\documentclass[11pt]{amsart}

\usepackage[left=1in,top=1in,right=1in,bottom=1in]{geometry}

\usepackage{enumerate}
\usepackage{amsthm}
\usepackage{amsmath}
\usepackage{amsfonts,amssymb}
\usepackage{mathrsfs}
\usepackage[pdftex]{graphicx}

\usepackage[unicode=true,colorlinks=true,linkcolor=blue,urlcolor=blue, citecolor=blue]{hyperref}

\usepackage{amscd}
\usepackage{xcolor}
\usepackage{ifpdf} 
\usepackage[all]{xy}

\ifpdf
\DeclareGraphicsRule{*}{mps}{*}{}
\else
\fi 

\numberwithin{equation}{section}
\numberwithin{table}{section}

\theoremstyle{plain}% default
\newtheorem{thm}{Theorem}[section]
\newtheorem{lemma}[thm]{Lemma}

\newtheorem{prop}[thm]{Proposition}
\newtheorem{corollary}[thm]{Corollary}

\theoremstyle{definition}

\newtheorem{definition}[thm]{Definition}

\newtheorem{exampleth}[thm]{Example}
\newenvironment{example}{\begin{exampleth}}{\hfill $\diamond$\\ \end{exampleth}}

\hyphenation{trans-cen-den-tal}

\theoremstyle{remark}
\newtheorem{rem}[thm]{Remark}

\newcommand{\tightcdot}{\!\cdot\!}

\makeatletter
\@namedef{subjclassname@2010}{%
 \textup{2010} Mathematics Subject Classification}
\makeatother

%% Blackboard bolds symbols
\newcommand \NN {\mathbb{N}} 
 
\newcommand \ZZ {\mathbb{Z}}
\newcommand \CC {\mathbb{C}}
\newcommand \RR {\mathbb{R}}  
\newcommand \pr {\mathbb{P}}
\newcommand \GG {\mathbb{G}}

\renewcommand{\AA}{\ensuremath{\mathbb{A}}}

%% SCRFonts

%%
 
\newcommand{\fa}{\ensuremath{\mathfrak{a}}} 
\newcommand{\fb}{\ensuremath{\mathfrak{b}}} 
\newcommand{\faJij}{\fa_{J(ij)}}
%% Operators
%\newarrow{Equal}{=}{=}{=}{=}{=}

%\newcommand{\id}{\ensuremath{\mathds{1}}}
\renewcommand{\geq}{\geqslant}
\renewcommand{\leq}{\leqslant}

%%%%%%%%%%
\newcommand{\longhookrightarrow}{\lhook\joinrel\longrightarrow}

\newcommand{\A}{H}

\DeclareMathOperator{\inc}{\operatorname{inc}}

\DeclareMathOperator{\Spec}{\operatorname{Spec}}
\DeclareMathOperator{\Quot}{\operatorname{Quot}}

\DeclareMathOperator{\aff}{\operatorname{aff}}
\DeclareMathOperator{\trop}{trop}
\DeclareMathOperator{\Gr}{Gr}
\DeclareMathOperator{\an}{an}
\DeclareMathOperator{\id}{id}

\newcommand \ii{\theta}
\newcommand\rr{\mu}

\newcommand \cT {\mathcal{T}}

\newcommand \cTG {\mathcal{T}\!\Gr}
\newcommand \cTGo {{\cT\!\Gr_0}}

\newcommand \cTC {\mathscr{C}}

%%Caligraphic

\newcommand \TP {\mathbb{T}}

\newcommand{\init}{\operatorname{in}}

\newcommand \CijTJ{\mathscr{C}^{(ij)}_{T,J}}
\newcommand \CijTJtilde{\mathscr{C}^{(ij)}_{T,\tilde{J}}}
\newcommand \CijTJprime{\mathscr{C}^{(ij)}_{T',J'}}

\title{Faithful tropicalization of the Grassmannian of planes}

\author[M.A.\ Cueto, M.\ H\"abich, A.\ Werner]{Maria Angelica Cueto,
  Mathias H\"abich \and Annette Werner${}^{\S}$
}
\thanks{${\S}$ \emph{Corresponding author}}

\date{\today}

\keywords{tropical geometry, Berkovich spaces, Grassmannians, space of phylogenetic trees}
\subjclass[2010]{14T05, 14M15, 14G22, 32C18}

\begin{document}
\begin{abstract}
  We show that the tropical projective Grassmannian of planes is
  homeomorphic to a closed subset of the analytic Grassmannian in
  Berkovich's sense by constructing a continuous section to the
  tropicalization map.  Our main tool is an explicit description of
  the algebraic coordinate rings of the toric strata of the
  Grassmannian. We determine the fibers of the tropicalization map and
  compute the initial degenerations of all the toric strata.  As a
  consequence, we prove that the tropical multiplicities of all points
  in the tropical projective Grassmannian are equal to one.  Finally,
  we determine a piecewise linear structure on the image of our
  section that corresponds to the polyhedral structure on the tropical
  projective Grassmannian.
\end{abstract}
\maketitle
%%%%%%%%%%%%%%%%%%%%%%%%%%%%%%%%%%%%%%%%%%%%%%%%%%%%%%%%%%%%%%%%%%%%%%%%%%%%%%
%%%%%%%%%%%%%%%%%%%%%%%%%%%%%%%%%%%%%%%%%%%%%%%%%%%%%%%%%%%%%%%%%%%%%%%%%%%%%%
\section{Introduction}
\label{sec:introduction}

In this paper, we investigate the tropical Grassmannian of planes in
$n$-space from the point of view of non-Archimedean analytic
geometry. The deep relations between tropical and non-Archimedean
analytic geometry have been studied by several authors, including
Einsiedler, Kapranov and Lind~\cite{EKL},
Gubler~\cite{gub07-2,gub07-1} and Payne~\cite{PayneAnalytification}.
Analytic spaces in this context are mostly Berkovich analytic spaces,
where, roughly speaking, points can locally be described by certain
seminorms. Spaces of seminorms or valuations have been present in
tropical geometry from its very beginnings by the work of Bieri and Groves~\cite{BG}. Recently, such
spaces have been used by Manon to investigate representations of
reductive groups~\cite{Manon10, Manon11}.

Of particular interest for the present paper is the recent work by
Baker, Payne and Rabinoff~\cite{BPR}, which contains a detailed study
of the connections between tropicalizations and skeleta of Berkovich
analytic curves. Our study of the Grassmannian of planes provides
higher-dimensional results in the same direction.

The tropicalization map on an analytic closed subvariety of a toric
variety is continuous and surjective, and it strongly depends on a
choice of coordinates of $X$. Work of Payne shows that the Berkovich
space $X^{\an}$ associated to a closed subvariety $X$ of a toric
variety is homeomorphic to the projective limit of all
tropicalizations of $X$~\cite[Theorem~4.2]{PayneAnalytification}.

Berkovich introduced skeleta of analytic spaces, roughly speaking, as
polyhedral subsets that are deformation retracts of the whole
space~\cite{ber99}.  For concrete examples, we refer to
Section~\ref{subsec:analyti}. These piecewise linear substructures of
analytic spaces were used in~\cite{ber99, ber04} to prove local
contractibility of smooth analytic spaces.

If $X$ is a curve, the corresponding Berkovich space $X^{\an}$ can be
endowed with a polyhedral structure locally modeled on an
$\mathbb{R}$-tree. The complement of its set of leaves carries a
canonical metric. As Baker, Payne and Rabinoff have shown, every
finite subgraph $\Gamma$ of this complement maps isometrically to a
suitable tropicalization of $X$~\cite[Theorem 6.20]{BPR}, where the
metric on this tropicalization is given locally by lattice lengths. We
say that this tropicalization represents $\Gamma$ faithfully.  When
all tropical multiplicities equal one, any compact connected subset of
a tropicalization is the isometric image of a suitable subgraph of
$X^{\an}$~\cite[Theorem 6.24]{BPR}.  The proofs of these two
statements rely on deep results concerning the structure theory of
analytic curves and their semistable reduction theory. Some of them
were developed by Thuillier in the context of potential theory on
curves~\cite{thu}.

The former results can be seen as a comparison between two polyhedral
approximations of analytic curves. The first one is given by all
tropicalizations (which approximate the analytic space by Payne's
theorem cited above), whereas the second one comes from skeleta of
semistable models. An interesting challenge is to look for a
comparison of these two different polyhedral approximations for higher
dimensional varieties. Two major difficulties arise in this
case. First, there is no polyhedral description of the full Berkovich
space generalizing the $\mathbb{R}$-tree description for curves.  And
second, there are no semistable models available in general.  Also, in
higher dimensions, skeleta are endowed with piecewise linear
structures and not with canonical metrics.  Still, we can ask the
following natural question. Let $X$ be a closed subvariety of a toric
variety. Is there a continuous map from the tropicalization of $X$ to
$X^{\an}$ that is a section to the tropicalization
map? % corresponding tropicalization of $X$ homeomorphic to a
% closed subset of the analytic space $X^{\an}$? If so, can such a
% homeomorphism be realized as a continuous section of the corresponding
% tropicalization map?
If the answer is yes, we call such a
tropicalization \emph{faithful}.

In this paper we show that the tropicalization of the Grassmannian of
planes $\Gr(2,n)$ induced by the Pl\"ucker
embedding~\eqref{eq:Pluecker} into $\pr_{K}^{\binom{n}{2}-1}$
satisfies this property, where $K$ is a complete non-Archimedean
field.  Let us describe our main results in more detail.  Via the
Pl\"ucker map, we embed $\Gr(2,n)^{\an}$ into the analytified
projective space $(\pr_K^{\binom{n}{2}-1})^{\an}$. The composition
with the coordinatewise logarithmic absolute value gives a continuous
map called tropicalization, i.e.,\ $\trop\colon \Gr(2,n)^{\an}
\rightarrow (\overline{\mathbb{R}}^{\binom{n}{2}}\smallsetminus
\{(-\infty,\ldots, -\infty)\})/ \mathbb{R}\tightcdot\mathbf{1}$, where
$\mathbf{1}$ is the all-ones vector.  Here, we put
$\overline{\mathbb{R}} = \mathbb{R} \cup \{- \infty\}$ and set the
logarithmic absolute value of zero to be $- \infty$.  The image of
$\trop$ is the tropical Grassmannian $\cTG(2,n)$. By $\Gr_0(2,n)$ we
denote the Zariski open subset of the Grassmannian mapping to the
complement of the coordinate hyperplanes under the Pl\"ucker
embedding.  Its tropicalization $\cTGo(2,n):= \trop (\Gr_0(2,n)^{\an})
\subset \mathbb{R}^{\binom{n}{2}} / \mathbb{R}\tightcdot\mathbf{1}$
was introduced and studied by Speyer and Sturmfels~\cite{TropGrass}.
Investigating the full space $\cTG(2,n)$ introduces technical
difficulties involving the boundary $\cTG(2,n)\smallsetminus
\cTGo(2,n)$, which we explain in Section~\ref{sec:cont-sect-from}.

\smallskip

The main result in this paper can be phrased as follows:
\begin{thm}\label{thm:MainThm}
  There exists a continuous section $\sigma\colon \cTG(2,n)\to
  \Gr(2,n)^{\an}$ to the tropicalization map $\trop \colon
  \Gr(2,n)^{\an} \to \cTG(2,n)$. Hence, the tropical Grassmannian
  $\cTG(2,n)$ is homeomorphic to a closed subset of the Berkovich
  analytic space $\Gr(2,n)^{\an}$.
\end{thm}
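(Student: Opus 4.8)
\emph{Proof plan.} The plan is to build $\sigma$ compatibly with the stratification of $\Gr(2,n)$ by the torus orbits of the ambient Pl\"ucker projective space, and then to verify continuity of the assembled map. First write $\Gr(2,n)=\bigsqcup_{J}\Gr_J$, where $J$ runs over the subsets of $\binom{[n]}{2}$ for which the locus $\Gr_J$ on which $p_{ij}\neq 0$ precisely for $\{i,j\}\in J$ is nonempty; this induces a partition $\cTG(2,n)=\bigsqcup_J\trop(\Gr_J^{\an})$, with $\trop(\Gr_J^{\an})$ living in the smaller coordinate torus $\RR^{J}/\RR\tightcdot\mathbf 1$. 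Using the explicit description of the coordinate ring of each stratum --- each $\Gr_J$ is, up to a split torus factor, a copy of $\Gr_0(2,k)$ for a suitable $k\leq n$ --- the construction of $\sigma$ reduces to producing, for every $3\leq k\leq n$, a continuous section on the open part $\cTGo(2,k)=\trop(\Gr_0(2,k)^{\an})$, and then gluing these pieces along the boundary strata.

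On $\cTGo(2,n)$ one uses the Speyer--Sturmfels polyhedral structure, whose cells $C_T$ are indexed by combinatorial types of phylogenetic trees $T$ on $[n]$. For $w$ in the relative interior of a maximal cell $C_T$, the initial degeneration $\init_w\bigl(\Gr_0(2,n)\bigr)$ depends only on $T$, is cut out by a toric prime ideal, hence is a single subtorus of the Pl\"ucker torus, and it occurs with multiplicity one; these are precisely the computations of the initial degenerations of the toric strata together with the multiplicity-one assertion. Using this and the explicit description of the fibers of $\trop$, one defines $\sigma(w)$ to be the distinguished point of $\trop^{-1}(w)\subset\Gr_0(2,n)^{\an}\subset\Gr(2,n)^{\an}$ that corresponds, under the identification of this fiber with an analytification of $\init_w(\Gr_0(2,n))$, to the Gauss point of that subtorus. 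By construction $\trop(\sigma(w))=w$.

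Continuity has two layers. Within a fixed stratum, as $w\to w_0$ from $\relint C_T$ into the relative interior of a face $C_{T'}$ --- equivalently, as an internal edge of $T$ is contracted to length zero --- one compares the seminorms attached to the two cells: contracting an edge is a flat degeneration of the orbit $\init_w$ onto $\init_{w_0}$, and the associated Gauss-type seminorms converge pointwise, hence in the Berkovich topology, so the values prescribed by adjacent cells agree on their common face. The delicate part is continuity across strata: as $w\in\trop(\Gr_J^{\an})$ approaches a boundary piece $\trop(\Gr_{J'}^{\an})$ with $J'\subsetneq J$, some coordinates of $w$ tend to $-\infty$ and one must show that $\sigma(w)$ converges to the value already prescribed on $\trop(\Gr_{J'}^{\an})$. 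This is where the explicit comparison of the coordinate rings of $\Gr_{J'}$ with degenerations of those of $\Gr_J$, and the analysis of $\trop^{-1}$ near the boundary, become indispensable. I expect this boundary gluing to be the main obstacle: no single torus-orbit picture spans several strata, and one must control seminorms of sequences of analytic points whose tropicalizations escape to infinity in some coordinates --- exactly the technical difficulty flagged in the introduction.

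Once $\sigma$ is known to be a well-defined continuous map with $\trop\circ\sigma=\id$, the homeomorphism statement is formal. Since $\Gr(2,n)^{\an}$ is Hausdorff, the image $\sigma(\cTG(2,n))$ is the locus where the continuous maps $\id$ and $\sigma\circ\trop$ agree and is therefore closed; and $\sigma$ is a continuous bijection onto this closed subset whose inverse is the restriction of $\trop$, hence a homeomorphism.
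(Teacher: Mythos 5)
Your overall architecture is close in spirit to the paper's: stratify by the vanishing pattern $J$, single out a distinguished point in each fiber of $\trop$, and glue. However, as written the proposal has two genuine gaps. First, the definition of $\sigma$ is only really given on the relative interiors of the \emph{maximal} cones of $\cTGo(2,n)$, where $\init_w$ is a translated subtorus; at non-maximal cones (e.g.\ the star tree already for $n=4$) and on all boundary strata, the initial degeneration is not a subtorus, and the inputs you invoke --- the description of $\trop^{-1}(w)$ as an affinoid domain, the integrality of the initial degenerations of the strata $\Gr_J(2,n)$, and multiplicity one at \emph{every} point (not just regular points of the open part, which is what Speyer--Sturmfels gives) --- are themselves main results of the paper (Proposition~\ref{prop:fiber}, Theorem~\ref{thm:reduction}, Theorem~\ref{thm:InitDegGrass}, Corollary~\ref{cor:multiplicities}), and their proofs require constructing the adapted coordinate systems $I(ij,T,J)$ (Propositions~\ref{pr:transBasis} and~\ref{pr:CompatibleI}), which your plan never produces. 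Treating them as available makes the argument circular. A related technical slip: the fiber $\trop^{-1}(w)$ is \emph{not} an analytification of $\init_w(\Gr_0(2,n))$; it is an affinoid domain over $K$ whose canonical \emph{reduction} over the residue field is $\init_w$ (and even this needs the relevant $\rho_{kl}$ to lie in the value group, handled by a base-change-and-descent step). The correct version of your idea is to take $\sigma(w)$ to be the unique Shilov boundary point of the fiber, which is exactly Corollary~\ref{cor:Shilov} --- but establishing uniqueness is the content of Theorem~\ref{thm:reduction}, again resting on the coordinate systems $I$.

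Second, continuity is not proved. Within a stratum you assert that ``Gauss-type seminorms converge pointwise'' as an edge is contracted, but since the distinguished point is described via different presentations (different $I$, different initial tori) on adjacent cells, pointwise convergence on $R(ij)$ is precisely what has to be checked, and across strata --- where some coordinates tend to $-\infty$ --- you explicitly flag the gluing as ``the main obstacle'' and leave it open. In the paper this is the technical heart of the proof (Theorem~\ref{thm:continuity} together with Lemmas~\ref{lm:RightIneq} and~\ref{lm:LeftIneq}; the $\liminf$ bound requires controlling monomials with negative exponents in variables whose seminorms tend to $0$, by passing between compatible sets $\tilde I$ and $I$). Your final step (image closed as the agreement locus of $\id$ and $\sigma\circ\trop$ in the Hausdorff space $\Gr(2,n)^{\an}$, with inverse given by $\trop$) is fine, and your observation that $\Gr_J(2,n)$ is, after choices, a product of a smaller $\Gr_0(2,t)$ with a split torus is essentially correct; but without the construction of the local coordinates and the continuity estimates, what you have is a correct outline of the strategy rather than a proof.
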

This section is constructed locally and it is defined by skeleton maps
on affine spaces, after a suitable choice of local coordinates. Our
construction relies on the interpretation of the tropical Grassmannian
as a compactification of the space of phylogenetic trees, extending
earlier work of Speyer and Sturmfels~\cite{TropGrass}. To be more
precise, we define $\sigma$ on a covering of $\cTG(2,n)$ by polyhedral
cones which we call $\{\CijTJ\}_{ij,T,J}$. Each cone consists of those
points in $\cTG(2,n)$ associated to the combinatorial type of a tree
$T$ whose only non-finite coordinates involving either $i$ or $j$ are
those contained in $J$.  The pair $ij$ of distinct indices lies
outside $J$.  For every such cone $\CijTJ$, we construct an
algebraically independent subset $I$ of the Pl\"ucker coordinates of
cardinality $2(n-2)$ such that the corresponding affine space
$\mathbb{A}^I_K$ intersects the Grassmannian in a Zariski open
subvariety. The Berkovich skeleton of $(\mathbb{A}^I_K)^{\an}$ can be
identified with $\overline{\RR}^{2 (n-2)}$. The natural inclusion of
the skeleton $\overline{\RR}^{2(n-2)}$ into $(\mathbb{A}^I_K)^{\an}$
induces the section $\sigma$ on $\CijTJ$.  In order to prove that this
map is well-defined, we check by direct computation that for each
$x\in \cTG(2,n)$, the point $\sigma(x)$ is the unique maximal element
in the fiber $\trop^{-1}(x)$ with respect to evaluation on rational
functions (Lemma~\ref{lm:MaxCondition}). We show the continuity of
$\sigma$ in Theorem~\ref{thm:continuity}.

\smallskip

The Pl\"ucker embedding induces a stratification of $\Gr(2,n)$ by
subvarieties of tori.  For every subset $J$ of the Pl\"ucker
coordinates, we denote by $\Gr_J(2,n)$ the subvariety of $\Gr(2,n)$
where precisely the Pl\"ucker coordinates in $J$ vanish. For example,
when $J$ is empty, we recover the open subset $\Gr_0(2,n)$. In
Lemma~\ref{lem:iso}, we use the local coordinate systems $I$ mentioned
above to describe the coordinate ring of $\Gr_J(2,n)$. This step is
crucial to characterize the fiber of $\trop$ over a point $x$ in
$\cTG(2,n)$ as the Berkovich spectrum of an affinoid algebra. This is
the content of Proposition~\ref{prop:fiber} and
Theorem~\ref{thm:reduction}.  Corollary~\ref{cor:Shilov} shows that
each of these affinoid algebras has a unique Shilov boundary point,
which is precisely $\sigma(x)$.  This gives a conceptual explanation
for the fact that $\sigma(x)$ is maximal in the fiber $\trop^{-1}(x)$.

In Section~\ref{sec:trop-mult-integr} we focus our attention on
piecewise linear structures.  In Lemma~\ref{lm:InGrobner}, we use the
local coordinates $I$ to compute the initial degenerations of each
subvariety $\Gr_J(2,n)$. Theorem~\ref{thm:InitDegGrass} states that
these degenerations are integral schemes, and that they coincide with
the reductions of the affinoid algebras given by the fibers of
tropicalizations.  Each stratum $\cTG_J(2,n)$ is endowed with the
induced Gr\"obner fan structure associated to its defining ideal.  In
Corollary~\ref{cor:multiplicities}, we show that the tropical
multiplicity of every point in the tropical Grassmannian, evaluated in
the corresponding stratum $\Gr_J(2,n)$, is equal to one.  This
provides an example of the general comparison
theorem~\cite[Proposition 4.24]{BPR} as well as a test case for a
higher-dimensional version of~\cite[Theorem 6.23]{BPR}.

On the analytic side, Corollary~\ref{cor:skeleton} states that the
image of the section $\sigma$ is a skeleton in the sense of
Ducros~\cite{DucrosPL}.  Note that we do not consider skeleta of
semistable models of the Grassmannian, although it seems likely that
the image of $\sigma$ can be identified with a skeleton of such a
model. Therefore there is, a priori, no natural piecewise linear
structure on the image of $\sigma$.  However, the properties of
$\sigma$ allow us to define a natural piecewise linear structure on
this set. We do so at the end of Section~\ref{sec:trop-mult-integr}.

%%%%%%%%%%%%%%%%%%

The rest of the paper is organized as follows. In
Section~\ref{sec:trop-analyt-vari}, we recall basic facts on tropical
and analytic varieties.  In Section~\ref{sec:trop-grassm}, we describe
the combinatorics and topological structure of the tropical
Grassmannian of $2$-planes in $n$-space, following the seminal work of
Speyer and Sturmfels~\cite{TropGrass}.
Section~\ref{sec:cont-sect-from} contains the proof of
Theorem~\ref{thm:MainThm}. In Section~\ref{sec:fibers-trop} we give an
explicit description of the fibers of the tropicalization map in the
language of affinoid domains.  Section~\ref{sec:trop-mult-integr}
deals with piecewise linear structures.  In
Section~\ref{sec:petersen-graph} we discuss how the construction of
the section $\sigma$ leads to an embedding of the quotient of the
tropical fan $\cTGo(2,n) $ by its $(n-1)$-dimensional lineality space
into a quotient of $\Gr(2,n)^{\an}$ by a torus action.  This
application is motivated by a question of Sturmfels to the third
author.

We hope that the exhaustive study of the tropical and analytic
Grassmannian carried out in this paper will provide a helpful
guideline to obtain further general results along the lines
of~\cite{BPR} for higher-dimensional varieties.  Finally, we believe
that the local coordinates introduced to explicitly construct our
section will be a useful tool to further investigate the algebraic
Grassmannian.

%%%%%%%%%%%%%%%%%%%%%%%%%%%%%%%%%%%%%%%%%%%%%%%%%%%%%%%%%%%%%%%%%%%%%%%%%%%%%%
%%%%%%%%%%%%%%%%%%%%%%%%%%%%%%%%%%%%%%%%%%%%%%%%%%%%%%%%%%%%%%%%%%%%%%%%%%%%%%
\section{Analytification and tropicalization}
\label{sec:trop-analyt-vari}
%%%%%%%%%%%%%%%%%%%%%%%%%%%%%%%%%%%%%%%%%%%%%%%%%%%%%%%%%%%%%%%%%%%%%%%%%%%%%%
%%%%%%%%%%%%%%%%%%%%%%%%%%%%%%%%%%%%%%%%%%%%%%%%%%%%%%%%%%%%%%%%%%%%%%%%%%%%%%

Throughout this paper, we let $K$ be a field which is complete with
respect to a non-Archimedean valuation $\nu\colon
K^{\ast}=K\smallsetminus \{0\}\to \RR$.  This valuation induces an
absolute value $|\,\mathord{\cdot}\,|= \exp(-\nu(\mathord{\cdot}))$ on
$K$.  Note that we allow $K$ to be an arbitrary field endowed with the
trivial absolute value. Other examples include non-Archimedean local
fields such as $\mathbb{Q}_p$, the completion $\CC_{p}$ of the
algebraic closure of $\mathbb{Q}_p$, the field $\mathbb{C}(\!(t)\!)$
of formal power series, and the field $\CC\{\!\{t\}\!\}$ of Puiseux
series.

In order to study tropicalizations of projective varieties, it is
convenient to extend the field $\RR$ as well as the valuation $\nu$
from $K^{\ast}$ to $K$, setting $\nu(0)=\infty$. Let
$\overline{\RR}=\RR\cup \{-\infty\}$ be the extended field of real
numbers. It is an additive monoid, and its topology extends the
Euclidean topology on $\RR$. The half-open intervals $[-\infty, a)$
for $a\in \RR$ give a basis of open neighborhoods of $-\infty$.

\smallskip In what follows, we use multi-index notation. More
precisely, given $x=(x_1,\ldots, x_n)\in K^n$ and $\alpha\in \ZZ^n$,
we write $x^{\alpha} = \prod_{i = 1}^n x_i^{\alpha_i}$ and $|\alpha| =
\alpha_1 + \ldots + \alpha_n$.

\subsection{Analytic spaces}\label{subsec:analyti}
Let us recall some basic facts about Berkovich analytic spaces.  The
\emph{analytification functor} associates to every $K$-scheme of
finite type an analytic space over $K$~\cite[Sections~3.4
and~3.5]{berkovichbook}. If $X = \Spec B$ is an affine $K$-scheme of
finite type, then $X^{\an}$ can be identified with the set of all
multiplicative seminorms on $B$ extending the absolute value on $K$~\cite[Remark 3.4.2]{berkovichbook}.  Here, a \emph{multiplicative
  seminorm} is a map of multiplicative monoids
$\|\mathord{\cdot}\|\colon B\to \RR_{\geq 0 }$ sending zero to zero
and satisfying the non-Archimedean triangle inequality, i.e., $\|f+g\|
\leq \max\{\|f\| , \|g\|\}$ for all $f,g\in B$.  The space $X^{\an}$
has a natural topology, namely, the coarsest one such that all
evaluation maps $\operatorname{ev}_f\colon \|\mathord{\cdot}\| \mapsto
\|f\|$ with $f$ in $B$ are continuous. When $X$ is a general
$K$-scheme of finite type, $X^{\an}$ is constructed by gluing the
analytifications on any open affine cover~\cite[Proof of
Theorem~3.4.1\,(3)]{berkovichbook}.

As we mentioned in the Introduction, a \emph{skeleton} of an analytic
space is a polyhedral subset satisfying a finiteness
condition. % which is a deformation retract of the whole space.
Rather than giving the precise definition, we focus on the following
example, which we thoroughly use in the sequel.

\begin{example}[% \textbf
  {Affine $n$-space}]\label{ex:affinespace}
  The analytic affine $n$-space $(\AA_K^n)^{\an}$ consists of all
  multiplicative seminorms on % the polynomial ring
  $K[x_1, \ldots,
  x_n]$ extending the absolute value of $K$.  Given any point
  $\rho% :=(\rho_1,\ldots, \rho_n)
  \in \overline{\RR}^n$, we define a
  multiplicative seminorm $\delta(\rho)$ on $K[x_1,\ldots, x_n] $ as
  follows:% For every
%   polynomial $\sum_{\alpha}
%   c_{\alpha}{x}^{\alpha}$ in $K[x_1,\ldots, x_n]$,  we define
\begin{equation*}
  \label{eq:6}
%   |\sum_{\alpha} c_{\alpha}{x}^{\alpha}(\delta(\rho))| :=ZZ^n$, we write $
\delta(\rho)\colon K[x_1,\ldots, x_n] \longrightarrow \RR_{\geq 0} \qquad
  \delta(\rho) (\sum_{\alpha} c_{\alpha}{x}^{\alpha})
 =\max_\alpha \{ |c_\alpha| 
 \exp(\sum_{i=1}^n{\rho_i \alpha_i})\}.
\end{equation*}
Here, we put $\exp(- \infty) = 0$.  The map $\delta$ is continuous and
injective. The image of $\delta$ is the \emph{skeleton} of the
analytic affine space, so we call $\delta$ the \emph{skeleton
  map}. Note that $\delta$ and the corresponding skeleton depend on
the choice of affine coordinates on $\AA^n_K$.

The restriction of $\delta$ to $\mathbb{R}^n$ is a continuous map
$\delta\colon \mathbb{R}^n \to%\longrightarrow
(\mathbb{G}_m^n)^{\an}$.  The image of $\delta$ is called the skeleton
of the split torus $(\mathbb{G}_m^n)^{\an}$. It is a deformation
retract of $(\mathbb{G}_m^n)^{\an}$.
\end{example}

\begin{example}[Projective $n$-space]\label{ex:ProjAnalytic}
  The analytic projective space $(\mathbb{P}^n_K)^{\an}$ can be
  obtained by gluing the analytifications of the standard open
  covering of $\pr^n_K$ by $n+1$ copies of $\AA_K^{n}$. It can also be
  described with the following equivalence relation of
  $(\AA_K^{n+1})^{\an} \smallsetminus \{0\}$. Two elements
  $\|\mathord{\cdot}\|_1$ and $\|\mathord{\cdot}\|_2$ in
  $(\AA_K^{n+1})^{\an}\smallsetminus \{0\}$ are \emph{equivalent} if
  for every positive integer $d$ there exists a constant $C=C(d)\in
  \RR_{>0}$ such that $\|f\|_1=C^d\|f\|_2$ for any homogeneous
  polynomial $f$ in $K[x_0, \ldots, x_n]$ of degree $d$. The
  topological space $(\pr^n_K)^{\an}$ can be identified with the set
  of equivalence classes in $(\AA_K^{n+1})^{\an} \smallsetminus
  \{0\}$, equipped with the quotient topology.
\end{example}

\subsection{Tropicalizations}\label{subsec:tropi}
We now introduce some notation for tropicalizations.  For every
$K$-split torus $\GG_m^n$ and every basis $\{\chi_1, \ldots, \chi_n\}$
of its character lattice $\Lambda$, we define a tropicalization map
\begin{equation}\label{eq:2}
\trop\colon (\GG_m^{n})^{\an} \longrightarrow \RR^n\qquad \gamma \longmapsto (\log|\chi_1(\gamma)|, \ldots, \log|\chi_n(\gamma)|).
\end{equation}
\begin{definition}
  If $X$ is a closed subscheme of $\GG_m^n$, the \emph{tropical variety} $\cT
  X$ is  the image of $X^{\an}$ under the tropicalization
  map.
\end{definition}
These notions extend the classical tropicalizations of subvarieties of
tori by valuation maps~\cite{ElimTheory}. Indeed, let $L|K$ be an
algebraically closed, complete non-Archimedean valued field with
nontrivial valuation extending the valuation on $K$.  Every closed
point $x$ in $X(L)$ induces a seminorm $\gamma_x$ in $X^{\an}$,
sending $f\mapsto |f(x)|$, where $f\in K[X]$. The restriction of the
tropicalization map~\eqref{eq:2} to $X(L)$ is the negative of the
coordinatewise valuation map on the split torus over $L$. By the
Fundamental theorem of tropical geometry, the closure of $\trop(X(L))$
coincides with $\cT X$ (see~\cite[Proposition~3.8]{gubler12}
and~\cite{DraismaTropSecant,EKL,FibersOfTrop}).

In order to define tropicalizations of projective varieties, we first
need to describe tropical projective space. We write $\mathbf{1}$ for
$(1,1,\ldots,1)$ and $-\mathbf{\infty}\tightcdot\mathbf{1}$ for
$(-\infty, -\infty, \ldots, -\infty)$.
\begin{definition}\label{def:tropProjSpace}
 The \emph{tropical projective space} is the topological space
\begin{equation*}
\TP\pr^n = (\overline{\RR}^{n+1}
\smallsetminus \{-\mathbf{\infty}\tightcdot\mathbf{1}\}) /
\RR\tightcdot\mathbf{1}\text{,}\label{eq:1}
\end{equation*}
endowed with the quotient topology.
\end{definition}

Let $\{x_0, \ldots, x_n\}$ be the projective coordinates on
$\pr^n_K$. Using Example~\ref{ex:ProjAnalytic}, we define the
associated tropicalization map on $(\pr^n_K)^{\an}$:
\begin{equation}\label{eq:3}
  \trop \colon (\pr^n_K)^{\an} \longrightarrow  \TP\pr^n \qquad  [\gamma]\longmapsto
  (\log\gamma(x_0), \ldots,\log \gamma(x_n)) + \RR\tightcdot\mathbf{1}\text{.}
\end{equation}
Here, $[\gamma]$ denotes the equivalence class of a nonzero
multiplicative seminorm $\gamma$ on $K[X_0, \ldots, X_n]$.  By
convention, we set $\log 0 = - \infty$.
\begin{definition}
  Given a projective variety $X \subset \pr^{n}_K$, we define its
  \emph{tropicalization} $\cT X$ as the image of $X^{\an}$ in
  $\TP\pr^n$ under the tropicalization map~\eqref{eq:3}.
\end{definition}

%%%%%%%%%%%%%%%%%%%%%%%%%%%%%%%%%%%%%%%%%%%%%%%%%%%%%%%%%%%%%%%%%%%%%%%%%%%%%%
%%%%%%%%%%%%%%%%%%%%%%%%%%%%%%%%%%%%%%%%%%%%%%%%%%%%%%%%%%%%%%%%%%%%%%%%%%%%%%
\section{The  tropical Grassmannian \texorpdfstring
  {$\cTG(2,n)$}{T Gr(2,n)}}
\label{sec:trop-grassm}
%%%%%%%%%%%%%%%%%%%%%%%%%%%%%%%%%%%%%%%%%%%%%%%%%%%%%%%%%%%%%%%%%%%%%%%%%%%%%%
%%%%%%%%%%%%%%%%%%%%%%%%%%%%%%%%%%%%%%%%%%%%%%%%%%%%%%%%%%%%%%%%%%%%%%%%%%%%%%

Our object of study is the Grassmannian $\Gr(2,n)$ of $2$-planes in
$n$-space. We view this variety inside projective space via the
Pl\"ucker map, which we now recall. We write $[n] = \{1,\ldots,n\}$
and $ij \in \binom{[n]}{2}$ for a subset $\{i,j\} \subset [n]$ of size
$2$.  The \emph{Pl\"ucker map} is given by the formula
\begin{equation*}
  \tilde{\varphi}\colon \mathbb{A}_K^{2 \times n} \dashrightarrow \pr^{\binom{n}{2}-1}_K
  \qquad \tilde{\varphi}(X)=\big(\det(X^{(ij)})\big)_{i<j}\text{,}
\end{equation*}
where $X^{(ij)}$ denotes the submatrix of $X$ obtained by choosing the
$i$th and $j$th columns of $X$. The locus of definition of $\tilde{\varphi}$
is the open set of matrices of rank two, which we identify with
the set of $2$-planes in $n$-space. This map induced the \emph{Pl\"ucker embedding}
\begin{equation}
 \label{eq:Pluecker} 
\varphi\colon \Gr(2,n)\to \pr^{\binom{n}{2}}_K.
\end{equation}

We set $p_{ij}:= \det(X^{(ij)})$.  As it is customary, for each pair
$\{i,j\}$, we choose only one projective coordinate among $p_{ij}$ and
$p_{ji}$. They are related by the identity $p_{ij} = -
p_{ji}$. Abusing notation, we consider each index as an unordered
pair.

The \emph{Pl\"ucker ideal} $I_{2,n}$ is the homogeneous prime ideal in
the polynomial ring $K[p_{ij}: ij\in \binom{[n]}{2}]$ of all algebraic
relations among the $2\times 2$-minors of a generic $2 \times
n$-matrix $X$. This ideal is generated by quadrics with coefficients
in $\ZZ$ and it admits a quadratic Gr\"obner basis~\cite[Theorem
3.1.7]{StInv93}.  A particularly nice system of generators for the
ideal $I_{2,n}$ is given by the three-term Pl\"ucker relations
\begin{equation}
\label{eq:3TermPluecker}
p_{ij}p_{kl}-p_{ik}p_{jl}+p_{il}p_{jk} = 0 \qquad \text{ for } ijkl \in 
\binom{[n]}{4}.
\end{equation}
We cover the variety $\Gr(2,n)$ by $\binom{n}{2}$ big open cells, all
of which are isomorphic to $\AA_K^{2(n-2)}$:
\begin{equation}
  \label{eq:4}
  U_{ij} = \varphi^{-1} (\{ p_{ij}\neq 0\}) \qquad \text{ for } ij\in \binom{[n]}{2}.
\end{equation}
\noindent The intersection of all $U_{ij}$ is the
open subvariety
\begin{equation*}
  \Gr_0(2,n):=\varphi^{-1} (\GG_m^{\binom{n}{2}} / \GG_m)\text{,}
\end{equation*}
where $\GG_m$ is embedded diagonally.

We tropicalize the Grassmannian $\Gr(2,n)$ and the open subvariety
$\Gr_0(2,n)$ with respect to its Pl\"ucker embedding by means of the
tropicalization map \eqref{eq:3}. Similarly, for each pair $ij$, we
define $\cT U_{ij}$ as the image of $U_{ij}^{\an}\subset \Gr(2,n)$
under the same map. In particular, we obtain
\begin{equation}
  \cT U_{ij}=\{x\in \cTG(2,n): x_{ij}\neq
  -\infty\}. \label{eq:5}
\end{equation}
To analyze these tropical varieties in more detail, we consider the % introduce the
% affine cone of the Pl\"ucker embedding. We define the
\emph{affine
  cone over the Grassmannian} $\Gr_0(2,n)_{\aff} = \Spec
(K[p_{ij}^\pm: ij\in \binom{[n]}{2}]/I_{2,n})$ in
$\mathbb{G}_{m}^{\binom{n}{2}}$.  The resulting \emph{tropical
  Grassmannian} $\cTGo(2,n)_{\aff}$ of $2$-planes in $n$-space is a
polyhedral fan in $\RR^{\binom{n}{2}}$ of pure dimension $2n-3$, all
of whose cones contain the line spanned by the all-ones vector
$\mathbf{1}$. It is a closed subfan of the Gr\"obner fan of
$I_{2,n}$~\cite[Section 2]{BJSST}. The quotient of $\cTGo(2,n)_{\aff}$
by $\mathbb{R}\tightcdot\mathbf{1}$ equals $\cTGo(2,n)\subset
\TP\pr^{\binom{n}{2}-1}$.

In~\cite{TropGrass}, Speyer and Sturmfels identified the space
$\cTGo(2,n)$ with the \emph{space of phylogenetic trees}, the
definition of which we recall now. A phylogenetic tree on $n$ leaves
is a real weighted tree $(T,\omega)$. Here, $T$ is a finite connected
graph with no cycles and with no degree-two vertices, together with a
labeling of its leaves in bijection with $[n]$. All the trees
in this paper will be labeled. We define the set of inner edges of $T$
as those that do not end in a leaf of $T$.  The weight function
$\omega\colon E(T)\to \RR$ is defined on the set of edges of $T$.  We
impose the condition that the weight of every inner edge of $T$ is
nonnegative. The tree $T$ is called the \emph{combinatorial type} of
the phylogenetic tree $(T, \omega)$.

Given a phylogenetic tree $(T,\omega)$, we construct a distance
function on $[n]$ as follows. For any pair of leaves $i,j$ in $T$, we
let $x_{ij}$ be the sum of the weights $\omega(e)$ of all edges $e$ in
the path from $i$ to $j$ in $T$. Note that this number could be
negative.  Tree-distance functions are characterized by the four-point
condition~\cite[Theorem 1]{Buneman74}.%~\cite[Theorem 2.36]{ASCB}.
\begin{thm}[Four-point condition]\label{thm:4ptCondition}
  A point $x\in \RR^{\binom{n}{2}}$ is the distance function
  associated to a phylogenetic tree on $n$ leaves if and only if for
  all pairwise distinct indices $i, j, k, l \in [n]$, the maximum
  among
\begin{equation}\label{eq:4-point-cond}
  x_{ij} + x_{kl}, \quad x_{ik} + x_{jl}, \quad x_{il} + x_{jk}
\end{equation}
is attained at least twice.
\end{thm}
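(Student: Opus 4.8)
The plan is to prove both directions, starting with the easier forward implication. First I would show that if $x = (x_{ij})$ arises as the distance function of a phylogenetic tree $(T,\omega)$, then for any four distinct leaves $i,j,k,l$ the maximum of the three sums in \eqref{eq:4-point-cond} is attained at least twice. The key observation is that the subtree of $T$ spanned by the four leaves $i,j,k,l$ — obtained by taking the union of the four paths and suppressing degree-two vertices — is a tree with exactly four leaves, hence is combinatorially either a ``star'' with one central vertex, or one of three possible ``quartet topologies'' with a single internal edge separating two of the leaves from the other two. In each case one computes the three pairwise sums directly in terms of the pendant edge weights and (possibly) the internal edge weight $e$: if the topology is $ij|kl$, then $x_{ik}+x_{jl} = x_{il}+x_{jk} = (\text{sum of all four pendant weights}) + 2\omega(e)$, while $x_{ij}+x_{kl}$ equals the same sum of pendant weights \emph{without} the $2\omega(e)$ term. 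Since $\omega(e)\ge 0$ for an inner edge, the two equal sums are the maximum, so the max is attained (at least) twice. In the star case all three sums are equal, so the conclusion is immediate.

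For the converse, I would argue by induction on $n$, following the standard neighbor-joining/tree-reconstruction argument. The base cases $n = 1, 2, 3$ are trivial (any point in $\RR^{\binom{n}{2}}$ for $n\le 3$ is realized by a star tree). For the inductive step, assume $x\in\RR^{\binom{n}{2}}$ satisfies the four-point condition on $[n]$. Its restriction to any subset of $[n-1]$ still satisfies the condition, so by induction there is a phylogenetic tree $T'$ on the leaves $\{1,\dots,n-1\}$ with the prescribed pairwise distances. I would then attach leaf $n$ to $T'$: the four-point condition, applied to $n$ together with triples from $[n-1]$, pins down exactly where on $T'$ the attachment point of $n$ must sit and what the pendant weight of the new edge must be — concretely, for a fixed reference pair one solves a small linear system (the ``Gromov product'' $\tfrac12(x_{in}+x_{jn}-x_{ij})$ is the distance from the attachment point to the common part), and the four-point inequalities guarantee these values are mutually consistent across all choices of reference pair. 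One then verifies that the resulting weighted tree $(T,\omega)$ realizes all of $x$, and that the new inner edge (if one is created by subdividing an edge of $T'$) receives nonnegative weight — this last point again comes from a four-point inequality.

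The main obstacle is the bookkeeping in the inductive step: showing that the attachment data for leaf $n$ is well-defined (independent of the auxiliary indices chosen) and that it produces a \emph{legitimate} phylogenetic tree, i.e. with no degree-two vertices and with all inner edges of nonnegative weight. This is where the full strength of the four-point condition — not just its specialization to one quadruple — gets used, and it requires a careful case analysis depending on whether the attachment point of $n$ is an existing vertex of $T'$, lies in the interior of an existing edge, or coincides with an existing leaf (forcing $x_{in} = 0$ for that leaf). Since this is the classical theorem of Buneman~\cite{Buneman74}, I would present the forward direction in full and sketch the converse, citing \cite{Buneman74} for the routine parts of the reconstruction.
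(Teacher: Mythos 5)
The paper does not prove this statement at all: it is imported verbatim from Buneman, cited as \cite[Theorem 1]{Buneman74}, so there is no internal proof to compare against. Your plan is essentially a correct reconstruction of the classical argument. The forward direction is complete as you describe it, with one small point worth making explicit in a write-up: in the quartet spanned by $i,j,k,l$ inside $T$, the ``internal edge'' is really the path in $T$ between the two branch vertices, and its weight is a sum of weights of \emph{inner} edges of $T$ (no leaf of $T$ can lie in the interior of such a path), hence nonnegative; the four pendant paths may pick up negative leaf-edge weights, but these contribute equally to all three sums in \eqref{eq:4-point-cond} and so cancel out of the comparison. For the converse, your inductive attachment of leaf $n$ via Gromov products is the standard reconstruction; the real content is exactly where you locate it, namely that the attachment point and pendant weight are independent of the auxiliary reference pair and that the subdivided piece which becomes an inner edge gets nonnegative weight, both of which follow from four-point inequalities involving $n$ and triples of old leaves. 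Deferring that bookkeeping to \cite{Buneman74} is entirely consistent with what the paper itself does, so your proposal is acceptable as it stands; if you wanted a self-contained proof you would need to carry out that case analysis (attachment at a vertex, in the interior of an inner edge, or in the interior of a leaf edge of $T'$) rather than cite it.
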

\noindent
As we discussed earlier, any $K$-point $x$ in the Grassmannian
$\Gr(2,n)$ satisfies the Pl\"ucker
relations~\eqref{eq:3TermPluecker}. It is immediate to check that
$\trop(x)$ satisfies Theorem~\ref{thm:4ptCondition}.

Assuming that $i,j,k$ and $l$ are distinct, we can look at the subtree
of $T$ spanned by these four leaves. Such a subtree is called a
\emph{quartet}.  It is a well-known fact that the list of quartets on
a phylogenetic tree characterizes its combinatorial type~\cite[\S
5.4.2]{treeOfLife}. There are exactly four combinatorial types for
quartet trees. These types are distinguished by the pairs of indices
attaining the maximum
in~\eqref{eq:4-point-cond}.  %The four-point condition distinguishes these types.

The  type of a quartet tree is also
determined by its cherries, which are defined as follows.
\begin{definition}\label{def:cherry}
  Let $T$ be a tree. A \emph{cherry} of $T$ is a pair of leaves
  $\{i,j\}$ in the tree $T$ with the property that they are both
  adjacent to the same node in $T$. Equivalently, the induced path
  from $i$ to $j$ in $T$ has only one internal node.
\end{definition}
\noindent 
For example, the unique cherries in the caterpillar tree on $n$ leaves
depicted on the left of Figure~\ref{fig:caterpillar} are $\{i,s_1\}$
and $\{j,s_{n-2}\}$. Notice that for \emph{trivalent trees} (i.e.,
where all its internal nodes have degree three) our definition of
cherry coincides with the standard one~\cite[Chapter 2.4]{ASCB}.  We
can determine the cherries of a quartet
by~\eqref{eq:4-point-cond}. Indeed, the pairing of the indices
$i,j,k,l$ realizing the minimal value in~\eqref{eq:4-point-cond} gives
the two cherries of the quartet $\{i,j,k,l\}$. If all three numbers
in~\eqref{eq:4-point-cond} agree, the quartet has no internal edge. We
call such a tree the \emph{star tree} on four leaves.

From~\cite[Theorem~3.4]{TropGrass} we know that the open cell
$\cTGo(2,n)$ equipped with its Gr\"obner fan structure is the space of
phylogenetic trees. The relative interior of each cone in this fan is associated to one
combinatorial type of a phylogenetic tree.  We make this
correspondence explicit by writing $\cTC_T$ for the cone whose relative interior is associated to
the tree $T$.  Each tree has exactly $n$ edges adjacent to its $n$
labeled leaves.  The inclusion of cones in $\cTGo(2,n)$ corresponds to
the coarsening of trees given by contraction of edges. These
contractions come from setting the weights of the corresponding edges
to be zero. In particular, the maximal cones are indexed by trivalent
tree types.  Each point $x$ in $\cTGo(2,n)$ is associated to a
phylogenetic tree $(T,\omega)$, and the weight function $\omega$ can
be recovered from $x$ using the Neighbor-Joining
Algorithm~\cite[Algorithm 2.41]{ASCB}.

\medskip

Our first result says that the open cell $\cTGo(2,n)$ is dense in
$\cTG(2,n)$:
\begin{lemma}\label{lm:TropGrass} 
  The tropical Grassmannian $\cTG(2,n)$ is the closure of $\cTGo(2,n)$
  in $\TP\pr^{\binom{n}{2}-1}$.
\end{lemma}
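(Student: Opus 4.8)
The plan is to show that $\cTG(2,n)$ is contained in the closure of $\cTGo(2,n)$; the reverse inclusion is automatic since $\cTG(2,n)$ is closed (it is the image of the compact space $\Gr(2,n)^{\an}$ under the continuous proper map $\trop$, or more directly because it is cut out by the tropical Pl\"ucker relations), and $\cTGo(2,n)\subset\cTG(2,n)$ by definition. So fix a point $x\in\cTG(2,n)$ coming from a seminorm $\gamma$ on the Pl\"ucker coordinate ring, and let $J\subset\binom{[n]}{2}$ be the set of indices where the coordinate of $x$ equals $-\infty$. I want to produce a sequence of points in $\cTGo(2,n)$ converging to $x$.

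First I would reduce to a combinatorial statement using the four-point condition (Theorem~\ref{thm:4ptCondition}). Since $x$ lies in the tropical Grassmannian, for every quadruple $ijkl$ the maximum of $x_{ij}+x_{kl},\,x_{ik}+x_{jl},\,x_{il}+x_{jk}$ is attained at least twice, with the convention that $-\infty$ is absorbing in sums and $-\infty+c=-\infty$; the point is that points of $\cTG(2,n)$ are exactly the "tree distances with possibly infinite entries", i.e.\ limits of genuine finite tree metrics. The key step is then: given such an $x$ with an infinite locus $J$, perturb it to $x^{(\varepsilon)}$ by replacing each coordinate $x_{ij}=-\infty$ (for $ij\in J$) with a large negative real number $-N(\varepsilon)$, chosen carefully (for instance, indexed so that edges "cut off" in the degenerate tree get length $\to-\infty$, or — since $\omega$ on inner edges must stay nonnegative — so that leaf edges tend to $-\infty$), and check that for $N$ sufficiently large the perturbed point still satisfies the four-point condition and hence lies in $\cTGo(2,n)$. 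Concretely: the degenerate point $x$ corresponds combinatorially to a tree-like structure where certain leaves have been "pushed to infinity", and the finite tree metrics approximating it are obtained by pulling those leaves back to finite but very negative positions. One then verifies, quadruple by quadruple, that the doubly-attained maximum persists under the perturbation — the only quadruples that need attention are those meeting $J$, and there the analysis is a short case check on which two of the three sums tie.

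The main obstacle is the bookkeeping around the sign/monotonicity constraints: inner-edge weights must be nonnegative while $x_{ij}\to-\infty$ forces the perturbation to act on leaf edges (or combinations thereof), so I need to argue that the combinatorial type of $x$ (the partially-degenerate tree) can always be realized as a limit of honest phylogenetic trees respecting $\omega\ge 0$ on inner edges. I expect this to follow from the description of $\cTG(2,n)$ as a compactification of the space of trees — each boundary stratum, indexed by $J$, is the tropicalization of a smaller Grassmannian-type stratum $\Gr_J(2,n)$, and such strata are visibly in the closure because one can deform the corresponding $2\times n$ matrix to make the vanishing minors small but nonzero. Indeed, an alternative and perhaps cleaner route is to bypass the combinatorics entirely: pick a $K$-point (or $L$-point) $X_0\in\Gr_J(2,n)$ with $\trop(\varphi(X_0))$ close to $x$, then perturb the matrix $X_0$ by adding $t\cdot M$ for generic $M$ with $|t|$ small; the resulting matrix lies in $\Gr_0(2,n)$ and its tropicalization converges to $x$ as $|t|\to 0$. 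I would carry out the argument in this second form if the estimates on the minors are manageable, falling back to the explicit four-point perturbation otherwise.
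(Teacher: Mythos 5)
Your reverse inclusion is fine: $\Gr(2,n)^{\an}$ is compact and $\TP\pr^{\binom{n}{2}-1}$ is Hausdorff, so $\cTG(2,n)=\trop(\Gr(2,n)^{\an})$ is closed. For the main inclusion, your preferred second route does work, but it is genuinely different from, and much heavier than, the paper's argument. The paper simply observes that $\Gr_0(2,n)$ is Zariski dense in $\Gr(2,n)$, hence $\Gr_0(2,n)^{\an}$ is dense in $\Gr(2,n)^{\an}$ by \cite[Corollary 3.4.5]{berkovichbook}, and then $\cTG(2,n)=\trop(\Gr(2,n)^{\an})\subset\overline{\trop(\Gr_0(2,n)^{\an})}=\overline{\cTGo(2,n)}$ by continuity of $\trop$; no coordinates, estimates or combinatorics enter. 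Your matrix-perturbation argument in effect reproves this analytic density pointwise: the seminorm underlying $x$ lies in the stratum $\Gr_J(2,n)^{\an}$ for $J=J(x)$, the Fundamental Theorem (over a complete algebraically closed extension $L$ with dense value group) supplies closed points $X_0\in\Gr_J(2,n)(L)$ whose tropicalizations approach $x$, and for generic $M$ and small $t\neq 0$ the Pl\"ucker coordinates of $X_0+tM$ indexed by $J$ are nonzero polynomials in $t$ without constant term while those outside $J$ retain their absolute values by the ultrametric inequality; so $X_0+tM\in\Gr_0(2,n)$, $\trop\varphi(X_0+tM)\to\trop\varphi(X_0)$ in $\TP\pr^{\binom{n}{2}-1}$, and a diagonal argument finishes. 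What your route buys is explicitness (boundary points are exhibited as limits of deformations of degenerate matrices); what the paper's route buys is a two-line soft proof.

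Be aware, however, that your fallback route (the four-point perturbation) has a genuine gap as stated: you cannot replace the coordinates in $J$ by a common large negative value $-N$ and check quadruples. For $n=4$ take $J=\{13,14,34\}$ (columns $1,3,4$ proportional) and $x_{12}=5$, $x_{23}=0$, $x_{24}=1$; this point lies in $\cTG(2,4)$, but after the uniform perturbation the three sums become $5-N$, $1-N$, $-N$, and the maximum is attained only once. The correct approximants must let the $J$-coordinates diverge at correlated rates dictated by the finite coordinates (here $x^{(t)}_{13}=5-2t$, $x^{(t)}_{14}=6-2t$, $x^{(t)}_{34}=1-2t$, coming from a star tree with leaf weights $5-t$, $t$, $-t$, $1-t$), and producing such a globally consistent family for arbitrary $n$ and $J$ amounts to a structure theory of the boundary strata, not the short per-quadruple case check you describe. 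Also, your assertion that points of $\cTG(2,n)$ are exactly the limits of genuine finite tree metrics is precisely the statement being proved, so it cannot be used as an input.
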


\begin{proof} Since the tropicalization map is continuous and
  surjective, and $\trop(\Gr_0(2,n)^{\an})=\cTGo(2,n)$, it suffices to
  show that $\Gr_0(2,n)^{\an}$ is dense in $\Gr(2,n)^{\an}$.  This
  follows from the fact that $\Gr_0(2,n)$ is Zariski dense in
  $\Gr(2,n)$, see \cite[Corollary 3.4.5]{berkovichbook}.
\end{proof}

Recall from~\eqref{eq:4} that we can cover the variety $\Gr(2,n)$ by
the open cells $\{U_{ij}: ij\in \binom{[n]}{2}\}$.  In turn, as a
corollary of Lemma~\ref{lm:TropGrass}, we can cover each cone $\cT
U_{ij}$ by the sets $\overline{\cTC_T} \cap \cT U_{ij}$, where we vary
the combinatorial type of the tree $T$. These sets consist of all
points $x= (x_{kl})_{kl}$ in $\cT U_{ij}$ such that there exists a
sequence $(x^{(m)})_m$ in $\cTC_T$ converging to $x$ in $\cT U_{ij}$,
which means
\[\lim_{m\to \infty}x_{kl}^{(m)} - x_{ij}^{(m)} = x_{kl}- x_{ij}
\qquad \text{ for all } kl \neq ij\text{.}
\] 
In particular, this
implies that given any tree type $T$, all points in
$\overline{\cTC_T}\cap U_{ij}$ satisfy the same four-point conditions
characterizing all quartets in $T$. For example, suppose that
$\{k,l\}$ is a cherry of the quartet $\{a,b,k,l\}$ in $T$. Then, all
points $x \in \overline{\cTC_T}\cap \cT U_{ij}$ satisfy
\begin{equation*} \label{eq:four-point-boundary}
x_{kl} + x_{ab} \leq x_{ak} + x_{bl} = x_{al} + x_{bk}. 
\end{equation*}
If we choose $ab=ij$, and $\{k,l\}$ is a cherry of the quartet with leaves
$\{i,j,k,l\}$, the previous expression is equivalent to
\begin{equation*}
  x_{kl} - x_{ij} \leq x_{ik} + x_{jl} -2x_{ij} = x_{il} + x_{jk}-2x_{ij}.\label{eq:34}
\end{equation*}
The latter will be thoroughly used in Section~\ref{sec:cont-sect-from}.
\medskip

%%%%%%%%%%%%%%%%%%%%%%%%%%%%%%%%%%%%%%%%%%%%%%%%%%%%%%%%%%%%%%%%%%%%%%%%%%%%%%
%%%%%%%%%%%%%%%%%%%%%%%%%%%%%%%%%%%%%%%%%%%%%%%%%%%%%%%%%%%%%%%%%%%%%%%%%%%%%%
\section{A continuous section on the tropical Grassmannian
  \texorpdfstring{$\cTG(2,n)$}{T Gr(2,n)}}
\label{sec:cont-sect-from}
%%%%%%%%%%%%%%%%%%%%%%%%%%%%%%%%%%%%%%%%%%%%%%%%%%%%%%%%%%%%%%%%%%%%%%%%%%%%%%
%%%%%%%%%%%%%%%%%%%%%%%%%%%%%%%%%%%%%%%%%%%%%%%%%%%%%%%%%%%%%%%%%%%%%%%%%%%%%%
In this section, we prove Theorem~\ref{thm:MainThm}. More precisely,
we construct sections to the tropicalization map on the
open cover $\{U_{ij}\}_{ij}$ of $\Gr(2,n)$ given in~\eqref{eq:4}, and
we show that these maps agree on the overlaps, thus giving the desired
section $\sigma$. 

In order to write explicit local formulas for $\sigma$, we recall the
algebraic description of the coordinate ring of each open set
$U_{ij}$. Fix a pair of indices $i,j$ in $[n]$ and let $R(ij)$ be the
coordinate ring of $U_{ij}$.  Setting $u_{kl}=p_{kl}/p_{ij}$ for every
$kl\neq ij$, we have
\begin{equation}
  R(ij) = K[u_{kl}: kl\in I(ij)]\text{,} \qquad \text{ where }\quad I(ij)=\{il, jl: l\neq i,j\}.
\label{eq:36}
\end{equation}
Note that by~\eqref{eq:3TermPluecker} we have $u_{kl} = u_{ik} u_{jl}
- u_{il} u_{jk}$ for $k,l \notin \{i,j\}$, thus we may view all
$u_{kl}$ as elements in $R(ij)$. We follow this convention throughout
the remainder of this Section.

\begin{rem}\label{rm:MultClosedSet}
  Given any subset $A \subset \binom{[n]}{2}\smallsetminus \{ij\}$, we
  denote by $S_A$ the multiplicatively closed subset of $R(ij)$
  generated by all $\{u_{kl}: kl\in A\}$.  As a corollary
  of~\eqref{eq:36} we see that $K[\Gr_0(2,n)] \simeq R(ij)_{S_{A_0}}$,
  where $A_0=\binom{[n]}{2}\smallsetminus \{ij\}$.
\end{rem}

From Lemma~\ref{lm:TropGrass}, we know that each tropical set $\cT
U_{ij}$ as in~\eqref{eq:5} has a stratification indexed by
combinatorial types of trees with $n$ leaves, namely,
$\{\overline{\cTC_T}\cap \cT U_{ij}\}_T$.  The next easy lemma gives a
necessary and sufficient condition for a map to be section to
$\trop$. Its proof follows from the definition of the tropicalization
map.
\begin{lemma}\label{lm:sectionProperty}
  Let $W \subset \cT U_{ij}$ be a subset, and let $\sigma\colon W \to
  U_{ij}^{\an}$ be any map.  Then, $\sigma$ is a section to $\trop$
  over $W$ if and only if for all $x\in W$,
  $\sigma(x)(u_{kl})=\exp(x_{kl}-x_{ij})$ for all $kl\in
  \binom{[n]}{2}\smallsetminus \{ij\}$.
\end{lemma}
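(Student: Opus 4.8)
The plan is to unwind two definitions and match them: that of ``section to $\trop$'' and the formula~\eqref{eq:3} for the tropicalization map restricted to the chart $U_{ij}$. By definition, $\sigma\colon W\to U_{ij}^{\an}$ is a section to $\trop$ over $W$ precisely when $\trop(\sigma(x))=x$ for every $x\in W$, so the entire task is to make the left-hand side explicit in terms of the values $\sigma(x)(u_{kl})$, and then to observe that equality of two points of $\TP\pr^{\binom{n}{2}-1}$ can be tested coordinatewise once we fix a common normalization.

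First I would recall that, via the Pl\"ucker map $\varphi$, the open cell $U_{ij}$ lands in the standard affine chart $\{p_{ij}\neq 0\}$ of $\pr^{\binom{n}{2}-1}_K$, whose coordinate ring is generated by the ratios $p_{kl}/p_{ij}=u_{kl}$ with $kl\neq ij$; in particular a point $\gamma\in U_{ij}^{\an}$ is exactly a multiplicative seminorm on $R(ij)$ extending $|\mathord{\cdot}|$ on $K$, where the $u_{kl}$ with $k,l\notin\{i,j\}$ are read off through the identity $u_{kl}=u_{ik}u_{jl}-u_{il}u_{jk}$ coming from~\eqref{eq:3TermPluecker}. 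Next I would compute $\trop(\gamma)$ using Example~\ref{ex:ProjAnalytic}: lift $\gamma$ to the seminorm on $(\AA^{\binom{n}{2}}_K)^{\an}\smallsetminus\{0\}$ sending the homogeneous coordinate $p_{ij}\mapsto 1$ and $p_{kl}\mapsto\gamma(u_{kl})$; then~\eqref{eq:3} gives that $\trop(\gamma)$ is the class of the vector whose $ij$-entry is $\log 1=0$ and whose $kl$-entry is $\log\gamma(u_{kl})$.

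Then I would compare with $x$. Since $W\subset\cT U_{ij}$, we have $x_{ij}\neq-\infty$ by~\eqref{eq:5}, so $x$ possesses a unique representative in $\overline{\RR}^{\binom{n}{2}}$ with $ij$-entry equal to $0$, namely $(x_{kl}-x_{ij})_{kl}$. Matching this against $\trop(\sigma(x))$, computed above with $ij$-entry also normalized to $0$, the condition $\trop(\sigma(x))=x$ becomes $\log\sigma(x)(u_{kl})=x_{kl}-x_{ij}$ for all $kl\neq ij$; exponentiating and using the conventions $\log 0=-\infty$ and $\exp(-\infty)=0$ (so that the possibility $\sigma(x)(u_{kl})=0$ is correctly paired with $x_{kl}=-\infty$) turns this into the asserted identity $\sigma(x)(u_{kl})=\exp(x_{kl}-x_{ij})$. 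Reading the chain of equivalences in both directions proves the lemma. There is no genuine difficulty here; the only point requiring care --- the ``main obstacle'', such as it is --- is the bookkeeping of the projective normalization (selecting on both sides the representative with $ij$-entry $0$) together with the $\pm\infty$ conventions, which is exactly what makes the statement clean.
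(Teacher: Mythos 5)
Your proof is correct and is essentially the paper's argument: the paper gives no explicit proof, stating only that the lemma ``follows from the definition of the tropicalization map,'' and your unwinding of the definitions --- lifting $\sigma(x)$ to a homogeneous seminorm normalized by $p_{ij}\mapsto 1$, computing $\trop(\sigma(x))$ via~\eqref{eq:3}, and comparing with the representative of $x$ having $ij$-entry $0$ (legitimate since $x_{ij}\neq-\infty$ on $\cT U_{ij}$) --- is exactly that intended verification, including the correct handling of the $-\infty$/$\exp(-\infty)=0$ conventions.
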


Since the big open cell $U_{ij}$ in the Grassmannian is a
$2(n-2)$-dimensional affine space, the first idea for constructing a
section to the tropicalization map is to use the skeleton map of
$U_{ij}$ as in Example~\ref{ex:affinespace}.  In order to do so, we
need the following projections.
 
  \begin{definition}\label{def:prIandPartialSection}
    Let $I$ be any subset of $\binom{[n]}{2}$ not containing the pair
    $ij$.  We define the projection
\begin{equation*}
  \pi_I\colon \cT U_{ij}  \longrightarrow \overline{\RR}^I \qquad
  [(x_{kl})_{kl \in \binom{[n]}{2}}] \longmapsto (x_{kl}- x_{ij})_{kl \in I}\text{,}\label{eq:8}
\end{equation*}
with the convention that $-\infty -a = -\infty$ for all $a\in
\RR$. Here, $[\mathord{\cdot}]$ denotes the class of a point in
$\TP\pr^{\binom{n}{2}-1}$.
  \end{definition}
\noindent
Note that the map $\pi_I$ is well defined and continuous on $\cT
U_{ij} \subset \TP\pr^{\binom{n}{2}-1}$ because $x_{ij}\neq -\infty$.
However, not all choices of a set $I\subset \binom{[n]}{2}$ of size
$2(n-2)$ will be suitable for our purposes. Indeed, if we compose the
map $\pi_{I(ij)}$ with the skeleton map $\delta\colon
\overline{\mathbb{R}}^{2(n-2)} \rightarrow (\AA_K^{2(n-2)})^{\an}
\simeq U_{ij}^{\an}$ from Example \ref{ex:affinespace}, we do not in
general obtain a section to the tropicalization map over
$U_{ij}^{\an}$. The set $I$ must be picked in a subtle way.  The core
of our proof explains precisely how to find such sets.
  
  \smallskip

  From now on, we consider index sets $I\subset \binom{[n]}{2}$ of
  size $2(n-2)$ not containing $ij$ and such that the variables
  $\{u_{kl}: kl \in I\}$ are algebraically independent in the function
  field of $\Gr(2,n)$. Under this condition, $K[u_{kl}: kl\in I]$ is
  the coordinate ring of an affine space $\AA_K^I$ embedded in the
  ambient projective space $\pr^{\binom{n}{2}-1}_K$.  We let
  $\delta_I\colon \overline{\RR}^I \rightarrow (\mathbb{A}^I_K)^{\an}$
  be the associated skeleton map.
  
  In this situation, given a point $x\in \cT U_{ij}$ we define the map
  $\sigma_I^{(ij)}(x):= \delta_I\circ \pi_I(x)$, i.e.,
\begin{equation}\label{eq:sigma}
  \sigma^{(ij)}_I(x)\colon K[u_{kl}: kl\in I]
  \longrightarrow \RR_{\geq 0}, \qquad \sum_{\alpha}
  c_{\alpha} {u}^{\alpha}\longmapsto \max_{\alpha}\{|c_{\alpha}|
  \exp(\sum_{kl\in I} \alpha_{kl} (x_{kl}-x_{ij}))\}\text{.} 
\end{equation}
Here, $c_{\alpha} \in K$ for all $\alpha$ and $|\mathord{\cdot}|\colon
K\to \RR_{\geq 0}$ is the absolute value on $K$.  Notice that
${\sigma}^{(ij)}_{I}(x)(u_{kl})\neq 0$ for all $kl\in I$ satisfying
$x_{kl}\neq -\infty$.

Given a point $x\in \cT U_{ij}$, our choice of $I$ will depend on
three quantities: the pair of indices $ij$, a combinatorial tree type
$T$ satisfying $x\in \overline{\cTC_T}$, and a set $J\subset
\binom{[n]}{2}$ encoding those coordinates of the point $x$ that have
value $-\infty$. To be more precise, given a point $x\in \cT U_{ij}$,
we define
\begin{equation}
  \label{eq:J}
  J(x):=\{kl\in \binom{[n]}{2}: x_{kl}=-\infty\}.
\end{equation}
We omit the point $x$ when understood from the context.
The set $J(x)$ has the following property:
\begin{lemma}\label{lm:ConditionsIAndJ}
  Let $x$ be a point in $\overline{\cTC_T}\cap \cT U_{ij}$, and assume
  that the tree $T$ is arranged as in the right of
  Figure~\ref{fig:caterpillar}. Let $J = J(x)$. Suppose that there
  exists $t \neq i,j$ satisfying $it,jt\notin J$, and let $a$ be such
  that $t\in T_a$. Then, for every $l\in T_a$, we have that $il \in J$
  if and only if $jl\in J$.
\end{lemma}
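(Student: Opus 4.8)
The plan is to realise the boundary point $x$ as a limit of genuine tree distances and then track the position of the node at which $T_a$ meets the path from $i$ to $j$. After translating by a multiple of $\mathbf 1$ I may assume $x_{ij}=0$. Since $x\in\overline{\cTC_T}\cap\cT U_{ij}$, the description of $\overline{\cTC_T}\cap\cT U_{ij}$ recalled after Lemma~\ref{lm:TropGrass} gives a sequence $x^{(m)}\in\cTC_T$ with $x^{(m)}_{ij}=0$ and $x^{(m)}_{kl}\to x_{kl}$ in $\overline{\RR}$ for every index $kl$. Each $x^{(m)}$ is the tree distance of a weight function $\omega^{(m)}\colon E(T)\to\RR$ that is nonnegative on the inner edges of $T$; I will write $d^{(m)}(u,v)$ for the $\omega^{(m)}$-length of the path between two vertices $u,v$ of $T$, so that $x^{(m)}_{kl}=d^{(m)}(k,l)$ for leaves $k,l$. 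Letting $v_a$ be the vertex of the $i$--$j$ path at which the subtree $T_a$ is attached (as in the right of Figure~\ref{fig:caterpillar}) and setting $h^{(m)}:=d^{(m)}(i,v_a)$, the fact that $v_a$ lies on the $i$--$j$ path together with $x^{(m)}_{ij}=0$ gives $d^{(m)}(i,v_a)+d^{(m)}(v_a,j)=0$, hence $d^{(m)}(j,v_a)=-h^{(m)}$.

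The next step is to record that, for any leaf $l\in T_a$, the path from $i$ to $l$ and the path from $j$ to $l$ both pass through $v_a$, so that with $q^{(m)}_l:=d^{(m)}(v_a,l)$ one has
\[
x^{(m)}_{il}=h^{(m)}+q^{(m)}_l,\qquad x^{(m)}_{jl}=-h^{(m)}+q^{(m)}_l,
\]
and in particular $x^{(m)}_{il}-x^{(m)}_{jl}=2h^{(m)}$ is independent of $l$. Applying this with $l=t$ and invoking the hypothesis $it,jt\notin J$, i.e.\ $x_{it},x_{jt}\in\RR$, shows that $h^{(m)}=\tfrac12\bigl(x^{(m)}_{it}-x^{(m)}_{jt}\bigr)$ converges to the \emph{finite} real number $H:=\tfrac12(x_{it}-x_{jt})$.

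Finally, for an arbitrary $l\in T_a$ I would pass to the limit in the identity $x^{(m)}_{jl}=x^{(m)}_{il}-2h^{(m)}$. This is legitimate because $2h^{(m)}\to 2H$ is a convergent real sequence while $x^{(m)}_{il}\to x_{il}$ in $\overline{\RR}$, and the subtraction map $\overline{\RR}\times\RR\to\overline{\RR}$, $(u,c)\mapsto u-c$, is continuous; it yields $x_{jl}=x_{il}-2H$. Since $H$ is finite, this forces $x_{jl}=-\infty$ if and only if $x_{il}=-\infty$, which is exactly the assertion $jl\in J\iff il\in J$. The one genuinely delicate point is the finiteness of $\lim_m h^{(m)}$, and this is precisely what the assumption $it,jt\notin J$ provides: without it $h^{(m)}$ could run off to $+\infty$, and then one could have, say, $x_{il}$ finite but $x_{jl}=-\infty$, so the equivalence would fail. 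Apart from that, the only thing to verify is the bookkeeping fact that every point of the closed cone $\cTC_T$ is the tree distance of some weight function on $E(T)$ with nonnegative inner-edge weights, so that $h^{(m)}=d^{(m)}(i,v_a)$ is meaningful even when $x^{(m)}$ lies on a face of $\cTC_T$ corresponding to a contraction of $T$.
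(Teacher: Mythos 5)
Your argument is correct, but it takes a more hands-on route than the paper. The paper disposes of this lemma in two lines: since $l,t\in T_a$, the pair $\{l,t\}$ is a cherry of the quartet $\{i,j,l,t\}$, so the four-point condition --- which the discussion at the end of Section~\ref{sec:trop-grassm} has already extended to all points of $\overline{\cTC_T}\cap \cT U_{ij}$, including those with $-\infty$ coordinates --- gives $x_{il}+x_{jt}=x_{it}+x_{jl}$, and the finiteness of $x_{it},x_{jt}$ immediately yields $x_{il}=-\infty \iff x_{jl}=-\infty$. What you do instead is re-derive exactly that identity from first principles: you realise $x$ as a limit of honest tree metrics $x^{(m)}$ on $T$, locate the attachment vertex $v_a$ of $T_a$ on the $i$--$j$ path, and observe that $x^{(m)}_{il}-x^{(m)}_{jl}=2\,d^{(m)}(i,v_a)$ is independent of $l\in T_a$, so that the hypothesis $it,jt\notin J$ pins this quantity to a finite limit $2H$ and the relation $x_{jl}=x_{il}-2H$ survives in $\overline{\RR}$. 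The paper's proof is shorter because it leans on the already-recorded boundary four-point conditions (themselves justified by the same limiting argument you spell out); yours is self-contained, makes the geometric content of the identity transparent, and correctly attends to the two delicate points --- that representatives with $x^{(m)}_{ij}=0$ are still tree metrics of $T$ with nonnegative inner-edge weights (leaf-edge weights may be negative, and the closed cone $\cTC_T$ allows zero inner weights), and that $(u,c)\mapsto u-c$ is continuous on $\overline{\RR}\times\RR$. Both proofs are sound; yours simply unwinds the citation the paper makes.
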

\begin{proof}
  The four-point condition on the quartet $\{i,j,l,t\}$ implies that
  $x_{il}+x_{jt}=x_{it}+x_{jl}$. Since $x_{it},x_{jt}\neq -\infty$,
  the statement holds.
\end{proof}

\medskip

The proof of Theorem~\ref{thm:MainThm} will be done in four steps.
Starting from a fixed pair of distinct indices $i,j\in [n]$ and an
affine open $U_{ij}$, Section~\ref{sec:caterpillar-case} gives a map
$\sigma^{(ij)}_{T,I}\colon \overline{\cTC_T} \cap \cT U_{ij} \to
U_{ij}^{\an} $, where $T$ is a caterpillar tree with endpoints $i$ and
$j$ (as in the left of Figure~\ref{fig:caterpillar}) and $I = I(ij)$
is the index set from~\eqref{eq:36}. In
Section~\ref{sec:exist-gener-tree}, we consider general trees. Here,
the candidate section is patched together from various maps. These
maps are defined on locally closed subsets $\CijTJ \subset
\overline{\cTC_T}\cap U_{ij}$, whose points have entries equal to
$-\infty$ on all coordinates indexed by $J\cap I(ij)$, and real values
on all coordinates indexed by $I(ij) \smallsetminus J$. The technical
heart of our proof lies in the construction of suitable index sets $I$
well adapted to both $T$ and $J$. This is the content of
Proposition~\ref{pr:CompatibleI}.  In algebraic terms, our subsets $I$
will allow us to associate to each point $x\in \CijTJ$ a
multiplicative seminorm on a localization of $R(ij)$, by defining on
an isomorphic ring, namely a localization of $K[u_{kl}: kl\in
I]$. This isomorphism guarantees that the affine subspaces $\AA_K^I$
considered above intersect the big open cell $U_{ij}$ in a Zariski
open subset, and so the multiplicative seminorm will yield a point in
the analytic Grassmannian. Defined in this way, the maps
$\sigma_I^{(ij)}$ will induce local sections to the tropicalization
map.

Section~\ref{sec:uniqueness} discusses the uniqueness and continuity
of such maps: the image of a point $x$ in a cone $\CijTJ$ is maximal
with respect to evaluation on functions among all elements in
$\trop^{-1}(x)$. In the terminology of analytic geometry, $\sigma(x)$
is a Shilov boundary point in its fiber (see
Corollary~\ref{cor:Shilov}).  This property ensures that our section
maps agree on the overlaps of the cones covering $\cT U_{ij}$, and
that they are independent of all choices of $T$ and $I$.  We glue them
together to define the desired map $\sigma\colon \cTG(2,n) \to
\Gr(2,n)^{\an}$.  Theorem~\ref{thm:continuity} shows that $\sigma$ is
continuous.

%%%%%%%%%%%%%%%%%%%%%%%%%%%%%%%%%%%%%%%%%%%%%%%%%%%%%%%%%%%%%%%%%%%%%%%%%%%%%%
%%%%%%%%%%%%%%%%%%%%%%%%%%%%%%%%%%%%%%%%%%%%%%%%%%%%%%%%%%%%%%%%%%%%%%%%%%%%%%

\subsection{The caterpillar case}
\label{sec:caterpillar-case}
%%%%%%%%%%%%%%%%%%%%%%%%%%%%%%%%%%%%%%%%%%%%%%%%%%%%%%%%%%%%%%%%%%%%%%%%%%%%%%
%%%%%%%%%%%%%%%%%%%%%%%%%%%%%%%%%%%%%%%%%%%%%%%%%%%%%%%%%%%%%%%%%%%%%%%%%%%%%%
Let $T$ be a caterpillar tree on $n$ leaves where $i,j$ are the
endpoint leaves of the backbone of $T$, as in the left of
Figure~\ref{fig:caterpillar}.

\begin{figure}[hbf]
  \centering
  \begin{minipage}[c]{0.45\linewidth}
    \includegraphics[scale=0.45]{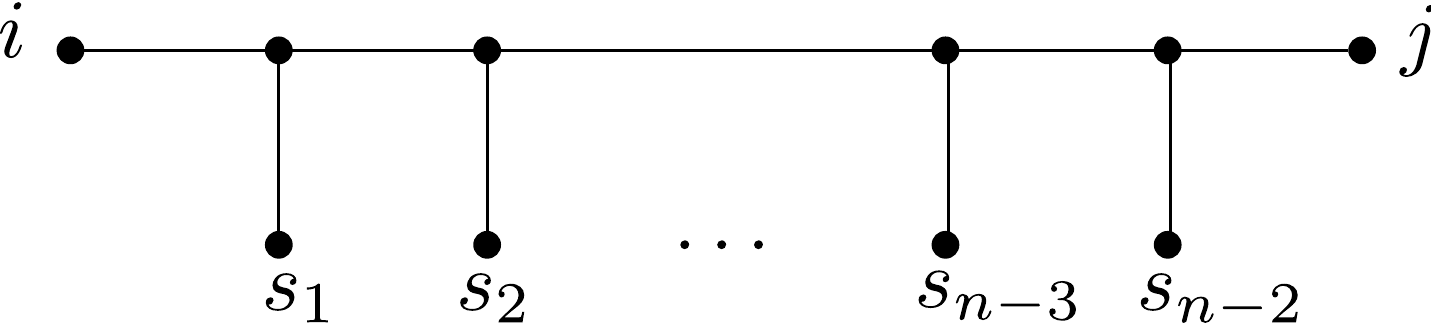}
  \end{minipage}
  \begin{minipage}[c]{0.4\linewidth}
\vspace{-3ex}  \includegraphics[scale=0.55]{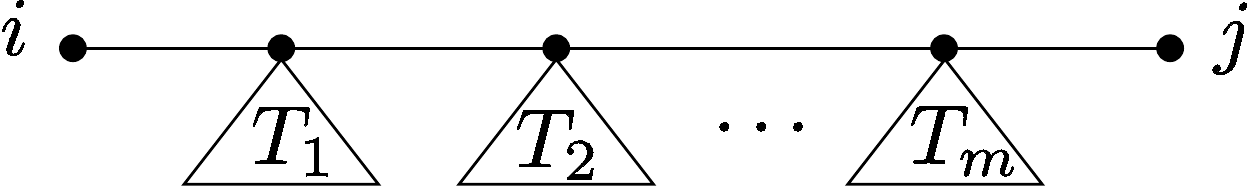}
\end{minipage}
\caption{From left to right: the caterpillar tree on $n$ leaves with
  endpoint leaves $i$ and $j$, and the path from leaf $i$ to $j$ on a
  tree arranged in caterpillar-like form. The labeled triangles
  indicate subtrees of the original tree. The backbone of the
  caterpillar tree is the chain graph with $m+2$ nodes given by the
  horizontal path from $i$ to $j$. The trees $T_1,\ldots, T_m$ need
  not be trivalent.}
  \label{fig:caterpillar}
\end{figure}

The shape of our caterpillar tree ensures that for any pair of indices
$k,l \subset [n]\smallsetminus\{ i,j\}$, the quartets of $T$ with
leaves $\{i,j,k,l\}$ satisfy
\begin{equation}\label{eq:CaterpillarIneq}
  x_{ij}+x_{kl}  \geq
  x_{ik}+x_{jl}\;  \qquad \text{and}\qquad\;
  x_{ij}+x_{kl} \geq
  x_{il}+ x_{jk}\text{,}
\end{equation}
where one of the two inequalities is an equality.  The following
result ensures that the embedding of the skeleton of $U_{ij}$ gives a
section to $\trop$ over $\overline{\cTC_T}\cap \cT U_{ij}$ as
in~\eqref{eq:sigma}.
\begin{prop}\label{pr:SectionCaterpillar}
  Let $T$ be the caterpillar tree with endpoints $i$ and $j$, and let
  $I=I(ij)$. Then, $\sigma^{(ij)}_{T,I} := \sigma_I^{(ij)}\colon
  \overline{\cTC_T} \cap \cT U_{ij}\hookrightarrow \cT U_{ij}
  \rightarrow U_{ij}^{\an}$ is a section to the tropicalization map
  over $\overline{\cTC_T}\cap \cT U_{ij}$.
\end{prop}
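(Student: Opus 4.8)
By Lemma~\ref{lm:sectionProperty}, it suffices to check that for every $x \in \overline{\cTC_T} \cap \cT U_{ij}$ and every $kl \in \binom{[n]}{2} \smallsetminus \{ij\}$, one has $\sigma^{(ij)}_I(x)(u_{kl}) = \exp(x_{kl} - x_{ij})$. The content of the proposition is really a \emph{well-definedness} statement: a priori $\sigma^{(ij)}_I(x)$ is only a multiplicative seminorm on the polynomial ring $K[u_{kl} : kl \in I]$, and we must verify that it is compatible with the relations $u_{kl} = u_{ik}u_{jl} - u_{il}u_{jk}$ for $k,l \notin \{i,j\}$, so that it descends to a seminorm on $R(ij)$, i.e.\ a point of $U_{ij}^{\an}$. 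Concretely, the plan is: first, show that for $I = I(ij)$ the values prescribed by~\eqref{eq:sigma} are consistent with the Pl\"ucker relations; second, deduce that $\sigma^{(ij)}_I(x)$ factors through $R(ij)$ and therefore lands in $U_{ij}^{\an}$; third, read off from the explicit formula~\eqref{eq:sigma} that it is a section.

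The key computation is the first step. For $k,l \notin \{i,j\}$ we need
\[
\sigma^{(ij)}_I(x)(u_{ik}u_{jl} - u_{il}u_{jk}) = \exp(x_{kl} - x_{ij}).
\]
Since all of $ik, jl, il, jk$ lie in $I(ij)$ and $\sigma^{(ij)}_I(x)$ is multiplicative on monomials, the left-hand side is a max of $|{-1}|\exp(x_{ik} + x_{jl} - 2x_{ij})$ and $|{-1}|\exp(x_{il} + x_{jk} - 2x_{ij})$, up to the contribution of cancellation in the binomial. Here is where the caterpillar shape enters: by~\eqref{eq:CaterpillarIneq}, for the quartet $\{i,j,k,l\}$ of $T$ we have $x_{ij} + x_{kl} \geq x_{ik} + x_{jl}$ and $x_{ij} + x_{kl} \geq x_{il} + x_{jk}$, with equality in at least one of the two. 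Equivalently $x_{kl} - x_{ij} \geq x_{ik} + x_{jl} - 2x_{ij}$ and $x_{kl} - x_{ij} \geq x_{il} + x_{jk} - 2x_{ij}$, again with at least one equality. So the seminorm value of $u_{ik}u_{jl} - u_{il}u_{jk}$, which is at most the max of the two monomial values, equals $\exp(x_{kl} - x_{ij})$ precisely because the maximum is attained and -- one must check -- is \emph{not} cancelled: if both terms were equal and leading, the two monomials $u_{ik}u_{jl}$ and $u_{il}u_{jk}$ are distinct, so no cancellation of the top-degree term occurs in the polynomial, and the seminorm value of the binomial still equals that common maximum. (One should also handle the case of $-\infty$ coordinates using the convention $\exp(-\infty) = 0$ and $-\infty - a = -\infty$; this is routine, as a vanishing monomial value only makes the inequalities easier.) This establishes that $\sigma^{(ij)}_I(x)$ respects the relations defining $R(ij)$ as a quotient.

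For the second step, once the relations are respected, $\sigma^{(ij)}_I(x)$ descends to a multiplicative seminorm on $R(ij) = K[u_{kl} : kl \in I(ij)]$ extending $|\cdot|$ on $K$ (the triangle inequality and $\mathbf{0} \mapsto 0$ are inherited from the skeleton seminorm $\delta_I$), hence defines a point of $U_{ij}^{\an} = \Spec(R(ij))^{\an}$. For the third step, by construction $\sigma^{(ij)}_I(x)(u_{kl}) = \exp(x_{kl} - x_{ij})$ for $kl \in I(ij)$ directly from~\eqref{eq:sigma}, and for $kl$ with $k,l \notin \{i,j\}$ the identity $\sigma^{(ij)}_I(x)(u_{kl}) = \exp(x_{kl} - x_{ij})$ is exactly what was proved in the first step. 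Thus the hypothesis of Lemma~\ref{lm:sectionProperty} holds and $\sigma^{(ij)}_{T,I}$ is a section to $\trop$ over $\overline{\cTC_T} \cap \cT U_{ij}$.

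The main obstacle is the first step, and specifically the no-cancellation point: one must be careful that when evaluating a seminorm on a \emph{binomial} whose two monomials have equal, maximal value, the result is still that value and not something smaller. This is true here because $\sigma^{(ij)}_I(x)$ is a \emph{multiplicative} seminorm (a point of Berkovich space), not merely a submultiplicative one, so the monomials $u_{ik}u_{jl}$ and $u_{il}u_{jk}$ have nonzero, equal value and their difference, being the nonzero Pl\"ucker product $u_{kl}$ up to sign, cannot have strictly smaller value under a multiplicative seminorm — indeed $\sigma^{(ij)}_I(x)(u_{kl})$ is forced to be nonzero whenever $x_{kl} \neq -\infty$, which pins it down. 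Everything else is bookkeeping with the caterpillar inequalities~\eqref{eq:CaterpillarIneq} and the $-\infty$ conventions.
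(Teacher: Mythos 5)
Your core verification is the same as the paper's: by Lemma~\ref{lm:sectionProperty} everything reduces to checking $\sigma^{(ij)}_I(x)(u_{ik}u_{jl}-u_{il}u_{jk})=\exp(x_{kl}-x_{ij})$, and this follows from the caterpillar inequalities~\eqref{eq:CaterpillarIneq}, exactly as in the paper. Two points of your framing are off, though neither breaks the argument. First, there is no well-definedness or ``descent'' issue: by~\eqref{eq:36}, $R(ij)$ \emph{is} the free polynomial ring $K[u_{kl}:kl\in I(ij)]$ (the big cell is an affine space), and for $kl\notin I(ij)$ the symbol $u_{kl}$ is merely a name for the specific polynomial $u_{ik}u_{jl}-u_{il}u_{jk}\in R(ij)$; so $\sigma^{(ij)}_I(x)$ is a point of $U_{ij}^{\an}$ by definition, and the only thing to check is the section condition of Lemma~\ref{lm:sectionProperty}. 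Second, your justification of the ``no cancellation'' step is not valid as stated: it is false in general that a multiplicative seminorm cannot assign a strictly smaller value to the difference of two elements of equal value (rigid points already give counterexamples), and the claim that $\sigma^{(ij)}_I(x)(u_{kl})\neq 0$ for $kl\notin I$ is what you are trying to prove. Fortunately the issue is vacuous: by the defining formula~\eqref{eq:sigma} (the Gauss/skeleton norm of Example~\ref{ex:affinespace}), the value of a polynomial is the maximum of $|c_\alpha|\exp(\sum\alpha_{kl}(x_{kl}-x_{ij}))$ over the monomials actually appearing, and $u_{ik}u_{jl}$, $u_{il}u_{jk}$ are distinct monomials, so the value of the binomial is exactly that maximum, which by~\eqref{eq:CaterpillarIneq} equals $\exp(x_{kl}-x_{ij})$; the $-\infty$ cases work out with the stated conventions, as you say.
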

\begin{proof} 
  Following~\eqref{eq:36}, we write
  $u_{kl}=u_{ik}u_{jl}-u_{il}u_{jk}\in R(ij)$ whenever $kl\notin
  I(ij)$. The inequalities in~\eqref{eq:CaterpillarIneq} imply that $
  \sigma^{(ij)}_{I} (x)(u_{kl}) = \exp(x_{kl}-x_{ij})$. Indeed,
\begin{equation*}\label{eq:26}
  \begin{split}
    \sigma_I^{(ij)} (x)(u_{ik} u_{jl} - u_{il} u_{jk})=
    \max\{\exp(x_{il}+x_{jk}-2x_{ij}), \exp(x_{ik}+x_{jl}-2x_{ij})\}=
    \exp(x_{kl}-x_{ij}).
  \end{split}
\end{equation*}
Likewise, $\sigma_I^{(ij)}(x)(u_{ik})=\exp(x_{ik}-x_{ij})$ and
$\sigma_I^{(ij)}(x)(u_{jk})=\exp(x_{jk}-x_{ij})$ for $k\neq i,j$.  By
Lemma~\ref{lm:sectionProperty}, the map $\sigma_I^{(ij)}$ is a section
to $\trop$ over $\overline{\cTC_T}\cap \cT U_{ij}$.
\end{proof}

%%%%%%%%%%%%%%%%%%%%%%%%%%%%%%%%%%%%%%%%%%%%%%%%%%%%%%%%%%%%%%%%%%%%%%%%%%%%%%
%%%%%%%%%%%%%%%%%%%%%%%%%%%%%%%%%%%%%%%%%%%%%%%%%%%%%%%%%%%%%%%%%%%%%%%%%%%%%%
\subsection{Existence for arbitrary trees}
\label{sec:exist-gener-tree}
%%%%%%%%%%%%%%%%%%%%%%%%%%%%%%%%%%%%%%%%%%%%%%%%%%%%%%%%%%%%%%%%%%%%%%%%%%%%%%
%%%%%%%%%%%%%%%%%%%%%%%%%%%%%%%%%%%%%%%%%%%%%%%%%%%%%%%%%%%%%%%%%%%%%%%%%%%%%%
Let $J = J(x)$ be a vanishing set of a point $x \in \cT U_{ij}$, as
in~\eqref{eq:J}, and fix $J(ij):=J\cap I(ij)$. This set defines a
monomial prime ideal in $R(ij)$
\begin{equation*}
  \label{eq:48}
  \faJij=\langle u_{kl}: kl\in J(ij)\rangle, 
\end{equation*}
and, in turn, a closed irreducible subvariety of the affine space
$U_{ij}$, namely,
\begin{equation*}
  \label{eq:35}
  Y_J(ij)=\Spec
(R(ij)/\faJij).
\end{equation*}
The variety $Y_J(ij)$ maps to $U_{ij} \cap \bigcap_{kl \in J(ij)}
\{u_{kl} = 0\} \subset \mathbb{P}^{\binom{n}{2}-1}$ under the
Pl\"ucker embedding from~\eqref{eq:Pluecker}.  Using these varieties,
we define a family of cones in $\cT U_{ij}$:
\begin{equation*}\label{eq:37} 
  \CijTJ:=\overline{\cTC_{T}} \cap \cT 
  Y_{J}(ij)\cap \!\!\!\!\!\!\bigcap_{kl\in I(ij)\smallsetminus J}\!\!\!\!\!\!\cT U_{kl}\;\;\longhookrightarrow \cT U_{ij}.
\end{equation*}
The collection $\{\CijTJ: T,J\}$ give a stratification of $\cT
U_{ij}$, so its union of $\CijTJ$ over all $T$ and $J$ equals $\cT
U_{ij}$.  Notice that the previous definition can be applied to any
subset $J\subset \binom{[n]}{2}$. However, only subsets of the form
$J=J(x)$ will yield nonempty cones $\CijTJ$.  We give a combinatorial
characterization of these sets in Lemma~\ref{lm:CharJ}.

\medskip

Given the indices $i,j$, we represent our tree $T$ as in the right of
Figure~\ref{fig:caterpillar}. If all the subtrees $T_1,\ldots, T_m$
have exactly one leaf each, then $T$ is a caterpillar tree with
endpoints $i$ and $j$, and Section~\ref{sec:caterpillar-case} tells us
how to construct a section to $\trop$ on $\CijTJ $: we define it
independently of $J$, setting $I=I(ij)$.

If $T$ is an arbitrary tree, the construction of a suitable index set
$I$ is more cumbersome. Indeed, the na\"{\i}ve choice $I=I(ij)$ will
in general not give a section to the tropicalization map. The reason
is very simple.  Suppose that we can pick two elements $k,l\in
[n]\smallsetminus \{i,j\}$ satisfying $x_{ij}+x_{kl} < x_{ik}+x_{jl} =
x_{il}+x_{jk}$. In particular, the leaves $k,l$ must belong to the
same subtree $T_a$.  If $ik,jk,il,jl\in I$, then the defining formula
for the map $\sigma^{(ij)}_{I}(x)$ as in~\eqref{eq:sigma} implies
\begin{equation*}
\begin{aligned}
  \trop \circ\,  \sigma^{(ij)}_{I}(x)(u_{kl})  &= \log(
  {\sigma}^{(ij)}_{I}(x)(u_{ik}\,u_{jl}-
  u_{il}\,u_{jk}))\\ & =\log(\max\{\exp(x_{ik}+x_{jl}-2x_{ij}),
  \exp(x_{il}+x_{jk}-2x_{ij})\}) \\ & =x_{ik}+x_{jl}-2x_{ij} >
  x_{kl}-x_{ij}.
\end{aligned}\label{eq:8bis}
\end{equation*}
Hence, the choice of $I=I(ij)$ does not induce a section to $\trop$ in
this setting. Notice that if one of $ik,jk,il,jl$ belongs to $J$, we
necessarily have
\[x_{ij}+x_{kl} = x_{ik}+x_{jl} = x_{il}+x_{jk} = -\infty\text{,}\]
and so $\trop \circ\, \sigma^{(ij)}_{I}(x)(u_{kl}) = x_{kl}-x_{ij}$.
This shows that the vanishing sets $J$ play a central role in the
construction of $I$.

\medskip

In order to make a systematic choice of $I$, we start by defining a
partial order $\preceq$ on $[n]\smallsetminus \{i,j\}$ that reflects
the combinatorial type of $T$ with respect to the leaves $i$ and
$j$. We denote the corresponding strict order by $\prec$.

\begin{definition}\label{def:orderWithCherryProperty} Let $i,j$ be a pair of indices, and let $\preceq$ be a partial order on the set
  $[n]\smallsetminus \{i,j\}$. Let $T$ be a tree on $n$ leaves
  arranged as in the right of Figure~\ref{fig:caterpillar}.  We say
  that $\preceq$ has the \emph{cherry property on $T$} with respect to
  $i$ and $j$ if the following conditions hold:
\begin{enumerate}[(i)]
\item Two leaves of different subtrees $T_a$ and $T_b$ cannot be
  compared by $\preceq$.
\item The partial order $\preceq$ restricts to a total order on the leaf
  set of each $T_a$, $a=1,\ldots, m$.
\item If $k\prec l\prec v$, then either $\{k,l\}$ or $\{l,v\}$ is a
  cherry of the quartet $\{i,k,l,v\}$ (and hence also of
  $\{j,k,l,v\}$).
\end{enumerate}
  \end{definition}

  The following lemma ensures the existence of partial orders with the
  cherry property on a given
  tree. Figure~\ref{fig:OrderedLabelingInduction} gives an example of
  such a partial ordering for $n=12$ and $m=1$, where the order agrees
  with the standard one on the set $[10]$.
\begin{lemma}\label{lm:order} Fix a pair of indices $i,j$, and let $T$
  be a tree on $n$ leaves. Then, there exists a partial order
  $\preceq$ on the set $[n]\smallsetminus \{i,j\}$ that has the cherry
  property on $T$ with respect to $i$ and $j$.
\end{lemma}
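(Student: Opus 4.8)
The plan is to construct $\preceq$ directly from the combinatorics of $T$, building it up one subtree at a time. Conditions (i) and (ii) of Definition~\ref{def:orderWithCherryProperty} force the desired order to be a disjoint union of total orders, one on the leaf set of each $T_a$; so the whole problem reduces to producing, for a single subtree $T_a$, a total order on its leaves satisfying condition~(iii) — namely that in any chain $k\prec l\prec v$ the ``middle'' leaf $l$ forms a cherry of the quartet $\{i,k,l,v\}$ with either $k$ or $v$. Since $i$ and $j$ attach to the backbone on the same side of $T_a$, the quartet $\{i,k,l,v\}$ has the same topology as the quartet inside $T$ obtained by replacing $i$ by the attaching point of $T_a$ to the backbone; hence condition~(iii) is really a statement purely about $T_a$ together with the distinguished ``root'' direction pointing toward $i$ (and $j$).

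First I would root $T_a$ at the edge along which it attaches to the backbone, obtaining a rooted tree, and then order its leaves by a depth-first traversal: pick at each internal node an arbitrary left-to-right ordering of the children and read off the leaves in the induced order. I claim this total order has the cherry property. The key step is the following observation about three leaves $k\prec l\prec v$ in a DFS order of a rooted tree: let $w$ be the last common ancestor of $\{k,l,v\}$. Then $l$ lies in the same subtree below $w$ as exactly one of $k$ or $v$ — because the three leaves cannot be split evenly when only two of $w$'s child-subtrees are involved, and if all three are in distinct child-subtrees then the DFS order would not interleave them, contradicting $k\prec l\prec v$ being consecutive in that order only if\ldots — more carefully: in a DFS order the leaves of any one child-subtree of $w$ form a contiguous block, so $k\prec l\prec v$ with $k,v$ in different child-subtrees forces $l$ to be in one of those same two blocks. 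In either case, the leaf $l$ together with whichever of $k,v$ shares its child-subtree of $w$ has their last common ancestor strictly below $w$, while the path from $i$ enters through $w$; this is exactly the statement that $\{k,l\}$ or $\{l,v\}$ is a cherry of the quartet $\{i,k,l,v\}$. Translating back through the identification of the quartet $\{i,k,l,v\}$ with the corresponding quartet of $T$ (using that $i$, $j$ and the attaching edge of $T_a$ all meet the backbone on the same side) gives condition~(iii), and simultaneously the remark in parentheses about $\{j,k,l,v\}$, since $i$ and $j$ play symmetric roles here.

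I would then assemble the global order: declare $k\preceq l$ if and only if $k$ and $l$ lie in the same $T_a$ and $k$ precedes $l$ (or equals $l$) in the chosen DFS order of $T_a$; leaves in different subtrees are left incomparable. Conditions (i) and (ii) hold by construction, and (iii) holds because any chain $k\prec l\prec v$ lies entirely in one $T_a$, where we have just verified it. The main obstacle is the combinatorial claim in the previous paragraph — getting the quartet/cherry translation exactly right, in particular making sure that ``cherry of the quartet $\{i,k,l,v\}$'' as defined in Definition~\ref{def:cherry} really does correspond to ``$l$ shares a child-subtree of the last common ancestor with $k$ or with $v$'' once the root direction toward $i$ is fixed. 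Everything else is bookkeeping: the existence of a DFS order is immediate, and the reduction to a single subtree is forced by~(i) and~(ii).
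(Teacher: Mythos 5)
Your overall route is genuinely different from the paper's: the paper reduces to a single subtree $T_a$ and constructs the order by induction, picking a cherry $\{s,t\}$ of $T_a$, declaring its two leaves to be the top of the order, ordering the subtrees hanging off the path from the attaching node to that cherry, and recursing; you instead take an arbitrary depth-first leaf order of $T_a$ rooted at its attaching edge and verify condition (iii) of Definition~\ref{def:orderWithCherryProperty} directly from the fact that the leaves of any rooted subtree form a contiguous block. That idea does work (indeed the paper's inductive order can be read as one particular depth-first order, so your argument is more general and avoids the induction), and your reductions — incomparability across the $T_a$'s, and the identification of the quartet $\{i,k,l,v\}$ with the rooted picture, valid because the paths from both $i$ and $j$ enter $T_a$ through its root, which also yields the parenthetical claim about $\{j,k,l,v\}$ — are fine.

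However, the central combinatorial claim is misstated, and one case is missing. With $w$ the last common ancestor of $\{k,l,v\}$, it is false that ``$k\prec l\prec v$ with $k,v$ in different child-subtrees forces $l$ to be in one of those same two blocks'': if $w$ has three or more children, then $k,l,v$ can lie in three distinct child-subtrees whose blocks occur in exactly that order, so your earlier assertion that a depth-first order ``would not interleave them'' is also wrong, and your conclusion about ``whichever of $k,v$ shares its child-subtree with $l$'' has nothing to apply to in that configuration. The correct dichotomy is: (a) the only configuration violating (iii) is the quartet of type $kv|il$, i.e.\ $k$ and $v$ in the \emph{same} child-subtree of $w$ with $l$ outside it; this is what block-contiguity excludes, since $l$ lying between $k$ and $v$ in the order would then be forced into that same block, contradicting that $w$ is the last common ancestor of all three; and (b) if $k,l,v$ lie in three distinct child-subtrees of $w$, the quartet $\{i,k,l,v\}$ is the star quartet, and by Definition~\ref{def:cherry} every pair of its leaves is a cherry, so (iii) holds trivially. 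With the case analysis repaired along these lines your proof is complete; as written, the step you yourself flagged as the main obstacle is the one that fails.
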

\begin{proof} It suffices to show the result for $m=1$. Indeed, if we
  let $\preceq_a$ be a total order with the cherry property on $T_a$
  for each $a=1,\ldots, m$, then the partial order
  $\mathord{\preceq}=\bigcup_{a=1}^m\mathord{\preceq}_a$ will verify
  the statement.

  Suppose $m = 1$.  Figure~\ref{fig:Induction} illustrates the method
  for building $\preceq$.  We proceed by induction on $n$. If $n=3$,
  then $T_1$ consists of a single leaf $l$, and we set $\preceq$ as
  $\{(l,l)\}$. Assume $n>3$. Then, we know that $T_1$ must contain a
  cherry, say $\{s,t\}$. We arrange $T_1$ vertically as in
  Figure~\ref{fig:Induction}, with one end being the internal node of
  $T$ to which $T_1$ is attached, and the other end equal to the
  cherry $\{s,t\}$.  We declare $t\prec s$. In order to satisfy the
  condition {(iii)} in Definition~\ref{def:orderWithCherryProperty},
  we set $l\prec t$ for all $l\neq t,s$. In addition, if $l\in V_c$
  and $k\in V_b$ with $c<b$, we set $l\prec k$. We indicate this last
  condition by the red vertical arrow and the vertical sign $\prec$ on
  the left of Figure~\ref{fig:Induction}. By the inductive hypothesis,
  we can construct an order $\preceq_b$ with the cherry property on
  each tree spanned by $V_b$ and $\{i,j\}$, where $b=1,\ldots, d$. We
  define the order $\preceq$ as
\[
\bigcup_{b=1}^d\mathord{\preceq}_b \cup \{(t,s),(t,t),(s,s)\} \cup
\{(l,t), (l,s): l\in \bigcup_{b=1}^d V_b\}\cup \{(l,k): l\in V_c, k\in
V_b,c<b\} .
\]
It is easy to check that $\preceq$ satisfies the required properties.
\begin{figure}
  \centering
  \includegraphics[scale=0.5]{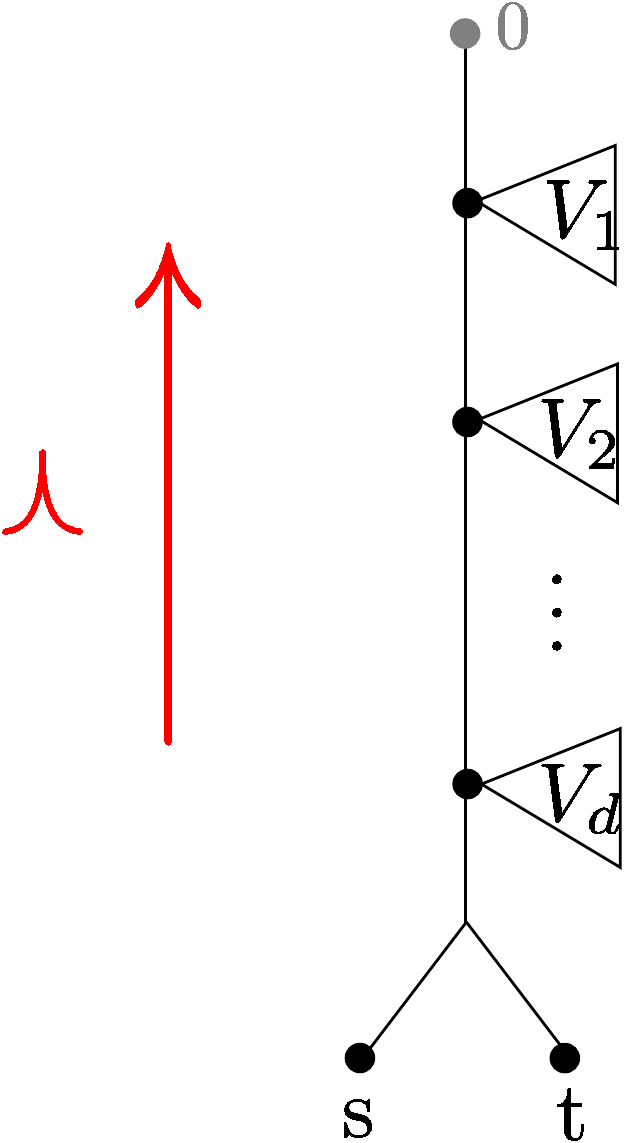}

  \caption{Inductive construction of the partial order $\preceq$,
    starting from a cherry of the subtree $T_1$. Notice that the edge
    connecting the cherry $\{s,t\}$ to the internal node to which
    $V_d$ is attached could be contracted, as in
    Figure~\ref{fig:OrderedLabelingInduction}. The grey node labeled
    with $0$ is the internal node in $T$ to which $T_1$ is
    attached. As before, the edge adjacent to this node could be
    contracted.}
  \label{fig:Induction}
\end{figure}
\end{proof}

\begin{figure}[htb]
  \centering
  \begin{minipage}{0.59\linewidth}
 \hfill   \includegraphics[scale=0.35]{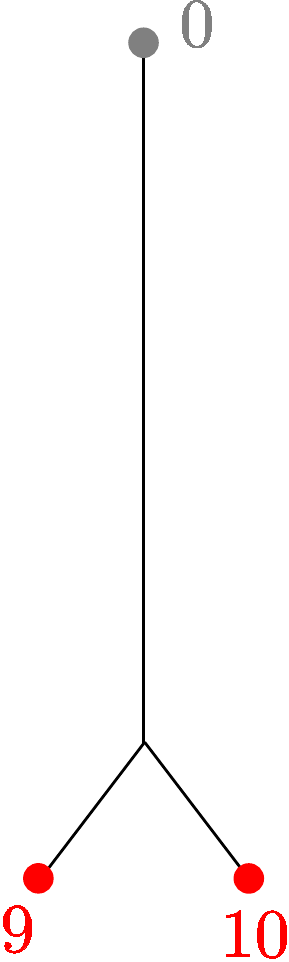}\;
  \includegraphics[scale=0.35]{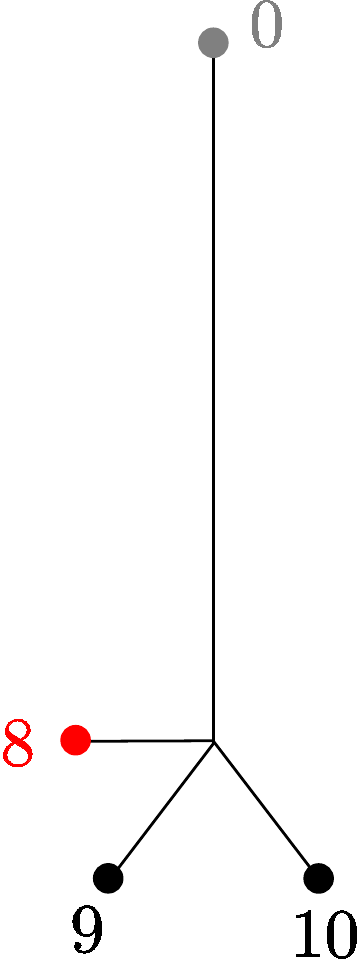}\;
\includegraphics[scale=0.35]{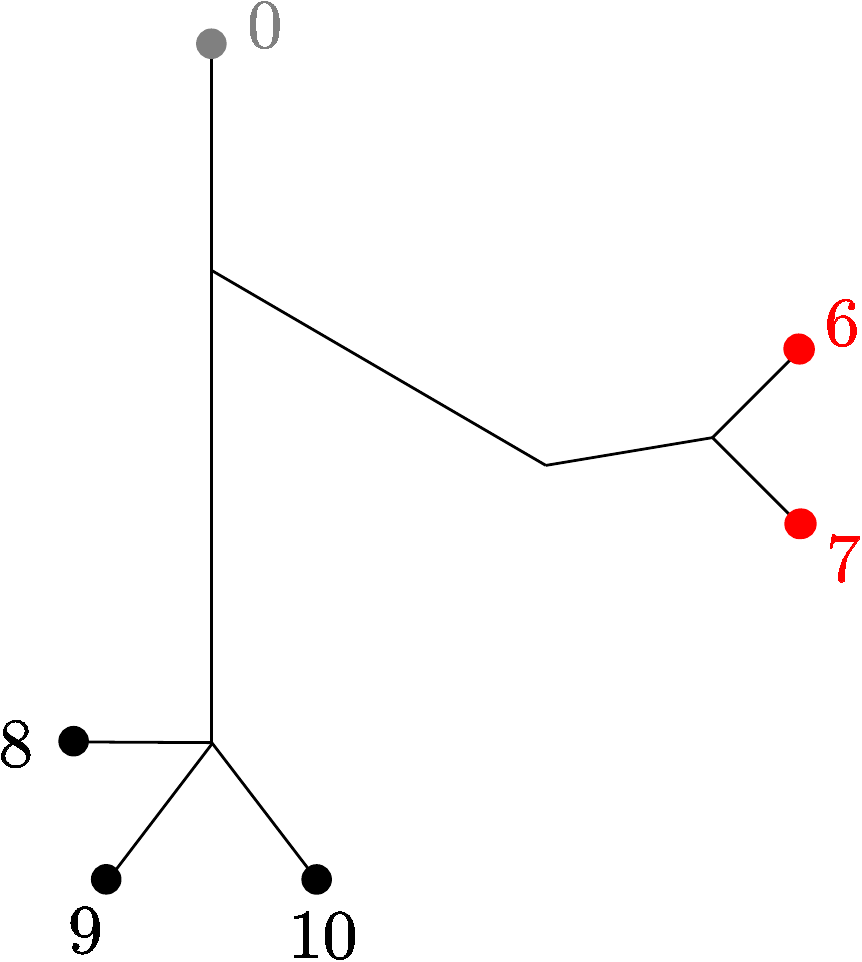}\;
\includegraphics[scale=0.35]{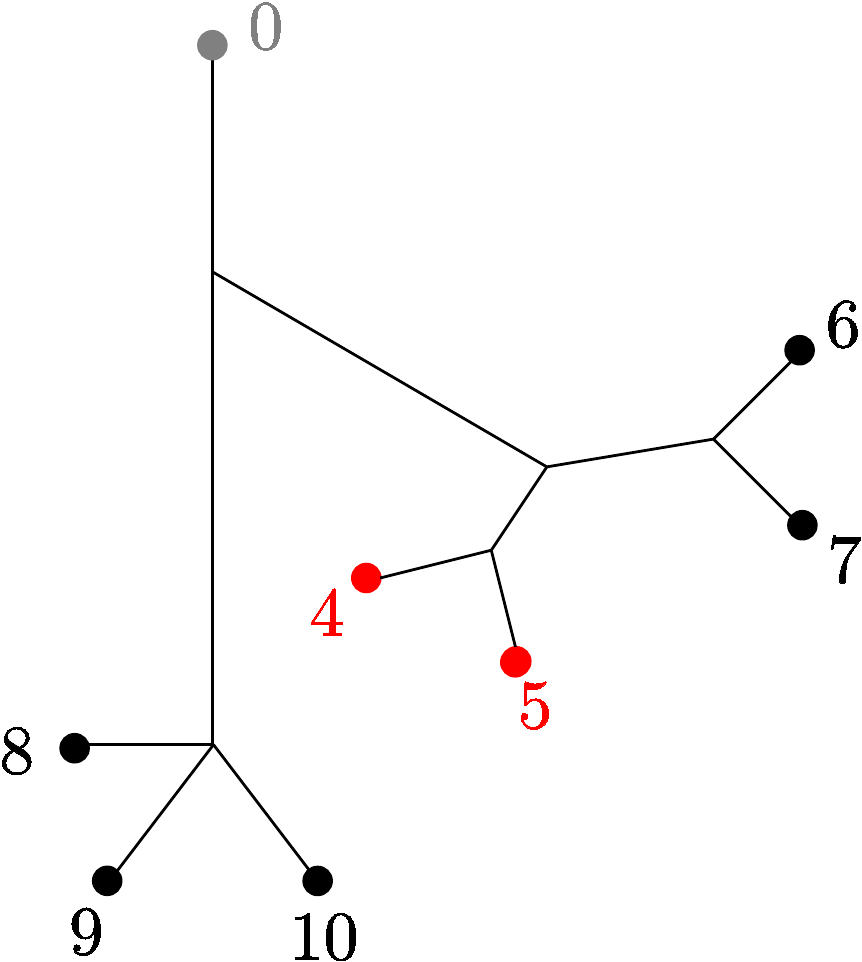}\;

\end{minipage}
  \begin{minipage}{0.4\linewidth}
\includegraphics[scale=0.35]{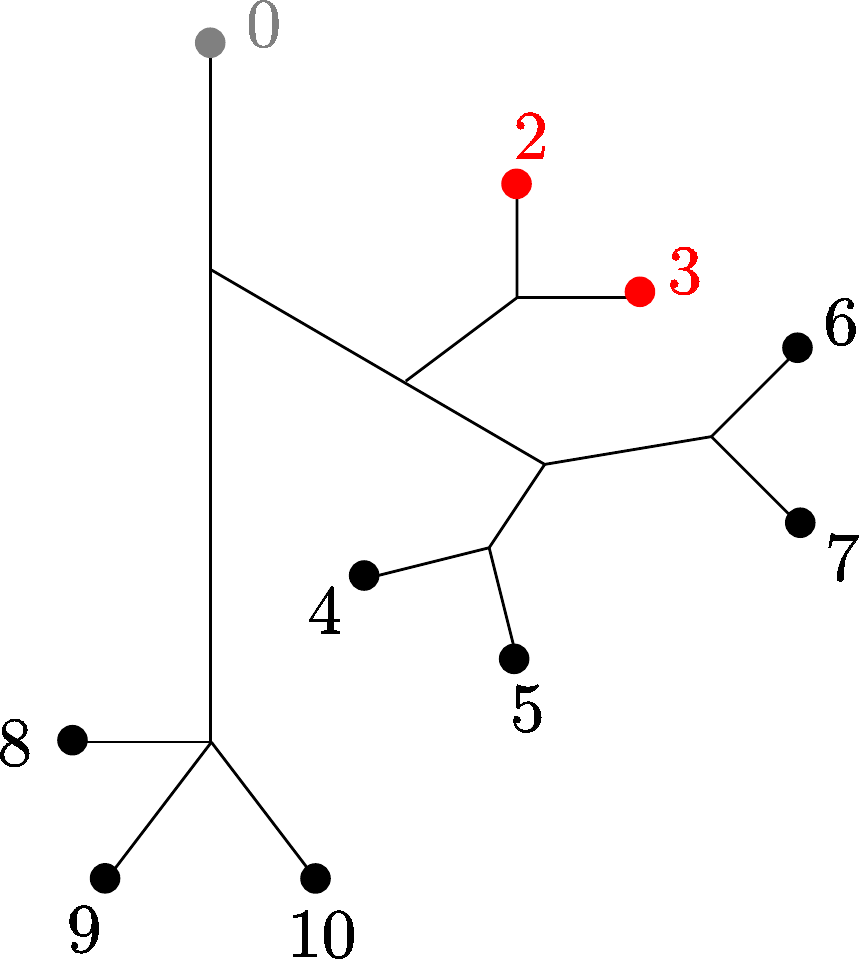}\;
\includegraphics[scale=0.35]{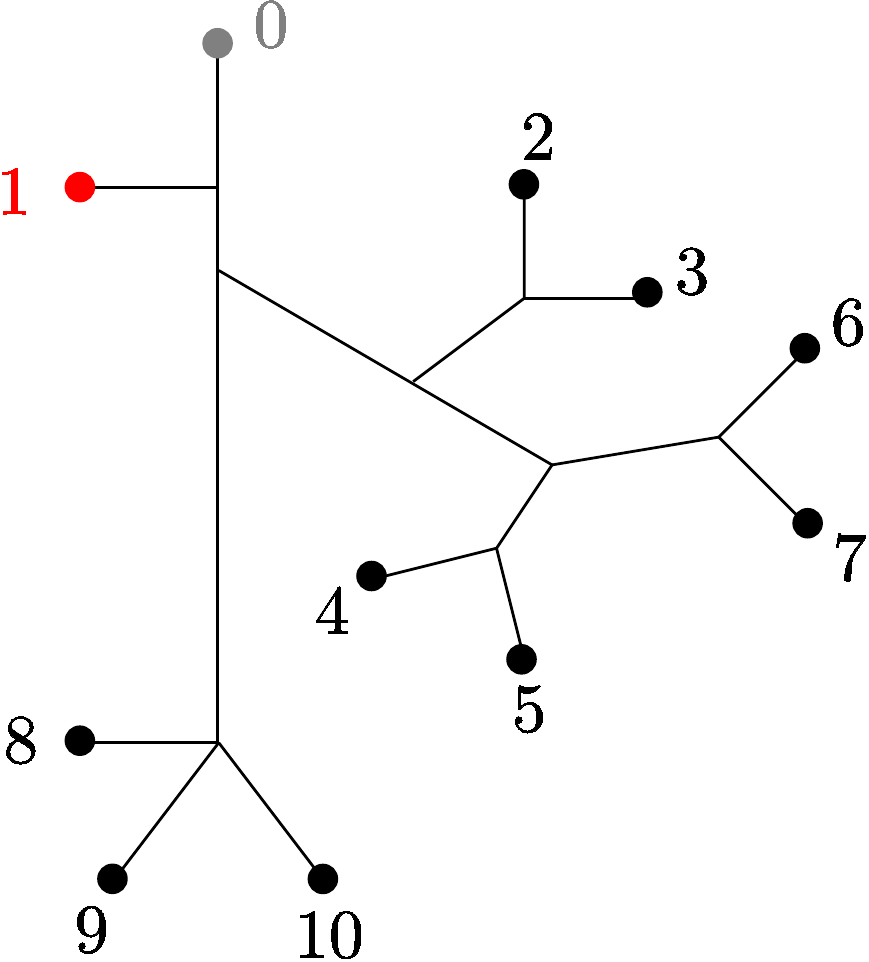}
\end{minipage}
\caption{From left to right: inductive definition of the order
  $\preceq_a$ on the leaves of the subtree $T_a$ in the right of
  Figure~\ref{fig:caterpillar}. We add one leaf or one cherry at a
  time in such a way that the corresponding new leaf or leaves are
  smaller than the previous ones in the order $\preceq_a$. When adding
  a cherry, we arbitrarily order its two leaves as well (both
  orderings will be valid).  The grey dot with label $0$ in $T_a$
  corresponds to an internal node of $T$. Broken leaf edges, such as
  the one in the third tree from the left, should be thought of as
  straight edges. As in Figure~\ref{fig:Induction}, the edge adjacent
  to the grey node with label $0$ could be contracted.}
  \label{fig:OrderedLabelingInduction}
\end{figure}

We now introduce the notion of compatibility of a set with a partial
order and a vanishing set. Recall that $J(ij)=J\cap I(ij)$.
\begin{definition}\label{def:CorrectI}
  Fix two indices $i,j$, a tree $T$ as in the right of
  Figure~\ref{fig:caterpillar}, and a vanishing set $J$. Let $\preceq$
  be a partial order on $[n]\smallsetminus \{i,j\}$ having the cherry
  property on $T$. Let $I\subset \binom{[n]}{2}$ be a set of size
  $2(n-2)$ not containing the pair $ij$. We say that $I$ is
  \emph{compatible with $\preceq$ and $J(ij)$} if for each index
  $a=1,\ldots, m$ and each leaf $k\in T_a$, exactly one of the
  following condition holds:
  \begin{enumerate}[(i)]
  \item $ik$ and $jk\in I$, and for all $l\prec k$ we have $il$ or
    $jl\in J(ij)$; or
\item  $ik\notin I$,
$jl\in I$ for all $l\in T_a$,
  and there exists $t\prec k$ in $T_a$ where $it,jt\notin J(ij)$. If $t$ is the
  maximal element with this property, then $kt\in I$; or
\item $jk\notin I$, $il\in I$ for all $l\in T_a$
  and there exists $t\prec k$ in $T_a$ where $it,jt\notin J(ij)$. If $t$ is the
  maximal element with this property, then $kt\in I$.
  \end{enumerate}
\end{definition}
\noindent
Observe that, for every $k\neq i,j$, at least one of the two elements
$ik,jk$ lies in $I$.  In addition, the following holds. If $s$ is the
maximal element of a subtree $T_a$ of size at least two, and $is,js\in
I$, we can infer that $il$ or $jl\in J$ for every $l\prec s$, and
hence $il,jl \in I$ for every $l \prec s$.

Figure~\ref{fig:compatibility} allows us to give a graphical
explanation of the compatibility property described above. We fix the
tree $T$ with $m=3$ and a partial order $\preceq$ on $[7]$ as in the
picture. For each $a=1,2,3$, we let $I_a:=\{kl\in I: k \text{ or } l\in
T_a\}$. Thus, $I=I_1\sqcup I_2\sqcup I_3$. By construction, since
$|T_1|=1$, we know that $I_1=\{i1,j1\}$ independently of the choice of
$J$.  If $i2$ or $j2$ belong to $J$, then $I_3=\{i2,j2,i3,j3\}$ in
agreement with condition \emph{(i)}. On the contrary, if $i2,j2\notin
J$ then we can choose between $I_3=\{i2,j2,j3,32\}$ (since 
\emph{(ii)} is satisfied) or $I_3=\{i2,j2,i3,32\}$ (so condition
\emph{(iii)} holds). There are many options for $I_2$, depending on
the set $J_2(ij):=\{ik\in J: k\in T_2\}\cup \{jk\in J: k\in T_2\}$. We
provide three examples. If $\emptyset\neq J_2(ij)\subseteq \{i4,j4\}$,
then we can take either $I_2=\{i4,j4,i5,j5,i6,65,i7,76\}$ or
$I_2=\{i4,j4,i5,j5,j6,65,j7,76\}$. Notice that in both cases $i5,j5\in
I_2$ by condition \emph{(i)}. If $\emptyset\neq J_2(ij)\subseteq
\{i7,j7\}$, we can take either $I_2=\{i4,j4,i5,54,i6,65,i7,76\}$ or
$I_2=\{i4,j4,j5,54,j6,65,j7,76\}$. Finally, assume
$J_2(ij)=\{j5,j6\}$. Then, we may choose
$I_2=\{i4,j4,i5,54,i6,64,i7,74\}$ or
$I_2=\{i4,j4,j5,54,j6,64,j7,74\}$.

\begin{figure}[htb]
  \centering
  \includegraphics[scale=0.7]{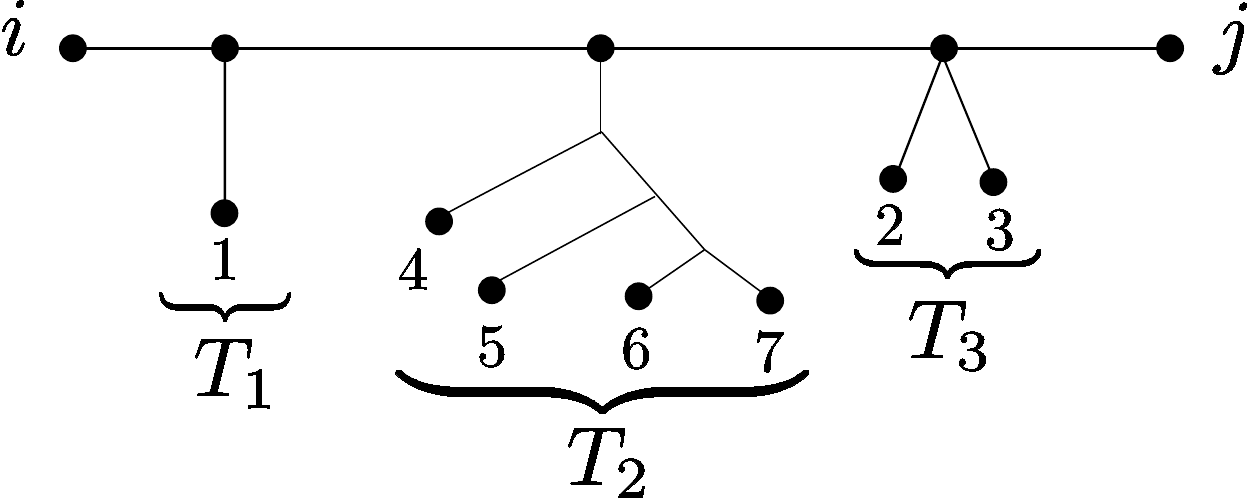}
  \caption{How to choose the set $I(ij)$ in an example of a tree $T$ with $m=3$. We
    define the order $\preceq$ as $\{ (2,3), (4,5), (4,6), (4,7),
    (5,6), (5,7), (6,7)\} \cup \{(k,k): 1\leq k\leq 7\}$. By
    construction, $\preceq$ has the cherry property on $T$ with
    respect to $i$ and $j$. For any choice of $J$, we have $I_1=\{i1,
    j1\}$. The choices of $I_2$ and $I_3$ will strongly depend on
    $J$. We take $I=I_1\sqcup I_2\sqcup I_3$.}
\label{fig:compatibility}
\end{figure}

\medskip Our first result ensures that compatible sets satisfy strong
algebraic properties:
\begin{prop}\label{pr:transBasis}
  Fix a pair of indices $i,j$, and let $T$ be a tree on $n$
  leaves. Let $\preceq$ be a partial order on the set
  $[n]\smallsetminus \{i,j\}$ that has the cherry property on $T$. Fix
  a set $J=J(x)$, where $x\in \overline{\cTC_T}\cap \cT U_{ij}$, and
  let $I$ be a set of size $2(n-2)$ that is compatible with $\preceq$
  and $J(ij)$. Then:
  \begin{enumerate}[(i)]
  \item The set of coordinates $\{u_{kl}: kl \in I\}$ is algebraically
    independent in the function field $\mbox{Quot}(R(ij))$ of the
    Grassmannian.
  \item Let $\A = I \cap I(ij)$. Then, the corresponding polynomial
    subring $K[u_{kl}: kl \in I]$ of $\mbox{Quot}(R(ij))$ fits into a
    diagram of the form
\begin{equation}
  \xymatrix@1{
    K[u_{kl}: kl \in I]\; \ar@{^{(}->}[r]^-{\ii} & R(ij)\; \ar@{^{(}->}[r]^-{\rr} & K[ u_{kl}: kl \in
    I][u_{kl}^{-1}: kl\in \A\smallsetminus J]\text{,}}
  \label{eq:diagram}
\end{equation}
where $\ii(u_{kl})=u_{ik}u_{jl}-u_{il}u_{jk}$ if $kl\notin I(ij)$, and
$\ii(u_{kl})=u_{kl}$ otherwise. Furthermore, $\rr$ is the unique map
compatible with the inclusions of both rings in $\mbox{Quot}(R(ij))$.
\item For each $l\in [n]\smallsetminus\{i,j\}$, we have
  \[ \rr(u_{il}), \rr(u_{jl}) \in K[u_{rs}: rs \in I_{\preceq
    l}][u_{rs}^{-1}: rs\in H_{\preceq l}\smallsetminus J]\text{,} \]
  where $I_{\preceq l}= \{rs \in I: (r \preceq l \mbox{ or } r \in
  \{i,j\}) \mbox{ and } (s \preceq l \mbox{ or } s \in \{i,j\})\}$ and
  $H_{\preceq l} = H \cap I_{\preceq l}$.
  \end{enumerate}
\end{prop}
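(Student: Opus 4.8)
The plan is to prove all three statements simultaneously by induction on the structure of the partial order $\preceq$, processing the leaves of each subtree $T_a$ from smallest to largest. The base case of the induction is the trivial subtree ($|T_a|=1$), where $I_a = \{ia, ja\}$ and the claims are immediate since $u_{ia}, u_{ja}$ are among the standard coordinates in $R(ij)$. For the inductive step, suppose $k$ is the $\preceq$-largest leaf of $T_a$ processed so far, and that (i)--(iii) hold for all $l \prec k$. I would split into the three cases of Definition~\ref{def:CorrectI}. In case~(i), both $ik$ and $jk$ lie in $I$; since $ik, jk \in I(ij)$ they are literally the coordinates $u_{ik}, u_{jk}$ of $R(ij)$, so (iii) holds trivially with $\rho(u_{ik}) = u_{ik}$, $\rho(u_{jk}) = u_{jk}$, and for the algebraic independence in (i) the key input is the hypothesis that for every $l \prec k$ either $il$ or $jl$ lies in $J(ij)$ (so that $u_{il}$ or $u_{jl}$ is \emph{not} in $I$), which prevents a Plücker relation from expressing $u_{ik}$ or $u_{jk}$ in terms of the previously chosen variables.

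In cases~(ii) and~(iii) (which are symmetric under swapping the roles of $i$ and $j$), say case~(ii), we have $ik \notin I$ but $jk \in I$ and $kt \in I$ for the $\preceq$-maximal $t \prec k$ with $it, jt \notin J(ij)$. The crucial algebraic identity is the three-term Plücker relation on the quartet $\{i, j, t, k\}$, namely $p_{ij} p_{tk} = p_{it} p_{jk} - p_{ik} p_{jt}$, which in the $u$-coordinates reads $u_{tk} = u_{it} u_{jk} - u_{ik} u_{jt}$, hence
\begin{equation*}
  u_{ik} = \frac{u_{it} u_{jk} - u_{tk}}{u_{jt}}.
\end{equation*}
Since $t \prec k$, by the inductive hypothesis (iii) both $u_{it}$ and $u_{jt}$ lie in the localized polynomial ring indexed by $I_{\preceq t} \subseteq I_{\preceq k}$ and their denominators only involve inverted variables from $H \setminus J$; moreover $jt \notin J(ij)$ guarantees $u_{jt}$ is invertible in the target ring (either $jt \in H \setminus J$, or $u_{jt}$ is already a unit because $t$ is minimal and then one uses condition~(ii) directly). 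The variables $u_{jk}$ and $u_{tk}$ are in $I$ by hypothesis. This exhibits $\rho(u_{ik})$ in the required ring, establishing (iii) for $k$; the cherry property (condition~(iii) of Definition~\ref{def:orderWithCherryProperty}) is what guarantees that the relevant quartet $\{i,j,t,k\}$ has $\{t,k\}$ as a cherry, so that the Plücker relation has the correct shape and no sign/degeneration issue arises. For (i), the algebraic independence of $\{u_{kl} : kl \in I\}$ follows by a dimension count: $|I| = 2(n-2) = \dim \Gr(2,n)$, so it suffices to check that the $u_{kl}$ with $kl \in I$ generate the function field, which the inductive construction shows since each standard coordinate $u_{il}, u_{jl}$ is recovered (rationally) from the chosen ones. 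Statement (ii) then follows by assembling: $\theta$ is the ring map sending each $u_{kl}$ to its Plücker expression, which lands in $R(ij)$ by~\eqref{eq:36}; injectivity of $\theta$ is exactly the algebraic independence from (i); and $\mu$ is forced by (iii), since (iii) shows every generator $u_{il}, u_{jl}$ of $R(ij)$ maps into $K[u_{kl}: kl\in I][u_{kl}^{-1}: kl \in H \setminus J]$, with uniqueness coming from the fact that all three rings are subrings of $\mathrm{Quot}(R(ij))$ and $\mu$ must commute with those inclusions.

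The main obstacle I anticipate is bookkeeping the \emph{interaction between the order $\preceq$ and the vanishing set $J$}: the three cases of Definition~\ref{def:CorrectI} are mutually exclusive by design, but verifying that in case~(ii) the element $t$ actually exists and is usable (i.e.\ that $u_{jt}$ becomes a unit in the target ring, which requires carefully tracking whether $jt \in H \setminus J$ or whether we fall back on the "there exists $t \prec k$ with $it, jt \notin J(ij)$" clause) demands a careful case analysis. A secondary subtlety is making the induction on the \emph{filtered} ring $K[u_{rs}: rs \in I_{\preceq l}][u_{rs}^{-1}: rs \in H_{\preceq l} \setminus J]$ precise: one must check that when passing from $l' \prec l$ to $l$ the index sets $I_{\preceq l'}$ and $H_{\preceq l'}$ are genuinely contained in $I_{\preceq l}$ and $H_{\preceq l}$, which is true because $\preceq$ restricts to a total order on each $T_a$ and elements of other subtrees are incomparable, so no "new small" indices appear unexpectedly. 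Everything else is a routine, if lengthy, manipulation of three-term Plücker relations.
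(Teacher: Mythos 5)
Your proposal is correct and takes essentially the same route as the paper's proof: an induction along the order $\preceq$ within each subtree (the paper phrases it as removing the $\preceq$-maximal leaf and shrinking $I$, $J$ accordingly), with the three-term Pl\"ucker relation on the quartet $\{i,j,t,k\}$ used to express the one missing coordinate with denominator a variable in $\A\smallsetminus J$, part \emph{(iii)} tracked through the filtration by $I_{\preceq l}$, and part \emph{(i)} deduced from \emph{(ii)}--\emph{(iii)} by comparing the $2(n-2)$ generators with the transcendence degree of $\operatorname{Quot}(R(ij))$. Two side remarks in your write-up are inessential and slightly inaccurate --- $kl\in J(ij)$ does not force $kl\notin I$, and the cherry property plays no role in this purely algebraic statement (it is needed later, e.g.\ for the section property and the non-cancellation arguments) --- but neither affects the core argument, which matches the paper's.
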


\begin{proof}
  Notice that \emph{(ii)} implies that the elements $\{u_{kl}: kl \in
  I\}$ in $K[U_{ij}]$ generate the quotient field $\Quot(R(ij))$ as a
  transcendental extension of $K$. Thus, it suffices to prove
  \emph{(ii)} and \emph{(iii)}.

  We first show that the statements hold for $m=1$. We proceed by
  induction on $n$. If $n=3$, then $T$ is a caterpillar tree and
  $I=I(ij)$, and there is nothing to prove because $\rr$ is the
  identity on $R(ij)$.  Assume $n>3$ and $I\neq I(ij)$. By symmetry
  between $i$ and $j$, we suppose that $ik\in I$ and $jk\notin I$ for
  some $k\neq i,j$.  The compatibility of $I$ with $\preceq$ and
  $J(ij)$ ensures that $\{il: l\neq i,j\}\subset I$.

  We let $s$ be the maximal leaf in $T_1$ and set $T'$ and $T'_1$ be
  the trees obtained by removing $s$ from $T$ and $T_1$,
  respectively. Let $\preceq'$ be the order on $T'$ induced by
  $\preceq$, and let $J'=J\cap \binom{[n]\smallsetminus \{s\}}{2}$ and
  $I'=I\cap \binom{[n]\smallsetminus \{s\}}{2} $. Notice that $J'$ is
  the vanishing set of a point in $\cTG(2,n-1)$, namely, $J'=J(x')$,
  where $x'$ is the projection of $x$ to those coordinates not
  containing the index $s$. The point $x'$ belongs to
  $\overline{\cTC_{T'}}\cap \cT U_{ij}'$ in $\cTG(2, n-1)$, where $\cT
  U_{ij}'=\{x_{ij}\neq -\infty\}\subset \TP\pr^{\binom{n-1}{2}-1}$. In
  addition, $\preceq'$ has the cherry property on $T'$ with respect to
  $i$ and $j$.

  We define $R(ij)'=K[u_{ik}, u_{jk}: k\neq i,j,s]$ and $\A'=I' \cap
  I(ij)$. We know that $I'$ is compatible with $\preceq'$ and
  $J'(ij)$, so by the inductive hypothesis, it satisfies \emph{(ii)}
  and \emph{(iii)}. In particular, we can uniquely define $\rr'$ to
  fit into the diagram
\begin{equation*}
  \xymatrix@1{ K[u_{kl}: kl \in I']\; \ar@{^{(}->}[r]^-{\ii'} & R(ij)'\;
    \ar@{^{(}->}[r]^-{\rr'} & K[ u_{kl}: kl \in I'][u_{kl}^{-1}: kl\in
    \A'\smallsetminus J'],}
\end{equation*}
where $\ii'$ is the restriction of $\ii$.
 
We claim that we can extend $\rr'$ to a map $\rr$ satisfying
\eqref{eq:diagram}.  Since we assumed $I \neq I(ij)$ and $\{il: l\neq
i,j\}\subset I$, the observation following
Definition~\ref{def:CorrectI} implies that the element $js$ cannot be
in $I$.  Instead, there exists $t\neq i,j$ such that $st\in I$, so $I
= I' \cup \{is, st\}$. We assign $\rr(u_{is}) :=u_{is}$ and
$\rr(u_{js}):= u_{it}^{-1}(-u_{st}+u_{is}\rr'(u_{jt}))$.  By the
induction hypothesis, we conclude that the image of $\rr$ lies in the
algebra $K[u_{kl}:kl\in I][u_{kl}^{-1}: kl\in \A\smallsetminus J]$.  A
straightforward calculation using the Pl\"ucker relations shows that
$\rr$ and $\ii$ are compatible with the inclusion in the function
field $\Quot R(ij)$.  The statement \emph{(iii)} holds by combining
the construction together with the induction hypothesis.

Assume now that $m>1$. For each $a=1,\ldots, m$, we let $I_a$ be the
subset of $I$ consisting of all pairs involving $i,j$, and the leaves
of $T_a$. To simplify notation, we identify each tree $T_a$ with its
set of leaves.  Set $I(ij)_a=\{ik,jk: k\text{ is a leaf in }T_a\}$,
$R(ij)_a=K[u_{kl}: kl\in I(ij)_a]$, and let $\A_a=I_a\cap I(ij)_a$.
Similarly, we let $\preceq_a$ be the restriction of $\preceq$ to the
leaves of $T_a$ and set $J(ij)_a = J\cap I(ij)_a$. The order
$\preceq_a$ has the cherry property on the tree spanned by
$\{i,j\}\cup T_a$ with respect to $i$ and $j$. Similarly, $J(ij)_a$ is
the vanishing set of the point $\pi_a(x)$, where
$\pi_a\colon\TP\pr^{\binom{n}{2}-1}\dashrightarrow
\TP\pr^{\binom{|T_a|+2}{2}-1}$ is the projection to those coordinates
in % indexed by pairs in
$\binom{\{i,j\}\cup T_a}{2}$.  The point $\pi_a(x)$ belongs to the
cone $\overline{\cTC_{\{i,j\}\cup T_a}}\cap \pi_a(\cT U_{ij})$ and
$\pi_a(\cT U_{ij})\subset \{x_{ij}\neq -\infty\}\subset
\TP\pr^{\binom{|T_a|+2}{2}-1}$.

By construction, we know that $I=\bigcup_{a=1}^mI_a$,
$\mathord{\preceq} =\bigcup_{a=1}^m \mathord{\preceq}_a$ and
$J(ij)=\bigcup_{a=1}^mJ(ij)_a$.  From the $m=1$ case, we know that the
set $I_a$ satisfies~\eqref{eq:diagram} for two injective maps $\ii_a$
and $\rr_a$.  We define the maps $\ii$ and $\rr$ associated to the set
$I$ by restriction, i.e., $\ii(u_{kl})=\ii_a(u_{kl})$ for $kl \in I_a$
and $\rr(u_{kl})=\rr_a(u_{kl})$ for $kl\in I(ij)_a$. Since $\ii_a$ and
$\rr_a$ satisfy \emph{(ii)} and \emph{(iii)}, the result follows.
\end{proof}

%%%%%%%%%
\medskip

Our next result shows that from a partial order $\preceq$ on
$[n]\smallsetminus \{i,j\}$ with the cherry property on a tree $T$ and
a vanishing set $J$, we can construct a set that is compatible with
$\preceq$ and $J(ij)$.

\begin{prop}\label{pr:CompatibleI}
  Assume $T$ is arranged as in the right of
  Figure~\ref{fig:caterpillar}, and let $\preceq$ be a partial order
  on $[n]\smallsetminus\{i,j\}$ that has the cherry property on $T$
  with respect to $i$ and $j$. Fix a set $J=J(x)$, where $x\in
  \overline{\cTC_T}\cap \cT U_{ij}$.  Then, there exists a subset
  $I=I(ij,T,J)\subset \binom{[n]}{2}$ of size $2(n-2)$ that is
  compatible with $\preceq$ and $J(ij)$.
\end{prop}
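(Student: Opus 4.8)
The plan is to build $I$ explicitly, one subtree at a time, and to verify the three clauses of Definition~\ref{def:CorrectI} by induction on $n$.

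\emph{Reduction to $m=1$.} Clauses (i)--(iii) of Definition~\ref{def:CorrectI} are imposed separately for each index $a$ and each leaf $k\in T_a$, and every Pl\"ucker pair occurring in them lies in $\binom{\{i,j\}\cup T_a}{2}$. So it suffices to produce, for each $a$, a set $I_a\subset\binom{\{i,j\}\cup T_a}{2}$ of size $2|T_a|$, not containing $ij$, which is compatible with $\preceq_a$ and $J(ij)_a$, and then take $I=\bigsqcup_{a=1}^m I_a$: the union is disjoint because the $T_a$ are, $ij\notin I$, and $|I|=\sum_a 2|T_a|=2(n-2)$. This is legitimate because, by Definition~\ref{def:orderWithCherryProperty}(i)--(iii), $\preceq_a$ has the cherry property on the tree spanned by $\{i,j\}\cup T_a$, and (exactly as in the proof of Proposition~\ref{pr:transBasis}) $J(ij)_a$ is the vanishing set $J(\pi_a(x))$ of the projection of $x$ to $\binom{\{i,j\}\cup T_a}{2}$, which lies in $\overline{\cTC_{\{i,j\}\cup T_a}}\cap\pi_a(\cT U_{ij})$.

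\emph{The case $m=1$.} Now $\preceq$ is a total order on $T_1=[n]\smallsetminus\{i,j\}$; let $s$ be its maximum, and induct on $n$. If $n=3$, then $T_1=\{s\}$ and $I=I(ij)=\{is,js\}$, for which clause (i) holds vacuously. Assume $n>3$, let $T'$ be $T$ with $s$ deleted, $\preceq'$ the induced order, and $J'=J\cap\binom{[n]\smallsetminus\{s\}}{2}$, which is again a vanishing set (namely $J(x')$ for the projection $x'$ forgetting the coordinates through $s$, by the four-point conditions); the inductive hypothesis supplies $I'$ of size $2(n-3)$ compatible with $\preceq'$ and $J'(ij)$. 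We extend $I'$ by two pairs through $s$ according to a dichotomy. \emph{Case A}: no leaf $t\prec s$ satisfies $it,jt\notin J(ij)$; set $I:=I'\cup\{is,js\}$. Clause (i) for $s$ holds by the case hypothesis, and since that hypothesis forces every $l\prec s$, hence every $l'\prec l$, to have $il'\in J(ij)$ or $jl'\in J(ij)$, clauses (ii)--(iii) are impossible for the leaves of $T'_1$, so $I'=\{il,jl:l\in T_1\smallsetminus\{s\}\}$ and no condition on a smaller leaf is affected. \emph{Case B}: some $t\prec s$ has $it,jt\notin J(ij)$; let $t$ be the $\preceq$-maximal such leaf. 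If $I'$ uses clause (ii) (equivalently, its maximal leaf does), put $I:=I'\cup\{js,st\}$; if it uses clause (iii), put $I:=I'\cup\{is,st\}$; if $I'$ uses only clause (i), choose either, say $I:=I'\cup\{is,st\}$. In every case $|I|=2(n-2)$, clause (ii) or (iii) for $s$ holds by construction, and adding one new pair of the form $is$ (resp. $js$) together with $st$ preserves clauses (i)--(iii) for the leaves of $T'_1$: it only enlarges the collections ``$il\in I$ for all $l\in T_a$'' required by (iii) (resp. ``$jl\in I$'' required by (ii)), while clause (i) for a leaf $k$, and the pair $kt'$ appearing in clauses (ii)--(iii) for $k$, involve only leaves $\preceq k\prec s$ and are left untouched.

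\emph{Main obstacle.} The delicate point is the coherence of the alternative ``clause (ii) throughout $T_1$'' versus ``clause (iii) throughout $T_1$'' with the inductive peeling of $s$: one must check that if some leaf of $T_1$ uses clause (ii) [resp. (iii)], then so does the $\preceq$-maximal leaf (argue that $jl\in I$ for all $l$ is incompatible with a leaf using clause (iii), and symmetrically), so that the choice recorded in $I'$ propagates unambiguously, and that clause (i) is used precisely on the initial segment of $(T_1,\preceq)$ ending at the first leaf both of whose coordinates avoid $J(ij)$. This structural picture rests on Lemma~\ref{lm:ConditionsIAndJ}, which ensures that the predicate ``$il\in J$ or $jl\in J$'' behaves coherently as $l$ ranges over a subtree. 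Granting it, the verification of Definition~\ref{def:CorrectI}(i)--(iii) for the set $I$ just built, and hence for $\bigsqcup_a I_a$ after the reduction, is a routine finite check.
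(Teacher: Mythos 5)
Your construction is correct and is essentially the paper's own proof: the same induction on $n$ peeling off the $\preceq$-maximal leaf $s$, the same dichotomy (every smaller leaf has a coordinate in $J(ij)$, versus a maximal $t\prec s$ with $it,jt\notin J$), the same extensions $I'\cup\{is,js\}$ or $I'\cup\{st,is\}$/$I'\cup\{st,js\}$, and the same assembly of the general case as a disjoint union over the subtrees $T_a$. Your rule for choosing between adding $is$ and $js$ (according to which of clauses (ii)/(iii) the leaves of $T_1'$ use for $I'$) is just a reformulation of the invariant the paper carries through its induction, namely that $\A'=I'\cap I(ij)$ contains $\{il: l\in T_1'\}$ or $\{jl: l\in T_1'\}$.
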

\begin{proof} Let us first consider the case $m=1$. We proceed by
  induction on $n$.  If $n=3$, then $T$ is a caterpillar tree with
  backbone $\{i,j\}$, and we take $I = I(ij)$.  Next, we assume that
  $n>3$, so $T_1$ has more than one leaf. Let $s$ be the maximal leaf
  in $T_1$ with respect to the order~$\preceq$. Remove $s$ from $T$
  and $T_1$, and call $T'$ and $T_1'$ the corresponding subtrees.  Let
  $\preceq'$ be the restriction of the order $\preceq$ to the leaves
  of $T_1'$. This order has the cherry property on $T'$ with respect
  to $i$ and $j$.

  We let $J' = J \cap \binom{[n] \smallsetminus \{s\}}{2}$. As in the
  proof of Proposition~\ref{pr:transBasis}, $J'$ is the vanishing set
  of the point $x'$, obtained by removing from $x$ all those
  coordinates involving the index $s$. The point $x'$ lies in
  $\overline{\cTC_{T'}}\cap \cT U_{ij}'\subset \cTG(2,n-1)$.  By the
  inductive hypothesis, there is a subset $I'=I(ij,T',J')$ of
  $\binom{[n] \smallsetminus \{s\}}{2}$ of size $2(n-3)$ that is
  compatible with $\preceq'$ and $J'(ij)$. Furthermore, the set
  $\A'=I'\cap I(ij)$ satisfies $\A'\supset \{il: l\in T_1'\}$ or
  $\A'\supset \{jl: l\in T_1'\}$.

  We define $I=I(ij,T,J)$ by adding two elements to $I'$. We have a
  priori two possible scenarios.  First, assume that, for all $l\neq
  i,j,s$, we have $il$ or $jl$ belonging to $J$. In this case, we
  define $I=I'\cup \{is,js\}=I(ij)$.  In other case, there exists
  $l\neq i,j,s$ with $il,jl\notin J$. We let $t$ be the maximal leaf
  with that property with respect to the partial order $\preceq'$.  In
  this case, we have two valid options to define $I$, namely,
 \begin{equation*}
   \label{eq:13}
   I := I' \cup \{ st, is \} \text{ or } I := I' \cup  \{ st, js \}.
 \end{equation*}
 To decide which one to choose, we proceed as follows. If $H'\supset
 \{il:l\in T_1'\}$, then we take $I = I' \cup \{ st, is \}$. If
 $H'\supset \{jl:l\in T_1'\}$, then we take $I = I' \cup \{ st, js
 \}$.  If $H'$ contains both sets $\{il:l\in T_1'\}$ and $\{jl:l\in
 T_1'\}$, both options are valid and we must choose one of them.  Note
 that $\A = I\cap I(ij)$ satisfies $\A\supset \{il: l\neq i,j\}$ or
 $\A\supset \{jl: l\neq i,j\}$. By construction, $I$ is compatible
 with $\preceq$ and $J(ij)$.

 We now prove the statement for general $m$. We keep the notation from
 the proof of Proposition~\ref{pr:transBasis}.  For each
 $a=1,\ldots,m$, we let $\preceq_a$ be the restriction of $\preceq$ to
 the leaf set of each $T_a$, and similarly we define $J_a$ as the
 subset of $J$ consisting of pairs in $\{i,j\}\cup T_a$. The order
 $\preceq_a$ has the cherry property on the tree spanned by the leaves
 of $T_a$, $i$, and $j$, with respect to the indices $i$ and
 $j$. Similarly, $J_a$ is the vanishing set of the projection
 $\pi_a(x)$ to the coordinates indexed by pairs in $\{i,j\}\cup
 T_a$. The point $\pi_a(x)$ belongs to $\overline{\cTC_{\{i,j\}\cup
     T_a}}\cap \pi_a(\cT U_{ij})$.  From the $m=1$ case, we construct
 sets $I_a=I_a(ij,T_a,J_a)$ that are compatible with $\preceq_a$ and
 $J_a(ij)$ for each $a$.  We define
\[
I=I(ij,T,J)=\bigcup_{a=1}^m I_a(ij,T_a,J_a).
\]
By construction, $I$ is compatible with $\preceq$ and $J(ij)$. This
concludes our proof.
\end{proof}
\begin{rem}\label{rm:ManyI}
  From the proof of Proposition~\ref{pr:CompatibleI}, we see that the
  set $I$ is not uniquely determined by $\preceq$, $J$ and $i,j$. This
  is so because for every $a=1,\ldots,m$, one of the following is
  true: either $\A\supset \{il: l\in T_a\}$ or $\A\supset \{jl: l\in
  T_a\}$, and we can choose freely between any of these two options.
  In particular, there is one choice of the set $I$ for which
  $H\supset \{il: l\neq i,j\}$ and one choice for which $H\supset
  \{jl: l \neq i,j\}$.  Allowing the flexibility to choose among two
  options for each $H_a=I_a\cap I(ij)$ whenever possible preserves the
  symmetry of our objects with respect to $i$ and $j$, and will
  simplify our proofs.

  Observe that $H\supset \{il,jl: l\in T_a\}$ if and only if either
  $T_a$ has exactly one leaf, or, for every $k\in T_a$ that is not
  maximal with respect to $\preceq$, we have $il\in J_a$ or $jl\in
  J_a$. The reason for imposing this condition comes from the
  Pl\"ucker relations.
\end{rem}

Notice that the properties of Proposition~\ref{pr:transBasis} ensure
that $\rr(u_{kl})=u_{kl}$ for all $kl\in \A$. We use this observation
in the following two examples and in the remainder of this Section.
\begin{example}\label{ex:caterpillar} 
  Let $T$ be a caterpillar tree with backbone spanned by $i$ and $j$.
  When setting $\preceq$ as $\{(k,k): k\neq i,j\}$, $I=I(ij)$,
  $\ii=\id$ and $\rr=\inc$, we recover the results from
  Section~\ref{sec:caterpillar-case}.
\end{example}
\begin{example}\label{ex:cherry}
  Let $i=1,j=2$ and $T$ be a trivalent tree on four leaves with cherry
  $\{1,2\}$. Then, we take $\preceq$ to be $\{(3,3), (3,4), (4,4)\}$.
  If $13,23\notin J(12)$, we let $I=\{13,23,34,14\}$, and $\rr$ be the
  map defined by $\rr(u_{24})=u_{13}^{-1}(u_{34}+u_{14}u_{23})$ and
  $\rr(u_{kl})=u_{kl}$ if $kl=13,23$ or $14$.

  On the contrary, if $J(12)$ contains either $13$ or $23$, we define
  $I=\{13,23,14,24\} = I(12)$ and let $\rr$ be the inclusion map.
\end{example}

\medskip In what follows, we outline the construction of a section to
$\trop$ over a covering of $\cT U_{ij}$.  Given a tree $T$ on $n$
leaves and a vanishing set $J=J(x)$ associated to a point $x\in
\overline{\cTC_T}\cap \cT U_{ij}$, we choose a partial order with the
cherry property on $T$ as in Lemma~\ref{lm:order} and a compatible set
$I = I(ij,T,J)$ as in Proposition ~\ref{pr:CompatibleI}.  We define
the map
\begin{equation}
  \sigma_{T,I,J}^{(ij)}\colon \CijTJ
  \longhookrightarrow  \cT U_{ij}
  \longrightarrow (\Spec K[u_{kl}: kl\in I])^{\an} \quad \text{ by } \quad \sigma_{T,I,J}^{(ij)}(x) =  {\sigma}^{(ij)}_I(x)\text{.}\label{eq:section}
\end{equation}
The next result show that all points in the image of
$\sigma^{(ij)}_{T,I,J}$ are multiplicative seminorms on the
localization $K[u_{kl}: kl\in I]_{S_{H\smallsetminus J}}$:
\begin{lemma}\label{lm:ExtensionFromRij}
  Let $i,j$, $T$, $\preceq$, and $J$ be as above. Assume that $I$ is
  compatible with $\preceq$ and $J(ij)$. Then, given any point $x\in
  \CijTJ$, the seminorm $\sigma^{(ij)}_{T,I,J}(x)$ as defined
  in~\eqref{eq:section} extends uniquely to a multiplicative seminorm
  on $K[ u_{kl}: kl \in I][u_{kl}^{-1}: kl\in \A\smallsetminus J]$,
  that is, to an element of $(\Spec K[ u_{kl}: kl \in I][u_{kl}^{-1}:
  kl\in \A\smallsetminus J])^{\an}\subset U_{ij}^{\an}$.
\end{lemma}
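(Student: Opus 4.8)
The statement to prove is that the seminorm $\sigma^{(ij)}_{T,I,J}(x)$, which is a priori defined only on the polynomial ring $K[u_{kl}: kl\in I]$, extends uniquely to a multiplicative seminorm on the localization $K[u_{kl}: kl\in I][u_{kl}^{-1}: kl\in \A\smallsetminus J]$. The key point is that a multiplicative seminorm $\|\cdot\|$ on a ring $B$ extends (uniquely) to a localization $B[g^{-1}]$ if and only if $\|g\|\neq 0$; in that case one is forced to set $\|g^{-1}\|=\|g\|^{-1}$, and one must check this is well-defined and still multiplicative. So the whole statement reduces to verifying that $\sigma^{(ij)}_I(x)(u_{kl})\neq 0$ for every $kl\in\A\smallsetminus J$.

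First I would recall, from the explicit formula~\eqref{eq:sigma} for $\sigma^{(ij)}_I(x)$ applied to the variable $u_{kl}$ with $kl\in I$, that
\[
\sigma^{(ij)}_I(x)(u_{kl}) = \exp(x_{kl}-x_{ij}),
\]
which was already noted right after~\eqref{eq:sigma}: this value is nonzero precisely when $x_{kl}\neq-\infty$. Now fix $kl\in\A\smallsetminus J = (I\cap I(ij))\smallsetminus J$. Since $kl\notin J = J(x)$, by the definition~\eqref{eq:J} of the vanishing set we have $x_{kl}\neq-\infty$, hence $\sigma^{(ij)}_I(x)(u_{kl})=\exp(x_{kl}-x_{ij})>0$ (here we also use $x_{ij}\neq-\infty$, which holds because $x\in\cT U_{ij}$). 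Therefore each $u_{kl}$ with $kl\in\A\smallsetminus J$ is sent to a unit in $\RR_{\geq 0}$, and the universal property of localization forces a unique extension of the monoid homomorphism $u_{kl}\mapsto\sigma^{(ij)}_I(x)(u_{kl})$ to $K[u_{kl}:kl\in I][u_{kl}^{-1}:kl\in\A\smallsetminus J]$; I would then check that the non-Archimedean triangle inequality is preserved. For the triangle inequality one writes any element of the localization as $f/m$ with $f\in K[u_{kl}:kl\in I]$ and $m$ a monomial in the inverted variables, sets $\|f/m\|:=\sigma^{(ij)}_I(x)(f)\cdot\sigma^{(ij)}_I(x)(m)^{-1}$, and notes that clearing a common denominator reduces $\|f/m+g/m'\|\leq\max\{\|f/m\|,\|g/m'\|\}$ to the already-known inequality for $\sigma^{(ij)}_I(x)$ on the polynomial ring, since multiplication by the (nonzero) scalar $\sigma^{(ij)}_I(x)(mm')^{-1}$ is order-preserving and multiplicative.

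Finally I would identify the resulting seminorm as a point of $(\Spec K[u_{kl}:kl\in I][u_{kl}^{-1}:kl\in\A\smallsetminus J])^{\an}$, which is the set of all multiplicative seminorms on that algebra extending $|\cdot|$ on $K$ (Remark~3.4.2 of~\cite{berkovichbook}); the extension condition over $K$ is inherited from $\sigma^{(ij)}_I(x)$. To see this analytic space sits inside $U_{ij}^{\an}$, I invoke Proposition~\ref{pr:transBasis}\emph{(ii)}: the map $\rr$ realizes $R(ij)$ as a subring of $K[u_{kl}:kl\in I][u_{kl}^{-1}:kl\in\A\smallsetminus J]$ inside $\Quot(R(ij))$, so restriction of seminorms gives a morphism $(\Spec K[u_{kl}:kl\in I][u_{kl}^{-1}:kl\in\A\smallsetminus J])^{\an}\to(\Spec R(ij))^{\an}=U_{ij}^{\an}$, and since the localization $K[u_{kl}:kl\in I][u_{kl}^{-1}:kl\in\A\smallsetminus J]$ is itself a localization of $R(ij)$ (via $\rr$), this morphism is an open immersion onto the subset where the finitely many functions $\rr^{-1}$ of the inverted generators are nonzero. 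I do not expect any real obstacle here: the only content is the nonvanishing $\sigma^{(ij)}_I(x)(u_{kl})\neq 0$ for $kl\in\A\smallsetminus J$, which is immediate from the formula~\eqref{eq:sigma} and the definition~\eqref{eq:J} of $J=J(x)$; the mildly delicate part is being careful that we invert only the variables indexed by $\A\smallsetminus J$ and not those in $J$ (where the value $\exp(x_{kl}-x_{ij})=0$ would make the extension impossible), which is exactly why the localization in the statement is taken over $\A\smallsetminus J$ rather than all of $\A$.
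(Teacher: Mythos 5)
Your proposal is correct and follows essentially the same route as the paper: the paper's proof is precisely the observation that $\sigma^{(ij)}_{T,I,J}(x)(u_{kl})=\exp(x_{kl}-x_{ij})\neq 0$ for all $kl\in \A\smallsetminus J$ (since $kl\notin J(x)$ means $x_{kl}\neq-\infty$), which forces a unique extension to the Laurent polynomial ring. Your additional verifications (the triangle inequality on the localization and the identification inside $U_{ij}^{\an}$ via Proposition~\ref{pr:transBasis}\emph{(ii)}) are details the paper leaves implicit, but the core argument is identical.
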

\begin{proof} Since
  $\sigma^{(ij)}_{T,I,J}(x)(u_{kl})=\exp(x_{kl}-x_{ij})\neq 0$ for all
  $kl\in \A\smallsetminus J$, we can uniquely extend this
  multiplicative seminorm to the Laurent polynomial ring $K[u_{kl}:
  kl\in I][u_{kl}^{-1}: kl\in \A\smallsetminus J]$.
\end{proof}

The following lemma guarantees that the ideal $\faJij$ maps to $0$
under $\sigma^{(ij)}_{T,I,J}(x)$.

\begin{lemma}~\label{lm:zero} Let $i,j$, $T$, $\preceq$, and $J$ be as
  above. Then, $\sigma^{(ij)}_{T,I,J}(x)(\faJij)=0$ for all $x\in
  \CijTJ$.
\end{lemma}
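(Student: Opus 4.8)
The plan is to reduce the statement to a computation on the generators of $\faJij=\langle u_{kl}\colon kl\in J(ij)\rangle$. Write $\varrho:=\sigma^{(ij)}_{T,I,J}(x)$. By Lemma~\ref{lm:ExtensionFromRij}, $\varrho$ is a multiplicative seminorm on the Laurent ring $B:=K[u_{kl}\colon kl\in I][u_{kl}^{-1}\colon kl\in\A\smallsetminus J]$, into which $R(ij)$ embeds via the map $\rr$ of Proposition~\ref{pr:transBasis}; the expression $\varrho(\faJij)$ is to be read through this embedding. Since $\varrho$ is multiplicative and satisfies the ultrametric inequality, it suffices to prove $\varrho(\rr(u_{kl}))=0$ for each $kl\in J(ij)$: then any $\sum_{kl\in J(ij)}g_{kl}u_{kl}\in\faJij$ has $\varrho$-value at most $\max_{kl}\varrho(\rr(g_{kl}))\cdot\varrho(\rr(u_{kl}))=0$, because each $\varrho(\rr(g_{kl}))$ is a finite nonnegative real.

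Fix $kl\in J(ij)=J\cap I(ij)$, so $x_{kl}=-\infty$. If $kl\in\A=I\cap I(ij)$, then $\rr(u_{kl})=u_{kl}$ (by the remark after Proposition~\ref{pr:CompatibleI}) and, directly from~\eqref{eq:section} and~\eqref{eq:sigma}, $\varrho(u_{kl})=\exp(x_{kl}-x_{ij})=0$. So assume $kl\in I(ij)\smallsetminus I$. Exchanging $i$ and $j$ if necessary, we may take $kl=jk$ with $jk\notin I$, for a leaf $k$ of some subtree $T_a$. Exactly one of the alternatives of Definition~\ref{def:CorrectI} holds for $k$, and since $jk\notin I$ (and at least one of $ik,jk$ lies in $I$) it must be~(iii): $ik\in I$, there is a maximal element $t\prec k$ of $T_a$ with $it,jt\notin J$, and $kt\in I$. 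Unwinding the recursive definition of $\rr$ in the proof of Proposition~\ref{pr:transBasis} shows
\[
\rr(u_{jk})=u_{it}^{-1}\bigl(-u_{kt}+u_{ik}\,\rr'(u_{jt})\bigr)\in B,
\]
where $u_{it}$ is invertible in $B$ because $it\in\A\smallsetminus J$ ($it\in I$ by~(iii) and $it\notin J$ by the choice of $t$).

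It remains to show that $\varrho$ annihilates the polynomial generators $u_{ik}$ and $u_{kt}$ occurring here. For $u_{ik}$: applying Lemma~\ref{lm:ConditionsIAndJ} to the index $t$ (which satisfies $it,jt\notin J$) and the leaf $k\in T_a$ gives $ik\in J\iff jk\in J$; since $jk\in J$ we get $ik\in J$, hence $x_{ik}=-\infty$ and $\varrho(u_{ik})=0$. For $u_{kt}$: we have $x_{ik}=x_{jk}=-\infty$ while $x_{it},x_{jt}\in\RR$, so among the three sums $x_{ij}+x_{kt}$, $x_{ik}+x_{jt}$, $x_{it}+x_{jk}$ the last two equal $-\infty$; the four-point condition (Theorem~\ref{thm:4ptCondition}), which demands that the maximum be attained at least twice, then forces $x_{ij}+x_{kt}=-\infty$, so $x_{kt}=-\infty$ and $\varrho(u_{kt})=0$. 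By multiplicativity and the ultrametric inequality,
\[
\varrho(\rr(u_{jk}))\le\varrho(u_{it})^{-1}\,\max\bigl\{\varrho(u_{kt}),\ \varrho(u_{ik})\,\varrho(\rr'(u_{jt}))\bigr\}=0 ,
\]
as $\varrho(\rr'(u_{jt}))$ is a finite nonnegative real. The case $kl=ik$ with $ik\notin I$ is handled in exactly the same way after swapping $i$ and $j$, which yields the lemma.

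The main obstacle is this last case $kl\notin\A$: one cannot argue directly with the variable $u_{kl}$ but must track the precise shape of $\rr(u_{kl})$ produced by the recursion of Proposition~\ref{pr:transBasis}, and then verify—using Lemma~\ref{lm:ConditionsIAndJ} and the four-point condition—that the auxiliary Pl\"ucker coordinates $ik$ and $kt$ appearing in that expression also belong to $J$. Everything else is a formal consequence of multiplicativity and the defining formula~\eqref{eq:sigma}.
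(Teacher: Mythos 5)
Your proof is correct and follows essentially the same route as the paper's: reduce to the generators of $\faJij$, dispose of the case $kl\in I$ directly from~\eqref{eq:sigma}, and for $kl\in I(ij)\smallsetminus I$ use compatibility (Definition~\ref{def:CorrectI}) to write $\rr(u_{kl})=u_{it}^{-1}(-u_{kt}+u_{ik}\,\rr(u_{jt}))$, then kill both terms via Lemma~\ref{lm:ConditionsIAndJ} and the four-point condition before applying the ultrametric inequality. The only difference is that you spell out the reduction to generators explicitly, which the paper leaves implicit.
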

\begin{proof} 
  For simplicity, we write $\sigma(x)=\sigma^{(ij)}_{T,I,J}(x)$. By
  symmetry between $i$ and $j$, it suffices to show that
  $\sigma(x)(u_{jl})=0$ for all $jl\in J(ij)$. If $jl\in J(ij)\cap I$,
  then $\sigma(x)(u_{jl})=\exp(x_{jl}-x_{ij})=0$.

  On the contrary, assume $jl\in J(ij)\smallsetminus I$. Since $I$ is
  compatible with $\preceq$ and $J(ij)$, and $jl\notin I$, this
  implies that $il\in I$, and there exists $t\prec l$ maximal such
  that $it, jt\notin J$, forcing $lt,it\in I$ and $\rr(u_{jl})=
  u_{it}^{-1}(-u_{lt}+u_{il}\,\rr(u_{jt}))$.
  Lemma~\ref{lm:ConditionsIAndJ} ensures that $il\in J(ij)$, so
  $\sigma(x)(u_{il})=0$. The four-point condition on the quartet
  $\{i,j,t,l\}$ implies that $x_{lt}+x_{ij} =-\infty$, so
  $x_{lt}=-\infty$ and thus $\sigma(x)(u_{lt})=0$.  The definition of
  $\rr(u_{jl})$, together with the non-Archimedean triangle inequality
  for $\sigma(x)$, yield
\[
\sigma(x)(u_{jl})
=\sigma(x)(u_{it}^{-1})\sigma(x)(-u_{lt}+u_{il}\,\rr(u_{jt})) \leq
\sigma(x)(u_{it}^{-1})\max\{\sigma(x)(u_{lt}),
\sigma(x)(u_{il})\sigma(x)(u_{jt})\}=0. \qedhere
\]
\end{proof}

By Proposition~\ref{pr:transBasis}~\emph{(ii)}, the map $\rr$ induces
an isomorphism of localizations
\begin{equation*}
  \xymatrix@1{
    \rr\colon  R(ij)_{S_{\A \smallsetminus J}}
    \ar[rr]^-{\sim} &&  K[u_{kl}: kl\in I]_{S_{\A \smallsetminus J}},}
\end{equation*}
where $S_{\A\smallsetminus J}$ is as in Remark~\ref{rm:MultClosedSet}.
Using this map and Lemma~\ref{lm:ExtensionFromRij}, we define
$\sigma^{(ij)}_{T,I,J}(x)$ on $R(ij)$ by restriction.

\medskip

We now state the main result in this Section, keeping the previous
notation.
\begin{thm}\label{thm:section} 
  Let $i,j$, $T$, $\preceq$, and $J$ be as above, and let
  $I=I(ij,T,J)$ be a set of size $2(n-2)$ that is compatible with
  $\preceq$ and $J(ij)$.  Then, $\sigma^{(ij)}_{T,I,J}$ is a section
  to the tropicalization map over the cone $\CijTJ$.
\end{thm}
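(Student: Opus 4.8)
The plan is to verify the section property using Lemma~\ref{lm:sectionProperty}, which reduces the statement to checking that $\sigma^{(ij)}_{T,I,J}(x)(u_{kl}) = \exp(x_{kl}-x_{ij})$ for every $kl \in \binom{[n]}{2}\smallsetminus\{ij\}$, once we know (from Lemma~\ref{lm:ExtensionFromRij} and the isomorphism $\rr$) that $\sigma^{(ij)}_{T,I,J}(x)$ is a genuine multiplicative seminorm on $R(ij)$, hence a point of $U_{ij}^{\an}$. By Lemma~\ref{lm:zero} the ideal $\faJij$ maps to $0$, so the seminorm indeed lands in the closed subvariety corresponding to the stratum, and the equality $\sigma^{(ij)}_{T,I,J}(x)(u_{kl}) = 0 = \exp(x_{kl}-x_{ij})$ holds automatically whenever $kl \in J$. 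Thus the real work is to establish the displayed equality for $kl \notin J$.

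First I would dispose of the easy indices: if $kl \in \A\smallsetminus J$, then by construction $\rr(u_{kl}) = u_{kl}$ and $\sigma^{(ij)}_{I}(x)(u_{kl}) = \exp(x_{kl}-x_{ij})$ directly from the definition~\eqref{eq:sigma}. The remaining cases are the coordinates $u_{il}$ or $u_{jl}$ with, say, $jl \notin \A$ (equivalently $jl \notin I$), and the coordinates $u_{kl}$ with $k,l \notin \{i,j\}$. I plan to handle these by induction along the partial order $\preceq$, exploiting Proposition~\ref{pr:transBasis}\emph{(iii)}, which says $\rr(u_{il}), \rr(u_{jl})$ lie in the localized polynomial ring generated by coordinates indexed by $I_{\preceq l}$. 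For the inductive step on $u_{jl}$: by compatibility there is a maximal $t \prec l$ with $it,jt\notin J$, and $\rr(u_{jl}) = u_{it}^{-1}(-u_{lt} + u_{il}\,\rr(u_{jt}))$, where $it, lt \in \A$ and $il \in \A$. Applying $\sigma(x)$ and using the non-Archimedean triangle inequality gives the bound $\sigma(x)(u_{jl}) \le \exp(-x_{it}+x_{ij})\max\{\exp(x_{lt}-x_{ij}), \exp(x_{il}-2x_{ij}+x_{jt})\}$, where I have already substituted the inductive value $\sigma(x)(u_{jt}) = \exp(x_{jt}-x_{ij})$. Now the four-point condition on $\{i,j,t,l\}$ applied to the point $x \in \overline{\cTC_T}\cap \cT U_{ij}$, together with the cherry structure of $T$ encoded in $\preceq$ (condition (iii) of Definition~\ref{def:orderWithCherryProperty} says $\{t,l\}$ or the relevant pair forms a cherry of $\{i,t,l,v\}$, and here the relevant inequality is $x_{lt} + x_{ij} \le x_{il}+x_{jt} = x_{it}+x_{jl}$), forces this maximum to equal $\exp(x_{jl}-x_{ij})$ and the triangle inequality to be an equality. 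For the coordinates $u_{kl}$ with $k,l\notin\{i,j\}$, I would write $\ii(u_{kl}) = u_{ik}u_{jl} - u_{il}u_{jk}$, push through $\rr$, apply $\sigma(x)$, and use the already-established values of $\sigma(x)$ on $u_{ik}, u_{jk}, u_{il}, u_{jl}$ together with the four-point condition on $\{i,j,k,l\}$ exactly as in the proof of Proposition~\ref{pr:SectionCaterpillar}, noting that the equality case of the tropical Pl\"ucker relation at $x$ is precisely what makes the max collapse to $\exp(x_{kl}-x_{ij})$.

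The main obstacle I expect is the bookkeeping in the inductive step: one must verify that the specific term attaining the maximum inside the triangle inequality is the one predicted by the four-point condition, and that no other term is strictly larger — this requires carefully matching the combinatorial case distinctions in the definition of compatibility (Definition~\ref{def:CorrectI}) with the three possible orderings of the quantities in~\eqref{eq:4-point-cond}, and using that $t$ is chosen \emph{maximal} with $it,jt\notin J$ so that for $t \prec s \prec l$ (if any) we have $is$ or $js \in J$, which controls the intermediate terms. A subtlety is that the value $\sigma(x)(u_{jt})$ is itself only known by induction, so the induction must be set up so that every coordinate appearing in $\rr(u_{jl})$ is strictly below $l$ in $\preceq$ or lies in $\A$; Proposition~\ref{pr:transBasis}\emph{(iii)} guarantees exactly this, so the induction is well-founded. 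Once the equality $\trop\circ\sigma^{(ij)}_{T,I,J}(x) = x$ (restricted to $U_{ij}$-coordinates) is checked on all of $R(ij)$, Lemma~\ref{lm:sectionProperty} finishes the proof.
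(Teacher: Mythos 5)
Your overall skeleton (reduce to checking $\sigma^{(ij)}_{T,I,J}(x)(u_{kl})=\exp(x_{kl}-x_{ij})$ via Lemma~\ref{lm:sectionProperty}, treat coordinates in $I$ directly, and handle $u_{il},u_{jl}$ with $jl\notin\A$ by an induction along $\preceq$ using $\rr(u_{jl})=u_{it}^{-1}(-u_{lt}+u_{il}\rr(u_{jt}))$, Proposition~\ref{pr:transBasis}\emph{(iii)} and the four-point condition) matches the paper's strategy for the pairs containing $i$ or $j$. But there are two genuine gaps. First, the claim that $kl\in J$ is ``automatic'' from Lemma~\ref{lm:zero} is unjustified when $k,l\neq i,j$: the ideal $\faJij$ is generated only by the variables $u_{rs}$ with $rs\in J\cap I(ij)$, and for a vanishing set such as $J=\{kl\}$ with $ik,jk,il,jl\notin J$ (which occurs, e.g., when $\{k,l\}$ is a cherry, cf.\ the stratum $J_1=\{43\}$ in Example~\ref{ex:degn=4}) the element $u_{kl}=u_{ik}u_{jl}-u_{il}u_{jk}$ does not lie in $\faJij$, so the vanishing $\sigma(x)(u_{kl})=0$ requires the same analysis as the finite coordinates (in the paper it comes out of the general computation, or from $kl\in I\cap J$ for a correctly chosen $I$).

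Second, and more seriously, your treatment of $u_{kl}$ with $k,l\notin\{i,j\}$ ``exactly as in Proposition~\ref{pr:SectionCaterpillar}'' breaks down precisely in the case the theorem is designed to handle: when $k$ and $l$ lie in the same subtree $T_a$. There the four-point condition reads $x_{ij}+x_{kl}\leq x_{ik}+x_{jl}=x_{il}+x_{jk}$, possibly \emph{strictly}, so the maximum of $\sigma(x)(\rr(u_{ik})\rr(u_{jl}))$ and $\sigma(x)(\rr(u_{il})\rr(u_{jk}))$ is in general strictly larger than $\exp(x_{kl}-x_{ij})$; the correct value is obtained only because, after full expansion in $I$-monomials, the dominant monomials of the two products cancel. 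This cancellation cannot be detected from the values of $\sigma(x)$ on $u_{ik},u_{jk},u_{il},u_{jl}$ together with the triangle inequality (the paper makes exactly this point right before Definition~\ref{def:orderWithCherryProperty}, showing the naive choice $I=I(ij)$ fails for this reason). The paper instead handles same-subtree pairs by a separate induction on $n$ inside each $T_a$: it removes the maximal leaf $s$, rewrites $u_{sl}$ via Pl\"ucker identities adapted to $I$ as $u_{it}^{-1}u_{is}u_{tl}+u_{it}^{-1}u_{il}u_{st}$, invokes Proposition~\ref{pr:transBasis}\emph{(iii)} to rule out cancellation in \emph{that} decomposition, and then uses the cherry property on quartets \emph{inside} the subtree (such as $\{i,l,t,s\}$), not on $\{i,j,k,l\}$; for pairs in different subtrees with $ik$ or $il\notin\A$ a further regrouping of terms is needed. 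This combinatorial core — matching the structure of $\rr$ and the compatible set $I$ against quartets within the subtrees — is the technical heart of the theorem and is missing from your proposal.
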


\begin{proof}
  We divide the proof in two parts. First, we discuss the case when
  $m=1$, and second, we show how the general result can be deduced
  from this special case.  Using the diagram~\eqref{eq:diagram} and
  the Pl\"ucker relations, we view all functions
  $u_{kl}:=u_{ik}u_{jl}-u_{il}u_{jk}$ in the ring $R(ij)$. By
  Lemma~\ref{lm:ExtensionFromRij}, we can define
  $\sigma^{(ij)}_{T,I,J}(x)$ on this ring. For simplicity, write
  $\sigma:= \sigma^{(ij)}_{T,I,J}$.

  In what follows, we consider polynomials in all variables $\{u_{kl}:
  kl\neq ij\}$. To simplify notation, rather than writing
  $\rr(u_{kl})$, we underline the variable $u_{kl}$ whenever $kl \in
  I(ij)\smallsetminus I$, indicating that we should consider its image
  under $\rr$ in $K[u_{kl}: kl\in I][u_{kl}^{-1}: kl \in
  \A\smallsetminus J]$.

  \smallskip Assume $m=1$. We proceed by induction on $n$ and use
  Lemma~\ref{lm:sectionProperty} to check the section property. If
  $n=3$, then $T$ is the caterpillar tree, and so $I=I(ij)$. The
  result follows by Proposition~\ref{pr:SectionCaterpillar}. Next,
  suppose that $n>3$, and let $s$ be the maximal element in $T_1$. Let
  $T'$ and $T_1'$ be the trees obtained by removing the leaf $s$ from
  $T$ and $T_1$, respectively. We keep the notation from the proof of
  Proposition~\ref{pr:transBasis} and define $I' = I \cap
  \binom{[n]\smallsetminus \{s\}}{2}$, $J'=J \cap
  \binom{[n]\smallsetminus \{s\}}{2}$, $R(ij)'= K[u_{ik},u_{jk}: k\neq
  i,j,s]$. We also let $\preceq'$ be the order on $T'_1$ induced by
  $\preceq$.  Since $I'$ is compatible with $\preceq'$ and $J'(ij)$,
  we find that the restriction of $\sigma(x)$ to $R(ij)'$ agrees with
  $\sigma^{(ij)}_{T',I',J'}(x')$, where $x'$ is the projection of $x$
  to those coordinates not involving the index $s$.  It suffices to
  prove that $\sigma(x)(u_{sl})=\exp(x_{sl}-x_{ij})$ for all $l\neq
  s$. All other identities will follow by the inductive
  hypothesis. Using the symmetry between $i$ and $j$, we suppose that
  $\{il: l\neq i,j\}\subset \A$ (see Remark~\ref{rm:ManyI}). We
  analyze two cases, depending on the nature of $I$.

\smallskip
\noindent
\textbf{Case 1:} Assume that for all $l\neq i,j,s$ we have $il$ or
$jl\in J$. In this situation, we have $I=I(ij)$. In particular,
$\sigma(x)(u_{is})=\exp(x_{is}-x_{ij})$ and
$\sigma(x)(u_{js})=\exp(x_{js}-x_{ij})$.

Fix an index $l\neq i,j,s$. By the four-point condition on the quartet
$\{i,j,s,l\}$, we have
$x_{sl}+x_{ij}=x_{il}+x_{js}=x_{jl}+x_{is}=-\infty$, so $u_{sl} =
u_{is}u_{jl}-u_{il}u_{js} \in \faJij$. Lemma~\ref{lm:zero} implies
$\sigma(x)(u_{sl})=0=\exp(x_{sl}-x_{ij})$, as we wanted to show.

\smallskip\noindent \textbf{Case 2:} Suppose that there exists $t\neq
i,j,s$ satisfying $it,jt\notin J$. Choose $t$ maximal with this
property. By Definition~\ref{def:CorrectI}, we know that $st\in I$, so
$\sigma(x)(u_{st})=\exp(x_{st}-x_{ij})$. Similarly, since $is\in I$ we
have $\sigma(x)(u_{is})=\exp(x_{is}-x_{ij})$. Using the identity
\[
\underline{u_{js}}=
u_{it}^{-1}u_{is}\,\underline{u_{jt}}-u_{it}^{-1}u_{st},
\] 
we compute $\sigma(x)(u_{js})$ by expanding $\underline{u_{jt}}$ as a
polynomial in the $I$-coordinates, and taking the maximum over the
values on all monomials occurring in $\underline{u_{js}}$. By
Proposition~\ref{pr:transBasis}~\emph{(iii)}, the monomial
$u_{it}^{-1} u_{st}$ does not cancel with any term in
$u_{it}^{-1}u_{is}\,\underline{u_{jt}}$. Since $\sigma(x)$ is
multiplicative and it satisfies $\sigma(x)(u_{kl}) = \exp(x_{kl} -
x_{ij})$ for all $kl \in I$ as well as $\sigma(x) (u_{jt}) =
\exp(x_{jt} - x_{ij})$ by the inductive hypothesis, we deduce
  \begin{align*}
    \sigma(x)(u_{js}) & =
    \sigma(x)(u_{it}^{-1}u_{is}\,\underline{u_{jt}}-u_{it}^{-1}u_{st})
    =
    \max\{\sigma(x)(u_{it}^{-1}u_{is}\,\underline{u_{jt}}), \sigma(x)(u_{it}^{-1}u_{st})\}\\
    & =  \max\{\exp(-x_{it}+ x_{is}) \sigma(x)(\underline{u_{jt}}), \, \exp(-x_{it}+ x_{st})\}\\
    & = \max\{\exp(-x_{it} + x_{is}+ x_{jt} - x_{ij}), \, \exp(-x_{it}
    + x_{st})\}.
  \end{align*}
  The four-point condition on the quartet $\{i,j,s,t\}$ yields
  $\sigma(x)(u_{js})= \exp(x_{js}-x_{ij})$.

  Finally, we prove the claim for $u_{sl}$, where $l\neq
  i,j,s,t$. First, assume that $t\prec l$. Then, the maximality of $t$
  and Lemma~\ref{lm:ConditionsIAndJ} imply that both $il$ and $jl$
  belong to $J$, hence $u_{sl}\in \faJij$ and $\sigma(x)(u_{sl})=0$ by
  Lemma~\ref{lm:zero}. The four-point condition on $\{i,j,s,l\}$
  yields $x_{sl}=-\infty$, and the result holds.

  On the contrary, suppose that $l\prec t$. By the Pl\"ucker
  relations, we write $u_{sl}$ as
\begin{equation*}
  u_{sl}=u_{it}^{-1}u_{is}{u_{tl}}+u_{it}^{-1}u_{il}u_{st}.
\end{equation*}
In order to evaluate $\sigma(x)$ on $u_{sl}$, we expand $u_{tl}$ in
$I$-coordinates and then take the maximum over the values of
$\sigma(x)$ on all monomials.  By Proposition
~\ref{pr:transBasis}~\emph{(iii)}, the monomial $u_{it}^{-1}
u_{il}u_{st}$ does not cancel with any term in the expansion of
$u_{it}^{-1}u_{is} {u_{tl}}$. Note that $\sigma(x) (u_{tl}) =
\exp(x_{tl} - x_{ij})$ by the inductive hypothesis.  Since $\sigma(x)$
is multiplicative and it satisfies $\sigma(x)(u_{pq}) = \exp(x_{pq} -
x_{ij})$ for all $pq \in I$, this implies
  \begin{align*}
    \sigma(x)(u_{sl}) & =
    \max\{\sigma(x)(u_{it}^{-1}u_{is}\,{u_{tl}}), \sigma(x)(u_{it}^{-1}u_{il}u_{st})\}\\
    & =  \max\{\exp(-x_{it}+ x_{is}) \sigma(x)({u_{tl}}), \, \exp(-x_{it}+x_{il} + x_{st} -x_{ij})\}\\
    & = \max\{\exp(-x_{it} + x_{is} +x_{tl} - x_{ij}\}, \, \exp( -
    x_{it} + x_{il} + x_{st}-x_{ij})\}.
  \end{align*}

  Since $l\prec t\prec s$, and $\preceq$ has the cherry property on
  $T$ with respect to $i$ and $j$, we know by
  Definition~\ref{def:orderWithCherryProperty}~{(iii)} that either
  $\{l,t\}$ or $\{t,s\}$ are cherries of the quartet $\{i,l,t,s\}$.
  Therefore, one of the following identities hold:
\begin{equation*}
  x_{is}+x_{tl}\leq x_{it}+x_{sl}=x_{il}+x_{st} \quad \text{ or }\quad
  x_{il}+x_{st}\leq x_{it}+x_{sl}=x_{is}+x_{tl}.\label{eq:9}
\end{equation*}
In both cases it is easy to check that $\sigma(x)(u_{sl})=
\exp(x_{sl}-x_{ij})$.  \smallskip

\medskip

Next, we discuss the case when $m>1$. As before, we show that
$\sigma(x)(u_{kl})= \exp(x_{kl}-x_{ij})$ for all $kl\in
\binom{[n]}{2}\smallsetminus\{ij\}$.  We follow the notation of
Figure~\ref{fig:caterpillar}. Proposition~\ref{pr:transBasis}~\emph{(iii)}
and the proof for the $m=1$ case guarantee that the result holds when
$kl\in I(ij)$ or when we pick two indices $k,l$ in the same subtree
$T_a$, for $a=1,\ldots, m$.

It remains to check the case when $k$ and $l$ belong to two different
subtrees, say $k\in T_a$ and $l\in T_b$ with $a \neq b$. The quartet
with leaves $\{i,j,k,l\}$ contains the cherry $\{i,k\}$ or the cherry
$\{i,l\}$. Hence, by the four-point condition, one of the following is
true:
\begin{equation}
  x_{ik}+x_{jl}\leq x_{ij}+x_{kl}=x_{il}+x_{jk} \quad \mbox{ or }  \quad x_{il} + x_{jk} \leq x_{ij} + x_{kl} = x_{ik} + x_{jl}.\label{eq:25}
\end{equation}

We distinguish four cases.  If $ik,jk\in \A$, then $u_{kl} =
u_{ik}\,\underline{u_{jl}}-\underline{u_{il}}u_{jk}$.  The
compatibility of $I$ with $\preceq$ and $J(ij)$, and
Proposition~\ref{pr:transBasis}~\emph{(iii)} ensure that none of the
monomials in $u_{ik}\,\underline{u_{jl}}$ cancel out with any monomial
in $\underline{u_{il}}u_{jk}$.  Using~\eqref{eq:25} we conclude that
\[
\sigma(x)(u_{kl}) = \max\{ \exp(x_{ik} + x_{jl} - 2 x_{ij}), \,
\exp(x_{il} +x_{jk} - 2 x_{ij})\} = \exp(x_{kl}-x_{ij}).
\]
An analogous argument holds when $il, jl\in \A$.

It remains to study the situation where both pairs $ik,jk$ and $il,jl$
have one member outside $\A$. In particular, by
Definition~\ref{def:CorrectI}, we know that $T_a$ and $T_b$ have more
than one leaf.  By the symmetry between $i$ and $j$, we are left with
two possibilities: either $ik, jl\notin \A$ or $ik,il\notin \A$.  By
the compatibility of $I$ with $\preceq$ and $J(ij)$, we know that
there exist maximal elements $k'\prec k$ and $l'\prec l$, satisfying
that $kk',ll'\in I$, and $ik',jk', il', jl'\notin J$.

Let us discuss the first scenario, where $ik,jl\notin \A$.  Notice
that $H\supset \{jq: q\in T_a\}\cup \{iq: q\in T_b\}$ by
Remark~\ref{rm:ManyI}.  We write $u_{kl} =
\underline{u_{ik}}\underline{u_{jl}} - u_{il} u_{jk}$.
Proposition~\ref{pr:transBasis}~\emph{(iii)} shows that the expansion
of $\underline{u_{ik}}\underline{u_{jl}}$ in $I$-coordinates does not
contain the monomial $u_{il}u_{jk}$.  Hence, by \eqref{eq:25} we
conclude
\begin{align*}
  \sigma(x) ( u_{kl}) & = \max\{ \sigma(x)
  (\underline{u_{ik}}\underline{u_{jl}}),
  \sigma(x)(u_{il}u_{jk})\}\\
  &= \max\{ \exp(x_{ik}+ x_{jl} - 2 x_{ij}), \exp(x_{il} + x_{jk} - 2
  x_{ij}) \} = \exp( x_{kl} - x_{ij}).
\end{align*}

Finally, let us analyze the case when $ik, il\notin \A$. In this
situation, we know that $H\supset \{jq: q\in T_a\cup T_b\}$, and we
have $u_{kl} = \underline{u_{ik}} u_{jl} - \underline{u_{il}} u_{jk}$,
where
\[
\underline{u_{ik}}=u_{jk'}^{-1}(-u_{kk'}+\underline{u_{ik'}}u_{jk})\qquad
\text{and} \qquad
\underline{u_{il}}=u_{jl'}^{-1}(-u_{ll'}+\underline{u_{il'}}u_{jl}).
\]
Plugging these expressions into the Pl\"ucker expression for $u_{kl}$,
we see that
\begin{equation*}
  u_{kl}=\underline{u_{ik}}u_{jl}-\underline{u_{il}}u_{jk}=u_{jk}u_{jl}(u_{jk'}^{-1}\,\underline{u_{ik'}}-u_{jl'}^{-1}\,\underline{u_{il'}})-u_{jk'}^{-1}u_{kk'}u_{jl}+u_{jl'}^{-1}u_{ll'}u_{jk}.\label{eq:41}
\end{equation*}
The conditions that $k'\prec k$ and $l'\prec l$ together with
Proposition~\ref{pr:transBasis}~\emph{(iii)} ensure that the monomials
$u_{jl}u_{kk'}u_{jk'}^{-1}$ and $u_{jl'}^{-1}u_{ll'}u_{jk}$ cannot be
present in the polynomial
$u_{jk}u_{jl}(u_{jk'}^{-1}\,\underline{u_{ik'}}-u_{jl'}^{-1}\,\underline{u_{il'}})$. Hence,
\begin{equation*}\label{eq:16}
  \sigma(x)(u_{kl})=\max\{\sigma(x)(u_{jk}u_{jl}(u_{jk'}^{-1}\,\underline{u_{ik'}}-u_{jl'}^{-1}\,\underline{u_{il'}}),
  \sigma(x)(u_{jl}u_{kk'}u_{jk'}^{-1}), \sigma(x)(u_{jl'}^{-1}u_{ll'}u_{jk})\}.
\end{equation*}
We now study these three terms, starting from the last two.  Since
$k,k'\in T_a$ and $l,l'\in T_b$, we deduce from the four-point
conditions on the quartets $\{j,l,k,k'\}$ and $\{j,k,l,l'\}$ that
\begin{equation*}
  \sigma(x)(u_{jl}u_{kk'}u_{jk'}^{-1})\leq \exp(x_{kl}-x_{ij})\quad
  \text{and} \quad \sigma(x)(u_{jl'}^{-1}u_{ll'}u_{jk})\leq \exp(x_{kl}-x_{ij}).\label{eq:17}
\end{equation*}

We claim that the value of $\sigma(x)$ at the polynomial
$u_{jk}u_{jl}(u_{jk'}^{-1}\,\underline{u_{ik'}}-u_{jl'}^{-1}\,\underline{u_{il'}})$
equals $\exp(x_{kl}-x_{ij})$.  As before, by
Proposition~\ref{pr:transBasis}~\emph{(iii)}, we know that both
expressions $u_{jk'}^{-1}\,\underline{u_{ik'}}$ and
$u_{jl'}^{-1}\,\underline{u_{il'}}$ involve disjoint sets of
monomials, so no cancellations can occur.  The four-point conditions
on the quartets $\{i,j,k',k\}$, $\{i,j,l',l\}$ imply $x_{jk} + x_{ik'}
= x_{ik} + x_{j k'}$ and $ x_{jl} + x_{i l'}= x_{il} + x_{jl'}$, so
\begin{align*}
  \sigma(x)(u_{jk}u_{jl}u_{jk'}^{-1}\,\underline{u_{ik'}}) &=
  \exp(x_{jk} + x_{jl} - x_{j k'} + x_{i k'} - 2 x_{ij})= \exp(x_{ik}
  + x_{jl} -2 x_{ij}), \\
  \sigma(x)(u_{jk}u_{jl}u_{jl'}^{-1}\,\underline{u_{il'}}) &= \exp(
  x_{jk} + x_{jl} - x_{j l'}+ x_{i l'} - 2 x_{ij}) = \exp(x_{jk} +
  x_{il} - 2 x_{ij} ).
\end{align*}
Hence, $\sigma(x)(u_{kl})= \max\{\exp(x_{ik} + x_{jl} -2
x_{ij}),\exp(x_{jk} + x_{il} - 2 x_{ij} )\} = \exp(x_{kl}-x_{ij})$ by
~\eqref{eq:25}. This concludes our proof.
\end{proof}

%%%%%%%%%%%%%%%%%%%%%%%%%%%%%%%%%%%%%%%%%%%%%%%%%%%%%%%%%%%%%%%%%%%%%%%%%%%%%%
%%%%%%%%%%%%%%%%%%%%%%%%%%%%%%%%%%%%%%%%%%%%%%%%%%%%%%%%%%%%%%%%%%%%%%%%%%%%%%
\subsection{Uniqueness and continuity}
\label{sec:uniqueness}
%%%%%%%%%%%%%%%%%%%%%%%%%%%%%%%%%%%%%%%%%%%%%%%%%%%%%%%%%%%%%%%%%%%%%%%%%%%%%%
%%%%%%%%%%%%%%%%%%%%%%%%%%%%%%%%%%%%%%%%%%%%%%%%%%%%%%%%%%%%%%%%%%%%%%%%%%%%%%
In the previous two Sections, we constructed explicit maps
$\sigma^{(ij)}_{T,I,J}$ on a covering of $\cTG(2,n)$ by cones labeled
$\CijTJ$, and we showed that these maps are a section to $\trop$ on
each domain.  Our next task is to glue these sections together and,
thus, define a map over $\cTG(2,n)$. In order to do so, we show that
these sections are the unique ones satisfying a maximality
property. In Section~\ref{sec:fibers-trop} we give a more conceptual
proof of this fact via the existence of a unique Shilov boundary point
in the fibers of the tropicalization map.

\begin{lemma}\label{lm:MaxCondition}
  Let $x\in \cTG(2,n)$ and $i,j$ be such that $x\in \cT U_{ij}$. Let
  $T$ and $J$ be such that $x\in \CijTJ$. Then, for any index set $I$
  compatible with $\preceq$ and $J(ij)$, and for any $\rho \in
  \trop^{-1}(x)$, we have that
  \[
  \rho(f) \leq \sigma^{(ij)}_{T,I,J}(x )(f) \qquad \text{ for all }
  f\in R(ij).\]
\end{lemma}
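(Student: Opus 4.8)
The plan is to reduce the inequality to a statement about monomials in the $I$-coordinates and then exploit the fact that $\sigma := \sigma^{(ij)}_{T,I,J}(x)$ is, essentially by construction, the \emph{largest} multiplicative seminorm taking the prescribed values on those coordinates. First I would fix $\rho \in \trop^{-1}(x)$ and recall what this means concretely via Lemma~\ref{lm:sectionProperty}: since $\rho$ is a point in $U_{ij}^{\an}$ lying over $x$, we have $\rho(u_{kl}) = \exp(x_{kl} - x_{ij})$ for all $kl \in \binom{[n]}{2}\smallsetminus\{ij\}$, and in particular for all $kl \in I$ (recall $ij \notin I$). By Theorem~\ref{thm:section}, the same holds for $\sigma$. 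So $\rho$ and $\sigma$ agree on every generator $u_{kl}$ with $kl \in I$ of the polynomial ring $K[u_{kl}: kl \in I]$.

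Next I would pass to this polynomial ring. Every $f \in R(ij)$ can, via the isomorphism $\rr$ of Proposition~\ref{pr:transBasis}~\emph{(ii)}, be written inside $K[u_{kl}: kl\in I][u_{kl}^{-1}: kl \in \A\smallsetminus J]$; since both $\rho$ and $\sigma$ are defined on $R(ij)$ by restriction through $\rr$ (using that $\rho(u_{kl}) \neq 0$ and $\sigma(u_{kl})\neq 0$ for $kl \in \A\smallsetminus J$, so both extend uniquely to that Laurent localization), it suffices to prove $\rho(g) \leq \sigma(g)$ for every $g = \sum_\alpha c_\alpha u^\alpha$ in that ring. Now the key computation: by multiplicativity of $\sigma$ and the explicit formula~\eqref{eq:sigma},
\[
\sigma(g) = \sigma\Big(\sum_\alpha c_\alpha u^\alpha\Big) = \max_\alpha \Big\{ |c_\alpha| \prod_{kl} \sigma(u_{kl})^{\alpha_{kl}} \Big\},
\]
whereas the non-Archimedean triangle inequality for $\rho$ gives
\[
\rho(g) = \rho\Big(\sum_\alpha c_\alpha u^\alpha\Big) \leq \max_\alpha \Big\{ |c_\alpha| \prod_{kl} \rho(u_{kl})^{\alpha_{kl}} \Big\}.
\]
Since $\rho(u_{kl}) = \sigma(u_{kl})$ for all relevant $kl$, the two right-hand sides coincide, and the inequality $\rho(g) \leq \sigma(g)$ follows immediately. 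Restricting along $\rr$ gives $\rho(f) \leq \sigma(f)$ for all $f \in R(ij)$.

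The only real point requiring care — and the step I expect to be the mild obstacle — is bookkeeping around the two ring extensions: one must check that $\rho$, a priori only a seminorm on $R(ij)$, genuinely factors through the same Laurent localization $K[u_{kl}: kl\in I][u_{kl}^{-1}: kl\in \A\smallsetminus J]$ on which the monomial estimate is transparent. This is exactly where $\rho \in \trop^{-1}(x)$ is used twice: it forces $\rho(u_{kl}) = \exp(x_{kl} - x_{ij})$, which is nonzero precisely for $kl \notin J$, so inversion of the elements $u_{kl}$ with $kl \in \A\smallsetminus J$ is legitimate, and the extension of $\rho$ to the localization is unique. Once both $\rho$ and $\sigma$ are viewed inside this common Laurent ring, the triangle-inequality-versus-multiplicativity comparison above is purely formal.
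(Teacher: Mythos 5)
Your proposal is correct and follows essentially the same route as the paper: both norms agree on the $I$-coordinates by Lemma~\ref{lm:sectionProperty} (and the section property of $\sigma$), one extends $\rho$ through $\rr$ to the Laurent localization $K[u_{kl}: kl\in I][u_{kl}^{-1}: kl\in \A\smallsetminus J]$, and the inequality then follows from the non-Archimedean triangle inequality for $\rho$ against the Gauss-norm formula~\eqref{eq:sigma} defining $\sigma$. The bookkeeping point you flag about extending $\rho$ to the localization is exactly the step the paper handles via Lemma~\ref{lm:ExtensionFromRij}.
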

\begin{proof}

  Let $\A= I\cap I(i,j)$ be as in Sections~\ref{sec:caterpillar-case}
  and~\ref{sec:exist-gener-tree}. Since $\rho$ and
  $\sigma^{(ij)}_{T,I,J}(x)$ belong to the fiber of $\trop$ over $x$,
  Lemma~\ref{lm:sectionProperty} ensures that
  $\rho(u_{kl})=\exp(x_{kl}-x_{ij})=\sigma^{(ij)}_{T,I,J}(x)(u_{kl})$
  for all $k,l$.

  Notice that since $\rho(u_{kl})\neq 0$ for any $kl\in
  \A\smallsetminus J$, we can uniquely extend the seminorm $\rho$ from
  $R(ij)$ to the Laurent polynomial ring $K[u_{kl}: kl\in
  I][u_{kl}^{-1}: kl\in \A\smallsetminus J]$ via the map $\rr$
  in~\eqref{eq:diagram}, as we did in
  Lemma~\ref{lm:ExtensionFromRij}. For simplicity, we also call this
  extension by $\rho$. Given any polynomial $f\in R(ij)$, we write it
  as a Laurent polynomial $f= \sum_{\alpha} c_{\alpha} u^{\alpha}$,
  where $\alpha\in \NN_{0}^{I\smallsetminus (\A \smallsetminus
    J)}\times \ZZ^{\A\smallsetminus J}$.  The definition of the map
  $\sigma^{(ij)}_{T,I,J}$ and the non-Archimedean triangle inequality
  for $\rho$ ensure that
\[
\rho(f) \leq \max\limits_{\alpha}\{\rho(c_{\alpha}u^{\alpha})\}
=\max_{\alpha}\{|c_{\alpha}| \sigma^{(ij)}_{T,I,J}(x)(u^{\alpha})\} =
\sigma^{(ij)}_{T,I,J}(x)(f)\text{.} \qedhere
\]
\end{proof}

The intrinsic characterization of the maps $\sigma^{(ij)}_{T,I,J}$ in
Lemma~\ref{lm:MaxCondition} has two important consequences. First, the
function $\sigma^{(ij)}_{T,I,J}$ is independent of the choice of the
index set $I$ as long as it is compatible with $\preceq$ and $J(ij)$.
Likewise, the functions $\sigma^{(ij)}_{T,I,J}$ and
$\sigma^{(ij)}_{T',I',J'}$ agree on the overlaps of the sets $\CijTJ$
and $\CijTJprime$, corresponding to two trees and two vanishing sets.
Therefore, these functions glue together to yield a unique map
$\sigma^{(ij)}$ on $\cT U_{ij}$ for every choice of a pair $ij$.

\smallskip

Our next result ensures that the collection $\{\sigma^{(ij)}\}_{ij}$
glues to a map $\sigma\colon \cTG(2,n)\to \Gr(2,n)^{\an}$.

\begin{prop}~\label{pr:gluing} The map $\sigma^{(ij)}$ is independent
  of our starting choice of indices $i$ and $j$.
\end{prop}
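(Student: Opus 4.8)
The plan is to reduce the statement to the uniqueness/maximality characterization already proved in Lemma~\ref{lm:MaxCondition}, which makes $\sigma^{(ij)}$ intrinsic. Concretely, suppose $i,j$ and $i',j'$ are two pairs of indices with $x\in\cT U_{ij}\cap\cT U_{i'j'}$. We must show that $\sigma^{(ij)}(x)$ and $\sigma^{(i'j')}(x)$ define the same point of $\Gr(2,n)^{\an}$, i.e.\ that they agree as multiplicative seminorms on the coordinate ring $R(ij,i'j')$ of the open set $U_{ij}\cap U_{i'j'}$, equivalently on the localization $R(ij)_{u_{i'j'}}$. Since both seminorms lie in $\trop^{-1}(x)$ (each $\sigma^{(ij)}$ is a section to $\trop$ by Theorem~\ref{thm:section}, glued over the cones $\CijTJ$), and $x\in\cT U_{i'j'}$ forces $x_{i'j'}\neq-\infty$, hence $\sigma^{(ij)}(x)(u_{i'j'})=\exp(x_{i'j'}-x_{ij})\neq 0$, the seminorm $\sigma^{(ij)}(x)$ extends uniquely to $R(ij)_{u_{i'j'}}=K[U_{ij}\cap U_{i'j'}]$, and similarly for $\sigma^{(i'j')}(x)$ on $R(i'j')_{u_{ij}}$. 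So both are points of $(U_{ij}\cap U_{i'j'})^{\an}$ lying over $x$.

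The key step is then to run the maximality argument of Lemma~\ref{lm:MaxCondition} on the ring $R:=K[U_{ij}\cap U_{i'j'}]$ for \emph{both} pairs simultaneously. Pick $T$ and $J=J(x)$ with $x\in\CijTJ$, and a compatible index set $I$ for the pair $ij$; likewise $T'$, $J'=J(x)$ (the vanishing set does not depend on the chosen chart, only on $x$) and a compatible $I'$ for $i'j'$. Lemma~\ref{lm:MaxCondition}, applied with the roles of the distinguished chart played by $U_{ij}$ and then by $U_{i'j'}$, shows that $\sigma^{(ij)}(x)$ is $\leq$-maximal among all elements of $\trop^{-1}(x)$ on $R(ij)$, hence — after the unique extension — maximal on $R$; and symmetrically $\sigma^{(i'j')}(x)$ is maximal on $R$. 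Indeed the proof of Lemma~\ref{lm:MaxCondition} only uses that for $\rho\in\trop^{-1}(x)$ one has $\rho(u_{kl})=\exp(x_{kl}-x_{ij})$ for all $kl$, which pins down $\rho$ on all the localized Plücker ratios, and then the non-Archimedean triangle inequality bounds $\rho(f)$ for an arbitrary $f$ by the max of the monomial contributions, which is exactly $\sigma^{(ij)}_{T,I,J}(x)(f)$. The same computation, rewriting $f\in R$ in the $I'$-coordinates adapted to $i'j'$, bounds $\rho(f)\leq\sigma^{(i'j')}(x)(f)$. Taking $\rho=\sigma^{(i'j')}(x)$ in the first inequality and $\rho=\sigma^{(ij)}(x)$ in the second yields $\sigma^{(i'j')}(x)(f)\leq\sigma^{(ij)}(x)(f)\leq\sigma^{(i'j')}(x)(f)$ for all $f\in R$, hence equality.

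I expect the main obstacle to be a bookkeeping point rather than a conceptual one: one must be careful that Lemma~\ref{lm:MaxCondition}, as stated, is about seminorms on $R(ij)$, whereas here we need the comparison on the larger ring $R=K[U_{ij}\cap U_{i'j'}]$. The resolution is the uniqueness of extension along the localization $R(ij)\hookrightarrow R(ij)_{u_{i'j'}}=R$: any multiplicative seminorm on $R(ij)$ sending $u_{i'j'}$ to a nonzero value has a \emph{unique} multiplicative extension to $R$, so equality of two such seminorms on $R(ij)$ already forces equality on $R$. Thus it suffices to check $\sigma^{(ij)}(x)=\sigma^{(i'j')}(x)$ on $R(ij)$, which is exactly what the two-sided maximality inequality above gives (both are the unique maximal element of $\trop^{-1}(x)$ with respect to evaluation). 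Consequently the local sections $\{\sigma^{(ij)}\}_{ij}$ agree on all overlaps $\cT U_{ij}\cap\cT U_{i'j'}$, and since the $\cT U_{ij}$ cover $\cTG(2,n)$ (by~\eqref{eq:5} and Lemma~\ref{lm:TropGrass}) they glue to a well-defined map $\sigma\colon\cTG(2,n)\to\Gr(2,n)^{\an}$, independent of all auxiliary choices of $i,j$, $T$, $\preceq$ and $I$.
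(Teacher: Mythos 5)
Your proof is correct and takes essentially the same route as the paper: the paper's argument for Proposition~\ref{pr:gluing} also extends $\sigma^{(ij)}(x)$ and $\sigma^{(pq)}(x)$ uniquely to the common coordinate ring of $U_{ij}\cap U_{pq}$ (the localizations $R(ij)_{S_{\{pq\}}}$ and $R(pq)_{S_{\{ij\}}}$, identified by the natural isomorphism) and then applies the maximality of Lemma~\ref{lm:MaxCondition} to both seminorms to force equality. The bookkeeping issue you flag, namely transferring the maximality from $R(ij)$ to the localization, is resolved exactly as you indicate, using multiplicativity and the fact that every element of $\trop^{-1}(x)$ takes the same nonzero value on the localized Pl\"ucker coordinate.
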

\begin{proof}
  The result follows from Lemma~\ref{lm:MaxCondition}, as we now
  explain. Fix two pairs of indices $ij, pq\in \binom{[n]}{2}$, and
  pick any point $x\in \cT U_{ij} \cap \cT U_{pq}$, so $x_{ij}, x_{pq}
  \neq -\infty$. Let $J=J(x)$ and fix a tree $T$ with $x\in
  \overline{\cTC_T}$.  The multiplicative seminorm $\sigma^{(ij)}(x)$
  is defined over $R(ij)$, whereas $\sigma^{(pq)}(x)$ is defined over
  $R(pq)$.  To avoid confusions, we denote by $u_{kl}$ the functions
  in $R(ij)$ and by $v_{kl}$ the functions in $R(pq)$.  As usual, we
  write $u_{kl}=u_{ik}u_{jl}-u_{ik}u_{jl}$ in $R(ij)$ whenever
  $kl\notin I(ij)$, and $v_{kl}=v_{pk}v_{ql}-v_{pl}v_{qk}$ in $R(pq)$
  whenever $kl\notin I(pq)$.

  Since $x\in \cT U_{ij} \cap \cT U_{pq}$, we know that
  $\sigma^{(ij)}(x)(u_{pq})=\exp(x_{pq}-x_{ij})\neq 0$ and
  $\sigma^{(pq)}(x)(v_{ij})=\exp(x_{ij}-x_{pq})\neq 0$.  Thus, the
  multiplicative seminorms $\sigma^{(ij)}(x)$ and $\sigma^{(pq)}(x)$
  extend uniquely to the localizations $R(ij)_{S_{\{pq\}}}$ and
  $R(pq)_{S_{\{ij\}}}$ defined as in
  Remark~\ref{rm:MultClosedSet}. These localizations are related by
  the natural isomorphism
  \[r\colon R(pq)_{S_{\{ij\}}} \longrightarrow R(ij)_{S_{\{pq\}}},
  \qquad v_{pk}\longmapsto u_{pk}/ u_{pq} \quad \text{ and } \quad
  v_{qk}\longmapsto u_{qk}/u_{pq}\text{.}\] Both seminorms
  $\sigma^{(ij)}(x)$ and $\sigma^{(pq)}(x)$ are maximal with respect
  to evaluation on these localizations.  By
  Lemma~\ref{lm:MaxCondition}, $\sigma^{(ij)}(x)\circ r=$
  $\sigma^{(pq)}(x)$ on $R(pq)_{S_{\{ij\}}}$, as we wanted to show.
\end{proof}

Theorem~\ref{thm:section} shows that $\sigma$ is a section to $\trop$.
We end by proving the continuity of $\sigma$.
\begin{thm}\label{thm:continuity}
  The map $\sigma\colon \cTG(2,n)\to \Gr(2,n)^{\an}$ is continuous.
\end{thm}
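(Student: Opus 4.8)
The plan is to reduce continuity of $\sigma$ to a local, coordinate-wise statement and then exploit the explicit formula \eqref{eq:sigma} together with the topology on Berkovich analytic spaces. First I would recall that the topology on $\Gr(2,n)^{\an}$ is generated by the evaluation maps $\operatorname{ev}_f$, so it suffices to prove that for every open $U_{ij}$ and every $f$ in the coordinate ring $R(ij)$, the composite $x\mapsto \sigma^{(ij)}(x)(f)$ is continuous on $\cT U_{ij}$, and that the $\cT U_{ij}$ form an open cover of $\cTG(2,n)$ compatible with $\sigma$. By Proposition~\ref{pr:gluing} the local maps $\sigma^{(ij)}$ glue, so continuity is a genuinely local question on each $\cT U_{ij}$; and by Lemma~\ref{lm:sectionProperty} we already know $\sigma^{(ij)}(x)(u_{kl})=\exp(x_{kl}-x_{ij})$, which is manifestly continuous in $x\in\cT U_{ij}\subset\TP\pr^{\binom{n}{2}-1}$ (here the quotient by $\RR\cdot\mathbf 1$ is harmless since $x_{ij}\neq-\infty$, and $\exp(-\infty)=0$ glues continuously).

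Next I would leverage the stratification $\cT U_{ij}=\bigsqcup_{T,J}\CijTJ$ and the maximality characterization in Lemma~\ref{lm:MaxCondition}. On a fixed stratum $\CijTJ$, with a compatible index set $I=I(ij,T,J)$, the seminorm $\sigma^{(ij)}_{T,I,J}(x)$ is given on $K[u_{kl}:kl\in I]$ by the skeleton formula $\sum_\alpha c_\alpha u^\alpha\mapsto \max_\alpha\{|c_\alpha|\exp(\sum_{kl\in I}\alpha_{kl}(x_{kl}-x_{ij}))\}$; since this is a finite maximum of continuous functions of $x$ (each $x\mapsto \exp(x_{kl}-x_{ij})$ being continuous, with the value $0$ at $-\infty$), $x\mapsto \sigma^{(ij)}_{T,I,J}(x)(f)$ is continuous on the \emph{closed} set $\overline{\CijTJ}\cap\cT U_{ij}$ for every $f\in R(ij)$ (extending $f$ via the isomorphism $\rr$ of Proposition~\ref{pr:transBasis}\emph{(ii)} to a Laurent polynomial in the $I$-variables, and noting the monomials appearing have exponents in $\NN_0^{I\smallsetminus(\A\smallsetminus J)}\times\ZZ^{\A\smallsetminus J}$, so the $\exp$-terms stay bounded on the stratum where the relevant coordinates are finite). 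The remaining point is that the finitely many closed sets $\overline{\CijTJ}\cap\cT U_{ij}$ cover $\cT U_{ij}$, so by the pasting lemma it is enough to check that the various local formulas \emph{agree} on overlaps of these closed sets — and this is exactly what Lemma~\ref{lm:MaxCondition} guarantees, since on any overlap both descriptions compute the unique maximal element of the fiber $\trop^{-1}(x)$.

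Concretely I would argue as follows: fix $ij$ and $f\in R(ij)$; for each stratum closure $Z_{T,J}:=\overline{\CijTJ}\cap\cT U_{ij}$ define $g_{T,J}(x):=\sigma^{(ij)}_{T,I,J}(x)(f)$ using any compatible $I$; each $g_{T,J}$ is continuous on $Z_{T,J}$ by the finite-max argument above; by Lemma~\ref{lm:MaxCondition} and the uniqueness discussion preceding Proposition~\ref{pr:gluing}, $g_{T,J}$ and $g_{T',J'}$ coincide on $Z_{T,J}\cap Z_{T',J'}$; since $\{Z_{T,J}\}$ is a finite closed cover of $\cT U_{ij}$, the gluing lemma yields a continuous function $g$ on $\cT U_{ij}$ with $g(x)=\sigma^{(ij)}(x)(f)$. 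As $f$ ranges over $R(ij)$ this shows $\sigma^{(ij)}$ is continuous as a map into $U_{ij}^{\an}$, and as $ij$ ranges over $\binom{[n]}{2}$ — with the open cover $\{\cT U_{ij}\}$ of $\cTG(2,n)$ and Proposition~\ref{pr:gluing} ensuring compatibility — $\sigma$ is continuous on $\cTG(2,n)$.

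The main obstacle I anticipate is the boundary behaviour where coordinates degenerate to $-\infty$: one must check carefully that, as $x$ approaches a point of a smaller stratum (some $x_{kl}\to-\infty$), the finite-max formula for $g_{T,J}$ has a well-defined continuous limit and that this limit matches the formula attached to the smaller stratum. This amounts to verifying that every monomial $c_\alpha u^\alpha$ occurring in the Laurent expansion of $f$ has nonnegative exponent in each variable $u_{kl}$ that is allowed to vanish on the limiting stratum — i.e.\ that negative exponents only appear on variables $u_{kl}$ with $kl\in\A\smallsetminus J$, which by definition of $J=J(x)$ stay finite (nonzero) — so that $\exp(\sum\alpha_{kl}(x_{kl}-x_{ij}))$ either tends to $0$ or to a finite positive value, never to $+\infty$. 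This is precisely the content built into the compatibility of $I$ with $\preceq$ and $J(ij)$ and into Proposition~\ref{pr:transBasis}, so the apparent difficulty is resolved by the structural results already established; the bookkeeping over the finite stratification is routine but is the part that needs care.
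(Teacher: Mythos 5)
Your overall reduction (work on each $\cT U_{ij}$, test continuity against evaluation maps $\operatorname{ev}_f$ for $f\in R(ij)$, use the gluing from Proposition~\ref{pr:gluing}) matches the paper, but the core of your argument has a genuine gap. The pasting-lemma strategy needs each $g_{T,J}(x)=\sigma^{(ij)}_{T,I,J}(x)(f)$ to be continuous on the \emph{closed} set $\overline{\CijTJ}\cap \cT U_{ij}$ and to agree there with $\sigma^{(ij)}(x)(f)$. Neither holds with the fixed index set $I=I(ij,T,J)$. Writing $\rr(f)=\sum_\alpha c_\alpha u^\alpha$ with $\alpha\in \NN_0^{I\smallsetminus(\A\smallsetminus J)}\times\ZZ^{\A\smallsetminus J}$, the monomials may carry \emph{negative} exponents exactly on the variables $u_{kl}$ with $kl\in\A\smallsetminus J$; these coordinates are finite on the stratum $\CijTJ$, but on its closure they are allowed to degenerate to $-\infty$ (the closure contains points with strictly larger vanishing set $J'\supsetneq J$). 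Your sentence ``which by definition of $J=J(x)$ stay finite'' conflates the stratum with its closure: at such boundary points the individual terms $|c_\alpha|\exp(\sum\alpha_{kl}(x_{kl}-x_{ij}))$ blow up to $+\infty$, so the ``finite max of continuous functions'' is neither finite nor continuous there, and the formula in the old coordinates is not even a candidate for $\sigma$ at the limit --- one must switch to an index set compatible with the larger vanishing set and then \emph{prove} that the two values match in the limit. Likewise, Lemma~\ref{lm:MaxCondition} identifies $\sigma(x)$ only when $I$ is compatible with $J(x)$, so it does not give the agreement of $g_{T,J}$ and $g_{T',J'}$ on overlaps of the closures.

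This boundary matching is precisely the nontrivial content of the paper's proof: after passing to subsequences with constant vanishing set $\tilde J\subset J$ lying in a single cone $\CijTJtilde$, the easy direction $\limsup_m\sigma(x^{(m)})(f)\leq\sigma(x)(f)$ is Lemma~\ref{lm:RightIneq} (expand $f$ in the coordinates $I$ compatible with the \emph{limit} point), while the direction $\sigma(x)(f)\leq\liminf_m\sigma(x^{(m)})(f)$ is Lemma~\ref{lm:LeftIneq}, which requires comparing the two coordinate systems $I$ and $\tilde I$, substituting Pl\"ucker expressions for the discarded variables, and an induction on $n$ using the cherry property and the four-point conditions to show that the ``bad'' monomials (those whose values tend to $+\infty$) never realize the maximum. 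Your proposal asserts this step away as ``built into the compatibility of $I$'' and ``routine bookkeeping'', but it is exactly where the work lies, and as stated your stratum-closure continuity claim would fail.
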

\begin{proof} Since the sets $U_{ij}$ for $ ij\in \binom{[n]}{2}$ are
  an open cover of $\Gr(2,n)$, it suffices to prove that each
  restriction $\sigma^{(ij)}\colon \cT U_{ij}\to U_{ij}^{\an}$ is
  continuous.  Recall that the topology on $U_{ij}^{\an}$ is the
  topology of pointwise convergence on functions in the coordinate
  ring $R(ij)$. Given a sequence $(x^{(m)})_{m\in \NN}\subset \cT
  U_{ij}$ converging to $x\in \cT U_{ij}$ and any element $f\in
  R(ij)$, we wish to show that the sequence $ \sigma(x^{(m)})(f)$ has
  a limit and, moreover, that its limit is
  $\sigma(x)(f)$. Equivalently,
  \begin{equation}
    \sigma(x)(f)\leq \liminf_m \,\sigma(x^{(m)})(f)\leq \limsup_m\,
    \sigma(x^{(m)})(f)\leq \sigma(x)(f)\qquad \text{ for all } f\in R(ij).
\label{eq:31}
\end{equation}
It suffices to prove that any subsequence of $(x^{(m)})_m$ has a
sub-subsequence satisfying~\eqref{eq:31}.

In order to establish the desired inequalities, we set some notation.
From the topology of $\cTG(2,n)$, we know that $\lim\limits_{m\to
  \infty} x^{(m)}_{kl}-x^{(m)}_{ij}=x_{kl}-x_{ij}$ in $\overline{\RR}$
for all $kl\in \binom{[n]}{2}$. In addition, by the pigeonhole
principle, any subsequence of $(x^{(m)})_m$ has a subsequence
$(x^{(m_k)})_{k}$ satisfying
$J(x^{(m_k)})=J(x^{(m_l)})=:\tilde{J}\subset J$ for all $k,l\in \NN$,
as well as $(x^{(m_{k})})_k\subset \CijTJtilde$ and $x\in \CijTJ$ for
the same tree $T$.  To simplify notation, we assume the original
sequence has this property.  The result follows from Lemmas
~\ref{lm:RightIneq} and~\ref{lm:LeftIneq} below.\end{proof}

\begin{lemma}\label{lm:RightIneq}
  Fix a pair of indices $i,j$ in $[n]$, an arbitrary tree type $T$ on
  $n$ leaves and a point $x\in \CijTJ$.  Suppose that there is a
  vanishing set $\tilde{J} \subset J$ and a sequence $(x^{(m)})_m $ in
  the cone $ \CijTJtilde$ converging to $x$. Then,
\[
\limsup\limits_m\, \sigma^{(ij)}(x^{(m)})(f)\leq
\sigma^{(ij)}(x)(f)\qquad \text{ for all }f\in R(ij).
\]
\end{lemma}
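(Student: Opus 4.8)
The plan is to bound $\sigma^{(ij)}(x^{(m)})(f)$ from above by a finite maximum of monomial values, computed in the coordinate system attached to the \emph{limit} point $x$, and then to pass to the limit term by term. Since $x\in\CijTJ$, Proposition~\ref{pr:CompatibleI} furnishes a set $I=I(ij,T,J)$ of size $2(n-2)$ compatible with $\preceq$ and $J(ij)$; write $\A=I\cap I(ij)$. By Proposition~\ref{pr:transBasis}~\emph{(ii)}, $\rr$ restricts to an isomorphism $R(ij)_{S_{\A\smallsetminus J}}\xrightarrow{\sim}K[u_{kl}:kl\in I][u_{kl}^{-1}:kl\in\A\smallsetminus J]$, so every $f\in R(ij)$ admits a finite expansion $f=\sum_{\alpha}c_{\alpha}w^{\alpha}$ with $c_{\alpha}\in K$, where $w_{kl}:=\ii(u_{kl})$ is the Pl\"ucker function $u_{kl}$ of $R(ij)$ and $\alpha$ ranges over a fixed finite subset of $\NN_{0}^{I\smallsetminus(\A\smallsetminus J)}\times\ZZ^{\A\smallsetminus J}$. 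Via $\rr$, the seminorm $\sigma^{(ij)}(x)$ coincides with the Gauss norm $\delta_I(\pi_I(x))$ of Example~\ref{ex:affinespace} extended to the Laurent ring, and since moreover $\sigma^{(ij)}(x)(w_{kl})=\exp(x_{kl}-x_{ij})$ by Lemma~\ref{lm:sectionProperty}, we obtain the \emph{exact} identity
\[
\sigma^{(ij)}(x)(f)=\max_{\alpha}\Bigl\{|c_{\alpha}|\prod_{kl\in I}\exp\bigl(\alpha_{kl}(x_{kl}-x_{ij})\bigr)\Bigr\}.
\]

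Next I would bring in the sequence. As $x^{(m)}\in\CijTJtilde$ and $\A\smallsetminus J\subseteq I(ij)\smallsetminus J\subseteq I(ij)\smallsetminus\tilde J$ (using $\tilde J\subseteq J$), every $x^{(m)}_{kl}$ with $kl\in\A\smallsetminus J$ is finite, so $\sigma^{(ij)}(x^{(m)})(u_{kl})=\exp(x^{(m)}_{kl}-x^{(m)}_{ij})\neq 0$ there; hence the multiplicative seminorm $\sigma^{(ij)}(x^{(m)})$ extends uniquely to $R(ij)_{S_{\A\smallsetminus J}}$. Applying the non-Archimedean triangle inequality to $f=\sum_{\alpha}c_{\alpha}w^{\alpha}$, and Lemma~\ref{lm:sectionProperty} once more for the values on the $w_{kl}$, gives
\[
\sigma^{(ij)}(x^{(m)})(f)\ \leq\ \max_{\alpha}\Bigl\{|c_{\alpha}|\prod_{kl\in I}\sigma^{(ij)}(x^{(m)})(w_{kl})^{\alpha_{kl}}\Bigr\}\ =\ \max_{\alpha}\Bigl\{|c_{\alpha}|\prod_{kl\in I}\exp\bigl(\alpha_{kl}(x^{(m)}_{kl}-x^{(m)}_{ij})\bigr)\Bigr\}.
\]
Here we only get an inequality, because $I$ need not be compatible with $\tilde J(ij)$; but that is all we need.

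To conclude, I would let $m\to\infty$ in the last display. Since $x^{(m)}\to x$ in $\cT U_{ij}$, we have $x^{(m)}_{kl}-x^{(m)}_{ij}\to x_{kl}-x_{ij}$ in $\overline{\RR}$ for all $kl$, and therefore $\exp(\alpha_{kl}(x^{(m)}_{kl}-x^{(m)}_{ij}))\to\exp(\alpha_{kl}(x_{kl}-x_{ij}))$ for each $kl\in I$: when $kl\in\A\smallsetminus J$ the exponent $\alpha_{kl}$ may be negative but the base stays bounded away from $0$, and when $kl\in I\smallsetminus(\A\smallsetminus J)$ we have $\alpha_{kl}\geq 0$, so the value $0$ is approached continuously. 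Thus for each of the finitely many exponent vectors $\alpha$ the term $|c_{\alpha}|\prod_{kl}\exp(\alpha_{kl}(x^{(m)}_{kl}-x^{(m)}_{ij}))$ converges to $|c_{\alpha}|\prod_{kl}\exp(\alpha_{kl}(x_{kl}-x_{ij}))$, and passing to the maximum over this fixed finite set yields
\[
\limsup_{m}\ \sigma^{(ij)}(x^{(m)})(f)\ \leq\ \max_{\alpha}\Bigl\{|c_{\alpha}|\prod_{kl\in I}\exp\bigl(\alpha_{kl}(x_{kl}-x_{ij})\bigr)\Bigr\}\ =\ \sigma^{(ij)}(x)(f),
\]
the last equality being the exact identity of the first paragraph. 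As $f\in R(ij)$ was arbitrary, this proves the claim.

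The delicate point is the mismatch of vanishing sets: the sequence lives in the stratum $\CijTJtilde$, whose points have fewer vanishing Pl\"ucker coordinates than the limit stratum $\CijTJ$, so the index set $I$ that computes $\sigma^{(ij)}(x)$ on the nose is a priori not adapted to the $x^{(m)}$. The remedy is exactly the hypothesis $\tilde J\subseteq J$, which guarantees that $\sigma^{(ij)}(x^{(m)})$ still extends to the localization $R(ij)_{S_{\A\smallsetminus J}}$, so that the triangle inequality may be run in the $I$-coordinates uniformly in $m$; the resulting one-sided estimate then survives passage to the limit.
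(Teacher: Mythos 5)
Your proof is correct and follows essentially the same route as the paper: expand $\rr(f)$ in the $I$-coordinates adapted to the limit point $x$, use $\tilde J\subseteq J$ to extend $\sigma^{(ij)}(x^{(m)})$ to $R(ij)_{S_{\A\smallsetminus J}}$, apply the non-Archimedean triangle inequality, and pass to the limit monomial by monomial using continuity of $\exp$ on $\overline{\RR}$. The only cosmetic difference is that you never introduce the auxiliary set $\tilde I=I(ij,T,\tilde J)$ (the paper's choice with $\tilde\A\subseteq\A$ is really needed only for the companion Lemma~\ref{lm:LeftIneq}), relying instead on Lemma~\ref{lm:sectionProperty} for the values $\sigma^{(ij)}(x^{(m)})(u_{kl})$, which is legitimate.
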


\begin{proof} 
  We let $\tilde{I}=I(ij,T,\tilde{J})$ and $I=I(ij,T,J)$ be two
  compatible sets, and set $\tilde{\A}=\tilde{I}\cap I(ij)$, $\A=I\cap
  I(ij)$. We construct $\sigma(x^{(m)})$ and $\sigma(x)$ using
  formula~\eqref{eq:section}.  Since $\sigma$ is independent of all
  choices of index sets, we may assume that both $I$ and $\tilde{I}$
  are compatible with the same order $\preceq$ on $[n]\smallsetminus
  \{i,j\}$ having the cherry property on $T$, and that $\tilde{\A}$
  satisfies $\tilde{\A}\subset \A$ by Remark~\ref{rm:ManyI}.  Since
  $\tilde{J}\subset J$, we extend the multiplicative seminorms
  $\sigma(x^{(m)})$ to $R(ij)_{S_{\A\smallsetminus J}}$ using
  Lemma~\ref{lm:ExtensionFromRij}.
  The topology of $\cTG(2,n)$ and the continuity of exponentiation 
on $\overline{\RR}$ imply that
 \begin{equation}
   \lim_{m\to \infty}\sigma(x^{(m)})(u^{\alpha}) =\sigma(x)(u^{\alpha})\qquad
   \text{ for all } \alpha\in 
   \NN_0^{I\smallsetminus (H\smallsetminus J)}\times \ZZ^{H\smallsetminus J}  .\label{eq:34b}
\end{equation}

Given any $f\in R(ij)$, we write $\rr(f)=\sum_{\alpha}
c_{\alpha}u^{\alpha}\in K[u_{kl}: kl\in I][u_{kl}^{-1}: kl\in
\A\smallsetminus {J}]$. Then, by definition, $\sigma(x)(f) =
\max\limits_{\alpha}\{\sigma(x)(c_{\alpha}u^{\alpha})\}$.
Combining~\eqref{eq:section},~\eqref{eq:34b} and the non-Archimedean
triangle inequality for each $\sigma(x^{(m)})$, we conclude that
  \begin{equation*}
  \limsup_{m}\sigma(x^{(m)})(f)\leq
  \limsup_m\max\limits_{\alpha}\{\sigma({x^{(m)}})(c_{\alpha}u^{\alpha})\}\!=\!
  \max\limits_{\alpha}\{\limsup_m\sigma(x^{(m)})(c_{\alpha}u^{\alpha})\}=
\sigma(x)(f).\!\! \qedhere
\end{equation*}
\end{proof}
\begin{lemma}\label{lm:LeftIneq}
  With the same hypothesis as in Lemma~\ref{lm:RightIneq}, we have
\[\sigma(x)(f)\leq \liminf\limits_m
\,\sigma(x^{(m)})(f) \qquad \text{for all }f\in R(ij).
\]
\end{lemma}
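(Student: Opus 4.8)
The plan is to complement Lemma~\ref{lm:RightIneq}: since that lemma already gives $\limsup_m\sigma^{(ij)}(x^{(m)})(f)\le\sigma^{(ij)}(x)(f)$, it suffices to produce the matching lower bound for the $\liminf$, after which the two estimates force the limit to exist and to equal $\sigma^{(ij)}(x)(f)$. The structural fact driving everything is that $\sigma^{(ij)}(x^{(m)})$ is, by construction, \emph{exactly} the skeleton (Gauss) seminorm in the $\tilde I=I(ij,T,\tilde J)$-coordinates: for every $g\in R(ij)$, writing $\tilde\rr(g)=\sum_\beta d_\beta u^\beta$ in those coordinates via the isomorphism of Proposition~\ref{pr:transBasis}, one has the \emph{equality}
\[
\sigma^{(ij)}(x^{(m)})(g)=\max_\beta|d_\beta|\exp\Bigl(\textstyle\sum_{kl\in\tilde I}\beta_{kl}\,(x^{(m)}_{kl}-x^{(m)}_{ij})\Bigr),
\]
so in particular $\sigma^{(ij)}(x^{(m)})(g)$ dominates the value of any single monomial of $\tilde\rr(g)$.

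I would then argue by the same double induction used for Proposition~\ref{pr:transBasis} and Theorem~\ref{thm:section}: on $n$, by removing the $\preceq$-maximal leaf $s$ of a subtree, and inside that on $m$, treating one subtree at a time. The base case $n=3$ is immediate: $T$ is a caterpillar, $\tilde I=I=I(ij)$, both $\sigma^{(ij)}(x^{(m)})$ and $\sigma^{(ij)}(x)$ are Gauss seminorms in the coordinates $\{u_{is},u_{js}\}$, and for $f=\sum_{p,q}c_{pq}u_{is}^pu_{js}^q$ each term $|c_{pq}|\exp\bigl(p(x^{(m)}_{is}-x^{(m)}_{ij})+q(x^{(m)}_{js}-x^{(m)}_{ij})\bigr)$ converges in $\RR_{\ge 0}$ (to $0$ if $is$ or $js$ lies in $J$ and carries a positive exponent, by the convention $\exp(-\infty)=0$), so $\sigma^{(ij)}(x^{(m)})(f)\to\sigma^{(ij)}(x)(f)$. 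For the inductive step, write $f=\sum_{p,q}g_{pq}u_{is}^pu_{js}^q$ with $g_{pq}\in R(ij)'=K[u_{ik},u_{jk}:k\ne i,j,s]$. Multiplicativity of $\sigma^{(ij)}(x^{(m)})$ and the identity $\sigma^{(ij)}(x^{(m)})|_{R(ij)'}=\sigma^{(ij)}({x'}^{(m)})$ (where ${x'}^{(m)}$ drops the coordinates involving $s$; then ${x'}^{(m)}\to x'$ in the appropriate cone of $\cTG(2,n-1)$ with $\tilde J'\subset J'$) reduce, using the inductive statement and Lemma~\ref{lm:RightIneq} in dimension $n-1$, to the claim that $\sigma^{(ij)}(x^{(m)})(f)$ equals the maximum over $(p,q)$ of $\sigma^{(ij)}(x^{(m)})(g_{pq})\exp\bigl(p(x^{(m)}_{is}-x^{(m)}_{ij})+q(x^{(m)}_{js}-x^{(m)}_{ij})\bigr)$. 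Granting this, passing to the limit over the finitely many summands gives
\[
\lim_m\sigma^{(ij)}(x^{(m)})(f)=\max_{p,q}\ \sigma^{(ij)}(x)(g_{pq})\exp\bigl(p(x_{is}-x_{ij})+q(x_{js}-x_{ij})\bigr)\ \ge\ \sigma^{(ij)}(x)(f),
\]
the last inequality being the non-Archimedean triangle inequality for $\sigma^{(ij)}(x)$; this is the assertion of the lemma. The case $m>1$ follows from $m=1$ by the same bookkeeping as in Theorem~\ref{thm:section}, with $\{u_{is},u_{js}\}$ replaced by the coordinates of a single subtree.

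The main obstacle is exactly the no-cancellation equality above. When $s$ is processed in case~(i) of Definition~\ref{def:CorrectI} one has $u_{is},u_{js}\in\tilde I$, and the $\tilde I$-monomials of $\tilde\rr(f)$ coming from distinct $(p,q)$ carry distinct exponents on $u_{is},u_{js}$, so no cancellation occurs. In the other two cases $\tilde I$ contains $u_{is}$ but not $u_{js}$, and $\tilde\rr(u_{js})=u_{it}^{-1}\bigl(-u_{st}+u_{is}\,\tilde\rr'(u_{jt})\bigr)$; after substituting and expanding $\tilde\rr(f)=\sum_{p,q}\tilde\rr'(g_{pq})\,u_{is}^p\bigl(\tilde\rr(u_{js})\bigr)^q$ into $\tilde I$-monomials one must check that the monomials realizing the maximal value for different $(p,q)$ remain distinct and that, when they do coincide, the corresponding coefficients do not cancel. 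This is precisely where the support control of Proposition~\ref{pr:transBasis}~\emph{(iii)} — stating that $\tilde\rr'(u_{jt})$ involves only coordinates indexed by leaves $\preceq t$, in particular never $u_{st}$ — is indispensable; it is the same device used repeatedly in the proof of Theorem~\ref{thm:section} to conclude that a given monomial ``does not cancel with any term'' of another expansion, and it is ultimately why the index sets $I$ and $\tilde I$ were built as in Definition~\ref{def:CorrectI}. Finally, a monomial value exceeding $\sigma^{(ij)}(x)(f)$ in the limit is ruled out directly by Lemma~\ref{lm:RightIneq}.
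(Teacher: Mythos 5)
Your route breaks at the central ``no-cancellation'' claim. The asserted identity
$\sigma(x^{(m)})(f)=\max_{p,q}\,\sigma(x^{(m)})(g_{pq})\,\exp\bigl(p(x^{(m)}_{is}-x^{(m)}_{ij})+q(x^{(m)}_{js}-x^{(m)}_{ij})\bigr)$
is false precisely in the situation that forced the introduction of compatible index sets, namely when $js\notin\tilde I$ (cases \emph{(ii)}/\emph{(iii)} of Definition~\ref{def:CorrectI}). Take $f=u_{st}=u_{is}u_{jt}-u_{it}u_{js}$ with $t\prec s$ in the same subtree, so $g_{10}=u_{jt}$ and $g_{01}=-u_{it}$. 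Since $\sigma(x^{(m)})$ is a section (Theorem~\ref{thm:section}), the left-hand side is $\exp(x^{(m)}_{st}-x^{(m)}_{ij})$, while by the four-point condition on $\{i,j,s,t\}$ the right-hand side is $\exp(x^{(m)}_{is}+x^{(m)}_{jt}-2x^{(m)}_{ij})=\exp(x^{(m)}_{it}+x^{(m)}_{js}-2x^{(m)}_{ij})$, which is strictly larger whenever the internal edge of that quartet has positive length --- the generic case in $\CijTJtilde$. The cancellation you hope to exclude is in fact built into the definition $\tilde{\rr}(u_{js})=u_{it}^{-1}(-u_{st}+u_{is}\,\tilde{\rr}'(u_{jt}))$: the dominant monomials of $u_{is}\tilde{\rr}'(u_{jt})$ and $u_{it}\tilde{\rr}(u_{js})$ coincide and cancel, leaving exactly $u_{st}$. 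Proposition~\ref{pr:transBasis}~\emph{(iii)} controls the supports of the expansions of single coordinates $u_{il},u_{jl}$; it does not prevent cancellations among the products arising from an arbitrary $f$ decomposed by powers of $u_{is},u_{js}$. Consequently the chain $\lim_m\sigma(x^{(m)})(f)=\max_{p,q}\cdots\ge\sigma(x)(f)$ fails at its first equality, and the induction step collapses. (This is also why the paper, in its analogous step, only decomposes $f$ in powers of $u_{is}$ after reducing to the case $js\in J$ and $f$ free of monomials in $\faJij$, a reduction your argument omits.)

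Beyond this, the proposal does not engage with what is actually the hard point of the lemma: the sequence lies in $\CijTJtilde$ with $\tilde J\subsetneq J$ possible, so the coordinate systems adapted to $x^{(m)}$ and to $x$ differ, and $\tilde{\rr}(f)$ may contain monomials with negative exponents in variables $u_{kl}$ with $kl\in\tilde{\A}\cap J\smallsetminus\tilde J$, whose values along the sequence tend to $+\infty$. Your closing remark that such terms are ``ruled out by Lemma~\ref{lm:RightIneq}'' is the right instinct but is not an argument; the paper's proof devotes its entire Case~2 to this: it removes not the $\preceq$-maximal leaf but a leaf $l$ with $il,jl\in\tilde J$, or the $\preceq$-minimal leaf with $il,jl\in J\smallsetminus\tilde J$, and proves $\sigma(x^{(m)})(f)=\sigma'(y^{(m)})(f)$ by explicitly rewriting $\tilde{\rr}'(f)$ into $\tilde{\rr}(f)$ through binomial expansions and locating the dominant terms via the cherry property. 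Without a correct substitute for the false factorization and without a treatment of the $J$ versus $\tilde J$ mismatch, the proof does not go through.
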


\begin{proof} 
  We let $\tilde{\rr}$ be the map from~\eqref{eq:diagram}
  corresponding to any given compatible set
  $\tilde{I}=I(ij,T,\tilde{J})$.  We extend the multiplicative
  seminorm $\sigma(x^{(m)})$ to $K[u_{kl}: kl\in
  \tilde{I}][u_{kl}^{-1}: kl\in \tilde{\A}\smallsetminus \tilde{J}]$
  using Lemma~\ref{lm:ExtensionFromRij}. The main obstruction to
  mimicking the proof of Lemma~\ref{lm:RightIneq} lies in the
  possibility of having a monomial $u^{\alpha}$ in the expression of
  $\tilde{\rr}(f)$ with a negative exponent corresponding to an
  unknown $u_{kl}$ with $kl\in \tilde{\A}\cap J\smallsetminus
  \tilde{J}$. If so, we know that $\lim\limits_{m\to +\infty}
  \sigma(x^{(m)})(u^{\alpha})=+\infty$.  However,
  Lemma~\ref{lm:RightIneq} implies that the sequence
  $\sigma(x^{(m)})(f)$ is bounded above by $\sigma(x)(f)$. This fact
  ensures that these bad monomials will not realize the maximum
  defining $\sigma(x^{(m)})(f)$.

  Fix a polynomial $f\in R(ij)$.  If $\sigma(x)(f)=0$, there is
  nothing to prove.  Now, suppose $\sigma(x)(f)\neq 0$.  In
  particular, this implies that $f\notin \faJij$ by
  Lemma~\ref{lm:zero}.  Moreover, we can assume that no monomial of
  $f$ lies in $\faJij$. Indeed, we know that
  $\sigma(x)(f)=\sigma(x)(f-c_{\alpha}u^{\alpha})$ for any monomial
  $u^{\alpha}$ in $\faJij$ by the strong non-Archimedean triangle
  inequality. We claim that
  \begin{equation*}
    \liminf\limits_m\sigma(x^{(m)})(f)=
    \liminf\limits_m\sigma(x^{(m)})(f-c_{\alpha}u^{\alpha}).\label{eq:18}  
\end{equation*}
If the left-hand side equals $0$, then the identity follows from the
non-Archimedean triangle inequality and the continuity of $\sigma$
when evaluated at monomials in $R(ij)$. If the left-hand side is
strictly positive, then we know that
$\sigma(x^{(m)})(f)>\sigma(x^{(m)})(c_{\alpha}u^{\alpha})$ for $m\gg
0$, and the result is again a consequence of the strong
non-Archimedean triangle inequality.

We prove the statement by induction on $n$, always assuming that $f$
contains no monomial in $\faJij$. If $n = 3$, then $T$ is the
caterpillar tree on three leaves, and we can choose $I = \tilde I =
I(ij)$. In this situation, the result follows immediately. When $n >
3$, we distinguish two cases. For the remainder of the proof, we fix a
partial order $\preceq$ on $[n]\smallsetminus \{i,j\}$ with the cherry
property on $T$ with respect to $i,j$.

\smallskip\noindent \textbf{Case 1:} Assume that for some maximal
element $s$ in the partial order $\preceq$ at least one of $is, js \in
J$.  By symmetry, we may assume that $js\in J$. In particular, we know
that $f$ contains no monomial involving the unknown $u_{js}$.  We
remove $s$ from $T$ and call $T'$ the induced tree and $\preceq'$ the
restriction of the order $\preceq$ to the leaves in $T'$.  We let
$J'=J\cap \binom{[n]\smallsetminus \{s\}}{2}$ and
$\tilde{J}'=\tilde{J}\cap \binom{[n]\smallsetminus \{s\}}{2}$.  We
choose $I$ compatible with $\preceq$ and $J(ij)$ and $\tilde I$
compatible with $\preceq$ and $\tilde{J}(ij)$, both containing the
pair $is$.  We let $R(ij)'=K[u_{il},u_{jl}: l\neq i,j,s]$.

Consider the projection $\pi_s\colon
\TP\pr^{\binom{n}{2}-1}\dashrightarrow \TP\pr^{\binom{n-1}{2}-1}$
obtained by removing all coordinates indexed by pairs containing
$s$. This map corresponds to the natural inclusion
$R(ij)'\hookrightarrow R(ij)$.  When restricted to $U_{ij}$, the
projection $\pi_s$ is well defined and its image lies in the affine
patch $\cT U_{ij}'= \{x_{ij}\neq -\infty\}\subset
\TP\pr^{\binom{n-1}{2}-1}$.  We define $y := \pi_s(x)$ and $y^{(m)} :=
\pi_s(x^{(m)})$ for all $m>0$.
 
We let $\sigma'$ be the section to $\trop$ over $\cTG(2,n-1)$ defined
in Theorem~\ref{thm:section}.  By construction, the intersections of
the sets $I$ and $\tilde{I}$ with $\binom{[n]\smallsetminus \{s\}}{2}$
are compatible with $\preceq'$ and $J'(ij)$ (resp.\ $\tilde{J}'(ij)$),
so we have $\sigma(x)_{|_{R(ij)'}}=\sigma'(y)$ and
$\sigma(x^{(m)})_{|_{R(ij)'}}=\sigma'(y^{(m)})$.  We write
$f=\sum_{b\geq 0} f_b u_{is}^b$, where $f_b\in R(ij)'$.  Then,
$\rr(f)=\sum_b \rr(f_b) u_{is}^b$, $\tilde{\rr}(f)=\sum_b
\tilde{\rr}(f_b) u_{is}^b$, and we know that $\rr(f_b)$ and
$\tilde{\rr}(f_b)$ contain no monomial involving the unknown
$u_{js}$. The inductive hypothesis on each $f_b$ and the continuity of
the exponential function on $\overline{\RR}$ yield
  \begin{align*}
    \sigma(x)(f)& =\max_b\{\sigma'(y)(f_b) \,\sigma(x)(u_{is}^b)\}
    \leq
    \max_b\{ \liminf_m \sigma'(y^{(m)})(f_b)\sigma(x^{(m)})(u_{is}^b)\} \\
    & \leq \liminf_m \,\max_b\{\sigma(x^{(m)})(f_b u_{is}^b) \}=
    \liminf_m \sigma(x^{(m)})(f)\text{.}
  \end{align*}

  \smallskip\noindent \textbf{Case 2:} Assume that every maximal
  element $s$ in the partial order $\preceq$ satisfies $is,js\notin
  J$.  In this situation, Lemma~\ref{lm:ConditionsIAndJ} ensures that
  for all $l \neq i,j$ either $il,jl\in J$ or $il,jl\notin {J}$, and
  similarly for the set $\tilde{J}$.

  First suppose that there is a leaf $l$ with $il, jl\in \tilde{J}$,
  say $l\in T_a$ for some $a=1,\ldots, m$. We remove $l$ from $T$ and
  call $T'$ the resulting tree and $\preceq'$ the induced order.  As
  in Case 1, we consider the projection $\pi_l\colon \cT U_{ij}\to \cT
  U_{ij}'\subset \TP\pr^{\binom{n-1}{2}-1}$ that deletes all
  coordinates involving $l$.  We define $R(ij)'=K[u_{ik}, u_{jk}:
  k\neq i,j,l]$ and we let $\sigma'$ be the section to $\trop$ over
  $\cT U_{ij}'\subset\cTG(2,n-1)$. Choose $I'$ and $\tilde{I}'$ in
  $\binom{[n]\smallsetminus \{l\}}{2}$ compatible with $\preceq'$ and
  $J'(ij)$ (resp.\ $\tilde{J}'(ij)$), with $\{ik: k\neq i,j,l\}\subset
  I'$ and $\{ik: k\neq i,j,l\}\subset \tilde{I}'$. We write $\rr'$ and
  $\tilde{\rr}'$ for the associated embeddings given by
  Proposition~\ref{pr:transBasis}~\emph{(ii)}.

  By assumption, no monomial of $f$ includes the unknowns $u_{il}$ nor
  $u_{jl}$, thus we know $f\in R(ij)'$. In addition, the expressions
  $\rr'(u_{ik})$, $\rr'(u_{jk})$, $\tilde{\rr}'(u_{ik})$ and
  $\tilde{\rr}'(u_{jk})$ for $k\neq i,j,l$ do not involve any unknown
  indexed by a pair containing $l$. Since $il,jl\in \tilde{J}$, we can
  define a set $\tilde{I}$ compatible with $\preceq$ and
  $\tilde{J}(ij)$ by adding to $\tilde{I}'$ a set of the form $\{il,
  jl\}$, $\{il, lt\}$, or $\{jl,lt\}$ for some $t\prec l$. The same
  method applies to $I$ and $I'$.  In particular, $\tilde{I}'\subset
  \tilde{I}$ and $I'\subset I$, so
  Proposition~\ref{pr:transBasis}~\emph{(iii)} ensures that
  $\rr(f)={\rr'}(f)$ and $\tilde{\rr}(f)=\tilde{\rr}'(f)$. The result
  follows by the inductive hypothesis.

  Second, we assume that no pair $il,jl$ lies in $\tilde{J}$. If the
  set $J(ij)$ is also empty, we can take $\tilde{I}=I$ and
  $\tilde{\rr}=\rr$, and the result follows immediately. Thus, we may
  suppose that some subtree $T_a$ has a leaf $l$ satisfying $il,jl\in
  J$. Pick the minimal element $l$ with this property. As before, we
  remove $l$ from $T$ and $T_a$, obtaining trees $T'$ and $T_a'$, and
  define $R(ij)' = K[u_{ik}, u_{jk}: k \neq i,j,l]$. By assumption,
  $f\in R(ij)'$.  As above, we let $\sigma'$ be the section to $\trop$
  over $\cTG(2,n-1)$, and $\pi_l$ the projection map.  We define $y :=
  \pi_l(x)$ and $y^{(m)} := \pi_l(x^{(m)})$ for all $m>0$.  Following
  Proposition~\ref{pr:CompatibleI}, we construct sets $I'$ and
  $\tilde{I}'$ compatible with $\preceq$ and $J'(ij)$ (resp.,\
  $\tilde{J}'(ij)$) such that $\tilde{\A}' \subset \A'$ and $ik\in
  \tilde{\A}'$ for all $k\in T_a'$. We let $\rr'$ and $\tilde{\rr}'$
  be the corresponding maps from~\eqref{eq:diagram} defined on
  $R(ij)'$.

  We claim that for all $f \in R(ij)'$
 \begin{equation}
   \sigma(x)(f)=\sigma'(y)(f)
   \quad \text{ and }\quad
   \sigma(x^{(m)})(f)=
   \sigma'(y^{(m)})(f).\label{eq:52}
 \end{equation}
 The original statement will follow immediately from these two
 identities and the inductive hypothesis.

 We start by proving the left-hand expression of~\eqref{eq:52}. Since
 $il,jl\in J$, we can extend $I'$ to a compatible set $I$ with respect
 to $\preceq$ and $J(ij)$, with associated map $\rr$ defined on
 $R(ij)$. As before, we conclude that $\rr'=\rr$ on $R(ij)'$ so $
 \sigma(x)(f)=\sigma'(y)(f)$ for all $f\in R(ij)'$.

 Recall that $il,jl \in J\smallsetminus \tilde{J}$ and that
 $\tilde{J}(ij)=\emptyset$. As opposed to the previous scenario, the
 difficulty in proving the right-hand side of~\eqref{eq:52} arises
 because we will not be able to extend $\tilde{I}'$ to a compatible
 set $\tilde{I}$ with respect to $\preceq$ and $\tilde{J}(ij)$
 whenever $l$ is not maximal with respect to $\preceq$. Moreover,
 given any compatible set $\tilde{I}$, its associated $\tilde{\rr}$
 will not agree with $\tilde{\rr}'$ over $R(ij)'$. We will need to
 modify both the set $\tilde{I}'$ and the map $\tilde{\rr}'$ to build
 $\tilde{I}$ and $\tilde{\rr}$.

 From now on, we assume $\tilde{I}$ and $I$ are compatible sets
 satisfying the conditions $\{ik: k\neq i,j,l\}\subset\tilde{H}'
 \subset \tilde{H} \subset H$ and $\{ik: k\neq i,j,l\}\subset
 H'\subset H$.  To simplify notation, we let $l-1$ and $l+1$ be the
 (possibly nonexistent) predecessor and successor elements of $l$ with
 respect to $\preceq$.  Both leaves lie in $T_a$.  We analyze two
 cases: whether $l$ is the first leaf of $T_a$ or not.

 If $l$ is the first leaf of $T_a$, the condition that $i(l+1), j(l+1)
 \notin \tilde{J}$ ensures that
 \[
 \tilde{I}=(\tilde{I}'\smallsetminus \{j(l+1)\}) \cup \{il,jl, (l+1)l\}.
 \]
 Notice that $\tilde{\rr}'(u_{j(l+1)})=u_{j(l+1)}$, whereas
 $\tilde{\rr}(u_{j(l+1)})=u_{il}^{-1}(-u_{(l+1)l}+u_{i(l+1)}u_{jl})$.
 The proof of Proposition~\ref{pr:transBasis}~\emph{(iii)} shows that
 no element in the image of $\tilde{\rr}'$ contains a monomial with a
 negative power of $u_{j(l+1)}$. We write
 \[\tilde{\rr}'(f)=\sum_{b\geq 0} f_b \,u_{j(l+1)}^b,\quad \text{
   where all } f_b\in K[u_{rs}: rs\in \tilde{I}'\smallsetminus
 \{j(l+1)\}][u_{rs}^{-1}: rs\in \tilde{H}'\smallsetminus 
 \{j(l+1)\}].
\]
In order to obtain $\tilde{\rr}(f)$ from $\tilde{\rr}'(f)$, we replace
$u_{j(l+1)}^b$ with $\tilde{\rr}(u_{j(l+1)}^b)$ in the expression of
$\tilde{\rr}'(f)$:
 \begin{equation}
   \tilde{\rr}(f)=\sum_{b\geq 0} f_b\, u_{il}^{-b}\sum_{k=0}^b
   \binom{b}{k}(-1)^k  u_{(l+1)l}^k
   u_{i(l+1)}^{b-k} u_{jl}^{b-k}.\label{eq:49}
 \end{equation}
 None of the variables $u_{il}$ and $u_{jl}$ appear in $f_b$, so there
 are no cancellations among the summands of~\eqref{eq:49}.  The
 multiplicativity of $\sigma(x^{(m)})$ and the definition of
 $\sigma(x^{(m)})$ yield
 \[
\sigma(x^{(m)})(f) = \max_{ b,k} \{
 \sigma(x^{(m)})(f_b)\,\sigma(x^{(m)}) (\binom{n}{k} u_{il}^{-b}
 u_{(l+1)l}^k u_{i(l+1)}^{b-k} u_{jl}^{b-k})\}.
\] 
By the four-point
 condition on the quartet $\{i,j,l,l+1\}$ and the definition of
 $\tilde{\rr}(u_{j(l+1)})$ we have
 \[\sigma'(y^{(m)})(u_{j(l+1)})=\sigma(x^{(m)})(u_{j(l+1)})=
 \sigma(x^{(m)})(-u_{il}^{-1}u_{i(l+1)}u_{jl})\geq
 \sigma(x^{(m)})(u_{il}^{-1}u_{(l+1)l}).
 \]
 In addition, we have $\sigma(x^{(m)})(f_b)= \sigma'(y^{(m)})(f_b)$
 for all $b$ by Proposition~\ref{pr:transBasis}~\emph{(iii)}.
 Therefore,
\[\sigma(x^{(m)}) (f) = \max_{b} \{ \sigma(x^{(m)})(f_b) \,\sigma(x^{(m)}) (u_{il}^{-b}
u_{i(l+1)}^{b} u_{jl}^{b})\} = \max_{b}
\{\sigma'(y^{(m)})(f_b\,u_{j(l+1)}^{b})\} = \sigma'(y^{(m)}) (f).\]

Finally, assume that $l$ is not the first leaf of $T_a$. Since $l$ is
also not the maximal leaf of $T_a$, we know that $l-1$ and $l+1$ are
true leaves of $T_a$. In this case
 \[
 \tilde{I}= (\tilde{I}'\smallsetminus \{(l+1)(l-1)\})\cup \{il,
 l(l-1), (l+1)l\}.
 \]
As before,  we write the expression of $\tilde{\rr}'(f)$:
 \[\tilde{\rr}'(f)=\sum_{b\geq 0} f_b \,u_{(l+1)(l-1)}^b,\quad \text{ where } f_b\in K[u_{kl}: kl\in \tilde{I}'\smallsetminus
 \{(l+1)(l-1)\}][u_{kl}^{-1}: kl\in \tilde{H}'].
\]

In order to obtain $\tilde{\rr}(f)$ from the expression of
$\tilde{\rr}'(f)$, we replace the power $u_{(l+1)(l-1)}^b$ by a
Laurent polynomial in $K[u_{kl}: kl\in \tilde{I}][u_{kl}^{-1}: kl \in
\tilde{\A}\smallsetminus \tilde{J}]$. Using the Pl\"ucker relations,
we write $u_{(l+1)(l-1)} = u_{il}^{-1}(u_{i(l+1)}u_{l(l-1)} +
u_{i(l-1)}u_{(l+1)l}) $. We obtain:
 \begin{equation}
   \tilde{\rr}(f)=\sum_{b\geq 0}f_b \,u_{il}^{-b} \sum_{k=0}^b \binom{b}{k}
   u_{i(l+1)}^k u_{l(l-1)}^k u_{i(l-1)}^{b-k} u_{(l+1)l}^{b-k}.\label{eq:11}
\end{equation}
Again, no cancellations occur among the summands in~\eqref{eq:11} by
Proposition~\ref{pr:transBasis}~\emph{(iii)}.

As before, the extensions of $\sigma'(y^{(m)})$ and $\sigma(x^{(m)})$
to this Laurent polynomial ring agree on all $f_b$.  Next, we find the
term on the right-hand side of~\eqref{eq:11} achieving the value of
$\sigma(x^{(m)})(f)$. Since $l-1\prec l \prec l+1$, the cherry
property of $\preceq$ with respect to $T$ ensures that either
$\{l,l+1\}$ or $\{l-1,l\}$ is a cherry of this quartet.

In the former case, the four-point condition on the quartet
$\{i,l-1,l, l+1\}$ ensures that
 \[
 \sigma(x^{(m)})(u_{(l+1)(l-1)})=
 \sigma(x^{(m)})(-u_{il}^{-1}u_{i(l+1)}u_{l(l-1)})\geq
 \sigma(x^{(m)})(u_{il}^{-1}u_{i(l-1)}u_{(l+1)l}).
 \]
 In particular, this implies that
 \begin{align*}
   \sigma(x^{(m)})(f) &= \max_{b} \{
   \sigma(x^{(m)})(f_b)\,\sigma(x^{(m)})(u_{il}^{-b}u_{i(l+1)}^{b}u_{l(l-1)}^{b})\}
   = \max_{b} \{ \sigma(x^{(m)})(f_b) \sigma(x^{(m)})(
   u_{(l+1)(l-1)}^b)\}\\
   & = \max_{b} \{ \sigma'(y^{(m)})(f_b) \sigma'(y^{(m)})(
   u_{(l+1)(l-1)}^b)\} =\sigma'(y^{(m)})(f).
 \end{align*}

 Finally, if $\{l-1,l\}$ is a cherry of the quartet $\{i,l-1,l,l+1\}$
 we have
 \[
 \sigma(x^{(m)})(u_{(l-1)(l+1)})=
 \sigma(x^{(m)})(-u_{il}^{-1}u_{i(l-1)}u_{(l+1)l})\geq
 \sigma(x^{(m)})(u_{il}^{-1}u_{i(l+1)}u_{l(l-1)}).
 \]
 We conclude that
 \begin{equation*}
   \sigma(x^{(m)})(f) = \max_{ b} \{
   \sigma(x^{(m)})(f_b u_{il}^{-b}u_{i(l-1)}^{b}u_{(l+1)l}^{b})\}
   = \max_{b} \{
   \sigma'(y^{(m)})(f_b u_{(l+1)(l-1)}^{b})\}
   =\sigma'(y^{(m)})(f).
   \qedhere
 \end{equation*}
\end{proof}

%%%%%%%%%%%%%%%%%%%%%%%%%%%%%%%%%%%%%%%%%%%%%%%%%%%%%%%%%%%%%%%%%%%%%%%%%%%%%%
%%%%%%%%%%%%%%%%%%%%%%%%%%%%%%%%%%%%%%%%%%%%%%%%%%%%%%%%%%%%%%%%%%%%%%%%%%%%%%
\section{Fibers of tropicalization}
\label{sec:fibers-trop}
%%%%%%%%%%%%%%%%%%%%%%%%%%%%%%%%%%%%%%%%%%%%%%%%%%%%%%%%%%%%%%%%%%%%%%%%%%%%%%

In this section, we study the fibers of the tropicalization map
$\trop\colon \Gr(2,n)^{\an} \rightarrow \cT \Gr(2,n)$. These fibers
are affinoid spaces (see~\cite[Section 4.13]{BPR}). We describe them explicitly in
Proposition~\ref{prop:fiber}.  This perspective gives a natural
geometric explanation for the maximality property of our section
$\sigma$ by means of Shilov boundaries. For a different approach to
investigate these fibers, we refer to~\cite{FibersOfTrop}.

As we saw in Section~\ref{sec:exist-gener-tree}, the vanishing sets of
points in $\cTG(2,n)$ play a crucial role when constructing the
section $\sigma$. They induce a stratification of $\Gr(2,n)$ into
subvarieties of tori associated to complements of coordinate
hyperplanes. For every $J\subsetneq \binom{[n]}{2}$, we define
\begin{equation}
E_J = \{p \in \pr^{\binom{n}{2}-1}_K: p_{kl} = 0 \mbox{ if and
  only if } kl \in J\}.\label{eq:22}
\end{equation}
Using the Pl\"ucker embedding $\varphi$ from~\eqref{eq:Pluecker}, we
define a locally closed subscheme of $\Gr(2,n)$ (endowed with the
reduced-induced structure):
\begin{equation}
  \label{eq:20}
  \Gr_J(2,n) = \varphi^{-1}(E_J).
\end{equation}
For example, the stratum associated to the empty set is
$\Gr_{\emptyset}(2,n)=\Gr_0(2,n)$.  Since we only consider $J \neq
(\binom{[n]}{2})$, we can always choose $ij \notin J$ and regard
$Gr_J(2,n)$ inside the big open cell $U_{ij}$. Notice that
$\Gr_J(2,n)$ will be nonempty if and only if $J$ is the vanishing set
of some point in $\cTG(2,n)$. The next result explains how to certify
this condition.  We discuss the case of $n=4$ in
Example~\ref{ex:degn=4}.
\begin{lemma}
  \label{lm:CharJ} Let $J\subsetneq \binom{[n]}{2}$ and assume
  $ij\notin J$. Then, $\Gr_J(2,n)$ is nonempty if and only if the set
  $J$ satisfies the following saturation conditions:
  \begin{enumerate}[(i)]
  \item if $kl,ls\in J$ and $ks\neq ij$, then $ks\in J$ or $il,jl\in J$,
\item if $ik,jk\in J$, then $kl\in J$ for all $l$.
  \end{enumerate}
\end{lemma}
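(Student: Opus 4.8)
The plan is to argue by direct linear algebra inside the big open cell $U_{ij}$. A point of $\Gr_J(2,n)\subset U_{ij}$ is the same as a rank-two $2\times n$ matrix with $p_{ij}\neq 0$; after normalizing so that its $i$-th and $j$-th columns are the standard basis vectors $e_1,e_2$, every remaining column is a vector $v_l\in K^2$ for $l\neq i,j$. In these terms the Pl\"ucker coordinates become proportionality conditions: $p_{il}=0$ iff $v_l$ is proportional to $e_1$, $p_{jl}=0$ iff $v_l$ is proportional to $e_2$, and $p_{kl}=0$ iff $v_k$ and $v_l$ are proportional, where the zero vector is regarded as proportional to everything. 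In particular $v_l=0$ precisely when $il,jl\in J$; I will call such an index $l$ \emph{black}. So a point of $\Gr_J(2,n)$ is an assignment $l\mapsto v_l$ realizing the prescribed vanishing pattern $J$, and the first step is to set up this dictionary cleanly (the Pl\"ucker relations~\eqref{eq:3TermPluecker} are subsumed in it, since the $v_l$ are arbitrary).

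For the ``only if'' direction, suppose $\Gr_J(2,n)\neq\emptyset$ and fix such an assignment. If $ik,jk\in J$ then $v_k=0$, hence $p_{kl}=\det(v_k\mid v_l)=0$ and $kl\in J$ for every $l$; this is (ii). For (i), assume $kl,ls\in J$ with $ks\neq ij$ and $l$ not black, so $v_l\neq 0$; then $v_k$ and $v_s$ are both proportional to $v_l$, hence proportional to each other, so $p_{ks}=0$ and $ks\in J$. The excluded case $ks=ij$ cannot coexist with $l$ non-black: $\{k,s\}=\{i,j\}$ would force $il,jl\in J$, i.e.\ $l$ black. Thus (i) and (ii) are necessary.

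For the ``if'' direction, assume (i) and (ii); I would construct a matrix over an algebraically closed extension $\overline K$ of $K$, which suffices, since a scheme of finite type over a field is nonempty as soon as it has a point over the algebraic closure. Declare two non-black indices $k,l\in[n]$ equivalent if $k=l$ or $kl\in J$; symmetry is clear, and transitivity is exactly (i) --- using, as above, that its exceptional clause ``$il,jl\in J$'' is only triggered when the middle index $l$ is black, which is excluded. So this is an equivalence relation on the non-black indices, and since $ij\notin J$ the indices $i$ and $j$ lie in distinct classes $C_i,C_j$. Now choose pairwise distinct points of $\mathbb P^1(\overline K)$, one per equivalence class, assigning $[1:0]$ to $C_i$, $[0:1]$ to $C_j$, and points with both coordinates nonzero to all other classes (possible since $\mathbb P^1(\overline K)$ is infinite). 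Let $v_l$ be a representative vector of the point attached to the class of $l$ for $l$ non-black, and $v_l=0$ for $l$ black. I would then verify that the resulting $2\times n$ matrix has rank two (columns $i,j$ are $e_1,e_2$) and that its Pl\"ucker vanishing set equals exactly $J$: $p_{kl}$ vanishes when $k,l$ lie in a common class, and then $kl\in J$ by construction; $p_{kl}$ is nonzero when $k,l$ lie in distinct non-black classes, since the assigned directions are distinct, and such a pair is not in $J$ since it would otherwise be equivalent; and any $p_{kl}$ involving a black index vanishes, the pair lying in $J$ by (ii).

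The only genuinely delicate point is the interplay of the two exceptional clauses ``$ks\neq ij$'' and ``$il,jl\in J$'' in condition (i): they are what makes the equivalence relation above well defined, and both the forward and backward arguments turn on the observation that $ks=ij$ with $kl,ls\in J$ forces $l$ to be black. Everything else --- reducing to $U_{ij}$, normalizing the matrix, and checking the vanishing set is exactly $J$ --- is bookkeeping with the column-proportionality dictionary established in the first step.
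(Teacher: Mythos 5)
Your proof is correct. The sufficiency half is essentially the paper's own argument: your ``black'' indices are the paper's set $Z_0(J)$, your equivalence relation on the remaining indices is exactly the relation $\sim$ used there, and assigning pairwise distinct directions in $\pr^1$ (with $e_1$, $e_2$ for the classes of $i$ and $j$) is the same construction as the paper's columns $v_k=(1,c_k)$; the only cosmetic difference is that you work over $\overline K$ rather than a finite extension with enough elements. The necessity half, however, is genuinely different in mechanism: the paper first invokes that $\Gr_J(2,n)\neq\emptyset$ forces $J=J(x)$ for some $x\in\cTG(2,n)$ and then reads off (i) and (ii) from the four-point conditions on the quartets $\{i,s,k,l\}$, $\{j,s,k,l\}$ and $\{i,j,k,l\}$, whereas you normalize a matrix representative in $U_{ij}$ and argue directly with proportionality of columns. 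Your route is more elementary and self-contained (it does not use the tropicalization map or the four-point condition at all), and it correctly isolates the one delicate point, namely that $kl,ls\in J$ with $\{k,s\}=\{i,j\}$ forces $il,jl\in J$; the paper's route is shorter given the tropical machinery already in place. Both the ``only if'' matrix argument and the verification that the constructed matrix has vanishing pattern exactly $J$ check out.
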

\begin{proof}
  Assume $\Gr_J(2,n)$ is nonempty. By the previous discussion, we know
  that $J=J(x)$ for some $x\in \cTG(2,n)$.  Let $T$ be such that $x\in
  \overline{\cTC_T}$. We arrange $T$ as in the right of
  Figure~\ref{fig:caterpillar}. Assume $kl,ls\in J$. The four-point
  conditions on the quartets $\{i,s,k,l\}$ and $\{j, s,k,l\}$ give
  $x_{il}+x_{ks}=x_{jl}+x_{ks}=-\infty$. Thus, condition \emph{(i)}
  holds.  Similarly, if $ik,jk\in J$, the four-point condition on the
  quartet $\{i,j,k,l\}$ and our assumption that $ij\notin J$ yield
  \emph{(ii)}.

  Conversely, suppose that $J$ is saturated.  To show that $Gr_J(2,n)$
  is nonempty, it suffices to construct an $L$-valued point of
  $Gr_J(2,n)$ for some extension field $L|K$. Such a point is
  represented by a matrix $X\in L^{2\times n}$ whose only vanishing
  $2\times 2$-minors are those indexed by pairs in $J$.  Define
\begin{equation}
  \label{eq:40}
  Z_0(J)=\{l\in [n] : il,jl\in J\}.
\end{equation}
Note that $i,j\notin Z_0(J)$. We define a relation $\sim$ on
$[n]\smallsetminus Z_0(J)$ as follows:
\begin{equation}\label{eq:39}
  k\sim l\quad \text{ if and only if }\quad kl\in J \text{ or }k=l.
\end{equation}
The saturation conditions ensure that $\sim$ is an equivalence
relation.

We decompose $[n]\smallsetminus Z_0(J)=\bigsqcup_{k=1}^t B_k$ into its
equivalence classes.  Notice that $i\not\sim j$, so we may assume that
$i\in B_1$ and $j\in B_2$.  Pick a finite field extension $L|K$
containing at least $t-1$ elements, and choose $t-2$ distinct elements
$c_3,\ldots, c_t\in L^{\ast}$.  For each $k=3,\ldots, t$, we consider
the column vector $v_k:=(1,c_k)$. We set $v_1=(1,0)$ and
$v_2=(0,1)$. Using these vectors, we build a rank two matrix
$X=(X^{(1)} | \ldots | X^{(n)})$ by columns, where
$X^{(l)}=\mathbf{0}$ for all $l\in Z_0(J)$, and $X^{(l)}=v_k$ if and
only if $l\in B_k$.  The vectors $\{v_1,\ldots, v_t\}$ are pairwise
linearly independent, so $X$ induces an $L$-valued point in
$\Gr_J(2,n)$.  This concludes our proof.
\end{proof}

From now on, we assume $J$ is always a vanishing set and we fix
$ij\notin J$. We view the set $\Gr_J(2,n)$ inside $U_{ij}$. We let
$\fa_J$ be the ideal of $R(ij)$ generated by $\{u_{kl}: kl\in J\}$. As
usual, if $kl\notin I(ij)$, we interpret
$u_{kl}=u_{ik}u_{jl}-u_{il}u_{jk}$. Our next result implies that
$\Gr_J(2,n)$ is irreducible.

\begin{lemma}\label{lm:a_JPrime}
The ideal $\fa_J$ of $R(ij)$ is  prime.
\end{lemma}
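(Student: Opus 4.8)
The plan is to compute the quotient ring $R(ij)/\fa_J$ explicitly and to show it is an integral domain. Retain the notation of the proof of Lemma~\ref{lm:CharJ}: set $Z_0(J)=\{l:il,jl\in J\}$, and let $\sim$ be the equivalence relation on $[n]\smallsetminus Z_0(J)$ with $k\sim l$ iff $kl\in J$ or $k=l$ (the saturation conditions guarantee this is an equivalence relation), with classes $B_1\ni i$, $B_2\ni j$, $B_3,\dots,B_t$. First I would reduce to the case $Z_0(J)=\emptyset$: for $l\in Z_0(J)$ the generators $u_{il}$ and $u_{jl}$ of $\fa_J$ are coordinates of the polynomial ring $R(ij)$, and by the saturation condition~\emph{(ii)} of Lemma~\ref{lm:CharJ} every remaining generator $u_{lm}=u_{il}u_{jm}-u_{im}u_{jl}$ involving an index $l\in Z_0(J)$ lies in $\langle u_{il},u_{jl}\rangle$; hence quotienting out these coordinates identifies $R(ij)/\fa_J$ with an ideal of exactly the same shape attached to $J$ inside the polynomial ring on the remaining coordinates.

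Assuming now $Z_0(J)=\emptyset$, I would spell out the generators of $\fa_J$. Since $il\in J$ iff $l\in B_1\smallsetminus\{i\}$ and $jl\in J$ iff $l\in B_2\smallsetminus\{j\}$, the coordinate generators of $\fa_J$ are exactly $\{u_{il}:l\in B_1\smallsetminus\{i\}\}\cup\{u_{jl}:l\in B_2\smallsetminus\{j\}\}$. The quadratic generators $u_{kl}=u_{ik}u_{jl}-u_{il}u_{jk}$ (with $k,l\neq i,j$) occur precisely when $k\neq l$ lie in a common class $B_m$; those arising from $B_1$ or $B_2$ already lie in the ideal of the coordinate generators (for instance $u_{ik},u_{il}\in\langle u_{im}:m\in B_1\smallsetminus\{i\}\rangle$), and pairs in two distinct classes contribute no generator at all. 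Consequently all relations split along the classes, and
\begin{equation*}
  R(ij)/\fa_J \;\cong\; K[u_{jl}:l\in B_1\smallsetminus\{i\}]\;\otimes_K\; K[u_{il}:l\in B_2\smallsetminus\{j\}]\;\otimes_K\;\bigotimes_{m=3}^{t}A_m,
\end{equation*}
where $A_m=K[u_{ik},u_{jk}:k\in B_m]/I_m$ and $I_m$ is generated by the $2\times 2$ minors $u_{ik}u_{jl}-u_{il}u_{jk}$ of the generic $2\times|B_m|$ matrix with rows $(u_{ik})_{k\in B_m}$ and $(u_{jk})_{k\in B_m}$.

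Finally I would conclude with two standard facts. Each $A_m$ is the coordinate ring of the variety of $2\times|B_m|$ matrices of rank at most one, that is, the affine cone over the Segre product $\pr^1\times\pr^{|B_m|-1}$; the ideal of $2\times 2$ minors of a generic matrix is prime over any field, so $A_m$ is a geometrically integral $K$-algebra, and polynomial rings are geometrically integral as well. Since a tensor product over the field $K$ of finitely many geometrically integral $K$-algebras is again an integral domain, $R(ij)/\fa_J$ is a domain, and therefore $\fa_J$ is prime. The step I expect to be the main obstacle is the combinatorial bookkeeping — determining exactly which generators of $\fa_J$ survive and checking that the quadratic relations split cleanly along the classes $B_m$; this rests squarely on the saturation conditions of Lemma~\ref{lm:CharJ}. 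On the algebraic side the only subtlety is that a tensor product of domains over a non-closed field need not be a domain, which is why geometric integrality of the factors is invoked — but this holds automatically here, since both the polynomial factors and the $2\times r$ determinantal factors are geometrically integral.
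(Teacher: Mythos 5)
Your proof is correct, and it takes a genuinely different route from the paper's. The paper argues by induction on $n$: it peels off one index at a time (an element of $Z_0(J)$, or a leaf of $B_1$ or $B_2$), reducing to the core case $Z_0(J)=\emptyset$, $B_1=\{i\}$, $B_2=\{j\}$, where $\fa_J$ becomes a sum of binomial ideals in disjoint variable sets; primality there is obtained after passing to the Laurent polynomial ring and invoking the Eisenbud--Sturmfels criterion that a binomial ideal whose exponent vectors span a primitive sublattice is prime. You instead give a one-shot structural computation of the quotient: after the (correct) reduction modulo the coordinates indexed by $Z_0(J)$, you observe that the quadratic generators coming from pairs inside $B_1$ or $B_2$ are absorbed by the coordinate generators, so $R(ij)/\fa_J$ splits as a tensor product over $K$ of polynomial rings and the rings $A_m$ of $2\times 2$ minors of a generic $2\times|B_m|$ matrix (cones over Segre varieties $\pr^1\times\pr^{|B_m|-1}$), and you conclude from the classical primality of generic determinantal ideals together with geometric integrality of the factors. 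Your bookkeeping checks out (pairs in distinct classes contribute no generator, all pairs within a class $B_m$, $m\geq 3$, contribute exactly the minors generating $I_m$), and you rightly flag the non-closed-field subtlety for tensor products, which geometric integrality handles. What each approach buys: yours is more explicit and self-contained, exhibiting $\Gr_J(2,n)$'s closure as a product of tori and Segre cones and avoiding any passage to a localization (the paper's final case only establishes primality of the extension of $\fa_J$ to the Laurent ring, leaving the contraction step implicit); the paper's induction, on the other hand, matches the leaf-removal machinery used throughout the rest of the paper and keeps the argument within the binomial-ideal toolkit it cites.
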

\begin{proof} Fix $x\in \cTG(2,n)$ such that $J=J(x)$.  We proceed by
  induction on $n$. If $n=3$, then $\fa_J =\fa_{J(ij)}$ is a monomial
  ideal generated by degree-one monomials, hence it is prime.

  Suppose $n>3$ and recall the set $Z_0(J)$ from~\eqref{eq:40}. We
  analyze four cases. First, assume $Z_0(J)$ is nonempty, pick any
  $s\in Z_0(J)$ and let $R(ij)'=K[u_{ik}, u_{jk}: k\neq i,j,s]$. The
  set $J'=J\cap \binom{[n]\smallsetminus \{s\}}{2}$ is the vanishing
  set of the projection of $x$ away from all coordinates containing
  $s$.  We write $\fa_J=\fa_{J'}+ \langle u_{is}, u_{js}\rangle$. The
  ideal $\fa_{J'}\subset R(ij)'$ is prime by the inductive
  hypothesis. Since $R(ij)/\fa_J\simeq R(ij)'/\fa_{J'}$ is an integral
  domain, the result follows.

  On the contrary, assume $Z_0(J)$ is empty. Recall the equivalence
  relation $\sim$ on $[n]$ from~\eqref{eq:39} and its equivalence
  classes $\{B_k: k=1,\ldots,t\}$, where $i\in B_1$ and $j\in B_2$. If
  $B_1\sqcup B_2=[n]$, then $\fa_J=\langle u_{ik}, u_{jl}: k\in B_1,
  l\in B_2\rangle$ is a monomial prime ideal of $R(ij)$ by
  construction.

  Suppose $B_1\sqcup B_2\neq [n]$. If $B_1\neq \{i\}$, we pick an
  element $s\in B_1\smallsetminus \{i\}$. Then, $J'= J\cap
  \binom{[n]\smallsetminus \{s\}}{2}$ is a vanishing set and
  $\fa_{J}=\fa_{J'} +\langle u_{is}\rangle$ by
  Lemma~\ref{lm:CharJ}. The ideal $\fa_{J'}\subset R(ij)'$ is prime by
  the inductive hypothesis, so $R(ij)/\fa_J \simeq
  (R(ij)'/\fa_{J'})\otimes_K K[u_{js}]$ is an integral domain and
  $\fa_J$ is a prime ideal. An analogous statement proves the result
  when $B_2\neq \{j\}$.

  Finally, assume $Z_0(J)=\emptyset$ and that $B_1=\{i\}$,
  $B_2=\{j\}$.  We show that the localization $\fa_J\subset
  K[u_{il}^{\pm},u_{jl}^{\pm}: l\neq i,j]$ is a prime ideal. Using the
  decomposition $[n]=\bigsqcup_{k=1}^t B_k$, we write $\fa_J = \sum_{k
    : |B_k|>1}\fa_{J_k}$, where $\fa_{J_k}=\langle
  u_{ir}u_{jl}-u_{il}u_{jr}: r,l\in B_k\rangle$ for all $k$ such that
  $|B_k|>1$.  For each $k=3,\ldots,t$, we fix an element $q_k\in B_k$.
  The ideals $\fa_{J_k}$ involve disjoint sets of variables and we can
  rewrite them as
  \begin{equation*}
    \fa_{J_k}=\langle u_{iq_k}u_{jl}u_{jq_k}^{-1}u_{il}^{-1}-1: l\in
    B_k\smallsetminus \{q_k\}\rangle
    \subset     K[u_{ir}^{\pm},u_{jr}^{\pm}: r\in B_k][u_{il}^{\pm},
    u_{jl}^{\pm}: l\notin (\{i,j\}\cup B_l )].
\end{equation*} 
From this we see that the set of exponents of the binomial generators
of $\fa_J$ generate a primitive sublattice of $\ZZ^{2(n-2)}$, so
$\fa_J$ is prime by~\cite[Theorem 2.1]{binomialIdealsES}.
\end{proof}

Lemma~\ref{lm:a_JPrime} implies that the coordinate ring of
$\Gr_J(2,n)$ is $(R(ij)/ \fa_J)_{S_{(J\cup\{ij\})^c}}$.  Here, we view
$S_{(J\cup \{ij\})^c}$ as the multiplicative subset generated by the
residue classes of all $u_{kl}$ with $kl \notin J \cup \{ij\}$.

The embedding $\Gr_J(2,n)^{\an} \subset U_{ij}^{\an}$ identifies
multiplicative seminorms on the coordinate ring of $\Gr_J(2,n)$ with
those multiplicative seminorms on $R(ij)$ which vanish precisely on
$\fa_J$.  We view the fiber of $\trop$ over any point $x$ with
vanishing set $J$ as an element in $\Gr_J(2,n)^{\an}$.

We fix a tree $T$ such that $x\in \overline{\cTC_T}$.  As in
Section~\ref{sec:exist-gener-tree}, we fix a partial order $\preceq$
on $[n]\smallsetminus \{i,j\}$ having the cherry property on $T$ with
respect to $i$ and $j$, and we choose a set $I$ compatible with
$\preceq$ and $J(ij)$.  In order to caracterize the fiber
$\trop^{-1}(x)$ in algebraic terms, we will need a suitable
description of the coordinate ring of $\Gr_J(2,n)$. We use the
following adaptation of the system of coordinates $I$ from
Section~\ref{sec:cont-sect-from}.

Recall that the two embeddings from \eqref{eq:diagram} are compatible
with the embeddings in the function field of $R(ij)$. We extend the
map $\ii$ to $\ii\colon K[ u_{kl}: kl \in I][u_{kl}^{-1}: kl\in
\A\smallsetminus J]\to R(ij)_{S_{\A\smallsetminus J}}$ so that
$\ii\circ \rr=\id$ and $\rr\circ \ii=\id$.  The embeddings $\rr$ and
$\ii$ induce the following inclusions
\begin{equation}\label{eq:30}
  \xymatrix@1{
    K[u_{kl}: kl \in I]/\ii^{-1}(\fa_J)\; \ar@{^{(}->}[r]^-{\overline{\ii}} & R(ij)/\fa_J\; \ar@{^{(}->}[r]^-{\overline{\rr}} & K[ u_{kl}: kl \in
    I][u_{kl}^{-1}: kl\in \A\smallsetminus J]}/\langle \rr(\fa_J)\rangle.
\end{equation}
The second map is injective by Lemma~\ref{lm:a_JPrime}.

Using these maps, we identify the coordinate ring of $\Gr_J(2,n)$ with
a suitable localization of the ring on the left-hand side
of~\eqref{eq:30}.  By definition, the ideal $\langle
\rr(\fa_J)\rangle$ is generated by the set $\{\rr(u_{kl}): kl\in J\}$.
The condition $\sigma(x)(u_{kl})=0$ and the formula~\eqref{eq:section}
imply that it is in fact generated by the monomials $\{u_{kl}: kl\in
I\cap J\}$.  Using this fact, we rewrite ~\eqref{eq:30} as follows:
\begin{equation}\label{eq:7}
  \xymatrix@1{
    K[u_{kl}: kl \in I\smallsetminus J]\; \ar@{^{(}->}[r]^-{\overline{\ii}} & R(ij)/\fa_J\; \ar@{^{(}->}[r]^-{\overline{\rr}} & K[ u_{kl}: kl \in
    I\smallsetminus J][u_{kl}^{-1}: kl\in \A\smallsetminus J]}.
\end{equation}

We let $S$ be the multiplicatively closed subset of $K[ u_{kl}: kl \in
I\smallsetminus J][u_{kl}^{-1}: kl\in \A\smallsetminus J]$ generated
by the polynomials
\begin{equation}
  \label{eq:33}
  f_{rs}:= \overline{\rr}(u_{rs}) \in K[u_{kl}: kl\in I\smallsetminus J][u_{kl}^{-1}:
  kl\in \A\smallsetminus J] \qquad \text{ for all }rs \notin J\cup\{ ij\},
\end{equation}
where we view all $u_{rs}$ in $R(ij)/\fa_J$.  The following result
gives a new description of the coordinate ring of $\Gr_J(2,n)$.

\begin{lemma}\label{lem:iso}   The inclusion $\overline{\rr}$
  from~\eqref{eq:7} induces an isomorphism
  \[
  \xymatrix@1{ \overline{\rr}\colon (R(ij) / \fa_J)_{S_{(J\cup
        \{ij\})^c}} \ar[rr]^-{\sim} & & (K[u_{kl}: kl \in I
    \smallsetminus J][u_{kl}^{-1}: kl \in \A \smallsetminus
    J])_S\text{.}} \]
\end{lemma}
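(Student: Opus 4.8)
The plan is to deduce the isomorphism by localizing, in two stages, the isomorphism of localizations
\[
\rr\colon R(ij)_{S_{\A\smallsetminus J}}\ \xrightarrow{\ \sim\ }\ K[u_{kl}\colon kl\in I]_{S_{\A\smallsetminus J}}=K[u_{kl}\colon kl\in I][u_{kl}^{-1}\colon kl\in\A\smallsetminus J]
\]
furnished by Proposition~\ref{pr:transBasis}~\emph{(ii)}, an isomorphism of $K$-algebras carrying $\ii$ to its inverse. First I would check that under $\rr$ the extended ideal $\fa_J R(ij)_{S_{\A\smallsetminus J}}$ corresponds to the monomial ideal generated by $\{u_{kl}\colon kl\in I\cap J\}$: for $kl\in I\cap J$ we have $\ii(u_{kl})=u_{kl}\in\fa_J$ (using the three-term Pl\"ucker relation~\eqref{eq:3TermPluecker} when $kl\notin I(ij)$), and conversely, for $kl\in J$ the element $\rr(u_{kl})$ is killed by $\sigma(x)$ (Lemma~\ref{lm:zero}), so the max-formula~\eqref{eq:section} forces every monomial of the Laurent polynomial $\rr(u_{kl})$ to be divisible by a variable $u_{rs}$ with $\sigma(x)(u_{rs})=0$, i.e.\ with $rs\in I\cap J$. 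Since inverting $S_{\A\smallsetminus J}$ (a set of variables disjoint from $J$) commutes with passing to the quotient by $\fa_J$, the isomorphism $\rr$ descends to
\[
\overline{\rr}\colon (R(ij)/\fa_J)_{S_{\A\smallsetminus J}}\ \xrightarrow{\ \sim\ }\ K[u_{kl}\colon kl\in I\smallsetminus J][u_{kl}^{-1}\colon kl\in\A\smallsetminus J],
\]
which is precisely the map $\overline{\rr}$ of~\eqref{eq:7} after inverting $S_{\A\smallsetminus J}$.

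Second I would localize this isomorphism once more. Because $\A\smallsetminus J\subseteq(J\cup\{ij\})^c$, inside $R(ij)/\fa_J$ the set $S_{\A\smallsetminus J}$ is contained in $S_{(J\cup\{ij\})^c}$, so $(R(ij)/\fa_J)_{S_{(J\cup\{ij\})^c}}$ is obtained from $(R(ij)/\fa_J)_{S_{\A\smallsetminus J}}$ by further inverting the classes of the remaining elements $u_{rs}$ with $rs\notin J\cup\{ij\}$. Under $\overline{\rr}$ these go to $f_{rs}=\overline{\rr}(u_{rs})$, i.e.\ to the generators of the multiplicative set $S$ of~\eqref{eq:33}. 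Localizing the source of $\overline{\rr}$ at the image of $S_{(J\cup\{ij\})^c}$ and its target at $S$ therefore yields
\[
(R(ij)/\fa_J)_{S_{(J\cup\{ij\})^c}}\ \xrightarrow{\ \sim\ }\ \big(K[u_{kl}\colon kl\in I\smallsetminus J][u_{kl}^{-1}\colon kl\in\A\smallsetminus J]\big)_S,
\]
and by construction this map is the localization of the inclusion $\overline{\rr}$ of~\eqref{eq:7}, hence is induced by it, as claimed; recalling from Lemma~\ref{lm:a_JPrime} that the left-hand side is the coordinate ring of $\Gr_J(2,n)$, this is the sought presentation.

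The argument is essentially bookkeeping with iterated localizations once Proposition~\ref{pr:transBasis}~\emph{(ii)} is available. The only point that needs genuine attention is the identification of $\langle\rr(\fa_J)\rangle$ with the monomial ideal $\langle u_{kl}\colon kl\in I\cap J\rangle$, since for $kl\in J\smallsetminus I(ij)$ the image $\rr(u_{kl})$ is a genuine Laurent polynomial rather than a variable; as indicated above (and in the discussion preceding the statement) this is settled by combining Lemma~\ref{lm:zero} with the explicit max-formula~\eqref{eq:section} for $\sigma(x)$.
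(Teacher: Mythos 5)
Your proof is correct and follows essentially the same route as the paper: the decisive point — that $\langle\rr(\fa_J)\rangle$ is the monomial ideal $\langle u_{kl}: kl\in I\cap J\rangle$, proved via Lemma~\ref{lm:zero} together with the max-formula defining $\sigma(x)$ — is exactly what the paper establishes in the discussion leading to~\eqref{eq:7}, and the remaining step is the same observation that $\overline{\rr}$ carries $S_{(J\cup\{ij\})^c}$ onto $S$, so the map extends to the localizations. The only cosmetic difference is that you obtain the isomorphism by localizing and quotienting the isomorphism of Proposition~\ref{pr:transBasis}~\emph{(ii)} in two stages, which yields bijectivity for free, whereas the paper extends the inclusion~\eqref{eq:7} and invokes its injectivity (resting on Lemma~\ref{lm:a_JPrime}) together with surjectivity by construction.
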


\begin{proof} Since the map $\overline{\rr}$ from~\eqref{eq:7}
  satisfies $\overline{\rr}(S_{(J\cup \{ij\})^c})=S$ by~\eqref{eq:33},
  we can extend this map to the localizations in the statement. The
  resulting map is injective by~\eqref{eq:7}, and surjective by
  construction.
\end{proof}

As an immediate corollary, we give a first description of the fibers
of $\trop$. As we said earlier, given $x\in \cT U_{ij}$ with $J=J(x)$,
we know that the fiber $\trop^{-1}(x)$ is contained in the analytic
stratum $\Gr_J(2,n)^{\an}$. Assume $x\in \overline{\cTC_T}$ for some
tree type $T$. Lemma~\ref{lem:iso} shows that the coordinate ring of
$\Gr_J(2,n)$ is isomorphic to $(K[u_{kl}: kl \in I \smallsetminus
J][u_{kl}^{-1}: kl \in \A \smallsetminus J])_S$.  We identify
$\trop^{-1}(x)$ with the set of all multiplicative seminorms $\gamma$
on $(K[u_{kl}: kl \in I \smallsetminus J][u_{kl}^{-1}: kl \in \A
\smallsetminus J])_S$ extending the absolute value on $K$ such that
$\gamma(f_{rs}) = \exp(x_{rs}-x_{ij})$ for all $rs \notin J\cup \{
ij\}$.  Our choice of $I$ ensures that $f_{rs} = u_{rs}$ if $rs \in
I\smallsetminus J$. In particular,
\begin{equation}
  K[u_{kl}^{\pm} :kl\in I\smallsetminus J]\subset
  (K[u_{kl}:
  kl \in I \smallsetminus J][u_{kl}^{-1}: kl \in \A \smallsetminus
  J])_S,\label{eq:27}
  \end{equation}
  and we can restrict each $\gamma$ to the leftmost Laurent polynomial
  ring. Conversely, any $\gamma$ defined on the left-hand side that
  satisfies $\gamma(f_{kl})=\exp(x_{kl}-x_{ij})$ for all $kl\notin J
  \cup \{ij\}$ has a unique extension to the right-hand side and,
  thus, lies in $\trop^{-1}(x)$.

\medskip

Let us now introduce notation for two examples of affinoid algebras
giving rise to non-Archimedean polydiscs and polyannuli. For a general
treatment of affinoid algebras and their Berkovich spectra, we refer
to~\cite[Section 2.1]{berkovichbook}. Throughout the remainder, we
keep the multi-index notation from Section~\ref{subsec:analyti}.
\begin{definition}\label{def:disc}
  For every $r = (r_1, \ldots, r_n) \in \mathbb{R}_{>0}^n$, we set
\begin{equation*}
  K\{ r_1^{-1} x_1, \ldots, r_n^{-1} x_n \} = \left\{\sum_{\alpha\in
      \mathbb{N}_0^n} c_\alpha {x}^\alpha: |c_\alpha| r^\alpha
    \rightarrow 0 \mbox{ as }|\alpha| \rightarrow \infty \right\}.
\end{equation*}
We define a Banach norm on this algebra by $\| \sum_{\alpha} c_\alpha
{x}^\alpha \| = \max\limits_\alpha |c_\alpha| r^\alpha$.

\end{definition}
\noindent As we said before, the algebra $K\{ r_1^{-1} x_1, \ldots,
r_n^{-1} x_n \}$ is an example of an affinoid algebra. We say that it
is \emph{strictly affinoid} if some power of each $r_i$ is contained
in the value group $|K^\ast|$.
\begin{definition}\label{def:annulus} For every $r = (r_1, \ldots,
  r_n) \in \mathbb{R}_{>0}^n$, we set
  \begin{equation*}
    K\{ r_1^{-1} x_1, r_1 x_1^{-1}, \ldots, r_n^{-1} x_n, r_n
    x_n^{-1}  \} =  \left\{\sum_{\alpha \in \mathbb{Z}^n}
      c_\alpha {x}^\alpha: |c_\alpha| r^\alpha \rightarrow 0 \mbox{ as
      }|\alpha| \rightarrow \pm \infty \right\}.
  \end{equation*}
  It is a Banach algebra with respect to the norm $\| \sum_{\alpha}
  c_\alpha {x}^\alpha \| = \max\limits_\alpha |c_\alpha| r^\alpha$.
\end{definition}

Given an affinoid algebra $A$, such as those in
Definitions~\ref{def:disc} and~\ref{def:annulus}, we construct its
\emph{Berkovich spectrum} $\mathcal{M}(A)$.  It is defined as the set
of all multiplicative seminorms on $A$ which are bounded with respect
to the Banach norm.  The set $\mathcal{M}(A)$ is endowed with the
coarsest topology such that for all $f \in A$, the evaluation map
$\operatorname{ev}_f\colon\mathcal{M}(A) \rightarrow \RR$, $\gamma
\mapsto \gamma(f)$ is continuous.  As an example, we remark that the
analytification $(\mathbb{A}^n_K)^{\an}$ of the $n$-dimensional affine
space in Example~\ref{ex:affinespace} is precisely the union of all
$\mathcal{M}(K\{r_1^{-1} x_1, \ldots, r_n^{-1} x_n\})$ for $(r_1,
\ldots, r_n) \in \RR_{>0}^n$. For details, we refer to~\cite[Proof of
Theorem 3.4.1]{berkovichbook}.

Starting from an affinoid algebra $A$, two elements $f,g\in A$ and
$r,s\in \RR_{>0}$, we use~\cite[Remark 2.2.2 (i)]{berkovichbook} to
construct a new affinoid algebra that resembles
Definitions~\ref{def:disc} and~\ref{def:annulus}:
\begin{equation}\label{eq:14}
  A\{r^{-1}f, s g^{-1}\} = A\{r^{-1}x,s y\}/(x-f, gy-1)\text{,}
\end{equation}
where, similarly to Definition~\ref{def:disc}, we set
\[ A\{r^{-1}x,s y\} := \left\{\sum_{m,n\geq 0} a_{mn}x^my^n :
  a_{mn}\in A, \|a_{mn}\|r^ms^{-n} \to 0 \mbox{ as } m+n\to
  \infty\right\}\text{.}
\] 
The algebra homomorphism $A \rightarrow
A\{r^{-1}f, s g^{-1}\}$ introduces a continuous and injective map
\[
 \xymatrix@1{
  \mathcal{M}(A\{r^{-1}f, s g^{-1}\})  \; \ar@{^{(}->}[r] &  \mathcal{M}(A)}.
\]
Its image is the set of all seminorms $\gamma$ on $A$ satisfying $
\gamma(f)\leq r$ and $ \gamma(g) \geq s$, see~\cite[Remark 2.2.2
(i)]{berkovichbook}.

By induction, we extend the construction from~\eqref{eq:14} to any
number of elements in $A$ and positive reals. Of particular interest
to us are the affinoid algebras $A\{r_1^{-1} f_1, r_1 f_1^{-1},
\ldots, r_n^{-1} f_n, r_n f_n^{-1}\}$, where $f_1, \ldots, f_n$ are
elements of $A$ and $r_1, \ldots, r_n\in \RR_{>0}$.  In this case, the
image of the map
\begin{equation}
  \xymatrix@1{\mathcal{M}(A\{r_1^{-1} f_1, r_1 f_1^{-1},   \ldots,
    r_n^{-1} f_n, r_n f_n^{-1}\}) \; \ar@{^{(}->}[r] & \mathcal{M}(A)}\label{eq:23}
\end{equation}
is the subset of all $\gamma \in \mathcal{M}(A)$ such that $
\gamma(f_i) = r_i$ for all $i = 1, \ldots, n$.

\medskip We now state the main result in this section. Fix $i,j$ and
pick a point $x \in \cT U_{ij}$. Let $J=J(x)$ be the vanishing set of
$x$, define $\rho_{kl} = \exp({x_{kl} - x_{ij}})$ for each $kl\neq
ij$, and let $I$ be an index set as above.  We fix a tree type $T$
with $x\in \overline{\cTC_T}$ and a compatible set $I=I(ij,T,J)$. We
associate to $x$ the affinoid algebra
  \begin{equation}
    A = K\{\rho_{kl}^{-1} u_{kl}, \rho_{kl} u_{kl}^{-1}: kl \in I
    \smallsetminus J \}\label{eq:24}
  \end{equation}
 and its Laurent domain
  \begin{equation}
    B = A \left\{\rho_{rs}^{-1} {f}_{rs}, \rho_{rs} {f}_{rs}^{-1}: rs
      \notin    ( I \cup J  \cup \{ij\})
    \right\}.\label{eq:21}
  \end{equation}

\begin{prop}\label{prop:fiber}
  The fiber $\trop^{-1}(x)$ is the affinoid subdomain $\mathcal{M}(B)$
  of $\Gr_J(2,n)^{\an}$.
\end{prop}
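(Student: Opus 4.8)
The plan is to strip the affinoid algebra $B$ down to a polyannulus and match its Berkovich spectrum with the explicit description of $\trop^{-1}(x)$ produced by Lemma~\ref{lem:iso}. Write $P = K[u_{kl}^{\pm}\colon kl\in I\smallsetminus J]$ for the Laurent polynomial ring of~\eqref{eq:27} and let $C$ denote the coordinate ring of $\Gr_J(2,n)$, which by Lemma~\ref{lem:iso} is $(K[u_{kl}\colon kl\in I\smallsetminus J][u_{kl}^{-1}\colon kl\in\A\smallsetminus J])_S$. The discussion following~\eqref{eq:27} identifies $\trop^{-1}(x)$, as a subspace of $\Gr_J(2,n)^{\an}$, with the set of multiplicative seminorms $\gamma$ on $P$ extending $|\mathord\cdot|_K$ with $\gamma(u_{kl}) = \rho_{kl}$ for all $kl\in I\smallsetminus J$ and $\gamma(f_{rs}) = \rho_{rs}$ for all $rs\notin I\cup J\cup\{ij\}$; here the topology is pointwise convergence on $P$, and it agrees with pointwise convergence on $C$ on this set because $C = P[f_{rs}^{-1}\colon rs\notin I\cup J\cup\{ij\}]$ and each $\gamma(f_{rs})$ is pinned to the constant $\rho_{rs}$. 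So it suffices to exhibit $\mathcal{M}(B)$ as exactly this space of seminorms, compatibly with the embedding into $\Gr_J(2,n)^{\an}$.

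I would first analyse $\mathcal{M}(A)$. As $A = K\{\rho_{kl}^{-1}u_{kl},\rho_{kl}u_{kl}^{-1}\colon kl\in I\smallsetminus J\}$ is a polyannulus algebra, every bounded multiplicative seminorm $\gamma$ on $A$ satisfies $\gamma(u_{kl})\leq\rho_{kl}$ and $\gamma(u_{kl})^{-1} = \gamma(u_{kl}^{-1})\leq\rho_{kl}^{-1}$, hence $\gamma(u_{kl}) = \rho_{kl}$ for every $kl\in I\smallsetminus J$; this is the skeleton of the split torus $(\GG_m^{I\smallsetminus J})^{\an}$ described in Example~\ref{ex:affinespace}. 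Conversely, any multiplicative seminorm $\gamma$ on $P$ with $\gamma(u_{kl}) = \rho_{kl}$ is bounded for the Banach norm of $A$: for $g = \sum_\alpha c_\alpha u^\alpha\in P$ the non-Archimedean triangle inequality and multiplicativity give $\gamma(g)\leq\max_\alpha|c_\alpha|\,\rho^\alpha = \|g\|_A$; since $P$ is dense in $A$, such a $\gamma$ extends uniquely to $\mathcal{M}(A)$, and pointwise convergence on $P$ matches that on $A$ on this set. Thus $\mathcal{M}(A)$ is homeomorphic to $\{\gamma\text{ on }P\colon\gamma|_K = |\mathord\cdot|_K,\ \gamma(u_{kl}) = \rho_{kl}\ \forall kl\in I\smallsetminus J\}$. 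Now Proposition~\ref{pr:transBasis}\,(iii), together with the reduction of the ideal $\langle\overline{\rr}(\fa_J)\rangle$ carried out just before Lemma~\ref{lem:iso}, shows that each $f_{rs}$ with $rs\notin I\cup J\cup\{ij\}$ lies already in $K[u_{kl}\colon kl\in I\smallsetminus J][u_{kl}^{-1}\colon kl\in\A\smallsetminus J]\subset P\subset A$; hence $B = A\{\rho_{rs}^{-1}f_{rs},\rho_{rs}f_{rs}^{-1}\colon rs\notin I\cup J\cup\{ij\}\}$ is a Laurent subdomain of $\mathcal{M}(A)$, and by the description recalled around~\eqref{eq:23} the canonical map $\mathcal{M}(B)\hookrightarrow\mathcal{M}(A)$ is a homeomorphism onto $\{\gamma\in\mathcal{M}(A)\colon\gamma(f_{rs}) = \rho_{rs}\ \forall rs\notin I\cup J\cup\{ij\}\}$. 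Combining the two displays, $\mathcal{M}(B)$ is homeomorphic to the space of seminorms describing $\trop^{-1}(x)$.

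It remains to see that this identification is the one induced by the inclusion $\mathcal{M}(B)\hookrightarrow\Gr_J(2,n)^{\an}$. Since every generator of the multiplicative set $S$ becomes a unit of $B$ — the $f_{rs}$ with $rs\in I\smallsetminus J$ equal the units $u_{rs}$ of $A$, while the remaining ones are inverted in the Laurent domain — the map $\overline{\rr}$ of~\eqref{eq:7} extends to a $K$-algebra homomorphism $C\to B$ whose image, namely $P[f_{rs}^{-1}\colon rs\notin I\cup J\cup\{ij\}]$, is dense in $B$. The resulting continuous injection $\mathcal{M}(B)\to(\Spec C)^{\an} = \Gr_J(2,n)^{\an}\subset U_{ij}^{\an}\subset\Gr(2,n)^{\an}$ is precisely the restriction-to-$P$ identification of the first paragraph, so its image is $\trop^{-1}(x)$ and it is a homeomorphism onto it. This gives $\trop^{-1}(x) = \mathcal{M}(B)$ as affinoid subdomains of $\Gr_J(2,n)^{\an}$.

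The step I expect to be the main obstacle is the bookkeeping around the index set $I$: checking that its compatibility with $\preceq$ and $J(ij)$ (Proposition~\ref{pr:CompatibleI} and Proposition~\ref{pr:transBasis}) genuinely forces every $f_{rs}$ into the Laurent \emph{polynomial} ring $P$ — so that $B$ really is a Laurent domain of $\mathcal{M}(A)$ rather than a more delicate affinoid domain — and threading the chain of ring identifications (coordinate ring of $\Gr_J(2,n)$, the localization of Lemma~\ref{lem:iso}, the dense subalgebra $P\subset A$) while keeping all of the Berkovich topologies straight.
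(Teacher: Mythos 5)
Your proof is correct and follows essentially the same route as the paper's: identify $\trop^{-1}(x)$ with multiplicative seminorms on $K[u_{kl}^{\pm}: kl\in I\smallsetminus J]$ satisfying $\gamma(u_{kl})=\rho_{kl}$ and $\gamma(f_{rs})=\rho_{rs}$, extend them to $\mathcal{M}(A)$ by boundedness against the Banach norm plus density (the paper does this via an explicit Cauchy-sequence argument), and then cut out $\mathcal{M}(B)$ inside $\mathcal{M}(A)$ using the Laurent-domain description around~\eqref{eq:23}, with the converse inclusion by restriction. One aside is slightly off but harmless: $\mathcal{M}(A)$ is not the skeleton of $(\GG_m^{I\smallsetminus J})^{\an}$ but rather the preimage under the torus tropicalization of the single skeleton point $(\log\rho_{kl})_{kl}$ — the fact you actually use, namely $\gamma(u_{kl})=\rho_{kl}$ for all $\gamma\in\mathcal{M}(A)$, is correct.
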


\begin{proof} 
  We prove the result by double inclusion. Let $\gamma$ be a point in
  $\trop^{-1}(x)$. From our earlier discussion
  following~\eqref{eq:27}, we identify $\gamma$ with a multiplicative
  seminorm on $K[u_{kl}^{\pm}: kl\in I\smallsetminus J]$ satisfying
  $\gamma(f_{rs})=\exp(x_{rs}-x_{ij}) =\rho_{rs}$ for all $rs\notin
  J\cup\{ ij\}$.

  We now show that any such $\gamma$ can be uniquely extended to a
  bounded multiplicative seminorm on ${A}$, i.e.,\ to an element in
  $\mathcal{M}(A)$.  Fix an element $g = \sum_{\alpha} c_{\alpha}
  u^{\alpha}$ of $A$, as in~\eqref{eq:24}. Then, we may write $g$ as
  the limit of a sequence of Laurent polynomials $g_m \in
  K[u_{kl}^{\pm}: kl\in I\smallsetminus J]$ with respect to the Banach
  norm on $A$.  Since $f_{rs} = u_{rs}$ whenever $rs \in
  I\smallsetminus J$, we know that
  $\gamma(c_{\alpha}u^{\alpha})=|c_{\alpha}|\rho^{\alpha}$.  Since
  $\gamma$ is non-Archimedean, it is therefore bounded by the Banach
  norm on $A$ restricted to $K[u_{kl}^{\pm}: kl\in I\smallsetminus
  J]$.  This property implies the inequalities
\[\gamma(g_n - g_m) \leq  \|g_n - g_m\| \leq \max\{ \|g -g_n\|, \| g - g_m\| \},\] 
where the right-hand side goes to zero for $n,m \to \infty$.  In
addition, the reverse triangle inequality for $\gamma$ ensures that
$\gamma(g_n-g_m)\geq |\gamma(g_n)-\gamma(g_m)|$ for all $n,m$. From
this we conclude that $(\gamma(g_n))_n$ is a Cauchy sequence in
$\RR$. We define $\gamma(g):=\lim\limits_{n\to \infty} \gamma(g_n)$.

By construction, $\gamma$ is a multiplicative bounded seminorm on $A$
extending the one on $K[u_{kl}^{\pm}: kl \in I \smallsetminus
J]$. Moreover, since $\gamma(f_{rs})=\rho_{rs}$ for all $rs\notin
I\cup J \cup \{ij\}$, the injective map from~\eqref{eq:23} ensures
that $\gamma$ lies in $\mathcal{M}(B)$. This proves
$\trop^{-1}(x)\subset \mathcal{M}(B)$.

For the converse, it is easy to see that every element in
$\mathcal{M}(B)$ restricts to a multiplicative seminorm $\gamma$ on
$K[u_{kl}^{\pm}: kl \in I \smallsetminus J]$ with
$\gamma(f_{rs})=\exp(x_{rs}-x_{ij})$ for all $rs\notin J \cup
\{ij\}$. This implies that $\gamma\in \trop^{-1}(x)$.
\end{proof}

\medskip

We end this section by discussing the Shilov boundaries of affinoid
algebras.  Given an affinoid algebra $A$, the Shilov boundary in
$\mathcal{M}(A)$ is the unique inclusion minimal closed subset
$\Gamma$ of $\mathcal{M}(A)$ such that, for every $f \in A$, the
continuous function $\mathcal{M}(A) \rightarrow \RR_{\geq 0}$ defined
by $\gamma \mapsto \gamma(f)$ achieves its maximum in $\Gamma$. In
order to prove that there exists a unique minimal closed subset
$\Gamma$ with this property, Berkovich showed a relation between
$\Gamma$ and the reduction $\widetilde {A}$ where $A$ is strictly
affinoid. We recall its definition, as it appears in~\cite[Section
2.4]{berkovichbook}.
\begin{definition} Given a commutative Banach algebra
  $(A,\|\mathord{\cdot}\|)$, we define its \emph{reduction} as
  \begin{equation*}
    \widetilde{A} = \{f \in A: \|f \| \leq 1\} / \{f \in A: \|f\| < 1\}.
  \end{equation*}
  \end{definition}
  
  By~\cite[Proposition 2.4.4]{berkovichbook}, the Shilov boundary of a
  strictly affinoid algebra $A$ corresponds bijectively to the set of
  irreducible components in $\mbox{Spec}(\widetilde{A})$.  In
  particular, if $\mbox{Spec}(\widetilde{A})$ is irreducible, the
  Shilov boundary consists of a single point $\xi$. The point $\xi$
  satisfies $\xi(f)\geq \gamma(f)$ for all $\gamma \in
  \mathcal{M}(A)$.

  We discuss some examples. If $A$ is our ground field $K$, its
  reduction is the residue field $\widetilde{K}$.  If $A = K \{x_1,
  \ldots, x_n\}$, the reduction $\widetilde{A}$ is isomorphic to the
  polynomial ring $\widetilde{K}[x_1, \ldots, x_n]$ over the residue
  field. The reduction of $A = K\{x_1, x_1^{-1}, \ldots, x_n,
  x_n^{-1}\}$ is isomorphic to the Laurent polynomial ring
  $\widetilde{A} = \widetilde{K}[x_1^{\pm}, \ldots,
  x_n^{\pm}]$. Notice that in all these cases, $\Spec \widetilde{A}$
  is an irreducible scheme. In particular, the Shilov boundary of $A$
  in these three examples has exactly one point.

%%%%%%%%%%%%%%%%%%%%%%%%%%%%%%%%%%%%%%%%%%%%%%%%%%%%%
%%%%%%%%%%%%%%%%%%%%%%%%%%%%%%%%%%%%%%%%%%%%%%%%%%%%%

  The next result shows that, for suitable choices of the field $K$,
  the reduction of the Laurent domain $B$ from~\eqref{eq:21} is an
  integral domain.
  \begin{thm}\label{thm:reduction} Assume that all $\rho_{kl}$ are
    contained in the value group $|K^\ast|$. Then, the reduction
    $\widetilde{{B}}% ={B}^{\circ} / {B}^{\circ\circ}
    $ is isomorphic to a localization of the Laurent polynomial ring
    $\widetilde{K}[u_{kl}^{\pm 1}: kl \in I\smallsetminus J]$.
\end{thm}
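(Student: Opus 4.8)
The plan is to peel $B$ down to a Laurent localization of a Laurent polynomial Tate algebra and then invoke the standard behaviour of the reduction functor on such domains. First I would normalize the coordinates: since every $\rho_{kl}$ lies in $|K^\ast|$, pick $c_{kl}\in K^\ast$ with $|c_{kl}|=\rho_{kl}$ for $kl\neq ij$ and substitute $u_{kl}=c_{kl}v_{kl}$. This identifies the algebra $A$ of~\eqref{eq:24} with the polyannulus $K\{v_{kl}^{\pm 1}:kl\in I\smallsetminus J\}$ carrying its Gauss norm, which, by the examples preceding the theorem, is a strictly affinoid algebra whose reduction is the Laurent polynomial ring $\widetilde A=\widetilde K[\overline{v_{kl}}^{\,\pm 1}:kl\in I\smallsetminus J]$. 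In particular $\widetilde A$ is an integral domain, so $A$ is reduced and its Gauss norm agrees with the spectral seminorm on $A$.

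Next I would pin down the norms $\|f_{rs}\|_A$ for $rs\notin I\cup J\cup\{ij\}$. Restricted to $A$, the seminorm $\sigma(x)$ is exactly the Gauss norm $\|\cdot\|_A$: on the dense Laurent polynomial subring $K[u_{kl}^{\pm 1}:kl\in I\smallsetminus J]$ both send $\sum_\alpha c_\alpha u^\alpha$ to $\max_\alpha|c_\alpha|\exp\bigl(\sum_{kl}\alpha_{kl}(x_{kl}-x_{ij})\bigr)$, and both are continuous. Now $f_{rs}=\overline{\rr}(u_{rs})$ really is an element of $A$, because each negative power occurring in it is indexed by a pair in $\A\smallsetminus J\subset I\smallsetminus J$; and $\sigma(x)(u_{rs})=\exp(x_{rs}-x_{ij})=\rho_{rs}$ by Theorem~\ref{thm:section}. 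Hence $\|f_{rs}\|_A=\rho_{rs}\in|K^\ast|$ and $f_{rs}\neq 0$. Fixing $d_{rs}\in K^\ast$ with $|d_{rs}|=\rho_{rs}$, the rescaled element $g_{rs}:=d_{rs}^{-1}f_{rs}$ is power-bounded of spectral norm $1$, so its reduction $\overline{g_{rs}}$ is a nonzero element of $\widetilde A$.

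Finally I would simplify $B$ and take reductions. Since $\|f_{rs}\|_A=\rho_{rs}$, the inequality $\gamma(f_{rs})\leq\rho_{rs}$ holds on all of $\mathcal{M}(A)$, so by Weierstrass division $A\{\rho_{rs}^{-1}f_{rs}\}=A$; thus $B$ from~\eqref{eq:21} is the Laurent domain obtained from $A$ by inverting the finitely many power-bounded elements $g_{rs}$, each of spectral norm $1$. The standard description of reductions of Laurent domains of a reduced strictly affinoid algebra (see~\cite[Section~2.4]{berkovichbook}) then gives $\widetilde B\cong\widetilde A\bigl[\,\overline{g_{rs}}^{\,-1}:rs\notin I\cup J\cup\{ij\}\,\bigr]$, the localization of $\widetilde A$ at the multiplicative set generated by the $\overline{g_{rs}}$. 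Since $\widetilde A=\widetilde K[\overline{v_{kl}}^{\,\pm 1}:kl\in I\smallsetminus J]$ is, after renaming $\overline{v_{kl}}$ as $u_{kl}$, the Laurent polynomial ring $\widetilde K[u_{kl}^{\pm 1}:kl\in I\smallsetminus J]$, this yields the claim. The step requiring the most care is the last one --- that passing to reductions converts the analytic Laurent localization into the plain algebraic localization of $\widetilde A$; this rests on $A$ being reduced, which is exactly why it was worth computing $\widetilde A$ first.
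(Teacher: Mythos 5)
Your proof is correct and takes essentially the same route as the paper's: rescale by constants of absolute value $\rho_{kl}$ to identify $A$ with the unit polyannulus, whose reduction is $\widetilde{K}[u_{kl}^{\pm 1}: kl\in I\smallsetminus J]$, note that the suitably rescaled $f_{rs}$ have norm one, and invoke the standard description of reductions of Laurent domains to conclude that $\widetilde{B}$ is a localization. The only differences are bookkeeping: you justify $\|f_{rs}\|_A=\rho_{rs}$ explicitly (where the paper asserts the norm-one property directly) and point to \cite[Section~2.4]{berkovichbook} instead of the paper's reference \cite[\S 7.2.6, Proposition 3]{bgr} for the final step.
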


\begin{proof} Since all $\rho_{kl}$ are contained in the value group
  of $K$, the affinoid algebra $A$ is isomorphic to $K\left\{u_{kl},
    u_{kl}^{-1}: kl \in I\smallsetminus J \right\}$. Its reduction is
  isomorphic to $\widetilde{K}[u_{kl}^{\pm}: kl\in I\smallsetminus
  J]$. Since each polynomial $f_{kl}$ has Banach norm one when viewed
  in $A$ under the aforementioned isomorphism, we use~\cite[\S 7.2.6,
  Proposition 3]{bgr} to conclude that $\widetilde{B}$ is isomorphic
  to a localization of $\widetilde{A}$.
\end{proof}

By~\cite[Proposition 3.7]{gubler12}, tropicalizations are invariant
under field extensions of complete non-Archimedean valued fields. We
conclude:
\begin{corollary}\label{cor:Shilov} The fiber $\trop^{-1}(x) =
  \mathcal{M}(B)$ contains a unique Shilov boundary point.
\end{corollary}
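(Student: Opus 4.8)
The plan is to derive Corollary~\ref{cor:Shilov} from Theorem~\ref{thm:reduction} together with Berkovich's description of Shilov boundaries of strictly affinoid algebras in terms of their reductions, and then to remove the hypothesis $\rho_{kl}\in|K^\ast|$ of Theorem~\ref{thm:reduction} by a scalar-extension argument.

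First I would treat the case in which all $\rho_{kl}$ lie in the value group $|K^\ast|$. Then the affinoid algebra $A$ of~\eqref{eq:24} is strictly $K$-affinoid, and hence so is its Laurent domain $B$ of~\eqref{eq:21}. By Theorem~\ref{thm:reduction} the reduction $\widetilde B$ is a localization of the Laurent polynomial ring $\widetilde K[u_{kl}^{\pm1}:kl\in I\smallsetminus J]$; in particular $\Spec\widetilde B$ is integral, hence irreducible. By~\cite[Proposition 2.4.4]{berkovichbook}, the Shilov boundary of $\mathcal M(B)$ is in bijection with the set of irreducible components of $\Spec\widetilde B$, so it consists of a single point $\xi$, which moreover satisfies $\gamma(f)\le\xi(f)$ for every $f\in B$ and every $\gamma\in\mathcal M(B)$.

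For an arbitrary complete non-Archimedean field $K$, I would choose a complete valued extension $L\mid K$ whose value group contains all the finitely many reals $\rho_{kl}$ (for instance a completed transcendental extension with prescribed absolute values, or an algebraically closed, complete, nontrivially valued extension). Running the constructions~\eqref{eq:24}--\eqref{eq:21} over $L$ produces the strictly $L$-affinoid algebra $B_L=B\,\widehat{\otimes}_K L$, which by Proposition~\ref{prop:fiber} over $L$ is the fiber of $\trop\colon\Gr(2,n)^{\an}_L\to\cTG(2,n)$ over $x$; the argument of the previous paragraph — now applicable, since $\rho_{kl}\in|L^\ast|$ — shows that $\mathcal M(B_L)$ has a unique Shilov boundary point $\xi_L$. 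The base-change map $\pi\colon\mathcal M(B_L)\to\mathcal M(B)$ is continuous and surjective: the fiber over $\gamma\in\mathcal M(B)$ is $\mathcal M\big(\mathcal H(\gamma)\,\widehat{\otimes}_K L\big)$, which is nonempty because a completed tensor product of nonzero Banach algebras over a field is nonzero (this is also the mechanism behind the invariance of tropicalizations under field extension,~\cite[Proposition 3.7]{gubler12}). Since $\eta(f)=\pi(\eta)(f)$ for $f\in B\subset B_L$, surjectivity gives $\max_{\eta\in\mathcal M(B_L)}\eta(f)=\max_{\gamma\in\mathcal M(B)}\gamma(f)$ for every $f\in B$, and this maximum is attained at $\xi_L$; hence $\pi(\xi_L)(f)=\max_{\gamma}\gamma(f)$ for all $f\in B$. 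Therefore the one-point set $\{\pi(\xi_L)\}$, which is closed in the Hausdorff (indeed compact) space $\mathcal M(B)$, is a boundary for $B$, so it contains and hence equals the Shilov boundary of $\mathcal M(B)$. This yields the corollary.

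The only genuinely delicate points are the surjectivity and functoriality of the base-change map $\pi$ and the compatibility of reductions with the extension $L\mid K$ (so that the argument behind Theorem~\ref{thm:reduction} really applies to $B_L$); both are standard, and I do not anticipate any obstacle beyond careful bookkeeping. Alternatively, one could bypass the field extension entirely by invoking Berkovich's theory of Shilov boundaries for not-necessarily-strict affinoid algebras, but the reduction-based route above is the most economical given the tools already assembled.
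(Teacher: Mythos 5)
Your proposal is correct and follows essentially the same route as the paper: pass to a complete extension $L|K$ whose value group contains all $\rho_{kl}$, apply Theorem~\ref{thm:reduction} and \cite[Proposition 2.4.4]{berkovichbook} to get a unique Shilov boundary point of $\mathcal{M}(B_L)$, and push it down along the surjective base-change map $\mathcal{M}(B_L)\to\mathcal{M}(B)$. The only difference is that you spell out in detail why the image point dominates every seminorm on $B$, a step the paper compresses into the assertion that the base-change map induces a surjection of Shilov boundaries.
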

\begin{proof} For any $x \in \cT\Gr(2,n)$, there is a complete
  non-Archimedean extension field $L|K$ whose value group contains all
  $\rho_{kl}$'s. We denote by $\trop_L$ the tropicalization map on
  $\Gr(2,n)^{\an}$ with respect to the Pl\"ucker embedding in
  $\pr^{\binom{n}{2}-1}_L$. By Proposition~\ref{prop:fiber}, we have
  $\trop^{-1}(x) = \mathcal{M}(B)$, and $\trop_L^{-1}(x) =
  \mathcal{M}(B_L) $, where $B_L$ denotes the complete base change
  induced by the extension $L|K$.  By Theorem~\ref{thm:reduction}, the
  reduction $\widetilde{B_L}$ is an integral domain, and so $B_L$
  contains a unique Shilov boundary point~\cite[Proposition
  2.4.4]{berkovichbook}. The natural map $\mathcal{M}(B_L) \rightarrow
  \mathcal{M}(B)$ induces a surjection between the corresponding
  Shilov boundaries. This concludes our proof.
\end{proof}

%%%%%%%%%%%%%%%%%%%%%%%%%%%%%%%%%%%%%%%%%%%%%%%%%%%%%%%%%%%%%%%%%%%%%%%%%%%%%%
%%%%%%%%%%%%%%%%%%%%%%%%%%%%%%%%%%%%%%%%%%%%%%%%%%%%%%%%%%%%%%%%%%%%%%%%%%%%%%
\section{Tropical multiplicities and piecewise linear structures}
\label{sec:trop-mult-integr}
%%%%%%%%%%%%%%%%%%%%%%%%%%%%%%%%%%%%%%%%%%%%%%%%%%%%%%%%%%%%%%%%%%%%%%%%%%%%%%
%%%%%%%%%%%%%%%%%%%%%%%%%%%%%%%%%%%%%%%%%%%%%%%%%%%%%%%%%%%%%%%%%%%%%%%%%%%%%%

In this Section, we discuss the piecewise linear structures on both
the tropical and analytic Grassmannians. On the tropical side, this
structure is encoded by initial degenerations of the Pl\"ucker ideal,
viewed in each stratum $\{E_J\}_J$ from~\eqref{eq:22}.  Combinatorial
information about these degenerations is encoded by the tropical
multiplicities.  This structure admits an interpretation in terms of
generalized phylogenetic trees, as will be explained in forthcoming
work of the first author.  On the analytic side, the piecewise linear
structure corresponds to the notion of skeleta of the ambient tori. We
will see that both structures are related by the section $\sigma$ from
Theorem~\ref{thm:MainThm}.

We start by discussing the polyhedral structure on the tropical
Grassmannian and, in particular, the notion of initial
degenerations. We follow the exposition of~\cite[Section 5]{gubler12},
adapting the definitions to follow our max convention. We keep the
notations and conventions of Section~\ref{sec:trop-analyt-vari}.  For
the remainder, we let $L|K$ be an algebraically closed field extension
that is complete with respect to a non-Archimedean valuation extending
that of $K$. We assume that the valuation map $\nu\colon L^{\ast}
\rightarrow \RR$ is surjective.  We let $L^{\circ}=\{l\in L:
\nu(l)\geq 0\}$ be the valuation ring of $L$ and $L^{\circ\circ}$ its
maximal ideal. We let $\widetilde{L}=L^{\circ}/L^{\circ\circ}$ be its
residue field.

We fix a multiplicative split torus $\GG_m^n$ over the field $K$, with
character lattice $\Lambda$.  Given a cocharacter $x\in
\Lambda_{\RR}^{\vee}$, we consider the associated \emph{tilted group
  ring} $L^{\circ}[\Lambda]^x$:
\[
L^{\circ}[\Lambda]^x:=\left\{\sum_{u} c_u \chi^{u} \in L[\Lambda]:
  \nu(c_u) + x(u) \geq 0 \text{ for all }u\right\}.
\]
The tilted group ring contains the valuation ring $L^{\circ}$.

Let $X\subset \GG_m^n$ be a closed subscheme with defining ideal
$\mathfrak{I} \subset K[\Lambda]$. We set
$\mathfrak{I}_L=\mathfrak{I}\otimes_K L$ and define
\[
X^{x}_L:=\Spec(L^{\circ}[\Lambda]^{-x}/(\mathfrak{I}_L \cap
L^{\circ}[\Lambda]^{-x})).
\]
This affine scheme is flat over $L^{\circ}$ and its generic fiber is
isomorphic to $X_L = X \times_{\Spec K} \Spec L$. Its special fiber
$\init_x X $ is the \emph{initial degeneration} of $X$ with respect to
$x$. It is defined by the \emph{initial ideal} of $\mathfrak{I}$ with
respect to $x$, namely, $\init_x\mathfrak{I}=(\mathfrak{I}_L\cap
L^{\circ}[\Lambda]^{-x})\otimes_{L^{\circ}}\widetilde{L}$. This ideal
is generated by all initial forms $\init_x(f)$ in
$\widetilde{L}[\Lambda]$, where $f\in \mathfrak{I}_L$. The initial
forms are defined as follows. Given $f=\sum_{u\in
  \Lambda}c_u\chi^{u}\in L[\Lambda]$, its initial form with respect to
$x$ is the polynomial $\init_x(f)=\sum_{u\in \Lambda}
\overline{\lambda c_u\chi^{u}(\tau)}\chi^{u}$, where $\tau\in
\GG_m^n(L)$ is such that $\trop(\tau)=x$ and $\lambda\in L$ satisfies
$\nu(\lambda)=\max\{ -\nu(c_u)+x(u): u\in \Lambda\}$. The bar denotes
the class of an element in the residue field $\widetilde{L}$. Note
that in the present paper, $\trop(\tau)$ is defined as the cocharacter
$u \mapsto \log | u(\tau)|$, whereas \cite{gubler12} uses $- \log |\,
\mathord{\cdot}\,|$.

By the Fundamental Theorem of tropical geometry (see~\cite[Theorem
5.6]{gubler12}), we know that $x\in \cT X$ if an only if $\init_xX$ is
nonempty. Both $\init_xX$ and $\init_x\mathfrak{I}$ only depend on the
choice of $L$ up to a base change~\cite[Remark 5.4]{gubler12}.
%%%%%%%%%%%%%%%%%%%%%%%%%%%%%%%%%%%%%%%%%%%%%%%%%%%%%

\begin{definition}
  The multiplicity $m_x$ of a point $x \in \cT X$ is the number of
  irreducible components of $\init_xX$, counted with multiplicities.
\end{definition}

In particular, this number is $1$ if and only if the scheme $\init_xX$
is irreducible and generically reduced. It is important to remark
that, unlike most references in the literature, here we wish to
consider multiplicities of all points in $\cT X$ and not only of the
so-called regular points of $\cT X$, i.e.,\ those points around which
$\cT X$ behaves locally like a linear space (see~\cite[Definition
3.1]{ElimTheory}).

\medskip

To study the initial degenerations of $\Gr(2,n)$, we decompose
$\Gr(2,n)$ into its strata $\Gr_J(2,n)$ defined in~\eqref{eq:20}, and
compute their initial degenerations. Without loss of generality, we
assume that our complete valued field $K$ is algebraically closed and
has surjective valuation map $\nu\colon K^{\ast} \rightarrow \RR$. If
this is not the case, we replace $K$ by an appropriate field extension
$L$ as explained above.

To match the previous definitions, we first find an ambient torus for
each variety $\Gr_J(2,n)$. Using the Pl\"ucker embedding and
projecting away from the coordinates indexed by $J$, we view
$\Gr_J(2,n)$ inside $\GG_m^{\binom{n}{2}-|J|}/\GG_m$. In particular,
when $ij\notin J$, we identify this quotient with the torus
$\GG_m^{\binom{n}{2}-1-|J|}=\Spec K[u_{kl}^{\pm}: kl\notin J
\cup\{ij\}]$.  We produce a set of generators for the defining ideal
$\mathfrak{I}$ of $\Gr_J(2,n)$ by removing all monomials in
$\fb_J:=\langle u_{kl}: kl\in J\rangle$ from the three-term Pl\"ucker
relations (\ref{eq:3TermPluecker}) expressed in the $u$-variables. In
particular, we identify $\mathfrak{I}$ with the quotient ideal
$(I_{2,n}+\fb_J)/\fb_J \subset K[u_{kl}^{\pm}: kl\notin J\cup
\{ij\}][u_{kl}: kl\in J]/\fb_J$. The affine coordinates $\{u_{kl}:
kl\notin J\cup \{ij\}\}$ are a basis of the character lattice
$\Lambda$.  We degenerate $\Gr_J(2,n)$ inside this torus with respect
to all points in $\Lambda^{\vee}_{\RR}$. The tropicalization of
$\Gr_J(2,n)$ in $\Lambda_{\RR}^{\vee}$ consists of all points of the
form $y=(x_{kl}-x_{ij})_{kl\notin J\cup \{ij\}}$, where $x\in
\cTG(2,n)$ has vanishing set $J$.

%%%%%

From now on, we assume $ij\notin J$.  Our next goal is to compute the
initial ideal $\init_{y}\mathfrak{I}$ inside
$\widetilde{K}[u_{kl}^{\pm}: kl\notin J \cup\{ij\}]$.  Unfortunately,
the tropical basis property of the 3-term Pl\"ucker relations
reflected in the four-point conditions will not simplify our
calculations.  Instead, we make use of two prior tools: the coordinate
systems $I$ from Section~\ref{sec:exist-gener-tree} and the
polynomials $f_{rs}$ from~\eqref{eq:33}.

Let us recall some notation.  Given $y$ and its associated point $x\in
\cTG(2,n)$, we pick a tree type $T$ satisfying $x\in \CijTJ$. As in
Section~\ref{sec:exist-gener-tree}, we fix a partial order $\preceq$
on $[n]\smallsetminus \{i,j\}$ that has the cherry property on $T$
with respect to $i$ and $j$ and we choose a set $I$ compatible with
$\preceq$ and $J(ij)$.  We let $f_{rs}$ with $rs\notin J\cup \{ij\}$
be as in~\eqref{eq:33}.  Our construction of the set $I$ in terms of
Pl\"ucker relations ensures that the Laurent polynomials
$u_{rs}-f_{rs}$ belong to $\mathfrak{I}$.  The next result says,
precisely, that these polynomials play the role of a Gr\"obner basis
for $\init_y\mathfrak{I}$.

\begin{lemma}\label{lm:InGrobner}
  Let $y\in \cTG_J(2,n)\subset \Lambda_{\RR}^{\vee}$ and $I =
  I(ij,T,J)$ be as above.  Then:
  \begin{equation}
    \label{eq:38}
    \init_{y}\mathfrak{I} 
    =\langle u_{kl}-\init_{y}({f_{kl}}):kl\notin (I\cup
    J\cup \{ij\})\rangle  \subset \widetilde{K}[u_{kl}^{\pm}:
    kl\notin J\cup \{ij\}].
  \end{equation}
\end{lemma}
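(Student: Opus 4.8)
The plan is to establish~\eqref{eq:38} by proving both inclusions, the nontrivial one via a dimension count rather than a direct Gr\"obner reduction. First I would check the inclusion $\supseteq$. Fix $rs\notin I\cup J\cup\{ij\}$. As recalled just before the statement, the Laurent polynomial $u_{rs}-f_{rs}$ lies in $\mathfrak{I}$, so its $y$\nobreakdash-initial form lies in $\init_y\mathfrak{I}$, and I claim this initial form is $u_{rs}-\init_y(f_{rs})$ (up to the scaling built into the definition of the initial form). Indeed, $f_{rs}=\overline{\rr}(u_{rs})$ equals $u_{rs}$ in the coordinate ring of $\Gr_J(2,n)$, and since $\sigma$ is a section to $\trop$ with $\sigma(x)=\sigma^{(ij)}_I(x)$ on $K[u_{kl}:kl\in I]$ (Theorem~\ref{thm:section}, Lemma~\ref{lm:ExtensionFromRij}), formula~\eqref{eq:sigma} gives $\sigma(x)(f_{rs})=\sigma(x)(u_{rs})=\exp(x_{rs}-x_{ij})=\exp(y_{rs})$. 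Taking logarithms, the maximal value of $-\nu(c_\alpha)+\langle\alpha,y\rangle$ over the terms $c_\alpha u^\alpha$ of $f_{rs}$ equals $y_{rs}$, which is also the weight of the monomial $u_{rs}$; hence $u_{rs}$ occurs among the maximal-weight terms of $u_{rs}-f_{rs}$ and its initial form is $u_{rs}-\init_y(f_{rs})$. This yields $J_0:=\langle u_{kl}-\init_y(f_{kl}):kl\notin I\cup J\cup\{ij\}\rangle\subseteq\init_y\mathfrak{I}$.

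Next, to obtain the reverse inclusion I would analyse the quotient $B/J_0$, where $B=\widetilde{K}[u_{kl}^{\pm}:kl\notin J\cup\{ij\}]$. Put $d=|I\smallsetminus J|$ and $A=\widetilde{K}[u_{kl}^{\pm}:kl\in I\smallsetminus J]\subseteq B$ (recall $ij\notin I$). Since $f_{rs}\in K[u_{kl}:kl\in I\smallsetminus J][u_{kl}^{-1}:kl\in\A\smallsetminus J]$ and an initial form is a nonempty subsum of the terms of a polynomial with coefficients that are units of $\widetilde{K}$, each $\init_y(f_{rs})$ is a nonzero element of $A$; moreover $\init_y(f_{rs})=u_{rs}$ when $rs\in I\smallsetminus J$, as $f_{rs}=u_{rs}$ there. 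Mimicking the proof of Lemma~\ref{lem:iso}, the assignment $u_{kl}\mapsto u_{kl}$ for $kl\in I\smallsetminus J$ and $u_{rs}\mapsto\init_y(f_{rs})$ for $rs\notin I\cup J\cup\{ij\}$ extends to a surjection $B\to A_{S'}$, where $A_{S'}$ is the localization of $A$ at the multiplicative set generated by $\{\init_y(f_{rs}):rs\notin I\cup J\cup\{ij\}\}$, and its kernel is exactly $J_0$. Hence $B/J_0\cong A_{S'}$ is an integral domain of Krull dimension $d$.

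Finally, I would conclude by comparing dimensions. By Lemma~\ref{lem:iso} and Lemma~\ref{lm:a_JPrime}, $\Gr_J(2,n)$ is irreducible of dimension $d$, and an initial degeneration of an irreducible subvariety of a torus has the same dimension as the variety (the model is flat over $K^{\circ}$ with generic fiber of dimension $d$); thus the coordinate ring $B/\init_y\mathfrak{I}$ of $\init_y\Gr_J(2,n)$ also has dimension $d$. Moreover $\init_y\mathfrak{I}\neq B$, since $y\in\cTG_J(2,n)$ forces $\init_y\Gr_J(2,n)\neq\emptyset$. By the first paragraph $J_0\subseteq\init_y\mathfrak{I}$, so $\overline{I}:=\init_y\mathfrak{I}/J_0$ is a proper ideal of the domain $B/J_0$ with $\dim\big((B/J_0)/\overline{I}\big)=d=\dim(B/J_0)$. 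Since $B/J_0$ is a localization of a finitely generated domain over a field, Krull's principal ideal theorem gives $\dim\big((B/J_0)/\overline{I}\big)<\dim(B/J_0)$ whenever $\overline{I}\neq 0$; therefore $\overline{I}=0$ and $\init_y\mathfrak{I}=J_0$, which is~\eqref{eq:38}.

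The main obstacle is the identity $\sigma(x)(f_{rs})=\exp(y_{rs})$ used in the first paragraph: this is exactly the point at which the construction of the section $\sigma$ and its maximality (Theorem~\ref{thm:section}, Lemma~\ref{lm:MaxCondition}, Proposition~\ref{pr:transBasis}) are invoked, and it is what forces the generators of $\init_y\mathfrak{I}$ to have the stated shape. The verification in the second paragraph that $B/J_0\cong A_{S'}$ is a domain of dimension $d$ — i.e.\ that the relations $u_{rs}-\init_y(f_{rs})$ genuinely eliminate the non-$I$ coordinates without introducing new relations — amounts to re-running the elimination argument behind Lemma~\ref{lem:iso}, and the remaining dimension bookkeeping is routine.
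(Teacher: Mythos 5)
Your proof is correct, but for the reverse (hard) inclusion it follows a different route from the paper's own proof of Lemma~\ref{lm:InGrobner}. The containment $\supseteq$ is argued exactly as in the paper: since $\sigma(x)(f_{kl})=\sigma(x)(u_{kl})=\exp(x_{kl}-x_{ij})$, the weight of $\init_y(f_{kl})$ is $y_{kl}$, so $\init_y(u_{kl}-f_{kl})=u_{kl}-\init_y(f_{kl})\in\init_y\mathfrak{I}$. For $\subseteq$, the paper's proof is a direct elimination argument: any $y$-homogeneous element of $\init_y\mathfrak{I}$ is lifted to some $g\in\mathfrak{I}$ via \cite[Lemma 2.12]{FJT}, rewritten modulo the generators $u_{kl}-\init_y(f_{kl})$ as a Laurent polynomial $h_0\in I_{2,n}$ involving only the $I$-variables, and then $h_0=0$ by the algebraic independence in Proposition~\ref{pr:transBasis}~\emph{(i)}. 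You instead write $J_0$ for the right-hand side of \eqref{eq:38}, identify $B/J_0$ with a localization of $\widetilde{K}[u_{kl}^{\pm}:kl\in I\smallsetminus J]$ (hence a domain of dimension $|I\smallsetminus J|$), and compare with $\dim \init_y\Gr_J(2,n)=|I\smallsetminus J|$, which holds because the initial degeneration is a flat degeneration of $\Gr_J(2,n)$, whose dimension is $|I\smallsetminus J|$ by Lemma~\ref{lem:iso}; a nonzero ideal $\init_y\mathfrak{I}/J_0$ would force a strict dimension drop, and properness is guaranteed since $y\in\cTG_J(2,n)$ makes $\init_y\Gr_J(2,n)$ nonempty. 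This is precisely the alternative argument the authors record in the remark immediately following the lemma, so your route is in effect sanctioned by the paper itself: it buys brevity and avoids the explicit lifting and rewriting, at the cost of invoking equidimensionality of the special fiber of the flat model over the valuation ring and standard dimension theory for finitely generated domains, whereas the paper's main proof stays entirely within the coordinate systems $I$ and the algebraic independence already established in Section~\ref{sec:cont-sect-from}.
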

\begin{proof}
  We prove the result by double inclusion.  The identity
  $\sigma(x)(f_{kl})=\sigma(x)(u_{kl})=\exp(x_{kl}-x_{ij})$ ensures
  that the weight of $\init_y(f_{kl})$ equals $x_{kl}-x_{ij}$, and so
\[
\init_{y}(u_{kl}-f_{kl}) = u_{kl}-\init_{y}(f_{kl}) \in
\widetilde{K}[u_{rs}: rs\notin J\cup \{ij\}] \qquad \text{ for all }kl
\notin J \cup \{ij\}.
\]
Since $u_{kl}-f_{kl}\in \mathfrak{I}$, the inclusion $\supseteq$ in~\eqref{eq:38} holds.

For the converse, it suffices to show that any $y$-homogeneous Laurent
polynomial $f\in \init_{y}\mathfrak{I}$ lies in the right-hand side
ideal. By~\cite[Lemma 2.12]{FJT}, we know that such $f$ is the initial
form of an element $g\in \mathfrak{I} $. Furthermore, since we know
that $u_{kl}-\init_y(f_{kl})$ lies in $\init_y \mathfrak{I}$, we may
assume that $f$ is a $y$-homogeneous Laurent polynomial in the
variables $u_{kl}$, where $kl\in I\smallsetminus J$.

We write $g=u^{\alpha}h$, where $h$ is a polynomial in
$(I_{2,n}+\fb_J)/\fb_J$.  We claim that we can choose $g$ in such a
way that $h$ does not involve any of the variables $u_{kl}$ with
$kl\notin I$. This follows by the properties of $I$.  Indeed, using
the expression
\[
u_{kl}^s=\sum_{r=0}^s \binom{s}{r}
\init_{y}(f_{kl})^{s-r}(u_{kl}-\init_{y}(f_{kl}))^r \qquad \text{ for
  all }s\geq 0,
\]
we can rewrite $h$ as a polynomial in the variables indexed by $I$
(with possibly negative exponents for the variables in
$\A\smallsetminus J$) and the polynomials $\{u_{kl}-\init_{y}(f_{kl}):
kl\notin I\cup J\cup \{ij\}\}$. Namely, we lift $h$ to a polynomial in
$I_{2,n}$ modulo $\fb_J$ and use the previous expression to obtain
\begin{equation*}
  h =\sum_{\alpha}\; h_{\alpha}\!\!\!\!\!
  \prod_{kl\notin I\cup J \cup
    \{ij\}}\!\!\!\!\!(u_{kl}-\init_{y}(f_{kl}))^{\alpha_{kl}}
  \mod \fb_J,
\end{equation*}
where the right-hand side lies in $I_{2,n}$, and each $h_{\alpha}$ is
a Laurent polynomial involving only variables indexed by pairs in
$I\smallsetminus J$. The vector $\alpha$ has nonnegative entries.  By
construction, $h_0\in I_{2,n}$ and it is a Laurent polynomial in
$\{u_{kl}: kl\in I\}$, hence $h_0=0$ by
Proposition~\ref{pr:transBasis}~\emph{(i)}.

Since all the terms $(u_{kl}-\init_{y}(f_{kl}))$ are $y$-homogeneous,
we see that all homogeneous components of $g$ belong to the ideal in
the right of~\eqref{eq:38}. This concludes our proof.
\end{proof}
\begin{rem}
  An alternative proof of the inclusion $\subseteq$ in~\eqref{eq:38}
  goes as follows. The quotient of $\widetilde{K}[u_{kl}^{\pm}: kl
  \notin J \cup \{ij\}]$ by the ideal on the right-hand side
  of~\eqref{eq:38} is an integral domain.  It has dimension
  $|I\smallsetminus J|$, and it maps surjectively to the coordinate
  ring of $\init_y\Gr_J(2,n)$. On the other hand, $\init_y\Gr_J(2,n)$
  is a flat degeneration of $\Gr_J(2,n)$, so its dimension is also
  $|I\smallsetminus J|$. Therefore, both ideals agree.
\end{rem}

As a consequence of this lemma, we see that the coordinate ring of the
initial degeneration is the localization of a Laurent polynomial ring.
\begin{thm}\label{thm:InitDegGrass}
  Given any $y\in \Lambda_{\RR}^{\vee}$, the initial degeneration
  $\init_y\Gr_J(2,n)$ is an integral scheme.
\end{thm}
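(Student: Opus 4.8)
The plan is to read off the structure of $\init_y\Gr_J(2,n)$ directly from Lemma~\ref{lm:InGrobner}. If $y\notin\cTG_J(2,n)$, then by the Fundamental Theorem of tropical geometry the initial degeneration is empty, and there is nothing to prove; so I assume $y$ corresponds to a point $x\in\cTG(2,n)$ with vanishing set $J$, and I fix a tree type $T$ with $x\in\CijTJ$, a partial order $\preceq$ with the cherry property on $T$, and a compatible set $I=I(ij,T,J)$, exactly as in the setup preceding Lemma~\ref{lm:InGrobner}. The coordinate ring of $\init_y\Gr_J(2,n)$ is by definition the quotient of $\widetilde K[u_{kl}^{\pm}: kl\notin J\cup\{ij\}]$ by $\init_y\mathfrak I$.

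The key step is to use the description~\eqref{eq:38} of $\init_y\mathfrak I$ to eliminate variables. The generators listed there are $u_{kl}-\init_y(f_{kl})$ for $kl\notin I\cup J\cup\{ij\}$, and by Proposition~\ref{pr:transBasis}\emph{(iii)} each $\init_y(f_{kl})$ is a Laurent polynomial in the variables $u_{rs}$ with $rs\in I\smallsetminus J$ (inverting only those in $\A\smallsetminus J$). Hence each such generator lets us solve for the variable $u_{kl}$, $kl\notin I\cup J\cup\{ij\}$, in terms of the variables indexed by $I\smallsetminus J$. Performing all these substitutions simultaneously — which is legitimate because the variables being eliminated are pairwise distinct and do not occur among $\{u_{rs}:rs\in I\smallsetminus J\}$ — yields a ring isomorphism
\[
\widetilde K[u_{kl}^{\pm}: kl\notin J\cup\{ij\}]/\init_y\mathfrak I \;\simeq\; \bigl(\widetilde K[u_{kl}^{\pm}: kl\in I\smallsetminus J]\bigr)_S,
\]
where $S$ is the multiplicative set generated by the images $\init_y(f_{rs})$ for $rs\notin I\cup J\cup\{ij\}$ together with the variables in $\A\smallsetminus J$ (these images are the denominators that appear when one inverts the eliminated coordinates $u_{kl}$, which must remain units in the torus $\GG_m^{\binom n2-1-|J|}$). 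This mirrors the algebraic computation of the coordinate ring of $\Gr_J(2,n)$ itself in Lemma~\ref{lem:iso}, now carried out over the residue field after degeneration. Since $\widetilde K$ is a domain, a Laurent polynomial ring over it is a domain, and any localization of a domain is a domain; therefore $\init_y\Gr_J(2,n)$ is integral.

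The main obstacle is bookkeeping rather than substance: one must check carefully that the elimination of the variables $\{u_{kl}:kl\notin I\cup J\cup\{ij\}\}$ is clean — i.e.\ that no $\init_y(f_{kl})$ secretly involves another eliminated variable in a way that would obstruct simultaneous substitution, and that the resulting denominators are exactly accounted for. Both points follow from Proposition~\ref{pr:transBasis}\emph{(iii)} (the $I_{\preceq l}$-triangularity, which guarantees that $\init_y(f_{kl})$ only uses $I$-coordinates, so the substitution is genuinely one-directional) and from the compatibility of $I$ with $\preceq$ and $J(ij)$. Alternatively, as the remark following Lemma~\ref{lm:InGrobner} indicates, one can sidestep the explicit isomorphism: the quotient by the right-hand ideal of~\eqref{eq:38} is visibly integral of dimension $|I\smallsetminus J|$, and since $\init_y\Gr_J(2,n)$ is a flat degeneration of $\Gr_J(2,n)$ it has the same dimension $|I\smallsetminus J|$, forcing the two to coincide; either route gives integrality.
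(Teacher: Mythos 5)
Your proposal is correct and takes essentially the same route as the paper: use Lemma~\ref{lm:InGrobner}, note via Proposition~\ref{pr:transBasis}~\emph{(iii)} that each $\init_y(f_{kl})$ involves only coordinates indexed by $I\smallsetminus J$, eliminate the variables $u_{kl}$ with $kl\notin I\cup J\cup\{ij\}$, and identify $\widetilde{K}[\init_y\Gr_J(2,n)]$ with a localization of the Laurent polynomial ring $\widetilde{K}[u_{kl}^{\pm}: kl\in I\smallsetminus J]$, hence a domain (the generic case being trivial). The dimension-count alternative you mention is used in the paper only as a remark giving another proof of the inclusion $\subseteq$ in~\eqref{eq:38}, so your main argument coincides with the paper's own proof.
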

\begin{proof}
  If $y$ is generic, i.e., if it does not lift to a point $x\in
  \cTG(2,n)$ with vanishing set $J$, we know that
  $\init_y\mathfrak{I}$ is the unit ideal. Thus,
  $\init_y\Gr_J(2,n)=\Spec (0)$ and the result follows.

  Otherwise, assume $y=(x_{kl}-x_{ij})$ with $x\in \cTG(2,n)$ and
  $J=J(x)$. Pick a tree $T$ such that $x\in \CijTJ$ and choose a
  compatible set $I=I(ij,T,J)$.  We write the coordinate ring of
  $\init_y\Gr_J(2,n)$ using Lemma~\ref{lm:InGrobner}:
\begin{equation*}
  \widetilde{K}[\init_y\Gr_J(2,n)]= 
  \frac{\widetilde{K}[u_{kl}^{\pm}: kl
    \notin J \cup \{ij\}]}{\langle u_{kl}-\init_{y}({f_{kl}}):kl\notin (I\cup
    J\cup \{ij\})\rangle}
  \simeq
  \widetilde{K}[u_{kl}^{\pm} :kl\in I\smallsetminus
  J]_{\mathscr{S}}.
\end{equation*}
Here, $\mathscr{S}$ is the multiplicatively closed set generated by
$\{\init_y(f_{rs}):rs\notin J\cup \{ij\}\}$.  The coordinate ring $
\widetilde{K}[\init_y\Gr_J(2,n)]$ is an integral domain, as we wanted to show.
\end{proof}

As a corollary, we determine the tropical multiplicities of all points
in $\cTG(2,n)$:
\begin{corollary}\label{cor:multiplicities}
  The multiplicity of every point in $\cTG(2,n)$, viewed in its
  ambient torus, is 1.
\end{corollary}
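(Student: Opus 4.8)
The plan is to deduce this immediately from Theorem~\ref{thm:InitDegGrass} together with the definition of tropical multiplicity recalled just above. Fix a point $x\in \cTG(2,n)$ and let $J=J(x)$ be its vanishing set as in~\eqref{eq:J}; by Lemma~\ref{lm:CharJ} the stratum $\Gr_J(2,n)$ is nonempty. As explained in the discussion preceding Lemma~\ref{lm:InGrobner}, viewing $x$ inside the ambient torus of $\Gr_J(2,n)$ amounts to choosing some $ij\notin J$ and passing to the cocharacter $y=(x_{kl}-x_{ij})_{kl\notin J\cup\{ij\}}\in\Lambda_{\RR}^{\vee}$, so that the multiplicity $m_x$ is by definition the number of irreducible components of $\init_y\Gr_J(2,n)$, counted with multiplicities.

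Next I would invoke Theorem~\ref{thm:InitDegGrass}: the initial degeneration $\init_y\Gr_J(2,n)$ is an integral scheme. Concretely, by Lemma~\ref{lm:InGrobner} its coordinate ring is the localization $\widetilde{K}[u_{kl}^{\pm}:kl\in I\smallsetminus J]_{\mathscr S}$ of a Laurent polynomial ring over $\widetilde K$, which is an integral domain; in particular it has no nonzero nilpotents and its spectrum is irreducible. Hence $\init_y\Gr_J(2,n)$ is irreducible and generically reduced, and by the remark following the definition of $m_x$ we conclude $m_x=1$.

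The only remaining bookkeeping point is the standing assumption that $K$ is algebraically closed with surjective valuation: as noted at the start of this Section, both $\init_y$ and the multiplicity are unchanged under passage to a complete valued extension of $K$ (see~\cite[Remark 5.4]{gubler12}), so there is no loss of generality. I do not expect any genuine obstacle here: all the real work has already been carried out in Lemma~\ref{lm:InGrobner} and Theorem~\ref{thm:InitDegGrass}, and the corollary is merely the translation of ``the coordinate ring of the initial degeneration is an integral domain'' into the language of tropical multiplicities.
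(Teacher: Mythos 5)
Your argument is correct and is essentially the paper's own: the corollary is deduced directly from Theorem~\ref{thm:InitDegGrass} (via Lemma~\ref{lm:InGrobner}), since integrality of $\init_y\Gr_J(2,n)$ means it is irreducible and generically reduced, so the multiplicity is $1$, with the field-extension reduction handled exactly as in the paper. Nothing further is needed.
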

\noindent As a side remark, we observe that in the case where $x$ is a
regular point in $\cTGo(2,n)$, i.e., a point lying on the interior of
a cone associated to a trivalent tree, this statement was implicit
in~\cite[Theorem 3.4]{TropGrass}.  Indeed, the proof of this result,
together with~\cite[Corollary 4.4]{TropGrass}, ensures that the
initial ideal $\init_xI_{2,n}$ of any regular point is a binomial
prime ideal.

Note that the proofs of Theorems~\ref{thm:reduction}
and~\ref{thm:InitDegGrass} show that, for every point $x$ in the
tropical Grassmannian with vanishing set $J$, the coordinate ring of
the associated initial degeneration $\init_y\Gr_J(2,n)$ is isomorphic
to the reduction of the affinoid algebra given by the fiber
$\trop^{-1}(x)$. This is a concrete manifestation of the relationship
between analytic and initial degenerations discussed in~\cite[Section
4]{BPR}. In the curve case, tropical multiplicities one everywhere
accounts for the existence of a faithful tropicalization
by~\cite[Theorem 6.24]{BPR}.  \medskip

\begin{example}\label{ex:degn=4}
  We illustrate the previous results when $n=4$ by computing all
  initial degenerations for the Grassmannian $\Gr(2,4)$.  As we know
  from Section~\ref{sec:trop-grassm}, there are four tree types on
  four leaves: the quartets $(12|34), (13|24), (14|23)$ and the star
  tree $(1234)$.

  We start with the stratum $\Gr_0(2,4)$ corresponding to
  $J=\emptyset$. By symmetry, it suffices to exhibit the initial
  degenerations for points in $\cTC_{14|23}\cup \cTC_{12|34}\cup
  \cTC_{(1234)}$. We view these points inside $\cT U_{12}$.  These
  three initial degenerations will be generated by the initial form of
  the unique Pl\"ucker equation $u_{34}-u_{13}u_{24}+u_{14}u_{23}$.

  Pick $x\in \cTC_{14|23}\smallsetminus \cTC_{(1234)}$ and set
  $y=(x_{kl}-x_{12})_{kl\neq 12}$. The quartet $(14|23)$ is a
  caterpillar tree with backbone $\{1,2\}$. We set $I=I(12)$ as in
  Example~\ref{ex:caterpillar}, so the only pair remaining is $34$ and
  $f_{34}:=u_{13}u_{24}-u_{14}u_{23}$. Here,
  $\init_y(f_{34})=u_{13}u_{24}$ is a monomial and
  $\init_y\mathfrak{I}=\langle u_{34}-u_{13}u_{24}\rangle$. We
  conclude that $\init_y\Gr_0(2,4)=\Spec \widetilde{K}[u_{14}^{\pm},
  u_{24}^{\pm}, u_{13}^{\pm}, u_{23}^{\pm}]$.

  Next, we fix $x\in \cTC_{12|34}\smallsetminus \cTC_{(1234)}$. We
  pick $I=\{13,23,34,14\}$ as in Example~\ref{ex:cherry}. In this
  case, $f_{24}= u_{13}^{-1}(u_{34}+u_{14}u_{23})$, and
  $\init_y(f_{24})= u_{13}^{-1}u_{14}u_{23}$ is again a
  monomial. Thus, $\init_y\mathfrak{I}=\langle
  u_{24}-u_{13}^{-1}u_{14}u_{23}\rangle$ and $\init_y\Gr_0(2,4)=\Spec
  \widetilde{K}[u_{13}^{\pm}, u_{23}^{\pm}, u_{34}^{\pm},
  u_{14}^{\pm}]$. If $x\in \cTC_{(1234)}$, we choose the same $I$ but
  now $\init_y(f_{24})=f_{24}$. In this case $\init_y
  \mathfrak{I}=\mathfrak{I}$ and $\init_y \Gr_0(2,4)= \Spec
  \widetilde{K}[u_{13}^{\pm}, u_{23}^{\pm}, u_{34}^{\pm},
  u_{14}^{\pm}]$.

  On the boundary, the four-point conditions impose some constraints
  on the valid subsets $J$ that give nonempty strata $\Gr_J(2,4)$, as
  we saw on Lemma~\ref{lm:CharJ}.  By symmetry, we assume $12\notin J$
  and we work over $U_{12}$. We need to analyze the closures of cones
  associated to the three quartets.

  First, assume $x\in (\overline{\cTC_{13|24}}\cup
  \overline{\cTC_{14|23}})\cap \cT U_{12}$. Both quartets are
  caterpillar trees with backbone $\{1,2\}$, and we set
  $I=\{13,23,14,24\}$.  The initial form $\init_y(f_{34})$ is a
  monomial.  There are fifteen possibilities for $J$ and
  $\init_y\Gr_J(2,4)=\Spec \widetilde{K}[u_{kl}^{\pm}: kl\in
  I\smallsetminus J]$ for all of them.

  Finally, suppose $x\in \overline{\cTC_{12|34}}\cap \cT U_{12}$.  We
  take the order $3\prec 4$. The possible values for $J$ determine two
  valid choices for a compatible set $I$.  If $13, 23\notin J$, there
  are two options for $J$, namely $J_1=\{43\}$ and $J_2=\{14,24,43\}$.
  We choose $I=\{13,23,14,43\}$. In the first case,
  $f_{24}:=u_{13}^{-1}u_{14}u_{23}$ is a monomial and
  $\init_y\Gr_{J_1}(2,4)=\Spec \widetilde{K}[u_{13}^{\pm},
  u_{23}^{\pm}, u_{14}^{\pm}]$. In the second case, $\mathfrak{I}=
  (0)$ so $\init_y\Gr_{J_2}(2,4)= \Spec \widetilde{K}[u_{13}^{\pm}]$.

  On the contrary, suppose that either $13$ or $23$ lie in $J$. Then,
  again by symmetry between $3$ and $4$, we may further assume that
  either $14$ or $24$ are also in $J$ and so $I=\{13,23,14,24\}$. This
  leaves seven options: $J_1=\{13,14,23,43\}$, $J_2=\{13,23,24,43\}$,
  $J_3=\{13,14,23,24,43\}$, $J_4=\{13,14,24,43\}$, $J_5=\{23,24,43\}$,
  $J_6=\{14,23,43\}$ and $J_7=\{14,23,24,43\}$. For all these sets we
  have $\mathfrak{I}=(0)$, thus $\init_y\Gr_J(2,4)=\Spec
  \widetilde{K}[u_{kl}: kl\in I\smallsetminus J]$.
\end{example}

We end this Section by discussing piecewise linear structures on the
analytic Grassmannian.  Our key tool will be the section $\sigma$
constructed in Section~\ref{sec:cont-sect-from}.  As before, we fix a
vanishing set $J$ as in Lemma~\ref{lm:CharJ} and its associated
stratum $\Gr_{J}(2,n)$.  Denote by $\Sigma$ the image of $\sigma$, and
$\Sigma_J := \Sigma \cap \Gr_{J}(2,n)^{\an}$.
 
Starting from the set $J$ and a pair $ij\notin J$, we consider the
family $\mathcal{I}(J)$ of index sets $I$ of size $2(n-2)$ that are
compatible with $J(ij)$ and with a partial order $\preceq$ on
$[n]\smallsetminus \{i,j\}$ that has the cherry property on some tree
$T$ whose associated cone $\CijTJ$ is nonempty. This collection is
nonempty by Proposition~\ref{pr:CompatibleI}.

For every $I \in \mathcal{I}(J)$, we write $\GG_m^{I \smallsetminus J}
= \Spec K[u_{kl}^{\pm}: kl \in I \smallsetminus J]$.
Lemma~\ref{lem:iso} and~\eqref{eq:27} show that the coordinate ring
$\Gr_J(2,n)$ is isomorphic to a localization of $K[u_{kl}^{\pm}: kl
\in I \smallsetminus J]$. This induces a natural open embedding
\begin{equation*}
  \xymatrix@1{
    \varphi_{I,J} \colon \Gr_J(2,n)^{\an}\;\;\ar@{^{(}->}[r] & (\GG_m^{I
      \smallsetminus J})^{\an}.}
\end{equation*}
Following Example~\ref{ex:affinespace}, we denote by $\mathcal{S}_{I
  \smallsetminus J}$ the skeleton of $(\GG_m^{I \smallsetminus
  J})^{\an}$. Note that the image of the open embedding
$\varphi_{I,J}$ contains the set of norms
$\mathcal{S}_{I\smallsetminus J}$.
 
\begin{prop}\label{pr:skeleton}
  The sets $\Sigma_J$ and $\bigcup_{I \in \mathcal{I}(J)}
  \varphi_{I,J}^{-1}(\mathcal{S}_{I \smallsetminus J})$ agree as
  subsets of $\Gr_{J}(2,n)^{\an}$.
\end{prop}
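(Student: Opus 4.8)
The plan is to prove the set equality by double inclusion, working in a fixed big open cell $U_{ij}$ with $ij \notin J$ and using the explicit description of $\sigma$ from Section~\ref{sec:cont-sect-from} together with Lemma~\ref{lem:iso}. Recall that a point of $\Gr_J(2,n)^{\an}$ lies in $\Sigma_J$ precisely when it equals $\sigma(x)$ for some $x \in \cTG(2,n)$ with $J(x) = J$, and by construction $\sigma(x) = \sigma^{(ij)}_{T,I,J}(x) = \sigma^{(ij)}_I(x)$ for any tree $T$ with $x \in \CijTJ$ and any $I$ compatible with a cherry-property order $\preceq$ on $T$ and with $J(ij)$. Meanwhile, for such an $I$, the formula~\eqref{eq:sigma} for $\sigma^{(ij)}_I(x)$ restricted to $K[u_{kl}^{\pm} : kl \in I\smallsetminus J]$ is exactly the skeleton seminorm $\delta(\rho)$ attached to the point $\rho = (x_{kl} - x_{ij})_{kl \in I \smallsetminus J} \in \RR^{I\smallsetminus J}$, since $\sigma(x)(u_{kl}) = 0$ for $kl \in J$ kills all other coordinates. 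In other words, $\varphi_{I,J}(\sigma(x))$ is the image under the skeleton map $\delta$ of the point $\pi_I(x)$, hence lies in $\mathcal{S}_{I\smallsetminus J}$, which proves $\Sigma_J \subseteq \bigcup_{I} \varphi_{I,J}^{-1}(\mathcal{S}_{I\smallsetminus J})$.

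For the reverse inclusion, I would start with a point $\gamma \in \varphi_{I,J}^{-1}(\mathcal{S}_{I\smallsetminus J})$ for some $I \in \mathcal{I}(J)$, so $\varphi_{I,J}(\gamma) = \delta(\rho)$ for some $\rho \in \RR^{I\smallsetminus J}$. Such $\gamma$ is a multiplicative seminorm on the coordinate ring of $\Gr_J(2,n)$, viewed via Lemma~\ref{lem:iso} as a localization of $K[u_{kl}^{\pm} : kl \in I\smallsetminus J]$, and on the Laurent monomials $u_{kl}$, $kl \in I\smallsetminus J$, it takes the values $\gamma(u_{kl}) = \exp(\rho_{kl})$. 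I would then define a point $x \in \cTG(2,n)$ by setting $x_{ij} = 0$, $x_{kl} = \rho_{kl}$ for $kl \in I\smallsetminus J$, $x_{kl} = -\infty$ for $kl \in J$, and filling in the remaining coordinates via $x_{kl} - x_{ij} = \log \delta(\rho)(f_{kl})$, i.e.\ using the polynomials $f_{rs}$ from~\eqref{eq:33}. The key point is to check that this $x$ indeed lies in $\cTG(2,n)$ with vanishing set exactly $J$ and in the closure $\overline{\cTC_T}$ of the correct tree cone, and that $\trop(\gamma) = x$ — but this is forced, since $\gamma$ factors through the coordinate ring and the localization relations $u_{rs} = f_{rs}$ hold in that ring, so $\gamma(u_{rs}) = \delta(\rho)(f_{rs})$ for all $rs \notin J \cup \{ij\}$. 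By Lemma~\ref{lm:MaxCondition}, $\sigma(x)$ is the maximal element of $\trop^{-1}(x)$ with respect to evaluation on $R(ij)$; but $\gamma$, extended to $R(ij)_{S_{\A\smallsetminus J}}$ via $\rr$ as in the proof of that lemma, coincides with $\delta(\rho)$ on the Laurent polynomial ring $K[u_{kl}^{\pm}: kl \in I\smallsetminus J]$, and $\delta(\rho)$ is itself the maximal such seminorm by the defining max-formula. Hence $\gamma = \sigma(x)$, so $\gamma \in \Sigma_J$.

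In carrying this out I would lean on two facts already established: first, that $\Gr_J(2,n)$ is irreducible with coordinate ring as in Lemma~\ref{lem:iso} (so restriction to $K[u_{kl}^{\pm}: kl \in I\smallsetminus J]$ loses no information and every seminorm on the latter with the right values on the $f_{rs}$ extends uniquely), and second, that for any $I$ compatible with $\preceq$ and $J(ij)$ the map $\sigma^{(ij)}_{T,I,J}$ really is a section landing in $\Sigma_J$ (Theorem~\ref{thm:section}) and is independent of all choices (the discussion after Lemma~\ref{lm:MaxCondition}). The verification that the candidate point $x$ built from $\rho$ lies in $\cTG(2,n)$ with the prescribed vanishing set and tree type is the step requiring the most care: one must confirm the four-point conditions hold, which amounts to checking that the $\log \delta(\rho)(f_{kl})$ values satisfy~\eqref{eq:4-point-cond} — this follows because $x$ is by construction $\trop(\gamma)$ for the honest analytic point $\gamma$, and the image of $\trop$ is contained in $\cTG(2,n)$ by definition, but spelling out that $J(x) = J$ (no accidental extra vanishing) uses that $\delta(\rho)(f_{rs}) = \exp(\rho_{rs}) \neq 0$ for $rs \in I\smallsetminus J$ and the saturation conditions of Lemma~\ref{lm:CharJ}.

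The main obstacle I anticipate is precisely the bookkeeping in the reverse inclusion: one needs to know that an arbitrary $\gamma$ in $\varphi_{I,J}^{-1}(\mathcal{S}_{I\smallsetminus J})$ has vanishing ideal exactly $\fa_J$ (so that it lands in the stratum $\Gr_J(2,n)^{\an}$ and not a larger one) and that the $x$ it produces genuinely has vanishing set $J$ rather than some $J' \supsetneq J$. Both reduce to the observation that on $\mathcal{S}_{I\smallsetminus J}$ all the skeleton coordinates $\gamma(u_{kl})$, $kl \in I\smallsetminus J$, are strictly positive, combined with the description~\eqref{eq:7} of the coordinate ring and the fact that the $f_{rs}$ for $rs \notin I\cup J\cup\{ij\}$ are monomials in these $u_{kl}$ up to sign — a consequence of $\sigma(x)(u_{kl}) = 0$ for $kl \in J$ noted just before Lemma~\ref{lem:iso}. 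Once that is nailed down, the identification $\gamma = \sigma(x)$ via maximality is immediate.
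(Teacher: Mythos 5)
Your proof is correct and follows essentially the same route as the paper: the forward inclusion by unwinding $\sigma^{(ij)}_{T,I,J}$ against the skeleton map $\delta_{I\smallsetminus J}\circ\pi_{I\smallsetminus J}$, and the reverse inclusion by setting $x=\trop(\xi)$ and playing the maximality of the skeleton seminorm off against Lemma~\ref{lm:MaxCondition} to force $\xi=\sigma(x)$. One side remark is wrong, though harmless: the polynomials $f_{rs}$ with $rs\notin I\cup J\cup\{ij\}$ are not in general monomials up to sign (for the star quartet with $J=\emptyset$ one has $f_{24}=u_{13}^{-1}(u_{34}+u_{14}u_{23})$); the nonvanishing you actually need --- that $\xi(u_{rs})\neq 0$ for all $rs\notin J\cup\{ij\}$, so that the vanishing set of $\trop(\xi)$ is exactly $J$ --- follows instead from the fact that these $u_{rs}$ are units in the coordinate ring $(R(ij)/\fa_J)_{S_{(J\cup\{ij\})^c}}$ of $\Gr_J(2,n)$, so every multiplicative seminorm on it is automatically nonzero on them.
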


\begin{proof} We prove the result by double inclusion.  Assume that $
  \sigma(x) \in \Sigma_J$ for some $x\in \cTG_J(2,n)$.  Choose $ij
  \notin J$ and a tree $T$ such that $x \in \overline{\cTC_T}\cap \cT
  U_{ij}$. Let $\preceq$ be an order on $[n]\smallsetminus \{i,j\}$
  with the cherry property on $T$ and $I\in \mathcal{I}(J)$ be
  compatible with $\preceq$ and $J(ij)$. Note that $\sigma(x)$
  vanishes on $\fa_J$. By construction, $\sigma(x)$ gives rise to the
  seminorm on $K[\Gr_J(2,n)]$ induced by the point $\delta_{I
    \smallsetminus J}(\pi_{I \smallsetminus J}(x))$ belonging to the
  skeleton $(\mathbb{G}_m^{I \smallsetminus J})^{\an}$. Therefore,
  $\varphi_{I,J} (\sigma(x)) \in \mathcal{S}_{I \smallsetminus J}$.

  Conversely, let $\xi \in \bigcup_{I \in \mathcal{I}(J)}
  \varphi_{I,J}^{-1}(\mathcal{S}_{I \smallsetminus J})$ and set $x :=
  \trop (\xi)$ in the coordinate system induced by the Pl\"ucker
  embedding~\eqref{eq:Pluecker}.  Since $\xi \in
  \varphi_{I,J}^{-1}(\mathcal{S}_{I \smallsetminus J})$ for some $I$,
  it is immediate to check that $\xi$ (viewed as a seminorm on
  $R(ij)$) is maximal among all multiplicative seminorms $\gamma$ on
  $R(ij)$ satisfying $\gamma(u_{kl})=\exp(x_{kl}-x_{ij})$ for all
  $kl\neq ij$.  By Lemma~\ref{lm:MaxCondition}, we conclude that
  $\xi=\sigma(x)$, so it lies in $\Sigma$.
\end{proof}

Recall that in the course of constructing $\sigma$, it was necessary
to find a suitable index set $I$ depending on $x \in \cTG(2,n)$. The
proof above shows that, whenever $\xi \in
\varphi_{I,J}^{-1}(\mathcal{S}_{I \smallsetminus J})$, $I$ is such a
suitable index set for $x = \trop \xi$.
 
We identify the image of our section as a skeleton in the sense of
Ducros \cite[(4.6)]{DucrosPL}.  Note that Ducros' skeleta carry
rational (not in general integral) piecewise linear structures.

 \begin{corollary}\label{cor:skeleton}
   The set $\Sigma_J$ is a skeleton in $\Gr_J(2,n)^{\an}$ in the sense
   of Ducros \cite{DucrosPL}.
 \end{corollary}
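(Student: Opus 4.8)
The plan is to deduce Corollary~\ref{cor:skeleton} directly from Proposition~\ref{pr:skeleton} by unwinding Ducros' definition of a skeleton. First I would recall that, according to Ducros~\cite[(4.6)]{DucrosPL}, a subset $S$ of an analytic space $Y$ is a \emph{skeleton} if $Y$ admits an atlas of ``torus charts'' — morphisms to analytic tori $(\GG_m^{d})^{\an}$ that are local isomorphisms onto open subsets — such that $S$ is locally the preimage of the canonical skeleta $\mathcal{S}_{\GG_m^d}$ under these charts, the whole datum being subject to a suitable compatibility/finiteness condition. The point of Proposition~\ref{pr:skeleton} is precisely that $\Sigma_J = \bigcup_{I\in\mathcal{I}(J)}\varphi_{I,J}^{-1}(\mathcal{S}_{I\smallsetminus J})$, and each $\varphi_{I,J}\colon \Gr_J(2,n)^{\an}\hookrightarrow (\GG_m^{I\smallsetminus J})^{\an}$ is an open immersion arising from the isomorphism in Lemma~\ref{lem:iso} identifying the coordinate ring of $\Gr_J(2,n)$ with a localization of $K[u_{kl}^{\pm}:kl\in I\smallsetminus J]$. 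So the collection $\{\varphi_{I,J}\}_{I\in\mathcal{I}(J)}$ is exactly the kind of atlas of torus charts that Ducros' definition asks for.

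The key steps, in order, would be: (1) note that $\Gr_J(2,n)$ is a smooth integral scheme of dimension $2(n-2)-|J| = |I\smallsetminus J|$ — this is immediate from Lemma~\ref{lm:a_JPrime} together with the fact that it is an open subscheme of an affine space expressed in the $I$-coordinates via Lemma~\ref{lem:iso} — so each $\varphi_{I,J}$ really lands in a torus of the correct dimension and is an open immersion onto its image; (2) observe that the sets $U_I := \varphi_{I,J}^{-1}((\GG_m^{I\smallsetminus J})^{\an})$, as $I$ ranges over $\mathcal{I}(J)$, form an open cover of $\Gr_J(2,n)^{\an}$ — this follows because the cones $\CijTJ$ (over all admissible $T$ and choices of $ij\notin J$) cover $\cT U_{ij}$ and hence the stratum, and each point of $\Gr_J(2,n)^{\an}$ tropicalizes into some such cone, forcing its image under the corresponding $\varphi_{I,J}$ to have all coordinates nonzero; (3) invoke Proposition~\ref{pr:skeleton} to identify $\Sigma_J$ on each chart $U_I$ with $\varphi_{I,J}^{-1}(\mathcal{S}_{I\smallsetminus J})$, i.e., with the preimage of the canonical torus skeleton; (4) check the transition/compatibility condition: on an overlap $U_I\cap U_{I'}$, the change of coordinates is the monomial (Laurent) change given by the Pl\"ucker relations through the isomorphisms $\rr,\rr'$ of~\eqref{eq:diagram}, which is a composition of invertible monomial maps, hence respects skeleta and induces a rational (typically non-integral) piecewise-linear transition map on the overlapping pieces of $\Sigma_J$. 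Assembling these verifies Ducros' axioms and yields the corollary.

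I would close by remarking that the rational (rather than integral) nature of the piecewise linear structure on $\Sigma_J$ — already flagged in the sentence preceding the corollary — is forced by step (4): the Laurent-monomial transition maps between the charts $\varphi_{I,J}$ have exponent matrices in $\GL$ over $\ZZ$ only up to the denominators introduced by inverting variables such as $u_{it}$ in formulas like $\rr(u_{js})=u_{it}^{-1}(-u_{st}+u_{is}\rr'(u_{jt}))$, so the induced affine maps on $\overline{\RR}^{I\smallsetminus J}$ lie in $\GL_{|I\smallsetminus J|}(\ZZ)\ltimes \QQ^{|I\smallsetminus J|}$ but not necessarily in the integral affine group. This is exactly the level of generality Ducros' skeleta are designed to accommodate, so no sharper bookkeeping is needed.

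The main obstacle I anticipate is step (4): making the compatibility condition of Ducros precise enough to check, since it requires that the two torus-chart descriptions of $\Sigma_J$ glue along a piecewise-linear isomorphism defined by the monomial coordinate changes, and one must confirm that the maps $\rr,\rr'$ of Proposition~\ref{pr:transBasis} really do restrict to monomial isomorphisms between the relevant localized Laurent rings on overlaps (as opposed to merely birational maps). The earlier results — especially Lemma~\ref{lm:MaxCondition}, which pins down $\Sigma_J$ intrinsically as the locus of fiberwise-maximal seminorms independently of the choice of $I$ — do most of this work for us, so the verification should reduce to bookkeeping with the explicit formulas already established; but it is the one place where a genuine argument, rather than a citation, is required.
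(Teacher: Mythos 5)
There is a genuine gap, and it sits exactly where you anticipated trouble. The paper's own proof is a one-line citation: by Proposition~\ref{pr:skeleton}, $\Sigma_J=\bigcup_{I\in\mathcal{I}(J)}\varphi_{I,J}^{-1}(\mathcal{S}_{I\smallsetminus J})$, and Ducros' Th\'eor\`eme~(5.1) in \cite{DucrosPL} states precisely that such a (locally finite) union of preimages of torus skeleta under morphisms to split tori is a skeleton. Your plan replaces that citation with a hands-on verification of the definition~(4.6), and the verification collapses at step~(4). The sets $\varphi_{I,J}^{-1}(\mathcal{S}_{I\smallsetminus J})$ for different $I\in\mathcal{I}(J)$ are \emph{not} rechartings of one and the same subset related by invertible monomial coordinate changes; they are genuinely different subsets of $\Gr_J(2,n)^{\an}$ (corresponding to the images of $\sigma$ over different cones $\CijTJ$), and their union is $\Sigma_J$. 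Moreover, the coordinate changes between two compatible systems $I$ and $I'$ are given by the maps $\rr$ of~\eqref{eq:diagram}, which are not monomial: the very formula you quote, $\rr(u_{js})=u_{it}^{-1}(-u_{st}+u_{is}\,\rr'(u_{jt}))$, is a sum of two terms, so the induced map on value vectors is a tropical (max-plus) map, not an element of any affine group, and it does not carry one torus skeleton onto another. Consequently the claim that the transitions ``respect skeleta'' is false, and the assertion that a finite union of such preimages is again a skeleton cannot be reduced to this bookkeeping --- it is exactly the nontrivial content of Ducros' theorem, whose proof uses his theory of analytic polytopes and is not recovered by your steps (1)--(4).

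Two smaller points. The dimension count in step (1) is wrong in general: the coordinate ring of $\Gr_J(2,n)$ is a localization of $K[u_{kl}^{\pm}:kl\in I\smallsetminus J]$, so the dimension is $|I\smallsetminus J|=2(n-2)-|I\cap J|$, which need not equal $2(n-2)-|J|$ since $J$ may contain pairs $kl$ with $k,l\notin\{i,j\}$ lying outside $I$ (this is harmless for the argument, but the stated identity is incorrect). And the closing remark misdiagnoses the rationality issue: inverting variables such as $u_{it}$ introduces no rational exponents, and the transitions are not ``Laurent-monomial maps with denominators''; the rational (rather than integral) piecewise linear structure is simply the generality in which Ducros' skeleta are defined, not something forced by these formulas --- indeed the explicit structure the paper later puts on $\Sigma_J$ via~\eqref{eq:skeleton-pieces} has integer coefficients. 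The correct repair of your proof is to keep Proposition~\ref{pr:skeleton} and Lemma~\ref{lem:iso} (which do show that each $\varphi_{I,J}$ is a morphism to a torus with $\varphi_{I,J}^{-1}(\mathcal{S}_{I\smallsetminus J})$ a preimage of the canonical skeleton) and then invoke Ducros' Th\'eor\`eme~(5.1) for the union, as the paper does, rather than attempt to verify definition~(4.6) chart by chart.
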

 \begin{proof}
   The result follows from Proposition~\ref{pr:skeleton} because $
   \bigcup_{I \in \mathcal{I}(J)} \varphi_{I,J}^{-1}(\mathcal{S}_{I
     \smallsetminus J})$ is a skeleton in $\Gr_J(2,n)^{\an}$ by
   \cite[(5.1) Th\'eor\`eme]{DucrosPL}.
 \end{proof}

\medskip

We use the previous results to endow $\Sigma$ with a piecewise linear
structure as defined by Berkovich, see \cite{ber99} and \cite{ber04}.
Roughly speaking, a \emph{piecewise linear space} is a locally compact
space provided with an atlas of
$R_S$-polyhedra~\cite[Section~1]{ber04}. Here, $S$ is a sub-semiring
of $\mathbb{R}$ containing $0$, and $R$ is a nonempty $S$-submonoid of
$\mathbb{R}^\ast_+$. In our setting, we let $S$ be the set of
nonnegative integers. We fix $R = |K^\ast|$ if the absolute value on
$K$ is nontrivial, and $R = \exp(\mathbb{Z})$ if the valuation is
trivial. Furthermore, we consider the additive situation obtained from
Berkovich's multiplicative setting by applying the logarithm map.  A
\emph{logarithmic $R_S$-polyhedron} in $\mathbb{R}^n$ is the solution
set of a finite number of linear inequalities of the form $\{a_1 t_1 +
\ldots + a_n t_n \leq c\}$ where $a_i\in \ZZ$ and either $c \in \log
|K^{\ast}|$ when the absolute value on $K$ is nontrivial or $c\in \ZZ$
when this absolute value is trivial.

We define $\Sigma_{I,J} := \varphi_{I,J}^{-1}(\mathcal{S}_{I
  \smallsetminus J})$. Since $\varphi_{I,J}$ is an open embedding, it
induces a homeomorphism $\varphi_{I,J}\colon \Sigma_{I,J} \rightarrow
\mathcal{S}_{I \smallsetminus J}$. Hence, $\Sigma_{I,J}$ is
homeomorphic to $ \mathbb{R}^{I \smallsetminus J}$.  We consider the
tropicalization map
 \[\trop\colon \Gr_J(2,n)^{\an} \longrightarrow \cTG_J(2,n)\subset \mathbb{R}^{\binom{n}{2}-|J|-1}\]
 induced by the coordinates of the ambient torus
 $\GG_m^{\binom{n}{2}-|J|-1}$.
 
 Pick a point $\xi \in \Sigma_{I,J}$ and let $x = \trop(\xi)$.  We
 write the polynomials $f_{rs}$ from~\eqref{eq:33} as $f_{rs} =
 \sum_{\alpha} c^{(rs)}_\alpha u^\alpha$ for $rs \notin J\cup \{ij\}$,
 where $\alpha \in \NN_0^{I\smallsetminus (J\cup H)}\times
 \ZZ^{H\smallsetminus J}$.  By construction, the point $x$ satisfies
\begin{equation}\label{eq:skeleton-pieces} x_{rs} - x_{ij} = \max_\alpha \{ \log |c^{(rs)}_\alpha| + \sum_{kl\in
    I\smallsetminus J} \alpha_{kl}(x_{kl}-  x_{ij}) \} \qquad \text{ for all } rs
  \notin J\cup \{ij\}.
\end{equation}
Conversely, pick a point $x\in \RR^{\binom{[n]}{2}\smallsetminus
  (J\cup\{ij\})}$ satisfying~\eqref{eq:skeleton-pieces}. We project to
the set of coordinates in $I \backslash J$ to obtain a point in the
skeleton $\mathcal{S}_{I \smallsetminus J}$. This point is of the form
$\varphi_{I,J}(\xi)$ for some $\xi \in \Sigma_{I,J}$.  The conditions
\eqref{eq:skeleton-pieces} ensure that $\trop(\xi) = x$. We conclude
that $\trop(\Sigma_{I,J})$ is the solution space
to~\eqref{eq:skeleton-pieces}.  We rewrite this system as a finite
disjunction of finite sets of linear inequalities and obtain a
logarithmic $R_S$-polytopal structure on $\trop(\Sigma_{I,J})\subseteq
\RR^{\binom{[n]}{2}\smallsetminus (J\cup\{ij\})}$.  Since
$\Sigma_{I,J}$ is contained in the image of the section $\sigma$, the
tropicalization map restricts to a homeomorphism $\Sigma_{I,J}
\rightarrow \trop(\Sigma_{I,J})$, which induces a logarithmic
$R_S$-polytopal structure on $\Sigma_{I,J}$.  We define a piecewise
linear structure on $\Sigma_J$ by taking the sets $\Sigma_{I,J}$ as
the charts of an atlas. The tropical coordinates are expressed
by~\eqref{eq:skeleton-pieces}, so this structure coincides with the
natural piecewise linear structure on the tropicalization given by the
cocharacter lattice of the ambient torus~\cite[Theorem 3.3]{gubler12}.

%%%%%%%%%%%%%%%%%%%%%%%%%%%%%%%%%%%%%%%%%%%%%%%%%%%%%%%%%%%%%%%%%%%%%%%%%%%%%%
%%%%%%%%%%%%%%%%%%%%%%%%%%%%%%%%%%%%%%%%%%%%%%%%%%%%%%%%%%%%%%%%%%%%%%%%%%%%%%
%%%%%%%%%%%%%%%%%%%%%%%%%%%%%%%%%%%%%%%%%%%%%%%%%%%%%%%%%%%%%%%%%%%%%%%%%%%%%%
\section{The Petersen Graph}
\label{sec:petersen-graph}
%%%%%%%%%%%%%%%%%%%%%%%%%%%%%%%%%%%%%%%%%%%%%%%%%%%%%%%%%%%%%%%%%%%%%%%%%%%%%
%%%%%%%%%%%%%%%%%%%%%%%%%%%%%%%%%%%%%%%%%%%%%%%%%%%%%%%%%%%%%%%%%%%%%%%%%%%%%
In this Section, we focus our attention on the open subvariety
$\Gr_0(2,n)$ of the Grassmannian. This variety admits an action by an
$n$-dimensional split torus which induces a geometric quotient in the
sense of Mumford~\cite{GIT}. On the tropical side, this action is
encoded by the lineality space of $\cTGo(2,n)$. The associated
spherical simplicial complex is pure of dimension $n-4$. When $n=5$,
it is the Petersen graph, and it represents the boundary complex of
$\overline{M_{0,5}}$.  The restriction to $\Gr_0(2,n)$ of the map
$\sigma$ constructed in Section~\ref{sec:cont-sect-from} is compatible
with this action and allows us to embed the quotient of $\cTGo(2, n)$
by its lineality space into the analytification of the quotient of
$\Gr_0(2,n)$ by the split torus $\GG_m^n$.  In particular, when $n=5$,
our construction allows us to view the Petersen graph inside this
analytic quotient space.

% The restriction of the map $\sigma$ constructed
% in Section~\ref{sec:cont-sect-from} to $\Gr_0(2,n)$ is compatible with
% this action and allows us to embed the quotient of $\cTGo(2,n)$ by its
% lineality space into the quotient of $\Gr_0(2,n)^{\an}$ by the
% analytic split torus $(\GG_m^n)^{\an}$. In particular, when $n=5$, our
% construction allows us to view the Petersen graph inside this analytic quotient
% space.

Recall from Section~\ref{sec:trop-grassm} that we view $\Gr_0(2,n)$
inside the torus $\GG_m^{\binom{n}{2}} / \GG_m $ via the Pl\"ucker embedding.
We let $\Lambda$ be its character lattice and we identify the
cocharacter lattice with $\ZZ^{\binom{n}{2}}/\ZZ\tightcdot
\mathbf{1}$.  We identify the cocharacter lattice $\Lambda^{\vee}$ of
the torus with $M^{\vee}/\ZZ\tightcdot\mathbf{1}$, where $M=
\ZZ^{\binom{n}{2}}$.  The tropical fan $\cT \Gr_0(2,n) \subset
\Lambda_{\RR}^{\vee}$ is the tropicalization of $\Gr_0(2,n)^{\an}$
with respect to this fixed embedding.

We define the sublattice $L_{\ZZ}$ of $M^{\vee}$ generated by the
cut-metrics $\{\mathbf{l_1}, \ldots, \mathbf{l_n}\}$, i.e.,
\begin{equation}
(\mathbf{l_k})_{rs}=
\begin{cases}
  1& \text{ if } r=k \text{ or } s =k,\\
0 & \text{ otherwise}.
\end{cases}\label{eq:42}
\end{equation}
We let $L$ be the $\RR$-span of $L_{\ZZ}$ inside $M^{\vee}_{\RR}$.

The set $\cTGo(2,n)$ is a subfan of the Gr\"obner fan of the Pl\"ucker
ideal $I_{2,n}\subset K[\Lambda]$.  All cones of $\cTGo(2,n)$ contain
the linear space $\overline{L}=L/\RR\tightcdot\mathbf{1}$ in
$\Lambda_{\RR}^{\vee}$ spanned by the classes of the
\emph{cut-metrics}. % $\{\mathbf{l_1}, \ldots, \mathbf{l_n}\}$.
We call $\overline{L}$ the \emph{lineality space} of $\cTG_0(2,n)$.
When $n=2$, we have $\overline{L}=\{0\}$ and when $n>2$, its dimension
is $n-1$.  We view the pointed fan $\cTGo(2,n)/\overline{L}$ inside
$\Lambda_\RR^{\vee}/\overline{L}$.

The lattice $L_{\ZZ}$ inside $M^{\vee}$ induces the following action
on $\Gr(2,n)$:
\begin{equation}\label{eq:action}
  \varrho\colon \GG_m^n\times \Gr(2,n)\longrightarrow \Gr(2,n) \qquad ((t_i)_{i \in [n]}, (p_{kl})_{kl}) \longmapsto (t_kt_lp_{kl})_{kl}.
\end{equation}

The orbit of any point in $\Gr(2,n)$ contains boundary points in its
closure. An easy calculation confirms that the only closed orbits in
$\Gr(2,n)$ are those associated to the $\binom{n}{2}$ points at
infinity $(0:\ldots: 0: 1: 0: \ldots: 0)$. Nonetheless, when
restricted to the open cell $\Gr_0(2,n)$, all orbits are closed.
Since $\GG_m^n$ is reductive and its action on $\Gr_0(2,n)$ is closed,
the action induces a geometric quotient $\Gr_0(2,n)/\GG_m^n$
(see~\cite[\S~1.2, Theorem~1.1 and Amplification~1.3]{GIT}). This
quotient is the affine scheme given by the ring of invariants under
this action. In what follows, we construct an embedding
$\varphi'\colon \Gr_0(2,n)/\GG_m^n\hookrightarrow \Spec
K[M/L^{\vee}_{\ZZ}]$. As expected, this embedding is compatible with
the Pl\"ucker map $\varphi$.

For simplicity, we denote $A_0 = K[\Gr_0(2,n)]$.  The monomial map
$\varrho$ from~\eqref{eq:action} as well as the second projection $p_2
\colon \GG_m^n \times \Gr_0(2,n) \to \Gr_0(2,n)$ both correspond to
ring homomorphisms $\varrho^\sharp, p_2^\sharp \colon A_0 \to
K[t_1^{\pm}, \ldots, t_n^{\pm}] \otimes A_0$.  Explicitly, we have
$\varrho^\sharp(u_{rs}) = t_rt_st_i^{-1}t_j^{-1} \otimes u_{rs} $ and
$p_2^\sharp (u_{rs}) = 1 \otimes u_{rs}$.  An easy induction reveals
that the ring of invariants under the action $\varrho$ equals
\[
A_0^{\GG_m^n} := \{ f \in A_0
: \varrho^\sharp(f) =
p_2^\sharp(f)\}=K[u_{kl}u_{ik}^{-1}u_{jl}^{-1},
u_{kl}u_{il}^{-1}u_{jk}^{-1}: k,l\neq i,j].
\]
The projection
$\Lambda^{\vee}=M^{\vee}/\ZZ\tightcdot\mathbf{1}\twoheadrightarrow
M^{\vee}/L_{\ZZ}$ dualizes to a homomorphism $M / L_{\ZZ}^\vee
\rightarrow \Lambda$ of lattices and hence induces a morphism of
tori. Consider the composition
\[\Gr_0(2,n) \longhookrightarrow \Spec K[\Lambda]
\longrightarrow \Spec K[M / L^{\vee}_{\ZZ}].\] It is invariant with
respect to the action $\varrho$, hence it factors over the geometric
quotient:
\[ \xymatrix{
  \Gr_0(2,n) \ar@{^(->}[rr]^{\varphi} \ar@{->>}[d] && \Spec K[\Lambda] \ar@{->}[d] \\
  \Gr_0(2,n)/\GG_m^n \ar@{^(->}[rr]^{\varphi'} && \Spec
  K[M/L_\ZZ^{\vee}]\; . }\]

Our next result, which appeared already in \cite[Lemma 6]{Mega09},
says that the pointed fan $\cTGo(2,n)/\overline{L}$ (with induced
multiplicities) in the $\RR$-vector space $M_{\RR}^{\vee}/L$ is precisely
the tropicalization of the geometric quotient $\Gr_0(2,n)/\GG_m^n$.

\begin{prop}\label{pro:QuotExists}
  The tropicalization $\cT(\Gr_0(2,n)/\GG_m^n)$ under ${\varphi'}$ is
  the pointed fan $\cTG_0(2,n)/\overline{L}$.
\end{prop}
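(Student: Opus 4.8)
The plan is to deduce the statement directly from the compatibility of tropicalization with the monomial maps appearing in the diagram defining $\varphi'$, together with the general fact that tropicalization commutes with the induced map on cocharacter lattices. First I would make precise the commutative square at the end of the excerpt on the level of tropicalizations. The quotient morphism $\Gr_0(2,n) \twoheadrightarrow \Gr_0(2,n)/\GG_m^n$ corresponds, on tori, to the projection of lattices $\Lambda^\vee = M^\vee/\ZZ\tightcdot\mathbf{1} \twoheadrightarrow M^\vee/L_\ZZ$; passing to the associated linear map of real vector spaces, this is exactly the quotient map $\Lambda_\RR^\vee \to \Lambda_\RR^\vee/\overline{L}$ (note $\overline{L} = L/\RR\tightcdot\mathbf{1}$). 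By functoriality of tropicalization with respect to homomorphisms of split tori — the tropicalization of a torus morphism is the induced linear map on cocharacter spaces — the tropicalization of $\Gr_0(2,n)/\GG_m^n$ inside $M_\RR^\vee/L$ is the image of $\cTGo(2,n)\subset\Lambda_\RR^\vee$ under this linear projection, i.e.\ $\cTGo(2,n)/\overline{L}$ as a set.

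Next I would address surjectivity and the set-theoretic identification more carefully. Surjectivity of $\trop$ onto the tropical variety is part of its definition (Section~\ref{subsec:tropi}), so the image of $\Gr_0(2,n)^{\an}/\GG_m^n$ under tropicalization is the closure of the image of the composite map $\Gr_0(2,n)^{\an} \to (\Spec K[M/L_\ZZ^\vee])^{\an}$; since $\cTGo(2,n)$ is already a closed polyhedral fan and linear projections take closed fans containing the kernel as a lineality direction to closed fans, the image $\cTGo(2,n)/\overline{L}$ is closed, and no extra closure is needed. The only point requiring care here is that $\cTGo(2,n)$ contains $\overline{L}$ in the lineality space of \emph{every} cone — this is exactly the statement recalled just before~\eqref{eq:42} (all cones contain the span of the cut-metrics), so the projection is a bijection on the quotient fan and genuinely identifies $\cT(\Gr_0(2,n)/\GG_m^n)$ with $\cTGo(2,n)/\overline{L}$.

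Finally, for the assertion about induced multiplicities, I would invoke the projection/pushforward formula for tropical multiplicities under a monomial map whose kernel contains the lineality space: since the map on lattices $M^\vee/\ZZ\tightcdot\mathbf{1} \twoheadrightarrow M^\vee/L_\ZZ$ is a surjection of free abelian groups and $L_\ZZ$ (modulo $\ZZ\tightcdot\mathbf{1}$) is a saturated sublattice of the lineality lattice of $\cTGo(2,n)$, the multiplicity of a cone $\bar\tau$ in the quotient fan equals the multiplicity of its preimage cone $\tau$ in $\cTGo(2,n)$. By Corollary~\ref{cor:multiplicities}, all these multiplicities are $1$, so in particular the induced multiplicities on $\cTGo(2,n)/\overline{L}$ are all $1$ and the multiplicity bookkeeping is automatic. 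The main obstacle is really just the bookkeeping in the first two steps: one must check that the kernel of the lattice projection is spanned by the cut-metrics modulo $\mathbf{1}$ and lies in every cone's lineality space (so that the fan structure passes cleanly to the quotient), and that the relevant functoriality of tropicalization applies to the composite through $\Spec K[\Lambda]$ — both are straightforward but need to be stated. Once these are in place the identification $\cT(\Gr_0(2,n)/\GG_m^n) = \cTGo(2,n)/\overline{L}$ follows.
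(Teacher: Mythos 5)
Your argument is correct, but note that the paper does not actually prove Proposition~\ref{pro:QuotExists}: it is stated with a citation to \cite[Lemma~6]{Mega09}, which is a general result on tropicalizations of geometric quotients of subvarieties of tori by subtori. What you have written is essentially a self-contained reconstruction of that cited lemma in the present special case, and the ingredients you isolate are exactly the right ones: compatibility of $\trop$ with the monomial map $\Spec K[\Lambda]\to\Spec K[M/L_\ZZ^{\vee}]$ (so that $\trop$ of the image is the image of $\cTGo(2,n)$ under the linear projection $\Lambda^{\vee}_{\RR}\to M^{\vee}_{\RR}/L$), surjectivity of $\Gr_0(2,n)^{\an}\to(\Gr_0(2,n)/\GG_m^n)^{\an}$ (or, as you hedge, the Fundamental Theorem plus the observation that the image of a fan whose cones all contain $\overline{L}$ is already closed), and the fact that $\overline{L}$ lies in every cone of $\cTGo(2,n)$, which identifies that image with the quotient fan. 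Two small remarks: the multiplicity discussion in your last paragraph is not needed, since the Proposition as stated makes no assertion about multiplicities (the paper's phrase ``with induced multiplicities'' occurs only in the surrounding prose); and your claim that $L_\ZZ$ modulo $\ZZ\tightcdot\mathbf{1}$ is saturated should be handled with care, since $L_\ZZ$ itself is not saturated in $M^{\vee}$ (one has $\mathbf{1}=\tfrac12\sum_k\mathbf{l_k}\notin L_\ZZ$), so any index computation in a pushforward formula would have to be done with the correct lattices --- but this does not affect the statement being proved.
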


We end by showing that the map $\sigma$ from Theorem~\ref{thm:MainThm}
respects the quotient structure on its source and target spaces and,
thus, it induces a map $\sigma'\colon \cTG_0(2,n)/\overline{L}\to
(\Gr_0(2,n)/\GG_m^n)^{\an}$.

\begin{lemma}\label{lem:QuotIsRespected}
  Let $x, y \in \cTGo(2,n)$, and let $\sigma$ be the section to
  $\trop$ from Theorem~\ref{thm:MainThm}.  Then, $\sigma(x)$ and
  $\sigma(y)$ induce the same element in $(\Gr_0(2,n)/\GG_m^n)^{\an}$
  if and only if $x - y \in \overline{L}$.
\end{lemma}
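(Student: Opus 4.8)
The plan is to describe both spaces as sets of multiplicative seminorms and reduce the statement to a short computation plus a symmetry argument. Since $\Gr_0(2,n)/\GG_m^n=\Spec A_0^{\GG_m^n}$ is affine, the canonical map $\Gr_0(2,n)^{\an}\to(\Gr_0(2,n)/\GG_m^n)^{\an}$ is simply the restriction of a multiplicative seminorm on $A_0$ to the subalgebra $A_0^{\GG_m^n}$; hence $\sigma(x)$ and $\sigma(y)$ induce the same point of the quotient if and only if $\sigma(x)(g)=\sigma(y)(g)$ for every $g\in A_0^{\GG_m^n}$. Writing $A_0=K[\Lambda]/I_{2,n}$ with $\Lambda=\{\beta\in\ZZ^{\binom{n}{2}}:|\beta|=0\}$, the three-term Pl\"ucker relations are homogeneous for the grading of $K[\Lambda]$ by $\Lambda/\Lambda_0$, where $\Lambda_0=\{\beta\in\Lambda:\langle\beta,\mathbf{l_k}\rangle=0\text{ for all }k\}$ is the lattice of $\GG_m^n$-invariant characters; hence $A_0^{\GG_m^n}$ is the $K$-span of the invariant Laurent monomials $\chi^\beta$ with $\beta\in\Lambda_0$, regarded as units of $A_0$. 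As $x\in\cTGo(2,n)$ has all coordinates finite, $\sigma(x)$ is a multiplicative seminorm on $A_0$ with $\sigma(x)(u_{kl})=\exp(x_{kl}-x_{ij})\neq 0$ for all $kl\neq ij$ (Lemma~\ref{lm:sectionProperty}); so each $u_{kl}$ is a unit for $\sigma(x)$, and writing $\chi^\beta=\prod_{kl\neq ij}u_{kl}^{\beta_{kl}}$, multiplicativity gives $\sigma(x)(\chi^\beta)=\exp(\langle\beta,x\rangle)$, which is well defined on $\RR^{\binom{n}{2}}/\RR\tightcdot\mathbf 1$ because $|\beta|=0$.

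For the ``only if'' direction, suppose $\sigma(x)$ and $\sigma(y)$ induce the same point of the quotient. Evaluating on the invariant monomials $\chi^\beta\in A_0^{\GG_m^n}$, $\beta\in\Lambda_0$, yields $\langle\beta,x-y\rangle=0$ for all $\beta\in\Lambda_0$, i.e.\ $x-y$ lies in the annihilator of $\Lambda_0$ inside $\Lambda_\RR^\vee=M_\RR^\vee/\RR\tightcdot\mathbf 1$. It then remains to identify this annihilator with $\overline L$. Since $\sum_{k=1}^n\mathbf{l_k}=2\tightcdot\mathbf 1$, we have $\mathbf 1\in L$ and $\Lambda_0\otimes\RR=L^\perp\subset M_\RR$; taking perpendiculars in the perfect pairing $M_\RR\times M_\RR^\vee\to\RR$ and passing to the quotient by $\RR\tightcdot\mathbf 1\subset L$ gives $(\Lambda_0\otimes\RR)^\perp=L$ in $M_\RR^\vee$ and hence $(L+\RR\tightcdot\mathbf 1)/\RR\tightcdot\mathbf 1=L/\RR\tightcdot\mathbf 1=\overline L$ inside $\Lambda_\RR^\vee$. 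Therefore $x-y\in\overline L$.

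For the ``if'' direction I would exploit the compatibility of $\sigma$ with the torus action. For $t\in\GG_m^n(K)$ the automorphism $\varrho_t$ of $\Gr_0(2,n)$ induces a homeomorphism $\varrho_t^{\an}$ of $\Gr_0(2,n)^{\an}$, namely $\gamma\mapsto\gamma\circ\varrho_t^\sharp$; since $\varrho_t^\sharp$ is a ring homomorphism of $A_0$ it preserves the partial order $\leq$ on seminorms, and from $\varrho_t^\sharp(u_{kl})=t_kt_lt_i^{-1}t_j^{-1}u_{kl}$ one reads off $\trop\circ\varrho_t^{\an}=\tau\circ\trop$, where $\tau$ is translation by $\sum_k\log|t_k|\,\mathbf{l_k}$ in $\TP\pr^{\binom{n}{2}-1}$. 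Because $\sigma(x)$ is the unique maximal element of the fiber $\trop^{-1}(x)$ (Lemma~\ref{lm:MaxCondition}, Corollary~\ref{cor:Shilov}), the order-preserving homeomorphism $\varrho_t^{\an}$ carries it to the unique maximal element of $\trop^{-1}(\tau(x))$, so $\varrho_t^{\an}(\sigma(x))=\sigma\!\left(x+\textstyle\sum_k\log|t_k|\,\mathbf{l_k}\right)$; evaluating on $g\in A_0^{\GG_m^n}$, where $\varrho_t^\sharp g=g$, gives $\sigma\!\left(x+\sum_k\log|t_k|\,\mathbf{l_k}\right)(g)=\sigma(x)(g)$. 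To handle an arbitrary $y$ with $x-y\in\overline L$ --- equivalently $y-x\equiv\sum_k\lambda_k\mathbf{l_k}\pmod{\RR\tightcdot\mathbf 1}$ with the $\lambda_k\in\RR$ prescribed, using again $\mathbf 1=\tfrac12\sum_k\mathbf{l_k}$ --- I would base change to a complete valued extension $L|K$ with surjective valuation, pick $t_k\in L^\ast$ with $\log|t_k|=\lambda_k$, and run the above over $L$ (with $\sigma_L$ the section of Theorem~\ref{thm:MainThm} for $\Gr_0(2,n)_L$): tropicalization is unchanged by this base change (\cite[Proposition~3.7]{gubler12}), and the canonical surjection $\Gr_0(2,n)_L^{\an}\to\Gr_0(2,n)^{\an}$ is order-preserving and surjects each $\trop$-fiber onto the corresponding fiber, hence sends $\sigma_L(x)$ to $\sigma(x)$. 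Restricting back to $A_0^{\GG_m^n}\subset A_0^{\GG_m^n}\otimes_K L$ then yields $\sigma(x)(g)=\sigma(y)(g)$ for all invariant $g$, as desired.

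The main obstacle is the bookkeeping in the ``if'' direction: checking carefully that $\varrho_t^{\an}$ conjugates the maximal-element characterization of $\sigma$ into itself, and that this identity survives base change to a field large enough to realize all real translations by cut-metrics. The monomial evaluation $\sigma(x)(\chi^\beta)=\exp(\langle\beta,x\rangle)$ and the linear-algebra identification $\Lambda_0^\perp=\overline L$ are routine.
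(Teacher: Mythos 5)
Your proof is correct, but your ``if'' direction takes a genuinely different route from the paper's. The paper observes that $x-y\in\overline L$ forces $x$ and $y$ to lie in a common cone $\cTC_T$ (every cone contains the lineality space), so both $\sigma(x)$ and $\sigma(y)$ are given by the explicit skeleton formula in one common compatible coordinate set $I=I(ij,T,\emptyset)$, and then checks directly that they take the same values on the monomial generators of $A_0^{\GG_m^n}$ --- a short computation resting on the explicit construction. You instead prove an equivariance statement: $\varrho_t^{\an}$ is order preserving and maps $\trop$-fibers to translated fibers, so by the maximality characterization (Lemma~\ref{lm:MaxCondition}) it carries $\sigma(x)$ to $\sigma\bigl(x+\sum_k\log|t_k|\,\mathbf{l_k}\bigr)$, giving agreement on invariants since $\varrho_t^\sharp$ fixes $A_0^{\GG_m^n}$; arbitrary real translations along $\overline L$ are then realized after base change to $L|K$ with surjective valuation. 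This is more conceptual, and it even sidesteps a point the paper leaves implicit (agreement of two multiplicative seminorms on algebra generators does not formally imply agreement on the subalgebra; the paper is implicitly using that both points are skeleton points of the same chart, whereas your argument yields agreement on all invariant functions at once). The cost is two facts you assert rather than prove: that $\sigma_L$ over $L$ is again characterized by maximality (true, since Lemma~\ref{lm:MaxCondition} holds over any complete non-Archimedean field), and that $\Gr_0(2,n)_L^{\an}\to\Gr_0(2,n)^{\an}$ surjects onto each $\trop$-fiber, which you need to conclude that $\sigma_L(x)$ restricts to $\sigma(x)$; the latter is standard (surjectivity of complete base change, as used in Corollary~\ref{cor:Shilov}), but it is quicker to note that $\sigma_L(x)$ and $\sigma(x)$ are defined by the same formula~\eqref{eq:sigma} in the same chart, so the restriction is $\sigma(x)$ by inspection. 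Your ``only if'' argument, evaluating on invariant Laurent monomials and identifying the annihilator of the invariant character lattice with $\overline L$, is a hands-on version of the paper's appeal to Proposition~\ref{pro:QuotExists} and is fine.
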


\begin{proof}
  The ``only if'' part follows from Proposition~\ref{pro:QuotExists}
  and holds for any continuous section to the tropicalization map.
  For the converse, note that since $x$ and $y$ lie in the same cone
  $\cTC_T$ of $\cTGo(2,n)$, $\sigma(x)$ and $\sigma(y)$ can be defined
  using the same set of coordinates $I=I(ij,T,\emptyset)$ for any pair
  of indices $i,j$. The condition $x-y \in \overline{L}$ implies that
  $\sigma(x)$ and $\sigma(y)$ agree on all monomials generating
  $A_0^{\GG_m^n}$, thus $\sigma(x) = \sigma(y)$ on $A_0^{\GG_m^n}$.
\end{proof}

\begin{corollary}\label{cor:QuotientEmbedding} The tropicalization map $\trop\colon
  (\Gr_0(2,n)/\GG_m^n)^{\an} \to M^{\vee}_{\RR}/L$ has a continuous
  section $\sigma'\colon \cTGo(2,n)/\overline{L} \to
  (\Gr_0(2,n)/\GG_m^n)^{\an}$.
\end{corollary}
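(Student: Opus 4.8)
The plan is to obtain $\sigma'$ directly from the section $\sigma$ of Theorem~\ref{thm:MainThm} by passing to the quotients on both sides, using the diagram
\[
\xymatrix{
  \Gr_0(2,n)^{\an} \ar@{->>}[d]_{q} & \cTGo(2,n) \ar@{_(->}[l]_-{\sigma} \ar@{->>}[d]^{\overline{q}} \\
  (\Gr_0(2,n)/\GG_m^n)^{\an} & \cTGo(2,n)/\overline{L}\ar@{-->}[l]^-{\sigma'}
}
\]
where $q$ is the analytification of the geometric quotient morphism and $\overline{q}$ is the linear-algebraic quotient by the lineality space $\overline{L}$. First I would note that Proposition~\ref{pro:QuotExists} identifies the tropicalization of $\Gr_0(2,n)/\GG_m^n$ with the pointed fan $\cTGo(2,n)/\overline{L}$, so the target of $\sigma'$ and the target of the tropicalization map $\trop$ on $(\Gr_0(2,n)/\GG_m^n)^{\an}$ coincide, and the phrase ``section to $\trop$'' makes sense.

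The construction of $\sigma'$ goes as follows. Given a class $[x] \in \cTGo(2,n)/\overline{L}$, pick any representative $x \in \cTGo(2,n)$ and set $\sigma'([x]) := q(\sigma(x))$, viewed as a seminorm on the invariant ring $A_0^{\GG_m^n}$. The key point that makes this well-defined is precisely Lemma~\ref{lem:QuotIsRespected}: if $x$ and $y$ are two representatives of $[x]$, then $x - y \in \overline{L}$, so $\sigma(x)$ and $\sigma(y)$ restrict to the same element of $(\Gr_0(2,n)/\GG_m^n)^{\an}$, i.e.\ $q(\sigma(x)) = q(\sigma(y))$. Hence $\sigma'$ is a well-defined map on the quotient. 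To see that $\sigma'$ is a section to $\trop$, I would chase the diagram: for $x \in \cTGo(2,n)$, Theorem~\ref{thm:section} (via Theorem~\ref{thm:MainThm}) gives $\trop(\sigma(x)) = x$ in $\cTGo(2,n)$, and since tropicalization is compatible with the torus quotient by Proposition~\ref{pro:QuotExists}, applying $\overline{q}$ yields $\trop(\sigma'([x])) = \overline{q}(\trop(\sigma(x))) = \overline{q}(x) = [x]$. This is a routine compatibility of the monomial maps $\varrho^{\sharp}$, $p_2^{\sharp}$ and the coordinatewise logarithmic absolute value, together with the definition of the induced tropicalization in the ambient torus $\GG_m^{\binom{n}{2}}/\GG_m \to \Spec K[M/L_{\ZZ}^{\vee}]^{\an}$.

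It remains to prove continuity of $\sigma'$, and this is the step I expect to be the main (though still mild) obstacle. Continuity of $\sigma$ is Theorem~\ref{thm:continuity}. The quotient map $\overline{q}\colon \cTGo(2,n) \to \cTGo(2,n)/\overline{L}$ is an open, surjective, continuous linear projection, so it is a topological quotient map; therefore a map out of $\cTGo(2,n)/\overline{L}$ is continuous iff its precomposition with $\overline{q}$ is. Now $\sigma' \circ \overline{q} = q \circ \sigma$, and $q\colon \Gr_0(2,n)^{\an} \to (\Gr_0(2,n)/\GG_m^n)^{\an}$ is the analytification of a morphism of affine $K$-schemes, hence continuous (a seminorm is restricted along the inclusion of the invariant subring $A_0^{\GG_m^n} \hookrightarrow A_0$, and restriction of seminorms is continuous for the respective pointwise-convergence topologies). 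Composing two continuous maps gives that $\sigma' \circ \overline{q}$ is continuous, and the quotient-map property then yields continuity of $\sigma'$. One small technical check to include is that $q$ indeed lands in and is continuous onto the quotient analytic space with its natural topology; this follows from functoriality of analytification applied to the quotient morphism $\Gr_0(2,n) \to \Gr_0(2,n)/\GG_m^n$ together with \cite[Sections~3.4 and~3.5]{berkovichbook}. Once these pieces are assembled, the corollary follows; in particular $\sigma'$ is a homeomorphism onto its image, exhibiting the pointed fan $\cTGo(2,n)/\overline{L}$ — the Petersen graph when $n=5$ — as a closed subset of $(\Gr_0(2,n)/\GG_m^n)^{\an}$.
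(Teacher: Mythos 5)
Your proposal is correct and matches the paper's intended argument: the corollary is deduced, exactly as you do, by descending $\sigma$ through the quotients using Lemma~\ref{lem:QuotIsRespected} for well-definedness, Proposition~\ref{pro:QuotExists} for the identification of the target, and Theorem~\ref{thm:continuity} plus the quotient-map property of the projection by $\overline{L}$ for continuity. Your extra remarks (openness of the linear projection restricted to the fan, closedness of the image) are routine and consistent with the paper's framework.
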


\begin{example} In~\cite[Example 9.10]{StSolvingSystems}, Sturmfels
  showed that the spherical complex associated to the tropical
  Grassmannian $\cTGo(2,5)/\overline{L}$ is the Petersen graph.  The
  embedding $\sigma'$ from Corollary~\ref{cor:QuotientEmbedding}
  allows us to view the Petersen graph inside the analytification
  $(\Gr_0(2,5)/\GG_m^5)^{\an}$.
\end{example}

%%%%%%%%%%%%%%%%%%%%%%%%%%%%%%%%%%%%%%%%%%%%%%%%%%%%%%%%%%%%%%%%%%%%%%%%%%%%%%
%%%%%%%%%%%%%%%%%%%%%%%%%%%%%%%%%%%%%%%%%%%%%%%%%%%%%%%%%%%%%%%%%%%%%%%%%%%%%%
\section*{Acknowledgments}
\label{sec:acknowledgements}
%%%%%%%%%%%%%%%%%%%%%%%%%%%%%%%%%%%%%%%%%%%%%%%%%%%%%%%%%%%%%%%%%%%%%%%%%%%%%%
%%%%%%%%%%%%%%%%%%%%%%%%%%%%%%%%%%%%%%%%%%%%%%%%%%%%%%%%%%%%%%%%%%%%%%%%%%%%%%

We wish to thank Antoine Ducros and Sam Payne for very fruitful
conversations and Josephine Yu for helping us with some computations
using \texttt{Macaulay2}~\cite{M2} and \texttt{Gfan}~\cite{gfan}.  We
would like to express our gratitude to the anonymous referee for
his/her careful reading and detailed comments. The first author was
supported by an Alexander von Humboldt Postdoctoral Research
Fellowship.

\def\cprime{$'$}

%   \bibliographystyle{abbrv}
%  \bibliography{bibliography}
%  \label{sec:biblio}

%\bigskip
\vspace{1ex}

\noindent
\textbf{\small{Authors' addresses:}}

\smallskip                        
\noindent
\small{M.\ A.\ Cueto,  Mathematics Department,
  Columbia University, 2990 Broadway, New York, NY, 10027, USA.
\\
\noindent \emph{Email address:} \url{macueto@math.columbia.edu}}
\vspace{2ex}

\noindent
\small{M.\ H\"abich and A.\ Werner, Institut f\"ur Mathematik, Goethe-Universit\"at
  Frankfurt, Robert-Mayer-Str.\ 6-8, 60325 Frankfurt a.\ M., Germany. 
\\
\noindent \emph{Email addresses:} \{\url{haebich},\url{werner}\}\url{@math.uni-frankfurt.de}
}

\end{document}